\newtheorem{lemma}{Lemma}[section]
\newtheorem{proposition}[lemma]{Proposition}
\newtheorem{theorem}[lemma]{Theorem}
\newtheorem{claim}[lemma]{Claim}
\newtheorem{remark}[lemma]{Remark}
\newtheorem{conjecture}[lemma]{Conjecture}
\newtheorem{corollary}[lemma]{Corollary}
\newtheorem{definition}[lemma]{Definition}
\newtheorem{thm}{Theorem}
\newcommand{\R}{\mathbb{R}}
\newcommand{\C}{\mathbb{C}}
\newcommand{\Z}{\mathbb{Z}}
\renewcommand{\H}{\mathbb{H}}
\newcommand{\TT}{\mathcal{T}}
\newcommand{\PP}{\mathcal{P}}
\newcommand{\BB}{\mathcal{B}}
\newcommand{\MM}{\mathcal{M}}
\newcommand{\HH}{\mathcal{H}}
\newcommand{\QQ}{\mathcal{Q}}
\newcommand{\RR}{\mathcal{R}}
\newcommand{\LL}{\mathcal{L}}
\newcommand{\CC}{\mathcal{C}}
\newcommand{\QQQ}{\mathbf{Q}}
\newcommand{\HHH}{\mathbf{H}}
\newcommand{\NNN}{\mathbf{N}}
\newcommand{\RRR}{\mathbf{R}}
\newcommand{\Ep}{{\rm Ep}}
\newcommand{\Hol}{{\rm Hol}}
\newcommand{\gr}{{\rm gr}}
\newcommand{\Gr}{{\rm Gr}}
\renewcommand{\Im}{{\rm Im}}
\newcommand{\ML}{{\rm ML}}
\newcommand{\PML}{{\rm PML}}
\newcommand{\PSL}{{\rm PSL}}
\newcommand{\length}{{\rm length}}
\newcommand{\dev}{{\rm dev}}
\newcommand{\QF}{{\rm QF}}
 \newcommand{\Label}[1]{\label{#1}\textcolor{green}{\tiny #1} } 
\renewcommand{\Label}[1]{\label{#1}}
\newcommand{\CP}{\mathbb{C}{\rm P}}
\newcommand{\col}{\colon}
\newcommand{\minus}{\setminus}
\newcommand{\bdr}{\partial}
\newcommand{\ti}{\tilde}
\newcommand{\ep}{\epsilon}
\newcommand{\gam}{\gamma}
\newcommand{\kap}{\kappa}
\newcommand{\lam}{\lambda}
\newcommand{\Del}{\Delta}
\DeclareRobustCommand{\rchi}{{\mathpalette\irchi\relax}}
\newcommand{\irchi}[2]{\raisebox{\depth}{$#1\chi$}} 
\newcommand{\Qed}[1]{\nopagebreak[4]{\tiny \hfill\fbox{\ref{#1}} \linebreak }\pagebreak[2]}
\definecolor{dblue}{cmyk}{1,.5, 0,.1}
\definecolor{arsenic}{rgb}{0.23, 0.27, 0.49}
 \definecolor{calpolypomonagreen}{rgb}{0.12, 0.3, 0.17}
  \definecolor{darkbyzantium}{rgb}{0.6, 0.3, 0.4}
 \definecolor{azure}{rgb}{0.0, 0.5, 1.0}
 \definecolor{cittingcolor}{cmyk}{60,0,10,0}
 \title[\today]{Intersection number of holonomy varieties of $\CP^1$-structures}
\author{Shinpei Baba}
\address{University of Osaka}
\email{sb.sci@osaka-u.ac.jp}
\date{\today}
\begin{document}
\begin{abstract} 
Let $\Sigma$ be a closed orientable surface of genus at least two, and let $X, Y$ be distinct marked Riemann surface structures on $\Sigma$, possibly with opposite orientations.
 In this paper, we show that there are (exactly) countably infinite pairs of $\CP^1$-structures on $X$ and on $Y$ sharing holonomy $\pi_1(\Sigma) \to \PSL_2\C$.
   \end{abstract}
\maketitle

\setcounter{tocdepth}{1}
\tableofcontents

\section{Introduction}
Let $\Sigma$ be a closed orientable surface of genus $g$ at least two. 
A {\sf quasi-Fuchsian} representation $\rho\col \pi_1(\Sigma) \to \PSL_2\C$ is a ``typical" discrete and faithful representation, such that the limit set of the discrete subgroup $\Im \rho$ is a Jordan curve on $\CP^1$.
Quasi-Fuchsian representations are important for the study of hyperbolic three-manifolds.
It has various equivalent definitions (such as convex cocompact representations), and also generalizations to representations into other Lie groups. (For other rank-one Lie groups, see Bowditch \cite{Bowditch95}.  See also \cite{KapovichLeeb18IsometryGroupsOfSpaces}).  
In this paper, we investigate another type of generalization of quasi-Fuchsian representations within the framework of $\pi_1(\Sigma) \to \PSL_2\C$. 

Let $S$ be the surface $\Sigma$ with a fixed orientation, and $S^\ast$ be $\Sigma$ with the opposite orientation. 
Let $\TT$ be the Teichmüller space of $S$, the deformation space of all marked Riemann surface structures on $S$.
Similarly let $\TT^\ast$ be the Teichmüller space of $S^\ast$.
Given a quasi-Fuchsian representation $\rho\col \pi_1(S) \to \PSL_2\C$, let $\Lambda$ be the limit set of $\Im \rho$. 
Then its complement $\CP^1 \minus \Lambda$ is a union of disjoint topological open disks $\Omega^+$ and $\Omega^-$, and the quotients $\Omega^+/\Im \rho$ and $\Omega^-/\Im \rho$ have marked Riemann surface structures on $S$ and $S^\ast$.
Bers' simultaneous uniformization theorem (\cite{Bers60}) asserts that,  for every pair of Riemann surface structures $X$ on $S$ and $Y$ on $S^\ast$, there is unique quasi-Fuchsian representation $\rho\col \pi_1(S) \to \PSL_2\C$, such that $X$ and $Y$ are biholomorphic to $\Omega^+/\Im \rho$ and $\Omega^-/\Im \rho$.
By this correspondence, the space $\QF$ of quasi-Fuchsian representations, {\it the quasi-Fuchsian space}, is indeed biholomorphic to $\TT \times \TT^\ast$ (\cite{Bers60}, see also \cite{Hubbard06, EarleKra06}).  
In particular, the quasi-Fuchsian representation $\rho$ is the unique transversal intersection point of the Bers' slices $\{X\} \times \TT^\ast$ and $\TT \times \{T\}$ in $\QF$.

The quotient surfaces $\Omega^+/\Im \rho$  and $\Omega^-/\Im \rho$ have not only Riemann surface structures but also have {\it $\CP^1$-structures (or complex projective structures)} on $S$ and $S^\ast$, which correspond to holomorphic quadratic differentials on these Riemann surfaces.
From a viewpoint of $\CP^1$-structures, the simultaneous uniformization theorem can equivalently be stated as follows, without the notion of quasi-Fuchsian representations:
Given a pair of Riemann surface structures  $X$ on $S$ and $Y$ in $S^\ast$, there is a unique pair of $\CP^1$-structure $C_X$ on $X$ and a $\CP^1$-structure $C_Y$ on $Y$ such that 
\begin{itemize}
\item the holonomy representation $\pi_1(\Sigma) \to \PSL_2\C$ of $C_X$ coincides with the holonomy representation $\pi_1(\Sigma) \to \PSL_2\C$ of $C_Y$ (up to conjugation), and
\item the developing maps $\ti{S} \to \CP^1$ of $C_X$ and $\ti{S}^\ast \to \CP^1$ $C_Y$ are injective, where $\ti{S}$ and $\ti{S}^\ast$ are the universal covers of $S$ and $S^\ast$, respectively. 
\end{itemize}

In this paper,  we consider a more general realization problem of a pair of Riemann surface structures $X$ and $Y$ on either $S$ or $S^\ast$ by a pair of $\CP^1$-structures $C_X$ and $C_Y$ sharing holonomy. 
In this general setting without the restriction of the injectivity and the orientation, we show that there are infinitely many realizing pairs: 
\begin{thm}\Label{InfinitelyManyPairs}
Let  $X, Y \in \TT \cup \TT^\ast$ with $X \neq Y$.
Then, there are exactly countably infinitely many distinct pairs $(C_i^X, C_i^Y)_{i = 1}^\infty$ of $\CP^1$-structures  $C^X_i$ on $X$ and $C^Y_i$ and $Y$,  such that the holonomy $\pi_1(S) \to \PSL_2\C$ of $C_i^X$ coincides with the holonomy of $C_i^Y$ for each $i = 1, 2, \dots$. (\Cref{InfiniteIsomodromicPairs}.)
\end{thm}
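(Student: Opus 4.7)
The plan is to study the holonomy varieties $V_X = \Hol_X(\PP(X))$ and $V_Y = \Hol_Y(\PP(Y))$ inside the $\PSL_2\C$-character variety $\chi$ of $\pi_1(S)$, where $\Hol_X \col \PP(X) \to \chi$ denotes the holonomy map from the space of $\CP^1$-structures on $X$. By Schwarzian theory, $\PP(X)$ is an affine complex space modeled on $H^0(X,K_X^2)$ of complex dimension $3g-3$, and $\Hol_X$ is a holomorphic local immersion (Hejhal, Earle, Hubbard); so $V_X$ and $V_Y$ are immersed Lagrangian submanifolds (with respect to Goldman's complex symplectic form) of $\chi$ of dimension $3g-3$, while $\chi$ itself has dimension $6g-6$. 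Since every fiber of $\Hol_X$ is countable by Goldman's $2\pi$-grafting theorem, each $\rho \in V_X \cap V_Y$ contributes at most countably many pairs $(C^X,C^Y)$, and conversely every such pair projects to a point of $V_X \cap V_Y$. Thus it suffices to prove that $V_X \cap V_Y$ is a discrete, countably infinite subset of $\chi$.

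For the nonemptiness of $V_X \cap V_Y$ I would split into cases. If $X$ and $Y$ lie in opposite orientation components of $\TT \cup \TT^\ast$, Bers' simultaneous uniformization supplies a unique quasi-Fuchsian representation $\rho_1 \in V_X \cap V_Y$. If $X$ and $Y$ share the same orientation, I would instead start from the Bers pair for $(X, \bar Y)$, where $\bar Y$ is $Y$ with reversed orientation, and apply a $2\pi$-grafting to the second factor along a suitably chosen simple closed curve, using an intermediate-value argument in a one-parameter subfamily of Bers pairs to arrange that the resulting underlying Riemann surface matches $Y$ with its given orientation.

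For the infiniteness, starting from $(C^X_1, C^Y_1)$ with common holonomy $\rho_1$, I would iterate a joint $2\pi$-grafting construction along an infinite sequence $\{\gamma_i\}$ of simple closed curves escaping in the curve complex of $S$. Each joint $2\pi$-grafting preserves the $\PSL_2\C$-holonomy but shifts the underlying Riemann surfaces off of $X$ and $Y$; a subsequent small deformation in the $(6g-6)$-dimensional parameter space $\PP(X) \times \PP(Y)$, constrained to lie on the $(3g-3)$-dimensional fiber product $\PP(X) \times_\chi \PP(Y)$, should restore the underlying surfaces to $(X,Y)$ while producing a new common holonomy $\rho_{i+1}$. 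Distinctness of the $\rho_i$ would then follow from the trace of the $\gamma_i$-holonomy element growing without bound as $\gamma_i$ lengthens.

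The principal obstacle is justifying the compensating deformation: one must show that the projection from the analytic set $\PP(X) \times_\chi \PP(Y)$ to the underlying Riemann-surface factor is locally surjective at each $(C^X_i, C^Y_i)$ in enough directions to compensate for the grafting displacement. Equivalently, $V_X$ and $V_Y$ must meet transversely at $\rho_i$ as Lagrangians in $\chi$, and the implicit-function neighborhood must extend at the scale needed to absorb the grafting. Goldman's complex symplectic structure provides the correct framework -- transverse Lagrangian intersection is generic -- and once transversality is established at $\rho_1$ and propagates to the subsequent $\rho_i$, the infinite family of distinct pairs follows. I expect this transversality and extension estimate to be the hardest part of the argument.
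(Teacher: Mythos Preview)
Your proposal identifies the right objects---the holonomy varieties as half-dimensional complex Lagrangians in $\rchi$---and correctly isolates where the difficulty lies, but the argument has a genuine gap at exactly the point you flag. After a joint $2\pi$-grafting along a long curve $\gamma_i$, the underlying Riemann surfaces of the grafted pair are displaced from $(X,Y)$ by an amount that is \emph{not} small: the conformal grafting map $\gr_{2\pi\gamma}$ moves points in $\TT$ by a bounded but definite Teichm\"uller distance, and there is no reason this displacement shrinks as $\gamma_i$ lengthens. A local implicit-function neighborhood coming from Lagrangian transversality at $\rho_i$ therefore cannot absorb it. You would need a \emph{global} path-lifting property for the map $\Psi\colon \BB \to (\TT\sqcup\TT^\ast)^2\setminus\Delta$, not just local transversality; and the transversality itself is not established (it is not known to hold at an arbitrary intersection point, and your mechanism for propagating it from $\rho_i$ to $\rho_{i+1}$ is unspecified). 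The same-orientation nonemptiness step is also only sketched: the intermediate-value argument you allude to is not obviously available, since grafting a single curve gives a one-real-parameter family while the target $\TT$ has real dimension $6g-6$.

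The paper takes a quite different route that sidesteps both difficulties. Rather than grafting an existing pair and then correcting, it \emph{builds the approximate pair directly with holonomy already at infinity}. One chooses a generic Teichm\"uller geodesic $R(t)$ passing $\ep/3$-close to both $X$ and $Y$, and takes $t_i\to -\infty$ along a recurrent subsequence. The Fuchsian holonomy $\rho_i$ of the hyperbolic structure $\sigma_i$ uniformizing $R(t_i)$ escapes every compact set of $\rchi$, so for large $i$ it automatically avoids the holonomies of the finitely many pairs already constructed. The technical core of the paper (\S4--\S6) is a \emph{uniform} asymptotic comparison between the Teichm\"uller ray from $\sigma_i$ and the conformal grafting ray $\gr^{\exp(s)}_{L_i/d}\sigma_i$ from the same basepoint, together with a uniform approximation of the irrational grafting by $2\pi$-integral grafting along a multiloop $M$. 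These yield $\CP^1$-structures $\Gr_{M_X}\sigma_i$ and $\Gr_{M_Y}\sigma_i$ with common Fuchsian holonomy $\rho_i$ whose conformal structures are within $\ep$ of $X$ and $Y$. The final correction---moving the conformal pair from $(\gr_{M_X}\sigma_i,\gr_{M_Y}\sigma_i)$ exactly to $(X,Y)$---then uses the already-proven fact that $\Psi$ is a \emph{complete} local branched covering (Theorem~A of \cite{Baba_23}), which supplies precisely the global path-lifting your argument is missing. Distinctness of the new pair from the previous $n$ follows because its holonomy lies outside the bounded set $\bigcup_h \Hol(U_h)$.
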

Note that $\TT \cup \TT^\ast$ is the space of all marked Riemann surface structures on $S$ (without fixing the orientation). 
The orientations of $X$ and $Y$ can be either the same or the opposite, in contrast to Bers' theorem. 

Next we interpret \Cref{InfinitelyManyPairs} in 
the {\sf $\PSL_2\C$-character variety} $\rchi$ of $\Sigma$,  the space of representations divided by conjugation: 
$$\{\pi_1(S) \to \PSL_2\C\} \sslash \PSL_2\C.$$
In this affine algebraic variety  $\chi$,
  there are various half-dimensional (closed real or complex smooth analytic) subvarieties with geometric significance. 
It has been important to understand the intersection of such half-dimensional subvarieties (Faltings \cite{Faltings83}, Dumas-Wolf \cite{Dumas-Wolf08}, Tanigawa \cite{Tanigawa-97}). 

Here we shall consider the intersection of {\it holonomy varieties}. 
For a Riemann surface structure $X$ on $\Sigma$, 
the set $\PP_X$ of $\CP^1$-structures on $X$ is identified with the complex affine vector space ${\rm QD}(X) \cong \C^{3g -3}$ of holomorphic quadratic differentials on $X$. 
Then, the deformation space $\PP_X$ properly embeds into the $\PSL_2\C$-character variety of $\Sigma$ by the holonomy map (Poincar\'e \cite{Poincare884}, Kapovich \cite{Kapovich-95}, Gallo Kapovich Marden \cite{Gallo-Kapovich-Marden}, see also \cite{Tanigawa99, Dumas18HolonomyLimit}). 
 Its image is a smooth complex analytic subvariety of $\rchi$, and it is called the {\sf holonomy variety} of $X$,  and we denote it by $\rchi_X$.

\begin{thm}\Label{InfiniteIntersection}
For all distinct $X, Y \in \TT \cup \TT^\ast,$
the intersection 
$$\rchi_X \cap \rchi_Y$$ 
is an infinite discrete closed subset of the character variety $\rchi$. 
Thus their algebraic intersection number is infinite.
\end{thm}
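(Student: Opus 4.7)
The plan is to extract the three claims---closedness, infiniteness, and discreteness---separately, with discreteness being the main new input.

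Closedness is immediate from the closedness of each holonomy variety $\rchi_X$ in $\rchi$: the holonomy map $\PP_X \to \rchi$ is a local biholomorphism whose image is a closed complex analytic subvariety. Infiniteness will be deduced from \Cref{InfinitelyManyPairs}: each of the countably many distinct pairs $(C_i^X, C_i^Y)$ determines a common holonomy class $\rho_i \in \rchi_X \cap \rchi_Y$, and I expect the construction behind \Cref{InfinitelyManyPairs} to yield infinitely many distinct $\rho_i$; alternatively, one can argue a posteriori that the map (pair)~$\mapsto$~(common holonomy) has at most finite fibers, so that countably many pairs produce countably many distinct intersection points.

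The heart of the argument, and the step I expect to be the real obstacle, is discreteness. Each $\rchi_X$ is smooth of complex dimension $3g-3 = \tfrac{1}{2}\dim_{\C}\rchi$, so dimension counting predicts a $0$-dimensional intersection, but positive-dimensional components of the intersection must be ruled out. Suppose for contradiction that $\rchi_X \cap \rchi_Y$ is not discrete at some $\rho$, corresponding to $\CP^1$-structures $C^X \in \PP_X$ and $C^Y \in \PP_Y$. Then a positive-dimensional germ of a holomorphic subvariety of $\rchi$ through $\rho$ lies in both $\rchi_X$ and $\rchi_Y$, and pulling this back through the local biholomorphisms associated to the holonomy maps produces positive-dimensional holomorphic families of $\CP^1$-structures on $X$ and on $Y$ with matching holonomy deformations. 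Equivalently, the (Lagrangian) subspaces $T_\rho \rchi_X$ and $T_\rho \rchi_Y$ of $T_\rho \rchi$ fail to be transverse.

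The final step is then to force $X = Y$ from the existence of such a family of coupled deformations. A natural route is to express the derivative of the holonomy map at $C^X$ (resp.\ $C^Y$) as a linear isomorphism from the space of quadratic differentials on $X$ (resp.\ on $Y$) onto $T_\rho\rchi_X$ (resp.\ $T_\rho\rchi_Y$), and to combine the non-degeneracy of the Goldman symplectic pairing on $T_\rho\rchi$ with the explicit dependence of these maps on the underlying complex structure to obtain a contradiction. The main difficulty I anticipate is controlling this rigidity step uniformly over all intersection points $\rho$, including degenerate configurations where the developing maps of $C^X$ and $C^Y$ share large overlapping regions.
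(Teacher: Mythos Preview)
Your plan inverts the paper's emphasis. In the paper, \Cref{InfinitelyManyPairs}, \Cref{InfiniteIntersection}, and \Cref{InfiniteFiber} are declared equivalent, and the discreteness (and hence closedness) of $\rchi_X \cap \rchi_Y$ is not proved here at all---it is quoted from the author's earlier work \cite[Theorem C]{Baba_23}. The new content of the present paper is precisely the \emph{infiniteness}, established by the constructive argument behind \Cref{InfinitelyManyPairs}. So your identification of discreteness as ``the heart of the argument'' and ``the main new input'' is backwards relative to what the paper actually does.

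On the infiniteness step, your worry about the fibers of the map (pair)~$\mapsto$~(common holonomy) is unnecessary: the paper records that the holonomy map $\PP_X \to \rchi$ is a proper \emph{embedding}, hence injective, and likewise for $\PP_Y$. Thus the correspondence between pairs $(C^X,C^Y)$ and points of $\rchi_X \cap \rchi_Y$ is a genuine bijection, and countably many distinct pairs give countably many distinct intersection points with no further argument.

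On your proposed discreteness argument: even setting aside that the paper does not attempt this, your sketch has a real gap. Non-transversality of the (complex Lagrangian) tangent spaces $T_\rho\rchi_X$ and $T_\rho\rchi_Y$ does not by itself force $X=Y$; you would need to rule out tangential but isolated intersections as well as actual positive-dimensional components, and your ``final step'' of deducing $X=Y$ from a coupled holomorphic family is left entirely unspecified. The known proofs of discreteness (as in \cite{Baba_23}) proceed by quite different, more geometric means, and your symplectic route would require substantial new ideas that you have not supplied.
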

As $\rchi_X$ and $\rchi_Y$ are smooth analytic subvarieties, the signs of the (isolated) intersection points are all positive; thus in total they have infinite algebraic intersection number. 
In fact, the points of the intersection 
$\rchi_X \cap \rchi_Y$ bijectively correspond to the infinite pairs  $(C_i^X, C_i^Y)_{i = 1}^\infty$ in \Cref{InfinitelyManyPairs}.

The {\it Fuchsian space} is a component of the real (analytic) slice of $\rchi$ which consists of discrete and faithful representations $\pi_1(S) \to \PSL_2\R$ respecting the orientation of $S$.  
 \Cref{InfiniteIntersection} can be regarded as reminiscent of Tanigawa's theorem stating that, for every $X \in \TT$, the holonomy variety $\rchi_X$ intersects the Fuchsian space in a discrete set (\cite[Theorem 12]{Faltings83})  infinitely many times   (\cite{Tanigawa-97}).

Last we relate our main theorem to the deformation space of isomonodromic pairs of $\CP^1$-structures.
Consider the space $\BB$ of (ordered) pairs of distinct $\CP^1$-structures on $\Sigma$ sharing holonomy. 
Then the quasi-Fuchsian space is biholomorphically identified with a connected component of $\BB$ unique up to switching the ordering of paired $\CP^1$-structures.
Some pairs in  $\BB$ with opposite orientations appear as the ideal boundary of 3-dimensional hyperbolic cone manifolds of $2\pi$-multiple cone angles which are homeomorphic to $S \times (-1,1)$ (c.f. \cite{Bromberg-07}).

Let $$\Psi \col \BB \to (\TT \sqcup \TT^\ast)^2 \minus \Delta$$
be the uniformization map taking a pair $(C, D)$ in $\BB$ to the pair of the underlying Riemann surface structures of $C$ and $D$. 
Then the author previously proved that the analytic mapping $\Psi$ is a complete local branched covering map (\cite[Theorem A]{Baba_23}). 
This local property implies that every fiber of $\Psi$ is closed and discrete.
Our theorem provides the explicit cardinality of its fibers. 
\begin{thm}\Label{InfiniteFiber}
Every fiber of $\Psi$ is a (countable) infinite discrete closed subset of $\BB$. 
\end{thm}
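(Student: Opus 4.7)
The plan is to deduce \Cref{InfiniteFiber} directly from \Cref{InfinitelyManyPairs} and \Cref{InfiniteIntersection}, since the fiber of $\Psi$ over a pair $(X,Y)$ is, essentially by definition, the same object described by those two theorems. There is no essential new geometric content; what one needs is a careful identification of the fiber with the intersection of holonomy varieties as topological spaces.

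First I would unpack the definition of the fiber. Fix $(X,Y) \in (\TT \sqcup \TT^\ast)^2 \minus \Delta$. By construction of $\Psi$, a point of $\Psi^{-1}(X,Y)$ is a pair $(C,D) \in \BB$ with $C \in \PP_X$, $D \in \PP_Y$, and $\hol(C) = \hol(D)$. Since $X \neq Y$, the underlying Riemann surfaces differ, so the condition $C \neq D$ built into the definition of $\BB$ is automatic. Recalling from the introduction that the holonomy map $\PP_X \to \rchi$ is a proper embedding onto $\rchi_X$ (and similarly for $Y$), each $z \in \rchi_X \cap \rchi_Y$ is the holonomy of a unique $C_z \in \PP_X$ and a unique $D_z \in \PP_Y$. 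Hence the assignment $z \mapsto (C_z, D_z)$ is a bijection
\[
\rchi_X \cap \rchi_Y \;\longleftrightarrow\; \Psi^{-1}(X, Y).
\]

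Second I would verify that this bijection is a homeomorphism. The topology on $\BB$ is the subspace topology inherited from $\PP \times \PP$, where $\PP$ denotes the deformation space of all $\CP^1$-structures. The composition of the inclusion $\Psi^{-1}(X,Y) \hookrightarrow \PP_X \times \PP_Y$ with either coordinate projection is continuous, and further composition with the embedding $\PP_X \hookrightarrow \rchi_X$ (respectively $\PP_Y \hookrightarrow \rchi_Y$) realizes the bijection above as a restriction of a topological embedding. Since $\PP_X \to \rchi_X$ is a homeomorphism onto its image, the bijection is a homeomorphism onto $\rchi_X \cap \rchi_Y$ with the subspace topology from $\rchi$.

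Finally, \Cref{InfiniteIntersection} asserts that $\rchi_X \cap \rchi_Y$ is infinite, discrete, and closed in $\rchi$. Transporting these properties through the homeomorphism of the previous step, $\Psi^{-1}(X,Y)$ is infinite and discrete, as required. (The countability already recorded in \Cref{InfinitelyManyPairs} matches the discreteness and infiniteness: a discrete subset of the separable space $\rchi$ is at most countable.) The only thing that could go wrong is a discrepancy between the topology on $\BB$ inherited via $\PP \times \PP$ and the topology induced through the holonomy map on each factor, but since each factor $\PP_X \hookrightarrow \rchi_X$ is a topological embedding, no such discrepancy arises, and this is the only point that requires any care in the argument.
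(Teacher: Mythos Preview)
Your proposal is correct and takes essentially the same approach as the paper: the paper explicitly states that \Cref{InfinitelyManyPairs}, \Cref{InfiniteIntersection}, and \Cref{InfiniteFiber} are all equivalent, and that the $\Psi$-fiber over $(X,Y)$ is exactly the sequence $(C_i^X, C_i^Y)$ of \Cref{InfinitelyManyPairs} (equivalently, in bijection with $\rchi_X \cap \rchi_Y$). Your write-up simply makes this identification explicit and checks it is a homeomorphism, which is precisely the content the paper leaves implicit.
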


The $\Psi$-fiber over $(X, Y)$ is exactly the infinite pairs $(C_i^X, C_i^Y)_{i = 1}^\infty$ in \Cref{InfinitelyManyPairs}. 
\Cref{InfiniteFiber} suggests a possibility of $\BB$ having infinitely many connected components, since each connected component must contain at least one fiber point, due to the completeness of the uniformization map $\Psi$ (the path lifting property).

\Cref{InfiniteIntersection} and \Cref{InfiniteFiber} follow immediately from \Cref{InfinitelyManyPairs}.
In fact, those three theorems are all equivalent. 

In our theorems, the ``infiniteness" is new. 
On the other hand, the discreteness in those theorems was proven by the author (\cite[Theorem C]{Baba_23}), and thus the cardinality has been known to be, at most, a countable set.
In this paper, we show that this upper bound is sharp by constructing infinitely many pairs.   
As for the lower bound, it has been known that the cardinality of  $\rchi_X \cap \rchi_Y$ is at least two if the orientations of $X$ and $Y$ are opposite and the cardinality of  $\rchi_X \cap \rchi_Y$ is at least one if the orientations of $X$ and $Y$ are the same (\cite[Corollary 12.7]{Baba_23}). 

The technical aspect of the proof concerns the strong asymptotic property of Teichmüller (geodesic) rays and grafting rays. 
Gupta, in his thesis (\cite{Gupta14, Gupta15}),  proved that, given every conformal grafting ray in the Teichmüller space, there is a Teichmüller ray asymptotic to it, as unparametrized rays. 
In his construction, typically those asymptotic rays have different base points. 

In contrast, we show such an asymptotic property, as parametrized rays,  for a certain family of pairs of a Teichüller ray and a grafting ray which share their base point.  
Moreover, this family has a uniform asymptotic rate (\Cref{SplitingLimitRay}). 

By \Cref{InfiniteFiber}, the intersection $\rchi_X \cap \rchi_Y$ is an infinite discrete set in the character variety $\chi$. 
 Thus it is interesting to understand how $\chi_X \cap \chi_Y$ distributes in $\chi$. 
The Morgan-Shalen boundary $\bdr \rchi$ is the compactification of $\chi$ by $\pi_1(S)$-actions on $\R$-trees, realizing projective limits of the translation lengths of closed curves by the representations $\pi_1(S) \to \PSL_2\C$. 
The Morgan-Scalen $\bdr \chi$ contains Thurston boundary of the Teichmüller space (\cite[\S 11.16]{Kapovich-01}).  
Perhaps one can generalize the construction of the  points in $\chi_X \cap \chi_Y$ in this paper and show that the intersection points are distributed in almost all directions: 

\begin{conjecture}
For all distinct $X, Y \in \TT \cup \TT^\ast,$ 
the accumulation set of the intersection $\chi_X \cap \chi_Y$ in the Morgan-Shalen boundary contains the Thurston boundary of the Teichmüller space in $\chi$. 
\end{conjecture}

\subsection{Idea of the proof}
We begin with outline the proof of \Cref{InfinitelyManyPairs} in the case where  $X, Y$ are Riemann surface structures on $S$,  i.e. the case where the realizing Riemann surfaces have the same orientation.  
Supposing that there are already $n$ isomonodromic pairs $(C_1^X, C_1^Y), \dots, (C_n^X, C_n^Y)$ of $\CP^1$-structures on $X$ and $Y$, we construct a new isomonodromic pair as follows. 

We take a ``generic'' Teichmüller (bifinte) geodesic $X_t ~(t\in \R)$ in $\TT$ which passes very close to $X$ and $Y$, so that its projection $[X_t]$ is dense in the Moduli space of Riemann surfaces in particular. 
Let $\rho_t\col \pi_1(S) \to \PSL_2\C$ be the discrete faithful representation uniformizing $X_t$, so that the marked hyperbolic surface $\H^2/ \Im \rho_t$ is conformally identified with the marked Riemann surface $X_t$.  

Clearly the mapping $\R \to \rchi$ defined by $t \mapsto \rho_t$ is a proper embedding.
Thus we can take sufficiently small $t < 0$ so that $\rho_t$ is sufficiently far from the $n$ holonomy representations of $(C_1^X, C_1^Y), \dots, (C_n^X, C_n^Y)$.
In addition,  using $2\pi$-grafting,  we can construct $\CP^1$-structures $C_{X'}, C_{Y'}$ with holonomy $\rho_t$ whose underlying Riemann surface structures $X'$ and $Y'$ are very close to the desired Riemann surface structures $X$ and $Y$: 
To achieve this construction, we prove and combine the following two approximations: the uniform asymptotic properties of certain infinite pairs of a Teichmüller ray and a corresponding grafting ray, associated with an accumulation point of the projection $[X_t]$ in the moduli space  (\Cref{SplitingLimitRay}), and corresponding uniform approximation of grafting rays by integral graftings (\Cref{IntegralGrafting}).
 
Then, by the completeness of $\Psi\col \BB \to (\TT \cup \TT^\ast)^2 \minus \Delta$, we can deform this pair $(C_{X'}, C_{Y'})$ to $C_X, C_Y$ in $\BB$ so that their underlying Riemann surface structures are exactly $X$ and $Y$.
As $\rho_t$ is sufficiently far from the holonomy representations of the already given pairs, we can conclude that the deformed new pair $(C_X, C_Y)$ realizing $(X, Y)$ is different from the $n$ pairs  $(C_1^X, C_1^Y), \dots, (C_n^X, C_n^Y)$ we already have. 

In the case where the orientations of $X$ and $Y$ are opposite, the proof is reduced to the case where $X$ and $Y$ have the same orientation by appropriate complex conjugation  (\S \ref{sOppositeOrieantations}).
We also give a short alternative proof, using Tanigawa's result and the properties of the uniformization map $\Psi$ (\S\ref{sAlternativeProof}).

\section{Acknowledgment}
I thank Brian Collier, Sebastian Heller, Takuro Mochizuki, Kyoji Saito for stimulating conversation and correspondence. 
 I also thank Subhojoy Gupta for discussions about Teichmülller rays. 
 
 This work is partially supported by Grant-in-Aid for Scientific Research 24K06737 and 23K22396.

\section{Preliminaries}
\subsection{Teichmüller rays}(See \cite[\S 11]{FarbMarglit12} for instance.)
The Teichmüller space $\TT$ of $S$ is the space of marked Riemann surface structures on $S$ up to isotopy. 
Given two (marked) Riemann surfaces $X, Y \in \TT$, let $K = K(X, Y)$ denote the infimum of the quasi-conformal dilatations $K_f $ among all quasi-conformal mappings $X \to Y$ preserving the marking of the surface. 
The {\sf Teichmüller distance} between $X$ and $Y$ on $\TT$ is given by 
$$d (X, Y) = \frac{1}{2} \log K, $$
which gives a Finsler metric on $\TT$, call the {\sf Teichmüller metric}.

A geodesic in $\TT$ in the Teichmüller metric is called a {\sf Teichmüller geodesic}. 
Then, given $X \in \TT$ and a measured foliation $V$ on $S$,  there is a Teichmüller geodesic $X_t$ with $X_0 = X$ along which $V$ ``conformally shrinks". 
Namely,  by Hubbard and Masur \cite{HubbardMasur79}, there is a flat surface $E = E(X, V)$ conformal to $X$ such that $V$ is the vertical measured foliation. 
In this paper, a {\sf flat surface} is an Euclidean surface with isolated cone singularities whose cone angles are $\Z_{\geq 3}$-multiples of $\pi$.
Then, we can obtain a ray of flat surfaces $E_t$ obtained by stretching in the horizontal direction by $e^{t/2}$ and shrinking in the horizontal direction by $e^{-t/2}$.
The conformal structure of $E_t$ gives the Teichmüller geodesic at unit speed. 

\subsection{$\CP^1$-structures}
(General references are \cite{Dumas-08}, \cite[Chapter 7]{Kapovich-01}, \cite[Chapter 14]{Goldman22GeometricStructuresOnManifolds}.) 
Recall that $\PSL_2\C$ is the automorphism group of $\CP^1$. 
Then, a {\sf $\CP^1$-structure} on a surface is a $(\CP^1, \PSL_2\C)$-structure.
Namely, an atlas of embedding open subsets covering $S$ into $\CP^1$ such that transition maps are given by elements in $\PSL_2\C.$
Clearly, each $\CP^1$-structure has a Riemann surface structure since transition maps preserve complex structure. 

$\CP^1$-structures can be viewed from various viewpoints, including the ones we discuss below.  
\subsubsection{Developing pairs}
A $\CP^1$-structure can equivalently be defined using a ``global coordinate" on the universal cover $\ti{S}$ of $S$.
Namely, a $\CP^1$-structure on $X$ is a pair $(f, \rho)$ of
\begin{itemize}
\item a local diffeomorphism $f \col \ti{S} \to \CP^1$ ({\sf developing map}) and 
\item a homomorphism $\rho\col \pi_1(S) \to \PSL_2\C$ ({\sf holonomy representation}), 
\end{itemize}
such that $f$ is {\it $\rho$-equivariant} (i.e. $f \gam = \rho(\gam) f$ for all $\gam \in \pi_1(S)$).
   This {\it developing pair} is defined up to $\PSL_2\C$. 
Namely
$(f, \rho)$ is equivalent to $(\alpha f,  \alpha \rho \alpha^{-1})$ for all $\alpha \in \PSL_2\C$.

\subsubsection{Schwarzian parametrization}
Next we explain an analytic viewpoint of a $\CP^1$-structure.
 A $\CP^1$-structure $C = (f, \rho)$ on $S$ corresponds to a holomorphic quadratic differential $q = \phi\, dz^2$ on a Riemann surface. 
The developing map $f$ induces a Riemann surface structure $X$ on $S$, and the Schwarzian derivative of $f$ induces a holomorphic quadratic differential on $X$. 
Thus a space of $\CP^1$-structures on a Riemann surface $X$ is an affine vector space of holomorphic quadratic differentials on $X$. 

There is a unique marked hyperbolic structure $\sigma$ uniformizing $X$.
A hyperbolic structure is, in particular, a $\CP^1$-structure, and, in this paper, we pick this hyperbolic structure to be the zero of this vector space--- in other words, when we take the Schwarzian derivative of $f$, the domain $\ti{X}$, the universal cover of $X$,  is identified with the upper half plane of $\C$ by the uniformization theorem.  

Let $\PP$ be the space of all marked $\CP^1$-structures on $S$. 
Let $\psi \col \PP \to \TT$ be the projection map which takes each $\CP^1$-structure to its complex structure. 

\subsubsection{Thurston's parametrization}(See \cite{Kamishima-Tan-92, Kulkani-Pinkall-94}, also \cite{Baba20ThurstonParameter}.) \Label{sThurstonParameters}
The Riemann sphere $\CP^1$ is the ideal boundary of the hyperbolic three-space $\H^3$. 
Moreover, $\PSL_2\C$ is the orientation-preserving isometry group of $\H^3$, extending the action on $\CP^1$. 

Through such relation,  a $\CP^1$-structure $C = (f, \rho)$ corresponds to a pair $(\sigma, L)$  of a hyperbolic structure $\sigma$ on $S$ and a measured (geodesic) lamination $L$ on $S$. 
This pair is called {\sf Thurston's parameters} of $C$. 

Let $\ti{L}$ be the  $\pi_1(S)$-invariant measured lamination on the universal cover $\H^2$ of $\sigma$. 
Then, this pair $(\sigma, L)$ give an $\rho$-equivariant ``locally convex" surface $\beta\col \H^2 \to \H^3$, called {\sf a bending map}, obtained by bending $\H^2$ along $\ti{L}$ by the angle given by the transversal measure of $\ti{L}$.
A bent surface is a particular type of pleated surface.
The developing map $f\col \ti{S} \to \CP^1$  corresponds to $\beta$ in a $\rho$-equivariant manner by certain ``locally'' well-defined nearest point projections from parts of $\CP^1$ to the pleated surface.

Let $\ML$ denote the space of all measured laminations on $S$. 
Then we have Thurston's parametrization of the deformation space
\begin{eqnarray}
\PP \cong \TT \times \ML, \Label{iThurston}
\end{eqnarray}\begingroup  
 by a canonical tangential homeomorphism. 
In particular, if $C \in \PP$ correponds to $(\sigma, L) \in \TT \times \ML$, the bending map $\H^2 \to \H^3$ is equivariant via the holonomy representation of $C$. 

    For each periodic leaf $\ell$ of $L$, there is a round cylinder $A_\ell$ in the grafted surface $\Gr_L \sigma$ foliated by circular closed curves such that the height of $A_\ell$ is the weight of $\ell$. 
    If there is more than one periodic leaf, then their corresponding round cylinders are disjoint. 
    The collapsing map  $\kap\col \Gr_L \sigma  \to \sigma$ collapses each cylinder $A_\ell$ to the closed geodesic $\ell$ on $\sigma$ and the restriction of $\kap$ to the complement of the cylinders is a $C^1$-diffeomorphism onto the complement of closed leaves 
    Therefore, there is a measured lamination $\LL$ on the grafted surface $\Gr_L \sigma$, such that 
    \begin{itemize}
\item the leaves of $\LL$ are circular,  and
\item $\kap$ takes $\LL$ to $L$ on $\sigma$.
\end{itemize}
    This lamination $\LL$ of the $\CP^1$-structure  $\Gr_L \sigma$ is called {\sf Thurston's lamination}.

\subsubsection{Thurston metric of a $\CP^1$-surface} \Label{sThurstonMetric}(\cite[S 2.1]{Tanigawa-97} \cite[\S 4.3]{Dumas-08}.) 
If a projective structure  $C \in \PP$ corresponds to $ (\sigma, L) \in \TT \times \ML$ by Thurston's parametrization (\ref{iThurston}),  then its grafting construction yields a conformal piecewise Hyperbolic/Euclidean-metric the $\CP^1$-structure $C = \Gr_L \sigma$ as follows: 
First suppose that $\ell$ is a geodesic loop on the hyperbolic surface $\sigma$ with weight $w > 0$.
  Then, as described above, the grafting of $\sigma$ along $\ell$ inserts a cylinder $A_\ell$ along $\ell$. 
 This cylinder  $A_\ell$ can be regarded as a Euclidean cylinder with geodesic boundary, such that the height is $w$ and the circumference is the hyperbolic length of $\ell$. 
  The $\Gr_\ell \sigma \minus A_\ell$ has a hyperbolic metric with geodesic boundary isometric to $\ell$, induced from the hyperbolic metric of $\sigma$. 
  Therefore, the hyperbolic metric and the Euclidean metric on  $\Gr_\ell \sigma$ are isometric along their boundary components, and we obtain a conformal metric on  $\Gr_\ell \sigma$.
  This is the Thurston metric on $\ Gr_\ell\sigma$.
  
  Suppose next that the measured lamination $L$ is a weighted (geodesic) multiloop $M$ on $\sigma$.
  Then we can similarly insert an Euclidean cylinder of height equal to the weight along each loop on $M$, and obtain a conformal hyperbolic/Euclidean structure on $\Gr_M \sigma$.
  
   For a general measured lamination $L$, let $\ell_i$ be a sequence of weighted loops on $\sigma$ converging to $L$ as $i \to \infty$.
   Then the Thurston metric on $\Gr_L \tau$ is the limit of Thurston metric on $\Gr_{\ell_i} \sigma$ as $i \to \infty$.
  \subsubsection{$2\pi$-grafting } 
A $2\pi$-grafting is, in general,  a cut-and-paste operation of a $\CP^1$-structure along an (admissible) loop or multiloop, and it creates a new $\CP^1$-structure preserving the holonomy representation.  
Grafting deformation was first developed by Maskit (\cite{Maskit-69}),  Hejhal (\cite{Hejhal-75}) Sullivan-Thurston (\cite{Sullivan-Thurston-83}).
It was further developed by Goldman (\cite{Goldman-87}). 
 We describe $2\pi$-grafting of a hyperbolic surface along a geodesic loop or multiloop, which we utilize in this paper. 

Let $\sigma$ is a hyperbolic surface, and $\ell$ is a geodesic loop on $\sigma$ with weight $w \in \Z_{> 0}$. 
Let $\rho\col \pi_1(S) \to \PSL_2\R$ be the discrete and faithful representation corresponding to $\sigma$ (unique up to $\PSL_2\R$), so that $\sigma$ is isometric to $\H^2/ \Im \rho.$
Note that the hyperbolic structure $\sigma$ is in particular a $\CP^1$-structure with holonomy $\rho$, and its developing map is a diffeomorphism onto  $\H^2$ as a subset of $\CP^1$. 
Then, by grafting $\sigma$ along $\ell$ with weight $2\pi w$, we obtain a $\CP^1$-structure on $S$ with the Fuchsian holonomy $\rho$.
Indeed, by bending a totally geodesic hyperbolic plane in $\H^3$ by angle $2\pi w$ along lifts of the geodesic loop $\ell$, the plane is {\it not} bent after all and it retains the same shape.
Therefore, as the bending map is holonomy equivariant,  $\Gr_{2\pi \ell} \sigma$ has the same holonomy $\rho$. (\Cref{fGrafting}.)
\begin{figure}
\begin{overpic}[scale=.15
] {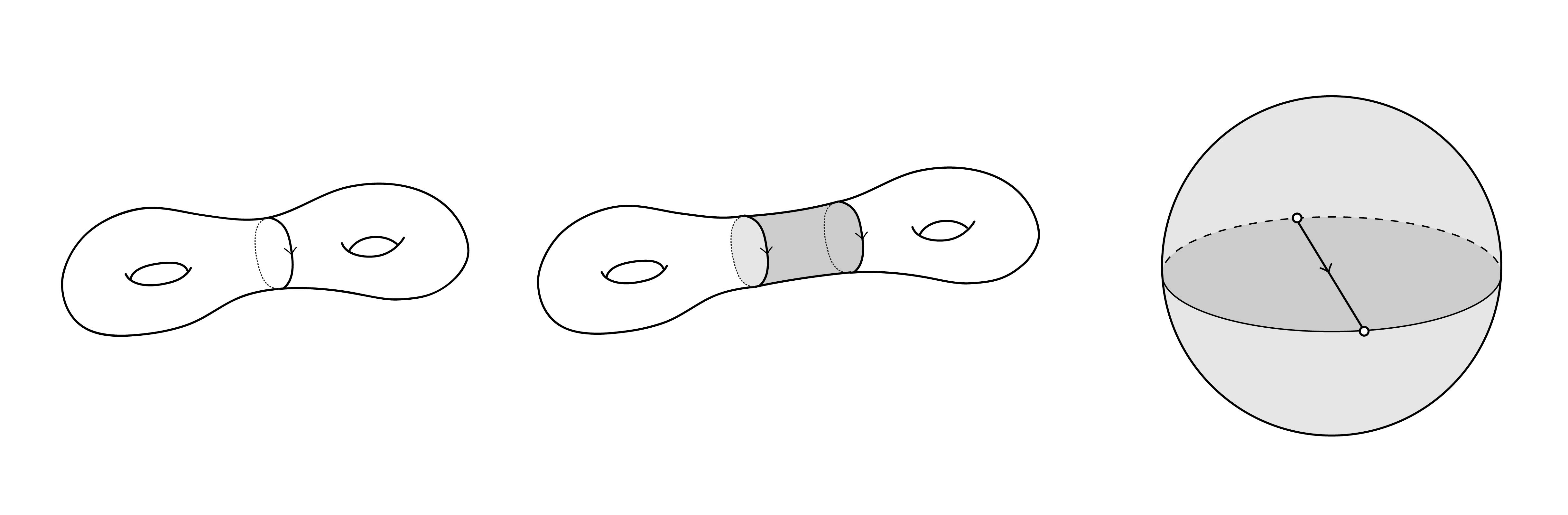} 
 \put(10, 22){\textcolor{black}{\small \contour{white}{$\sigma$}}}   
  \put(19.5, 17){\textcolor{black}{\small \contour{white}{$\ell$}}}   
  \put(35, 22){\textcolor{black}{\small \contour{white}{$\Gr_{2\pi \ell} \sigma$}}}   
 \put(47, 21){\textcolor{darkgray}{\small \contour{white}{$A_{2\pi \ell}$}}}   
 \put(71, 28){\textcolor{black}{\small \contour{white}{$\CP^1$}}}   
 \put(80, 28){\textcolor{darkgray}{\small \contour{white}{$\Im\, \dev(A_{2\pi\ell})$}}} 
      \end{overpic}
\caption{$2\pi$-Grafting. Middle: The cylinder $A_{2\pi \ell}$ inserted by grafting.  Right: the grafting cylinder develops onto $\CP^1$ minus the endpoints of the holonomy hyperbolic element. }\Label{fGrafting}
\end{figure}

Suppose that $M$ is a $\Z_{> 0}$-weighted multiloop on a hyperbolic surface $\sigma$. 
Then, by $2\pi$-grafting $\sigma$ along $M$, we similarly obtain a new $\CP^1$-structure with the same holonomy $\rho$.  
 \subsubsection{Grafting rays}

Let $\sigma$ be a hyperbolic structure on $S$. 
Let $L$ be a measured geodesic lamination on $\sigma$.
Then we obtain a ray of $\CP^1$-structures $\Gr_{t L} \sigma$ corresponding to $(\sigma, t L), t \geq 0$ by Thurston's parametrization (\ref{iThurston}).
 This ray in the deformation space $\PP$ is called {\sf a (projective) grafting ray.}

Let $\gr_{L} \sigma \in \TT$ denote  $\psi (\Gr_{L} \sigma)$,  the complex structure of $\Gr_{L}\sigma$. 
Then, $\gr_{t L} \sigma, t \geq 0$ is called the conformal grafting ray from $\sigma$ in the direction of $L$.

\subsection{Epstein surfaces} (\cite{Epstein84}, see also \cite{Anderson98ProjectiveStructuresOnRiemannSurfacesAndDevelopingMaps}.)\Label{sEpsteinSurfaces}
Let $(f, \rho)$ be a developing pair of a projective structure $C$ on $S$, where $f \col \ti{S} \to \CP^1$ is its developing map.

Each point $x \in \H^3$ determines a unique spherical metric $s_x$ on $\CP^1$ by normalizing the Poincar\'e disk model of $\H^3$ so that $x$ is at the center of the 3-disk. 
Given a conformal metric $\mu$ on $C$,  there is a unique mapping $\Ep\col \ti{S} \to \H^3$ such that, the spherical metric $s_{\Ep (x)}$ of $\CP^1$ centered at $\Ep(x)$ coincides with the push-forward metric of $\mu$ at the tangent space $T_{f(x)} \CP^1$.
In fact, this surface is the envelope of the horospheres centered at the points $f(x)$ for $x \in \ti{S}$, and $\Ep$ is also $\rho$-equivariant.

Let $C \cong (X, q)$ be the Schwarzian parametrization of $C$. 
Then the quadratic differential $q$ gives a singular Euclidean metric on $C$, where the singular points are the zeros of the differential.
In this paper, we use the Epstein surface given by this singular Euclidean metric.  
In this case, the Epstein surface is well-defined and smooth in the complement of the singular points (and it continuously extends to the singular points as a mapping to points on $\CP^1$).

\subsection{Traintracks}(\cite{Penner-Harer-92})
A {\sf traintrack graph} on a surface is a (locally finite) $C^1$-smooth graph $G$ such that, at each vertex $p$ of $G$, the edges of $G$ with its endpoint at $p$ are divides into two groups $e_1, \dots, e_m$ and $f_1, \dots, f_n$ such that 
\begin{itemize}
\item the vectors $v$ tangent  to the edges $e_1, \dots, e_m$ at $p$ coincide, 
\item the vectors $u$ tangent to the edges $f_1, \dots, f_n$ at $p$ coincide, and 
\item $v = -u$ in the tangent space at $p$.
\end{itemize}
In this paper, we use traintrack graphs which are trivalent, i.e. $m =1 $ and $n =2$.
  
A {\sf marked rectangle} is a rectangle such that a pair of opposite edges is marked as vertical edges and the other pair is marked as horizontal edges. 
A {\sf fat traintrack} $T$ is an orientable surface with boundary which is obtained by taking a union of marked rectangles $\{R_i\}$ along horizontal edges as follows:  
Divide some horizontal edges into finitely many segments, pair up all horizontal edge segments, and glue each paired horizontal edge by a diffeomorphism.
Each rectangle $R_i$ of $T$ is called a {\sf branch}. 
The points dividing a horizontal edge of a branch appear as non-smooth points on the boundary of $T$.

More generally, a {\sf marked polygon} is a polygon with an even number $2n$ of edges,  such that a set of alternating $n$ edges are marked as {\it vertical edges} and the set of the other alternating $n$ edges are marked as {\it horizontal edges}.
A {\sf polygonal traintrack} is an orientable surface with boundary with singular points obtained, similarly by gluing some marked polygons as follows: 
Divide each horizontal edge into finitely many segments (if necessary), pair up all horizontal segments of the polygons, and glue each pair of horizontal segments by a diffeomorphism.  
Each polygon of the polygonal traintrack is also called a {\sf branch.}
Similar to the case of a fat traintrack, the points dividing a horizontal edge of a branch appear as non-smooth points on the boundary of the polygonal traintrack.
(\cite{Baba_23}.)

Given a lamination $\lambda$ on a hyperbolic surface $\sigma$, a {\sf traintrack neighborhood} $\sigma$ of $\lambda$ is a fat traintrack containing $\lambda$ in its interior, such that, for each branch $R$ of $\sigma$, each component of $\lam \cap R$ is an arc connecting opposite horizontal edges of $R$. 

\begin{definition}[cf. \cite{Minsky92}]
A traintrack neighborhood $\sigma$ on a hyperbolic surface $\sigma$ is $\ep$-nearly straight, if all boundary geodesics have curvature less than $\ep$ at non-singular points and all horocyclic horizontal edges of rectangular branches have curvature less than $\ep$. 
\end{definition}

In fact, for every $\ep > 0$, we can always take an $\ep$-nearly straight neighborhood of $L$ in $\sigma$ by appropriately taking a sufficiently small neighborhood of the lamination. (\cite{Minsky92}].)

\section{Grafting rays and Teichmüller rays}
Recall that the Teichmüler geodesic flow is ergodic in the moduli space $\MM$ of Riemann surface structures on $S$ (\cite{Masur82, Veech82}).  
Let $\PML$ denote the space of projective measured laminations on $S$.
Pick an arbitrary generic Teichmüller geodesic $X(t),\, t \in \R$, such that 
\begin{itemize}
\item the projective vertical foliation $[V] \in \PML$ and the projective horizontal foliation $[H] \in \PML$ are both uniquely ergodic, and they have no saddle connections;
\item its corresponding quadratic differential has only simple zeros; 
\item the projection of the Teichmüller ray $X(t), t \leq 0$ in the negative direction is dense in the unit-tangent space of the moduli space $\MM$.
\end{itemize}

For each $t \in \R$, 
$(X(t), [V])$ conformally equivalent to 
 a unique marked flat surface $E_t$ of unite area with the vertical foliation $[V]$ (\cite{HubbardMasur79}). 
 Let $V_t$ be the representative of the projective foliation $[V]$ on $E_t$, so that $V_t$ has length one on $E_t$.
 By the density assumption, 
pick the decreasing sequence  $0 > t_1 > t_2 > \dots$ diverging to $- \infty$ such that the unmarked tangent vector $[X'(t_i)]$ in the moduli space converges to $[X'_\infty(0)] \in T^1 \MM$ as $i \to \infty$, where $X_\infty(t), $ $t \in \R,$ is an appropriate Teichmüller geodesic in $\TT$. 
Then, for each $i$, there is a mapping class diffeomorphism $\nu_i\col S \to S$ such that $\nu_i X(t_i)$ converges to $X_\infty(0) \eqqcolon X_\infty$ as $i \to \infty$.

Then $\nu_i E(t_i)$ converges to a flat surface $E_\infty$ with unite area, and $\nu_i V_{t_i}$ converges to a vertical measured foliation $V_\infty$ on $E_\infty$.
Then $V_\infty$ has length one on the flat surface $E_\infty$, and it is the vertical foliation of the Teichmüller geodesic $X_\infty(t)$.

By the density assumption, without loss of generality, we can in addition assume that
\begin{itemize}
\item the vertical $V_\infty$ is  uniquely ergodic and has no saddle connections, and
\item every singular point is three-pronged.
\end{itemize}
 
For each $i = 1, 2, \dots$, let $\sigma_i$ be the (marked) hyperbolic structure on $S$ uniformizing the Riemann surface $X(t_i)$.
Let $L_i$ be the measured geodesic lamination on $\sigma_i$ representing the vertical measured foliation $V_{t_i}$. 
For $u \geq 0$, let $\gr_{L_i}^u (\sigma_i)$ denote the conformal grafting of $\sigma_i$ along $u L_i$. 
Then $\gr_{L_i}^u (\sigma_i),  u \geq 0$ is called the {\it conformal grafting ray }starting from $\sigma_i$ along the vertical foliation $L_i$.

In this section, we prove the following uniform (strong) asymptotic property of grafting rays and Teichmüller rays from $X(t_i) \eqqcolon X_i$ as parametrized rays. 
\begin{theorem}\Label{SplitingLimitRay}
For every $\ep > 0$, there are constant $I_\ep > 0$,  $d > 0$ and $s_\ep > 0$ such that, if $i > I_\ep$, then 
$$d_\TT (X(t_i + s), \gr^{\exp(s)}_{L_i /d} (\sigma_i) ) < \ep$$
for all $s > s_\ep$, where $d_\TT$ denotes the Teichmüller distance.

\end{theorem}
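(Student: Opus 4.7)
The strategy is to prove the asymptotic statement first on a limit of the sequence $\{(\sigma_i, L_i, X(t_i + \cdot))\}$, and then transfer it to a uniform statement in $i$ by continuous dependence on initial data. Gupta's theorem, asserting that every conformal grafting ray is asymptotic (as an unparametrized ray) to some Teichmüller ray, is the background input; the new content is (i) identifying that Teichmüller ray in the limit as the one based at $X_\infty$, (ii) upgrading the unparametrized asymptotic to a parametrized one, which is what fixes the constant $d$, and (iii) transferring the result uniformly to the whole sequence.

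First, I set up the limiting data. Since $\nu_i E(t_i) \to E_\infty$ as unit-area flat surfaces and $\nu_i V_{t_i} \to V_\infty$ as measured foliations, the uniformizing hyperbolic structures satisfy $\nu_i \sigma_i \to \sigma_\infty$ in $\TT$, where $\sigma_\infty$ uniformizes $X_\infty$, and $\nu_i L_i \to L_\infty$ in $\ML$, where $L_\infty$ denotes the measured geodesic lamination on $\sigma_\infty$ representing $V_\infty$. With this, the claim reduces to producing a single $d > 0$ for which
$$d_\TT\bigl(X_\infty(s),\, \gr^{\exp(s)}_{L_\infty/d}(\sigma_\infty)\bigr) \to 0 \quad \text{as } s \to \infty,$$
from which the $i$-indexed version will follow by continuous dependence.

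The heart of the argument is this limit statement. I would apply Gupta's theorem to the grafting ray $u \mapsto \gr^{u}_{L_\infty/d}(\sigma_\infty)$ to extract an asymptotic unparametrized Teichmüller ray, and then show that for the correct value of $d$ this ray is precisely $s \mapsto X_\infty(s)$ under the reparametrization $s = \log u$. Using the Schwarzian parametrization of $\Gr^u_{L_\infty/d}(\sigma_\infty)$ with base $\sigma_\infty$, the associated holomorphic quadratic differential induces a singular Euclidean metric whose grafting cylinders have horizontal width proportional to $u$; an Epstein-surface estimate on an $\ep$-nearly straight train-track neighborhood of $L_\infty$ (which exists by unique ergodicity and absence of saddle connections of $V_\infty$) shows that after normalizing area, this flat structure converges in shape to $E_\infty(\log u)$ in Teichmüller distance. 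The value of $d$ is pinned down as the unique ratio making the areas match in the limit, i.e., an intrinsic ratio between the $\sigma_\infty$-hyperbolic length of $L_\infty$ and the $V_\infty$-transverse measure on $E_\infty$.

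Finally, uniformity in $i$ follows from a window-by-window compactness argument: on any fixed compact interval $[s_\ep, s_\ep + M]$, joint continuity of $(\sigma, L) \mapsto \gr^{\exp(s)}_{L/d}(\sigma)$ and of the Teichmüller flow in its initial data, combined with the convergences $\nu_i \sigma_i \to \sigma_\infty$ and $\nu_i L_i \to L_\infty$, yields the desired $\ep$-bound for all $i > I_\ep$, and chaining successive windows converts the pointwise limit statement into the uniform bound claimed. I expect the main obstacle to be the parametrized matching in the limit: Gupta's result is about unparametrized rays, so the calibration fixing $d$ requires direct quantitative estimates on the rate at which the conformal class of $\Gr^u_{L_\infty/d}(\sigma_\infty)$ moves in $\TT$ as $u$ grows, and it is this estimate that carries the real technical content. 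Once the calibration is in hand, the remaining steps should be essentially formal.
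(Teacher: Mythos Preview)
Your overall architecture matches the paper's: first establish the parametrized asymptotic for the single limit ray $X_\infty(\cdot)$ and its grafting companion from $\sigma_\infty$ along $L_\infty$ (the paper's Theorem~\ref{Asymptotic}), then transfer to the sequence. You also correctly identify $d$ as the ratio $\length_{E_\infty}(V_\infty)/\length_{\sigma_\infty}(L_\infty)$.

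However, your uniformity transfer has a genuine gap. The ``window-by-window compactness plus chaining'' argument does not work as stated: on each compact $s$-window the threshold $I_\ep$ you obtain from continuous dependence depends on the window, and you offer no mechanism that propagates the $\ep$-bound from one window to the next. Teichm\"uller flow is not stable in that sense, so chaining cannot produce a single $I_\ep$ valid for all $s > s_\ep$. What the paper does instead is observe that the convergence $\nu_i E(t_i) \to E_\infty$ holds as \emph{flat surfaces}, i.e., via a $(1\pm\ep)$-bilipschitz map respecting the horizontal and vertical directions; since Teichm\"uller flow is horizontal stretching, this bilipschitz constant is preserved for all $s \geq 0$ simultaneously (Corollary~\ref{BilipschitzGraftedEuclideanTraintracks}). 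An analogous statement holds on the grafting side (Corollary~\ref{BilipschitzGraftedHyperbolicTraintracks}). Composing these $s$-uniform bilipschitz maps with the explicit quasiconformal map $\Phi_j^s$ built in the limit case gives the uniform bound directly.

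This also bears on your limit-case plan. The paper does not invoke Gupta's theorem as a black box and then calibrate; it constructs the $(1+\ep)$-quasiconformal map $\Phi_j^s\colon \Gr_{L_\infty}^{\exp(s)/d}\sigma_\infty \to E_\infty(s)$ by hand, via compatible polygonal (rectangle plus hexagon) decompositions carried by an $\ep$-nearly straight traintrack neighborhood of $L_\infty$ on $\sigma_\infty$ and a Euclidean traintrack on $E_\infty$. Rectangles are handled by an elementary horocyclic-length estimate, hexagons (containing the singularities) via a model $\CP^1$-structure $(\C, z\,dz^2)$ and Epstein-surface comparisons. This explicit construction is not incidental: the same polygonal decompositions exist on $(\sigma_i, L_i)$ and $E_i$ for $i$ large, and the $s$-uniform bilipschitz comparison maps above respect them branch by branch. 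If you only know the limit statement abstractly (via Gupta plus Masur plus a rate calculation), you have no object to transport, and your uniformity step has nothing to compose with.
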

The constant $d$ will be explicitly given in \S\ref{sAsymptoticPropertyInLimit}.
It is already known that, for each $i$,  the grafting ray $\gr^{\exp(s)}_{k L_i} (X(t_i))$ is asymptotic to the Teichmüller ray $X(t_i + s)$ as unparametrized rays:
Indeed, Gupta \cite{Gupta14,Gupta15} proved that, for every grafting ray along an arbitrary geodesic lamination $L$, there is a Teichmüller ray in the direction of $L$ typically from a different basepoint which is asymptotic to it up to reparametrization.

Masur proved that, for an arbitrarily fixed recurrent uniquely-ergodic vertical measured foliation $V$,  all Teichmüller rays with vertical foliation $V$ are asymptotic \cite[Theorem 2]{Masur80}.
Therefore, our main contribution of \Cref{SplitingLimitRay} is the asymptotic property as parametrized rays and the uniformness of the asymptotic property.

The strategy of \Cref{SplitingLimitRay} is overall based on the idea of the proof of Gupta. 
However, since we compare the grafting ray with a Teichmüller ray from the same base point, we utilize different and more geometric (than analytic) techniques especially in \S \ref{sHexagonalBranches}.
In particular,   we do not use any Grötzsch-type argument, whereas it is crucial in Gupta's argument (\cite{Gupta14}, Lemma 4.24).   
 \subsection{Fat traintrack structures and nearly-straight traintracks  in the limit}\Label{sAsymptoticPropertyInLimit}

We first show the asymptotic property of the single Teichmüller ray $X_\infty(t)$ in the limit and its corresponding grafting ray---
theuniform asymptotic property in \Cref{SplitingLimitRay} is morally modeled on this asymptotic property in the limit. 

Recall that $V_\infty$ is the vertical measured foliation of the limit flat surface $E_\infty$ of unit length. 
Let $H_\infty$ be the horizontal measured foliation of $E_\infty$ orthogonal to $V_\infty$.

Then, let $L_\infty$ be the measured geodesic lamination on $\sigma_\infty$ obtained by straightening to a measured foliation $V_\infty$. 

    Suppose that a flat surface $E$ has a vertical measured lamination $V$ so that the transversal measure is exactly given by the Euclidean length. 
  Then, the Euclidean length $\length_E V$ of $V$ is exactly the area of this flat surface.
 In particular, as ${\rm Area}\, E_\infty = 1$, $\length_{E_\infty} V_\infty = 1$, 
Then we let $$d = \frac{\length_{E_\infty} V_\infty}{\length_{\sigma_\infty} L_\infty}  = \frac{1}{\length_{\sigma_\infty} L_\infty},$$
where $\length_{\sigma_\infty} L_\infty$ denotes the hyperbolic length of $L_\infty$ (\cite{Thurston86HyperbolicStructuresII}).
We first construct a sequence of fat traintracks by splitting $E_\infty$  along vertical singular leaves.
Let $N = 2 (2 g- 2)$, the number of singular points on $E_\infty$.
Let $r_1, \dots r_N$ be vertical neighborhoods of singular points of $E_\infty$; since $V_\infty$ has no saddle connections, each $\gam_h$ is a tripod contained in a singular leaf.
Then, the complement $E_\infty \minus (r_1 \cup \dots \cup r_N)$ has a fat traintrack structure $T_0$, so that the branches are all Euclidean rectangles with horizontal and vertical edges---namely, we decompose  $E_\infty \minus (r_1 \cup \dots \cup r_N)$ by the horizontal line segments starting from the endpoints of $\gam_1, \dots, \gam_k$ and ending when the segments hit the boundary of $E_\infty \minus (r_1 \cup \dots \cup r_N)$. 
Clearly $T_0$ is foliated by $V_\infty$. 

Let $\sigma_\infty$ be the (marked) hyperbolic structure on $S$ uniformizing $X_\infty$.
Since $L_\infty$ is the geodesic representative of $V_\infty$, there is a traintrack neighborhood $\tau_0$ of $L_\infty$ on $\sigma_\infty$, such that there is a marking preserving diffeomorphism $\sigma_\infty \to E_\infty$ which induces an {\it isomorphism} from $(\tau_\infty, L_\infty)$  to $(T_0, V_\infty)$ as fat-traintracks carrrying .
 
By enlarging  $r_1, \dots r_N$,  we can construct a sequence $T_0, T_1, \dots$ of splitting of the traintrack $T_0$ so that lengths of all branches of $T_j$ diverge to infinity as $j \to \infty$. 
For $j = 1,2, \dots$, we let $r_1(j), \dots r_N(j)$ be this increasing sequence of vertical neighborhoods of singular points of $E_\infty$, such that the support $| T_j | $ of the traintrack $T_j$ is $E_\infty \minus (r_1(j) \cup \dots \cup r_N(j))$.
Clearly $r_h(j)$ is a tripod and the lengths of all three smooth edges go to infinity as $j \to \infty$ for all $h = 1, \dots, N$.

The vertical foliation $V_\infty$ and the horizontal foliation $H_\infty$ induce horizontal and vertical foliations of each $T_j$.
 By collapsing each horizontal leaf of $T_j$ to a point, we obtain a traintrack graph $G_j$.
By modifying the sequence of splittings if necessary,  we may, in addition, assume that $G_j$ is trivalent.
(In other words, there are no horizontal segments in $T_j$ connecting the endpoints of different tripods.)

We next construct the corresponding splitting sequence of traintrack neighborhoods of the measured lamination $L_\infty$ on the limit hyperbolic surface $\sigma_\infty$.
Since $L_\infty$ corresponds to a uniquely ergodic foliation $V_\infty$ without saddle connections, such that the underlying geodesic lamination $|L_\infty|$ of $L_\infty$ is maximal.
   
More generally, let $L$ be a maximal geodesic lamination on a hyperbolic surface $\sigma$.
Then, the complement $\sigma \minus L$ consists of hyperbolic ideal triangles $\Delta$, and each ideal triangle has a canonical horocyclic lamination $\lam_\Delta$ (\Cref{fHorocyclicLaminationAndAlmostHorocyclicFoliation}, left) invariant under the isometrics of $\Delta$: 
namely, leaves are horocyclic arcs centered at the vertices of the ideal triangle, and the complement of the lamination is a single triangle with the horocyclic edges. 
Then, the horocyclic arcs are orthogonal to the edges of the ideal triangle. 
Therefore, those horocyclic laminations on the complementary ideal triangles yield a {\sf horocyclic lamination} $\lambda$ on the hyperbolic surface $\sigma$ orthogonal to $L$. (See \cite{Thurston-98}.)
The horocyclic lamination has a transversal measure given by the hyperbolic length in the direction orthogonal to $\lam$.

 The support $|\lam_\Delta|$  of the horocyclic lamination is the complement of the triangle with horocyclic edges; thus this support is also foliated by geodesic rays orthogonal to $\lam_\Delta$ which limit to a common ideal vertex. 
We can extend this geodesic foliation on the support to a singular foliation $\mu_\Delta$ in $\Delta$ such that 
\begin{itemize}
\item $\mu$ has exactly one singular leaf $t$, and it is a tripod connecting the center $c_{\delta}$ of $\Delta$ to the vertices of $\Delta$ by geodesic rays;
\item each 
component of $\Delta \minus t$ is foliated by one parameter family of smooth lines connecting a pair of adjacent vertices of $\Delta$, and those leaves smoothly converge to the geodesic edge of $\Delta$ connecting the vertices (\Cref{fHorocyclicLaminationAndAlmostHorocyclicFoliation}, middle);
\item this foliation is invariant under the isometries of $\Delta$.
\end{itemize}
We call this singular foliation $\mu_\Delta$ of $\Delta$ a {\sf mostly-straight foliation} of $\Delta$. 

 The complement $\sigma \minus L$ consists of $N$ ideal triangles. 
Thus, the mostly straight foliations $\mu_\Delta$ on the complementary ideal triangles $\Delta$ yield a {\sf mostly-straight (singular) foliation} $\mu$ on $\sigma$ w.r.t. $L$, where the singular points are the centers of the ideal triangles. 

Note that the horocyclic lamination $\lam_\Delta$ of an ideal triangle $\Delta$ has a singular point on each edge where two horocyclic arcs centered at different vertices meet tangentially. 
Pick a singular foliation $\lam_\Delta'$ on $\Delta$ such that
\begin{itemize}
\item $\lam_\Delta$ coincides with $\lam_\Delta'$ in a small neighborhood $U$ of the complementary triangle $\Delta \minus |\lam_\Delta|$ with horocyclic edges;
\item $\lam_\Delta'$ has only one singular leaf $t$ with the singular point $c_\Delta$, and it is a union of three geodesic segments from $c_\Delta$ orthogonal to edges of $\Delta$;
\item each connected component of $\Delta \minus t$ is foliated by smooth (topologically) parallel arcs $\alpha$ such that $\alpha$ connects the endpoints of a horocyclic arc of $\lam_\Delta$ and  $\alpha$ orthogonally intersects the boundary geodesics of $\Delta$;
\item $\lam_\Delta'$ is invariant under the isometrics of $\Delta$. 
 (\Cref{fHorocyclicLaminationAndAlmostHorocyclicFoliation}, right). 
  \end{itemize}
 We call this singular foliation  $\lam_\Delta'$, a {\sf mostly-horocyclic foliation} of the ideal triangle $\Delta$. 
Given a maximal geodesic lamination $L$ on a hyperbolic surface $\sigma$, by taking a union of the mostly-horocyclic foliations on complementary ideal triangles, we obtain a {\sf mostly-horocyclic (singular) foliation} $\lam$ on $\sigma$ w.r.t. $L$. (c.f \cite[\S 4]{Thurston-98}.)
\begin{figure}
\begin{overpic}[scale=.06
] {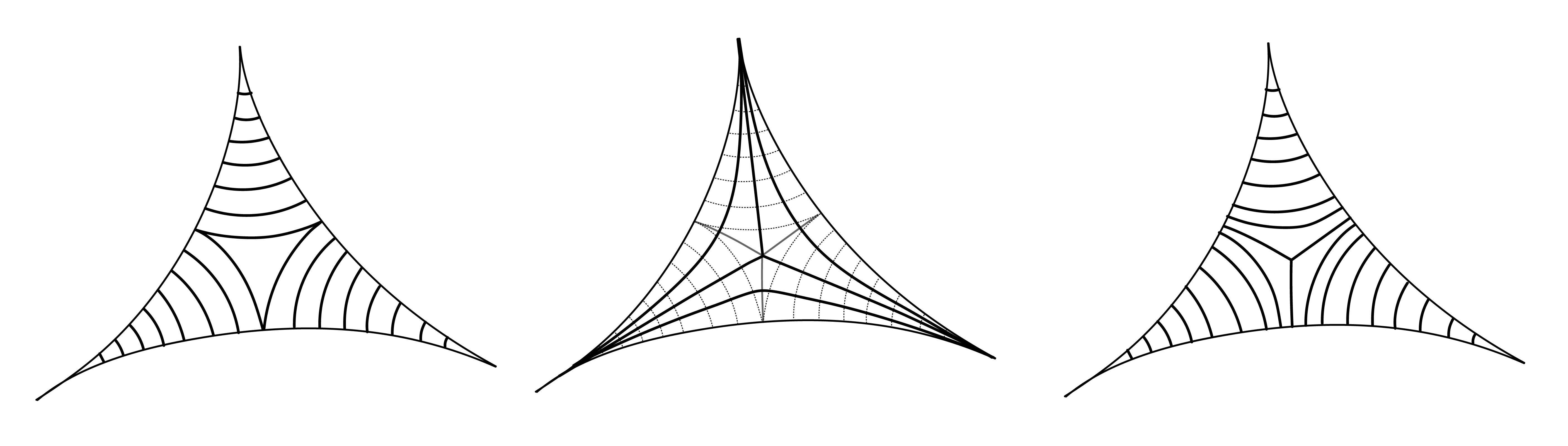} 
 \put(70 ,1 ){\textcolor{black}{\tiny \contour{white}{Nearly horocyclic foliation}}}   
 \put(80 ,15 ){\textcolor{darkgray}{\small \contour{white}{$\lam'_\Delta$}}}   
 \put(2 ,1 ){\textcolor{black}{\tiny \contour{white}{Horocyclic lamination}}}   
 \put(14 ,15 ){\textcolor{darkgray}{\small \contour{white}{$\lam_\Delta$}}}   
  \put(34 ,1 ){\textcolor{black}{\tiny \contour{white}{Mostly straight foliation}}}   
 \put(45 ,15 ){\textcolor{black}{\small \contour{white}{$\mu_\Delta$}}}   
      \end{overpic}
\caption{horocyclic lamination and nearly horocyclic foliation.}\Label{fHorocyclicLaminationAndAlmostHorocyclicFoliation}
\end{figure}

Let $\kap\col (\sigma_\infty, L_\infty) \to (E_\infty, V_\infty)$ be the marking preserving mapping such that 
 $\kap$ collapses, in each complementary triangle $\Delta$ of $(\sigma_\infty, L_\infty)$ to a ``Y-shaped'' graph with half-infinite edges ({\sf tripod}) by collapsing each mostly-horocyclic leaves  of $\lambda_\Del'$.
Then $\kap\col \sigma_\infty \to E_\infty$ takes $L_\infty$ to $V_\infty$ and is injective on each leaf of the maximal lamination $L_\infty$.

Therefore, by each singular tripod leaf of $V_\infty$ corresponds to a complementary ideal triangle of $L_\infty$, we can construct a sequence of train track neighborhoods $\tau_i$ of $L_\infty$ corresponding to the fat traintrack $T_i$, 
such that 
\begin{itemize}
\item $\tau_j$ is $\ep_j$-nearly straight and $\ep_j \searrow 0$ as $j \to \infty;$ 
\item  $(T_j, V_\infty)$ is isomorphic to $(\tau_j, \lambda_\infty)$ by $\kap$; 
\item  horizontal (short) edges of branches of $\tau_j$  are horocyclic (\Cref{fNeaerlyStraightTraintrackNeighborhood}). 
\end{itemize}
\begin{figure}
\begin{overpic}[scale=.1
] {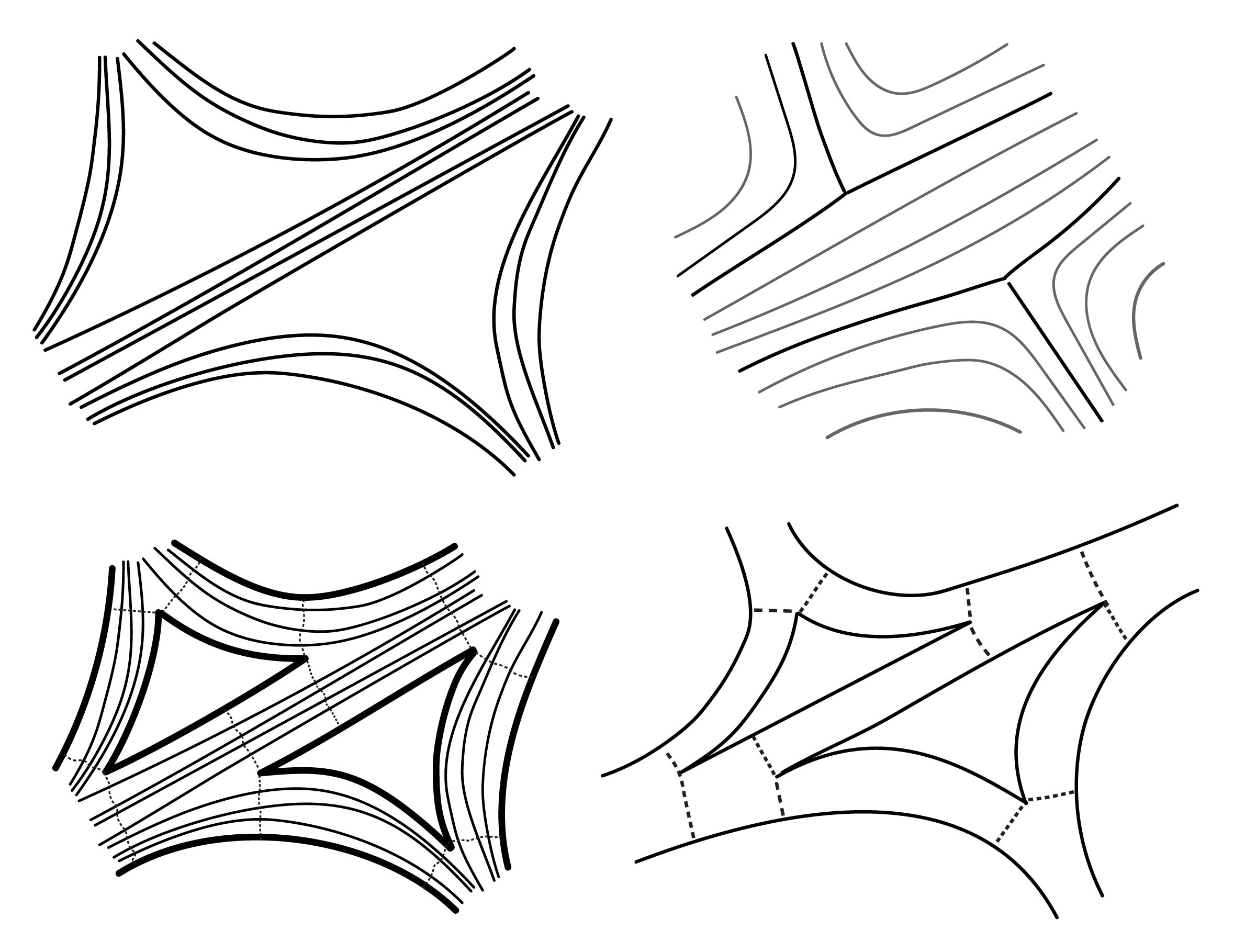} 
 \put(25, 31){\textcolor{Black}{ \contour{white}{$\tau_j$}}}   
  \put(25, 70){\textcolor{Black}{ \contour{white}{$\sigma_\infty$}}}   
    \put(35, 65){\textcolor{Black}{ \contour{white}{$L_\infty$}}}   
    \put(70, 53){\textcolor{Black}{ \contour{white}{$V_\infty$}}}   
  \put(70, 70){\textcolor{Black}{ \contour{white}{$E_\infty$}}}   
      \put(70, 32){\textcolor{Black}{ \contour{white}{$T_j$}}}   
      \end{overpic}
\caption{A nearly straight traintrack $\tau_j$ corresponding to a Euclidean fat traintrack $T_j$.}\Label{fNeaerlyStraightTraintrackNeighborhood}
\end{figure}

\begin{lemma}\Label{AlmostIsomtricTraintrackDecompositions}
Let $\ep > 0$.
Then, if $j \in \Z_{> 0}$ is sufficiently large,  $$(d - \ep) \length \alpha_j < \length a_j < ( d  + \ep ) \length \alpha_j,$$
for all the vertical edges $a_j$ and $\alpha_j$ of the branches of $T_j$ and $\tau_j$ corresponding by the collapsing map $\kap$. 
\end{lemma}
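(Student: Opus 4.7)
The plan is to identify the vertical edges of corresponding branches of $T_j$ and $\tau_j$ with long segments of leaves of $V_\infty$ and $L_\infty$ respectively, and then invoke the unique ergodicity of the leafwise flows to force the Euclidean-to-hyperbolic length ratio of such segments to converge to $d$ uniformly as $j \to \infty$. By construction, the vertical edge $a_j$ of each branch $R$ of $T_j$ is itself a $V_\infty$-leaf segment of Euclidean length exactly $\length a_j$. On the hyperbolic side, $\ep_j$-nearly straightness implies that the vertical edge $\alpha_j$ of the corresponding branch of $\tau_j$ is $C^1$-close to a true geodesic leaf $\ti\ell_R \subset L_\infty$ traversing $R$, with hyperbolic lengths differing by a factor $1 + O(\ep_j)$. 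Since $\kap$ is a homeomorphism from $|L_\infty|$ onto $|V_\infty|$ that is smooth along leaves, $\kap(\ti\ell_R)$ is a $V_\infty$-leaf segment differing from $a_j$ only in a tripod neighborhood, with an additive error bounded independently of $j$.

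Next, parametrize each leaf of $L_\infty$ by its hyperbolic arc length, producing a flow $\psi_s$ on the compact space $|L_\infty|$ preserving the canonical invariant measure $\mu$ (transverse measure of $L_\infty$ times hyperbolic leaf length), of total mass $\mu(|L_\infty|) = \length_{\sigma_\infty} L_\infty = 1/d$. Define the leafwise Euclidean-speed cocycle $\phi \col |L_\infty| \to \R_{>0}$ by taking $\phi(x)$ to be the Euclidean speed of $\kap$ along the $L_\infty$-leaf through $x$. Because $\kap$ is leafwise smooth, $\phi$ is continuous; and a change-of-variables on transversals gives
\[
\int_{|L_\infty|} \phi \, d\mu \;=\; \length_{E_\infty} V_\infty \;=\; 1,
\]
so the $\mu$-average of $\phi$ equals $d$. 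The unique ergodicity of $V_\infty$ transfers via straightening and $\kap$ to unique ergodicity of $\psi_s$, so Oxtoby's uniform ergodic theorem yields: for any $\ep' > 0$ there exists $T_0$ such that for every $x \in |L_\infty|$ and every $T \geq T_0$,
\[
\left| \int_0^T \phi(\psi_s x)\, ds - d T \right| < \ep' T.
\]
Equivalently, every $L_\infty$-leaf segment of hyperbolic length $T \geq T_0$ maps under $\kap$ to a $V_\infty$-leaf segment of Euclidean length in $((d-\ep')T,\,(d+\ep')T)$, uniformly in the initial point.

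Finally, the splitting sequence was constructed so that $\min_R \length_{\sigma_\infty}(\ti\ell_R) \to \infty$ as $j \to \infty$. Given $\ep > 0$, choose $\ep'$ comparable to $\ep / d$ and take $j$ large enough that $\ep_j$ is small and $\length_{\sigma_\infty}(\ti\ell_R) \geq T_0$ for every branch $R$; combining the three estimates above then gives
\[
(d - \ep)\length \alpha_j \;<\; \length a_j \;<\; (d + \ep)\length \alpha_j
\]
for every pair of corresponding vertical edges simultaneously. The main obstacle is the ergodic-theoretic step: verifying that the leafwise flow on $|L_\infty|$ is uniquely ergodic and that $\phi = |\kap'|$ is continuous, so that Oxtoby's theorem applies. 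Unique ergodicity follows from the assumed unique ergodicity of $V_\infty$ together with the bijection of transverse measures induced by straightening, while continuity of $\phi$ follows from the explicit description of $\kap$ as collapsing the nearly-horocyclic foliations $\lam'_\Delta$ in each complementary ideal triangle. The remaining endpoint corrections are absorbed into the $O(\ep_j)$ error and the bounded tripod defect, both negligible against the diverging branch lengths.
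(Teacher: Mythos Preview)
Your argument is sound in outline and reaches the right conclusion, but it takes a more elaborate route than the paper and has one soft spot.

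The paper's proof is a short application of weak* convergence of geodesic currents. Since $V_\infty$ is uniquely ergodic and the branch lengths diverge, any choice of vertical edges $a_j$, suitably weighted, converges to $V_\infty$ as a current; because $\kap$ is marking-preserving, the corresponding $\alpha_j$ with the \emph{same} weights converge to $L_\infty$, which is the same current on $S$. The Euclidean and hyperbolic length functionals are continuous on currents, so $w_j\,\length(a_j) \to \length_{E_\infty} V_\infty = 1$ and $w_j\,\length(\alpha_j) \to \length_{\sigma_\infty} L_\infty = 1/d$, and the ratio follows. No pointwise derivative ever appears.

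Your approach unpacks the same unique-ergodicity input through Oxtoby's uniform ergodic theorem applied to the speed cocycle $\phi = |\kap'|$. This is equivalent in content --- weak* convergence of long leaf segments to the lamination \emph{is} the uniform ergodic theorem tested against continuous functions --- but by introducing a pointwise cocycle you incur the obligation to show that $\phi$ is continuous on the compact lamination $|L_\infty|$. The paper only specifies $\kap$ up to the property that it collapses each nearly-horocyclic leaf to a point; leafwise differentiability of $\kap$ on $|L_\infty|$, and especially the \emph{transverse} continuity of that derivative across the Cantor-set transversals of $L_\infty$, are not automatic from that description. The gap is reparable (choose a sufficiently regular representative of $\kap$, or bypass $\phi$ by testing against length forms directly as the paper does), but your appeal to ``the explicit description of $\kap$'' does not close it.
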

\begin{proof}
Recall that $V_\infty$ is uniquely ergodic,  and the measured foliation $V_\infty$ gives positive measures to arcs transversal to $V_\infty$. 
Pick the corresponding branches $a_j$ and  $\alpha_j$ of $T_j$ and $\tau_j$ for each $j$.
 Then we can pick sequences of weights $w_j > 0, \omega_j > 0$, such that the sequence of weighted vertrical  segments
$(a_j, w_j)$ converges to $V_\infty$ and similarly
the sequence $(\alpha_j, \omega_j)$ of weighted leaf segments  converges to $L_\infty$ as $j \to \infty$ in weak* topology.

Note that the collapsing map $\kap\col (\sigma_\infty, L_\infty) \to (E_\infty, V_\infty)$ isomorphically takes $V_\infty$ to $L_\infty$ and the vertical edge $a_j$ to the vertical edge  $\alpha_j$ for all $j = 1, 2, \dots$. 
Therefore we can assume that $w_j = \omega_j$ for each $j = 1, 2, \dots$.

Therefore
$$w_j \length (a_j) \to \length_{E_\infty}(V_\infty) = 1$$
as $j \to \infty$.
$$w_j \length (\alpha_j) \to \length_{\sigma_\infty}(L_\infty),$$
as $j \to \infty.$

 Since  $$\frac{ \length_{E_\infty}(V_\infty)}{  \length_{\sigma_\infty}(L_\infty) } =  d,$$
the convergences of the weighted lengths above imply
 $$\frac{\length (a_j) }{\length (\alpha_j)} \to  d$$
 as $j \to \infty$. 
 Thus the lemma follows immediately. 
\end{proof}

\subsection{Stretching a traintrack along a Teichmüller ray and a grafting ray}
 Recall that the Teichmüller ray $X_\infty\col [0, \infty) \to \TT$ from $X_\infty$ has the vertical foliation $V_\infty$ and the horizontal foliation $H_\infty$, and the flat surface $E_\infty$ conformally realizes $X_\infty$ and geometrically realizes the vertical foliation $V_\infty$ and the horizontal foliation $H_\infty$.
 
 Let $E_\infty(s)$ be the marked flat structure on $S$ corresponding to $X_\infty(s)$ obtained by stretching $E_\infty$ only in the horizontal direction by $\exp(s)$; then $E_\infty(s)$ realizes the vertical measured foliation $\exp(s) V_\infty$, preserving the horizontal measured foliation $H_\infty$. 
Let  $f_{\infty, s}\col E_\infty = E_\infty(0) \to E_\infty(s)$ denote this linear stretching map in the horizontal direction. 
By  $f_{\infty, s}$ the traintrack structure $T_j$ of $E_\infty \minus (\gam_1(j) \cup \dots \cup \gam_N(j))$ descends to a traintrack structure $T_j(s)$ of $E_\infty(s) \minus  f_{\infty, s}(\gam_1(j) \cup \dots \cup \gam_N(j))$. 
 
Next we consider a corresponding grafting ray starting from the hyperbolic surface $\sigma_\infty$ representing $X_\infty$ along the geodesic representative $L_\infty$ of $V_\infty$. 
For $s > 0$, let $\Gr_{L_\infty}^s \sigma_\infty$ denote the projective structure on $S$ obtained by grafting the hyperbolic surface $\sigma_\infty$ along the (scaled) measured lamination $s L_\infty$.
Since $L_\infty$ has no periodic leaves, we let  $g_s\col \sigma_\infty \to \Gr_{L_\infty}^{s} \sigma_\infty$ be the canonical grafting $C^1$-diffeomorphism.
Then $s L_\infty$ is geometrically realized as a circular lamination on the projective surface $\Gr_{L_\infty}^s \sigma_\infty$. 
Namely, the grafting map $g_s$ takes the geodesic lamination $s L_\infty$ to the geometric realization. 
(See \cite{Kamishima-Tan-92, Baba20ThurstonParameter} for Thurston's parametrization of $\CP^1$-structures.)
 Then, by $g_t\col \sigma_\infty \to \Gr_{L_\infty}^s \sigma_\infty$,  the nearly-straight traintrack structure $\tau_j$ on $\sigma_\infty$ descends to a traintrack neighborhood $\tau_j(s)$ of the circular lamination $s L_\infty.$
    
Given a fat traintrack $T$, we call, by the {\sf one-skeleton},  the union of the horizontal and vertical edges of the rectangular branches, and denote it by $T^1.$
It has a structure of a finite graph: The vertices are the vertices of the rectangular branches of the traintrack, and the edges are segments connecting adjacent vertices. 
Note that an edge of a branch is not necessarily an edge of the one-skeleton, and it may be divided into multiple edges of the one-skeleton by some vertices.

       \begin{corollary}\Label{BilipschitzOneSkeleton}
   For every $\ep > 0$, there are $J_\ep > 0$ and $s_\ep > 0$  such that, if $j > J_\ep$ and $s > s_\ep$, then
   there is a  $(d - \ep, d + \ep)$-bilipschitz ``linear'' isomorphism between the one-skeletons 
    $$\phi_j^s\col \tau_j^1 (\exp(s)/ d) \to T^1_j (s)$$  for sufficiently large $s > 0$,  such that
    \begin{itemize}
    \item   $\phi_j^s$ is linear on each edge of $ \tau_j^1 (\exp(s)/ d)$ with respect to arc length  with respect to the Thurston metric on $\Gr_{L_\infty}^{\exp(s)/d}$ and the Euclidean metric on $E_\infty(s)$,
      and
    \item $\phi_j^s$ extends to a marking preserving homeomorphism $$\Gr_{L_\infty}^{\exp(s)/d} \sigma_\infty \to E_\infty(s).$$
\end{itemize}

   \end{corollary}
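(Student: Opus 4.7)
The plan is to define $\phi_j^s$ as the combinatorial graph isomorphism between $\tau_j^1(\exp(s)/d)$ and $T_j^1(s)$ supplied by $\kap$, realized affinely on each edge with respect to arc length, and then to derive the bilipschitz constant by an edge-by-edge length comparison. For corresponding vertical edges, horizontal stretching $f_{\infty,s}$ preserves vertical Euclidean lengths in $E_\infty(s)$, and grafting preserves the hyperbolic lengths of leaves (and of the boundary leaves of $\tau_j$) of $L_\infty$ in the Thurston metric on $\Gr_{L_\infty}^{\exp(s)/d}\sigma_\infty$. Hence \Cref{AlmostIsomtricTraintrackDecompositions} applies directly to the vertical edges of $\tau_j(\exp(s)/d)$ and $T_j(s)$, yielding a length ratio in $(d - \ep, d + \ep)$ once $j > J_\ep$.

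For corresponding horizontal edges sitting over a branch $R$, the edge in $T_j(s)$ has Euclidean length $\exp(s) \cdot m(R)$, where $m(R)$ denotes the transverse measure of $V_\infty$ across $R$. In the Thurston metric, the corresponding edge of $\tau_j(\exp(s)/d)$ has length equal to its original hyperbolic horocyclic length $\ell_j(R)$ in $\tau_j$ plus the Euclidean transverse width $(\exp(s)/d) \cdot m(R)$ inserted by the grafting. The $\ep_j$-nearly-straight property supplies a uniform comparison $\ell_j(R) \le C\, m(R)$ across all branches and all $j$. Consequently,
\[
\frac{\length_{T_j(s)}(\text{horiz})}{\length_{\tau_j(\exp(s)/d)}(\text{horiz})} \;=\; \frac{\exp(s)\, m(R)}{\ell_j(R) + (\exp(s)/d)\, m(R)} \;=\; \frac{d}{1 + d\,\ell_j(R)/(\exp(s)\, m(R))}
\]
lies in $(d - \ep, d + \ep)$ once $s > s_\ep$, with $s_\ep$ depending only on $d$, $C$, and $\ep$. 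Taken together, these two edge-length ratios give the desired $(d-\ep, d+\ep)$-bilipschitz constant for $\phi_j^s$ on the one-skeleton with its induced path metric, since bilipschitz bounds on individual edges pass directly to the induced graph metric.

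Finally, I extend $\phi_j^s$ to a marking-preserving homeomorphism $\Gr_{L_\infty}^{\exp(s)/d}\sigma_\infty \to E_\infty(s)$: each branch is a topological rectangle whose boundary map is already determined by $\phi_j^s$, and each complementary region (a tripod neighborhood of a $V_\infty$-singularity in $E_\infty(s)$, respectively a center neighborhood of a complementary ideal triangle in $\Gr_{L_\infty}^{\exp(s)/d}\sigma_\infty$) is a topological disk on which any compatible homeomorphic extension suffices. Marking preservation follows from that of $\kap$, together with the fact that both grafting and horizontal stretching are isotopic to the identity. The main obstacle in this plan will be establishing the uniform comparison $\ell_j(R) \le C\, m(R)$ for horocyclic horizontal edges across all branches and all $j$; this requires leveraging the $\ep_j$-nearly-straight condition to control the geometry of horocyclic arcs near the complementary ideal triangles uniformly in $j$.
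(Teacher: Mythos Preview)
Your proposal is correct and follows essentially the same route as the paper: define $\phi_j^s$ edge-wise linearly via the combinatorial isomorphism induced by $\kap$, then compare edge lengths separately in the vertical and horizontal directions, invoking \Cref{AlmostIsomtricTraintrackDecompositions} for the vertical edges and an explicit length computation for the horizontal ones.

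The one place where you diverge from the paper is in the horizontal-edge estimate, and here you are making your life harder than necessary. You aim for a uniform comparison $\ell_j(R)\le C\,m(R)$ between the horocyclic edge length and the transverse measure, and you correctly flag this as the main obstacle. The paper sidesteps this entirely: since $\tau_j$ is $\ep_j$-nearly straight with $\ep_j\searrow 0$, the horocyclic horizontal edges of $\tau_j$ themselves have hyperbolic length tending to zero as $j\to\infty$; in particular, for $j>J_\ep$ one has $\length_{\sigma_\infty} e_j' < \ep$ (or at worst a fixed constant) uniformly over all branches. Plugging this directly into your displayed ratio
\[
\frac{d}{1 + d\,\ell_j(R)/(\exp(s)\,m(R))}
\]
already forces it into $(d-\ep,d+\ep)$ for large $s$, with no need to compare $\ell_j(R)$ against $m(R)$. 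So your identified obstacle dissolves once you use the shrinking of the horocyclic edges rather than a relative bound.
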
 
 \begin{proof}
Let  $\phi_j^s\col \tau_j^1 (\exp(s)/ d) \to T^1_j (s)$ be the edge-wise linear mapping which continuously extends to a marking-preserving homeomorphism $\Gr_{L_\infty}^{\exp(s)/d} \sigma_\infty \to E_\infty(s)$. 

 We first consider the bilipschitz property in the vertical direction. 
  By  \Cref{AlmostIsomtricTraintrackDecompositions}, if $j$ is sufficiently large,  
corresponding vertical edges of $T_j$ and $\sigma_j$ are  $(d - \ep, d + \ep)$-bilipschiz.
Then the grafting map $\sigma_\infty \to \Gr_{L_\infty}^{\exp(s)/d} \sigma_\infty$  by $\exp(s)L$ preserves  the vertical length of branches of $\tau_j$, and 
the horizontal stretch map $f_{\infty, s}\col E_\infty \to E_\infty(s)$ preserves the vertical length of branches of $T_j$.
 Therefore, the edge-wise linear mapping $\phi_j^s\col \tau_j^1 (\exp(s)/ d) \to T^1_j (s)$ that is $(d+ \ep, d- \ep)$-bilipschitz in the vertical direction.

We next consider the bilipschitz property in the horizontal direction.
Let $e_j$ and $e_j'$ be corresponding horizontal edges of $T_j(s)$ and $\tau_j( \exp (s)/d)$, respectively. 
Since $\tau_j$ is a  $\ep_j$-nearly straight traintrack with $\ep_j \searrow 0$ as $j \to \infty$,
for every $\ep > 0$, there is $J_\ep > 0$ such that, if $j > J_\ep$,  every (horocyclic) horizontal edges of branches of $\tau_j$ has length less than $\ep$.

Let $e_j(s), e_j'(s)$ be the corresponding horizontal edges of $T_j(s)$ and $\sigma_j(\exp (s)/d)$, respectively.
Then $$\length_{E_\infty(s)} e_j(s) = \exp(s) V_\infty(e_j),$$ where $V_\infty(e_j)$ is the $V_\infty$-transversal measure of $e_j$ and $$\length_{\sigma_\infty^s} e_j'(s) = \exp(s) L_i(e_j')/d + \length_{\sigma_\infty} e_j',$$ where $\length_{\sigma_\infty^s}$ denote the length with respect to Thurston's metric on the projective surface $\Gr_{L_\infty}^{\exp s/d} \sigma_\infty$. 

Therefore, for every $\ep > 0$, there is $J_\ep > 0$, such that, if $j > J_\ep$, then since $V_\infty (e_j) = L_\infty(e_j')$ and $ \length_{\sigma_\infty} e_j' < c,$
we have
$$\Big\vert \frac{\length_{E_\infty^s} e_j(s)}{\length_{\sigma_\infty^s} e_j'(s)} - d \Big\vert < \ep,$$
for sufficiently large $s > 0$.
Therefore, we can make $\phi_j^s$ a $( d - \ep, d + \ep)$-bilipschitz mapping also on the horizontal edges. 
\end{proof}

\section{Construction of almost conformal mappings}\Label{sAlmostConformalMapping}      
      
  Recall that $X_\infty(s)$ is the Teichmüller geodesic ray from $X_\infty$ with the vertical measured foliation $V_\infty$ parametrized by $s \geq 0$. 
Let $\gr^s_L(\sigma_\infty)$ be the conformal grafting ray from the hyperbolic surface $\sigma_\infty$ along the measured geodesic lamination $L_\infty$, where $\sigma_\infty$ uniformizes the marked Riemann surface $X_\infty$ and the measured lamination $L_\infty$ corresponds to the measured foliation $V_\infty$. 

In this section, we prove the asymptotic property of those rays as parametrized rays without modifying their base points. 
\begin{theorem}\Label{Asymptotic}
For every $\ep > 0$, there is $s_\ep > 0$ such that 
$$d(X_\infty(s), \gr_ V^{\exp (s)/d} \sigma_\infty  ) < \ep$$
for all $s > s_\ep$. 
\end{theorem}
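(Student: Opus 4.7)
My plan is to construct, for each sufficiently large $s$, an explicit quasiconformal homeomorphism $\Psi_s\col\Gr^{\exp(s)/d}_{L_\infty}\sigma_\infty\to E_\infty(s)$ whose maximal dilatation $K_s$ can be made arbitrarily close to $1$; since $E_\infty(s)$ conformally realizes $X_\infty(s)$, this will yield $d(X_\infty(s),\gr^{\exp(s)/d}_{V}\sigma_\infty)\le \tfrac{1}{2}\log K_s<\ep$. Such a direct construction is preferable to composing Gupta's unparametrized asymptotic with \cite[Theorem 2]{Masur80}, because it tracks the parametrization explicitly; this is essential input for the uniform family version \Cref{SplitingLimitRay}.

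Given $\ep>0$, I would first fix a small auxiliary $\delta=\delta(\ep)$ and apply \Cref{BilipschitzOneSkeleton} to choose $j$ large enough that $\tau_j$ is $\delta$-nearly straight and that, for all $s>s_\delta$, there is a $(d-\delta,d+\delta)$-bilipschitz linear isomorphism $\phi_j^s$ between the one-skeletons $\tau_j^1(\exp(s)/d)$ and $T_j^1(s)$. On each branch, extend $\phi_j^s$ by the natural affine map in horizontal/vertical coordinates, sending the (essentially rectangular, in Thurston's metric) branch of $\tau_j(\exp(s)/d)$ onto the Euclidean rectangular branch of $T_j(s)$. The crucial point is that the bilipschitz constant is approximately $d$ in \emph{both} the horizontal and vertical directions (by \Cref{AlmostIsomtricTraintrackDecompositions} together with the horizontal estimate in the proof of \Cref{BilipschitzOneSkeleton}). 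Hence the affine extension is a near-isotropic $d$-scaling, and its quasiconformal dilatation on each branch is bounded by $((d+\delta)/(d-\delta))^2=1+O(\delta)$.

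The main obstacle is the extension across the complementary regions, where the geometries genuinely differ: on the grafted side each is a central horocyclic triangle of bounded hyperbolic area (unaffected by grafting, since $L_\infty$ does not enter the ideal triangles), while on the flat side it is a tripod $r_h(j)(s)$ whose arms have lengths tending to infinity as $j\to\infty$. My plan here is to build the extension by interpolation modelled on the collapsing map $\kap$: both the tripod and the central horocyclic triangle canonically retract onto the common $Y$-graph at the three-pronged singular point, and a carefully chosen homeomorphism compatible with these retractions (and matching $\phi_j^s$ on the boundary, which is part of the one-skeleton) will have quasiconformal dilatation bounded by $1+O(\delta+\ep_j)$. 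Combined with the branch estimate and the fact that $\ep_j\searrow 0$ as $j\to\infty$, this produces a global quasiconformal $\Psi_s$ with $K_s\le 1+O(\delta)$; the theorem then follows by choosing $\delta$ small in terms of $\ep$.
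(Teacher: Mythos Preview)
Your overall strategy---building an explicit quasiconformal map $\Psi_s$ with dilatation close to $1$---is exactly the paper's approach, and your treatment of the rectangular branches is close in spirit to \Cref{UniformizaingGraftedRectangle} (though the ``affine extension'' glosses over the fact that the grafted branches are not Euclidean; the paper uses the horocyclic-length derivative \Cref{HorocyclicLengthDerivative} to verify near-conformality carefully).

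The genuine gap is in the complementary regions. On the flat side, the complement $E_\infty(s)\setminus T_j(s)$ is the union of the tripods $r_h(j)$, which are \emph{one-dimensional} $Y$-shaped graphs sitting in singular vertical leaves; on the grafted side, the complement $\Gr^{\exp(s)/d}_{L_\infty}\sigma_\infty\setminus\tau_j(\exp(s)/d)$ consists of \emph{two-dimensional} horocyclic triangles. No homeomorphism can send the latter onto the former, so your proposed ``interpolation compatible with retractions'' cannot yield a quasiconformal homeomorphism. Equivalently: your branch maps already surject onto all of $E_\infty(s)$ up to measure zero, so the complementary triangles have nowhere to go except to be collapsed, destroying injectivity.

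The paper resolves this by replacing the traintrack/complement dichotomy with a polygonal decomposition into rectangles and \emph{hexagons} (\S\ref{sAlmostConformalMapping}): the hexagons $Q_{j,h}^s$ and $\QQ_{j,h}^s$ are two-dimensional neighborhoods of the singularities on both sides, so a homeomorphism is possible. But showing these hexagons are $(1+\ep)$-quasiconformally equivalent is the hardest part of the argument (\S\ref{sHexagonalBranches}): it passes through a model hexagon $\QQQ_{j,h}^s$ inside the projective structure $C_q=(\C,z\,dz^2)$, and uses asymptotics of its developing map (\Cref{iAssymptotic}, from \cite{GuptaMj21}) together with Epstein-surface geometry (\Cref{AlomstExponentialMap}) to compare the Euclidean and Thurston-metric pictures. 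This is substantially more than an interpolation argument, and your proposal does not supply a substitute for it.
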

\begin{figure}
\begin{overpic}[scale=.15
] {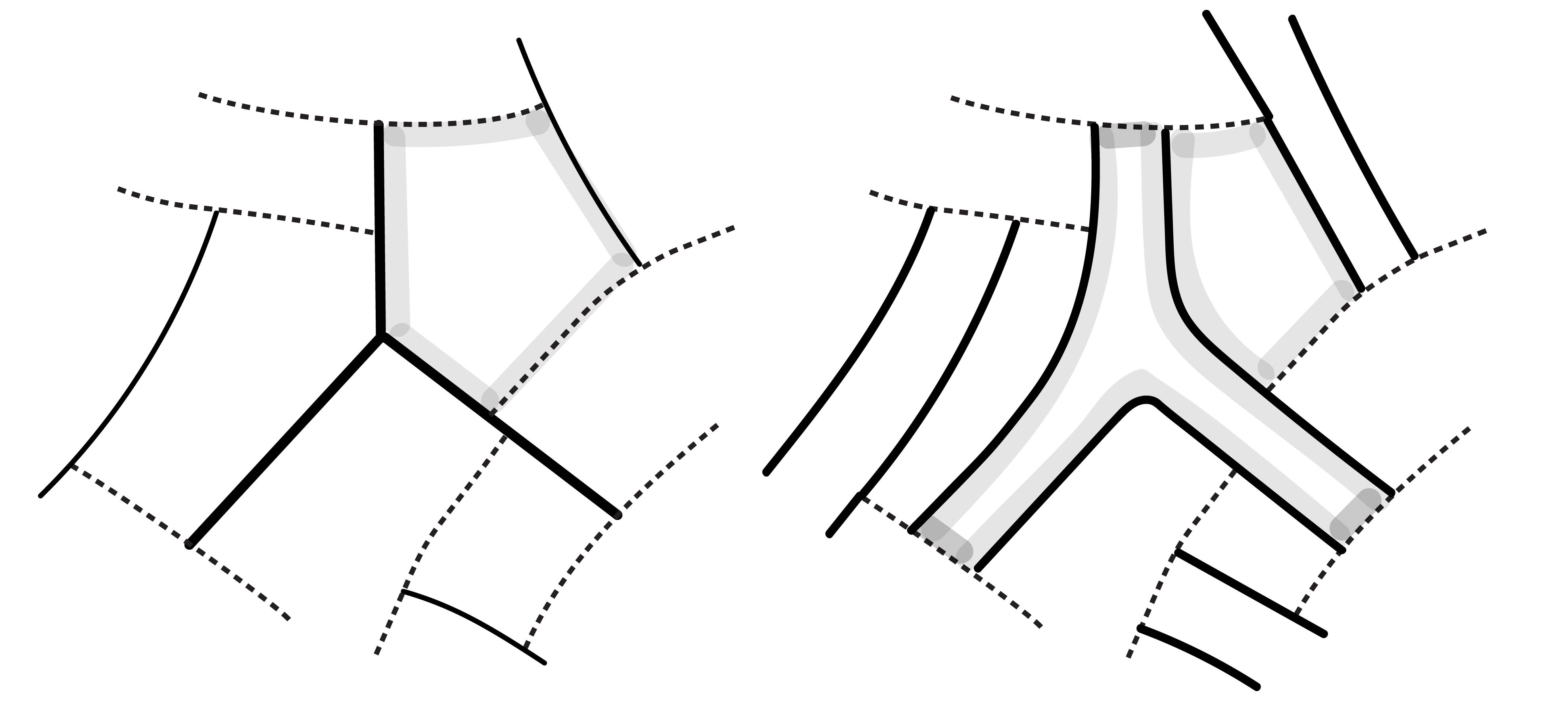} 
        \put(5 , 36 ){\textcolor{Black}{
    \contour{white}{$T_j$}
    }}  

        \put(18 ,21.5 ){\textcolor{Black}{
    \contour{white}{$\gamma_h$}
    }}  
        \put(29 ,26 ){\textcolor{Black}{
    \contour{white}{$R$}
    }}  
   
        \put(70 , 21.5 ){\textcolor{Black}{\small
    \contour{white}{$Q_{j, h}$}
       }}  
       \put(77 , 27 ){\small \textcolor{Black}{
    \contour{white}{$R_{j, k}$}
       }}  
           \put(55 , 40 ){\small \textcolor{Black}{
    \contour{white}{$E_{\infty, j}$}
       }}  
      \end{overpic}
\caption{
The traintrack structure $T_j$ induces a  polygonal traintrack decomposition 
$ E_{\infty, j} = (\cup_{h = 1}^N Q_{j, h})  \cup (\cup_{k = 1}^{N'} R_{j, k})$ (right). 
Solid segments are vertical edges and dotted segments are horizontal edges of the decompositions.}\Label{sEuclideanRectangleHexsagonDecomposition}
\end{figure}

We first construct a decomposition of $E_\infty$ into rectangles and hexagons from the traintrack structure $T_j$ of $E_\infty \minus (\gam_1(j) \cup \dots \cup \gam_N(j))$. 

Given $\ep > 0$ and a subset $A$ of a flat surface $E$ with a horizontal foliation $H$, the {\sf $\ep$-horizontal neighborhood} of $A$ is the subset of $E$ consisting of points $p$ can be connected to $A$ by a segment of a leaf of $H$ with length at most $\ep$. 

Recall that the branches of $T_j$ are Euclidean rectangles; 
let $m_j$ be the shortest horizontal length of the branches of $T_j$.
For each $h =1, \dots, n$,
 let $Q_{j, h}$ be the horizontal $m_j/3$-neighborhood of the $Y$-shaped graph $\gam_h$.
 Then $Q_{j, h}$ is a hexagon with horizontal and vertical edges and one singular point in the middle, and its horizontal edges have length $2m_j/3$; see \Cref{sEuclideanRectangleHexsagonDecomposition}. 
Therefore, by the definition of $m_j$,  the hexagons $Q_{j, 1}, Q_{j, 2}, \dots, Q_{j, N}$ are pairwise disjoint.
In addition, for each branch $R$ of $T_j$,  the Euclidean rectangle $R$ minus the $m_j/3$-horizontal neighborhood of the vertical edges is still a rectangle with horizontal and vertical edges. 
Therefore, the traintrack structure $T_j$ of  $E_\infty \minus (\gam_1(j) \cup \dots \cup \gam_N(j))$ gives a rectangle decomposition of $E_\infty \minus (Q_{j, 1} \cup \dots \cup Q_{j, N}) = \cup_{k = 1}^{N'} R_{j, k}$, and   so that each rectangle piece $R_{j, k}$ is a branch of $T_j$ minus the $m_j/3$-horizontal-neighborhood of its vertical edges (\Cref{sEuclideanRectangleHexsagonDecomposition}). 
Thus we have a decomposition of the flat surface $E_\infty$ into hexagons and rectangles with vertical and horizontal edges,  
$$ E_{\infty, j} = (\cup_{h = 1}^N Q_{j, h})  \cup (\cup_{k = 1}^{N'} R_{j, k}).$$
Then this polygonal traintrack $E_{\infty, j}$ {\it carries} the vertical foliation $V_\infty$, i.e. the intersection of $V_\infty$ with each hexagon and rectangle is a union of disjoint arcs connecting different horizontal edges.

By the horizontal stretch map $f_{\infty, s}\col E_\infty \to E_\infty(s)$, this polygonal decomposition $E_{\infty, j}$ induces a corresponding polygonal decomposition of $ E_\infty(s)$
$$E_{\infty, j}(s) = (\cup_{k = 1}^N Q_{j, h}^s)  \cup (\cup_{h = 1}^{N'} R_{j, k}^s),$$
where $f_{\infty, s}(Q_{j, h}) = Q_{j, h}^s$ and  $f_{\infty, s}(R_{j, k}) = R_{j, k}^s$.

Recall that we constructed an $\ep_j$-nearly straight traintrack neighborhood $\tau_j$ of $L_\infty$ on $\sigma_\infty$, where $\ep_j \searrow 0$ as $j \to \infty$.
This decomposition $\tau_j$ is induced by the traintrack decomposition $T_j$ of $E_\infty$ so that $\tau_j$ descends to $T_j$ by the (picked) collapsing map $\kap\col (\sigma_\infty, L_\infty) \to (E_\infty, V_\infty)$.

Similarly, for each $j = 1, 2, \dots,$ the polygonal decomposition $E_{\infty, j} = (\cup_{h = 1}^N Q_{j, h})  \cup (\cup_{k = 1}^{N'} R_{j, k})$ induces   a polygonal decomposition 
$\sigma_{\infty, j} = (\cup_{h = 1}^N \QQ_{j, h})  \cup (\cup_{k = 1}^{N'} \RR_{j, k})$ such that
\begin{itemize}
\item  $\sigma_{\infty, j} = (\cup_{k = 1}^N \QQ_{j, h})  \cup (\cup_{h = 1}^{N'} \RR_{j, k})$ is isomorphic to $E_{\infty, j} = (\cup_{h = 1}^N Q_{j, h})  \cup (\cup_{k = 1}^{N'} R_{j, k})$ as polygonal traintrack carrying $L_\infty \cong V_\infty$; 
\item this isomorphism is realized by the collapsing map $\kap$;
\item the vertical edges of the $\QQ_{j, h}$ and $\RR_{j, k}$ are segments of leaves of $L_\infty$, and their horizontal edges are segments of the horocyclic foliation $\lambda_\infty$ of $(\sigma_\infty, L_\infty)$.
\end{itemize}

Recall that the traintrack neighborhood $\tau_j$ of $L_\infty$ on $\sigma_\infty$ is transformed into a traintrack neighborhood $\tau_j(s)$ of the Thurston lamination $s L_\infty$ on $\Gr_{L_\infty}
^s \sigma_\infty$.
Similarly, the polygonal decomposition $\sigma_{\infty, j}$ induces a decomposition $\sigma_{\infty, j}(s)$ after grafting:
  $$\Gr_{L_\infty}^{\exp(s)/d} \sigma_\infty  =  ( \cup_{h = 1}^N \QQ_{j, h}^s) \cup (\cup_{k = 1}^{N'} \RR_{j, k}^s ),$$
  
where $\QQ_{j, h}^s$ is obtained by grafting $\QQ_{j, h}$ along the restriction of $\frac{\exp{s}}{d} L_\infty$ to $\QQ_{j, h}$ and 
$\RR_{j, k}^s$ is obtained by grafting $\RR_{j, k}$ along the restriction of  $\frac{\exp{s}}{d}  L_\infty$ to $\RR_{j, k}$. 

\begin{proposition}
For every $\ep > 0$, there are $J_\ep > 0$ and $s_\ep > 0$ such that, if $j > J_\ep$ and $s > s_\ep$, 
there is a $(d - \ep, d + \ep)$-bilipschitz map between the one-skeletons of the polygonal decompositions
$$\phi_j^s\col ( \cup_h \partial \QQ_{j, h}^s) \cup (\cup_k \partial \RR_{j, k}^s ) \to  (\cup_h \partial Q_{j, h}^s) \cup (\cup_k \partial R_{j, k}^s) $$
for sufficiently large $s > 0$, such that $\phi_j^s$ is linear on each edge. 
\end{proposition}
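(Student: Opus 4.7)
The plan is to extend the bilipschitz one-skeleton isomorphism of \Cref{BilipschitzOneSkeleton} to the finer polygonal decomposition. Recall that the polygonal decomposition is obtained from the traintrack decomposition by two local modifications: each branch of $T_j$ is truncated horizontally by $m_j/3$ on each side (yielding the rectangle $R_{j,k}$), and each vertical tripod $\gam_h$ is thickened horizontally by $m_j/3$ (yielding the hexagon $Q_{j,h}$). The collapsing map $\kap$ provides a polygonal-traintrack isomorphism $(\sigma_{\infty,j}, L_\infty) \to (E_{\infty,j}, V_\infty)$, giving a canonical one-to-one correspondence between edges of the grafted polygonal decomposition on $\Gr_{L_\infty}^{\exp(s)/d}\sigma_\infty$ and edges of the stretched polygonal decomposition on $E_\infty(s)$. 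I would define $\phi_j^s$ edge by edge, linearly in arc length, and verify the bilipschitz constant separately on horizontal and vertical edges.

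For horizontal edges --- the truncated horocyclic edges of $\RR_{j,k}^s$ together with the new horocyclic tip edges of $\QQ_{j,h}^s$ --- the argument mirrors the horizontal calculation in the proof of \Cref{BilipschitzOneSkeleton}. Each such edge $e'$ is transverse to $L_\infty$, so its Thurston length equals $\tfrac{\exp(s)}{d} L_\infty(e') + \length_{\sigma_\infty}(e')$, while the corresponding edge $e = \kap(e')$ on $E_\infty(s)$ has Euclidean length $\exp(s) V_\infty(e)$. Since $\kap$ gives $L_\infty(e') = V_\infty(e)$ and $\ep_j$-near straightness bounds $\length_{\sigma_\infty}(e')$ uniformly, the ratio of Euclidean to Thurston length tends to $d$ uniformly as $j, s \to \infty$.

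For vertical edges --- the vertical boundaries of $\RR_{j,k}^s$ and of $\QQ_{j,h}^s$ along the tripod arms --- each is a segment of a leaf of $L_\infty$; since grafting preserves lengths along the Thurston lamination, the Thurston length equals the hyperbolic length on $\sigma_\infty$. The corresponding Euclidean edge is a vertical segment of the same branch height, unchanged by the horizontal stretch. The content is to show that the hyperbolic length of such a shifted vertical edge agrees asymptotically with that of the boundary vertical edge of the containing branch of $\tau_j$. I would extend \Cref{AlmostIsomtricTraintrackDecompositions} to all vertical edges of the polygonal decomposition, using that its proof relies only on the unique ergodicity of $V_\infty$ and $L_\infty$ together with the $\kap$-preservation of transversal weights; the same argument then gives the asymptotic ratio $d$ for any sequence of vertical edges whose weighted transversals converge to $V_\infty$ in weak$^\ast$ topology, which is the case for the shifted edges once weights are chosen consistently across the splitting sequence.

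The main obstacle I expect is justifying this extension: distinct geodesic leaves of $L_\infty$ inside a hyperbolic branch may a priori diverge exponentially, so two interior vertical edges of a single branch of $\tau_j$ need not have comparable hyperbolic lengths without further input. I would resolve this by combining $\ep_j$-near straightness, which forces the boundary geodesics (and hence nearby interior leaves, over short horocyclic shifts) to deviate very little from being parallel, with the $\kap$-isomorphism, which forces all vertical edges in a single branch to descend to Euclidean vertical segments of the identical branch height. Making this quantitative, with the error controlled by $\ep_j$ and the horocyclic width of the branch (itself controlled in terms of $\ep_j$ via near-straightness), gives the uniform $(d - \ep, d + \ep)$-bilipschitz bound on the full polygonal one-skeleton for $j > J_\ep$ and $s > s_\ep$.
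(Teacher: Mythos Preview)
Your proposal is correct and matches the paper's approach exactly: the paper's entire proof is the single line ``The proof is similar to that of \Cref{BilipschitzOneSkeleton},'' and you have faithfully unpacked what that similarity entails. Your extra caution about the interior vertical edges is more than the paper offers, but your resolution via the unique-ergodicity argument of \Cref{AlmostIsomtricTraintrackDecompositions} (which depends only on the weak$^\ast$ convergence of the weighted edges to $V_\infty$ and $L_\infty$, not on the edges being the outer boundary edges of branches) is the right way to close it.
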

\begin{proof}
The proof is similar to that of \Cref{BilipschitzOneSkeleton}.
\end{proof}
  
 Our main goal of this section is to prove the following. 
      \begin{proposition}\Label{qfExtension}
    For every $\ep > 0$, there are $J_s > 0$ and $s_\ep > 0$ such that, if $j > J_s$ and $s > s_\ep$, then we can extend the above bilipschitz mapping between the one-skeletons $$\phi_j^s \col ( \cup_{h = 1}^N \partial \QQ_{j, h}^s) \cup (\cup_{k = 1}^{N'} \partial \RR_{j, k}^s ) \to (\cup_{h = 1}^N \partial Q_{j, h}^s) \cup (\cup_{k = 1}^{N'} \partial R_{j, k}^s)$$ to a $(1 + \ep)$-quasiconformal mapping $$\Phi_j^s\col \Gr_{L_\infty}^{\exp(s)/d} \sigma_\infty \to E_\infty(s)$$    
     taking the polygonal decomposition $\tau_j(s)$ to  the polygonal decomposition $E_{\infty, j}(s)$. 
\end{proposition}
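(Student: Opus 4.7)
The plan is to extend the one-skeleton map $\phi_j^s$ across each piece of the polygonal decomposition separately, choosing each local extension to be nearly conformal; since all extensions agree with $\phi_j^s$ on the common boundary one-skeleton, they automatically assemble into a globally defined homeomorphism $\Phi_j^s \col \Gr_{L_\infty}^{\exp(s)/d}\sigma_\infty \to E_\infty(s)$ carrying the polygonal decomposition on the source to the polygonal decomposition on the target.

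\emph{Rectangular pieces.} Grafting along the leaves of $L_\infty$ passing through a rectangular branch $\RR_{j,k}$ turns it into an isometric Euclidean rectangle $\RR_{j,k}^s$, whose vertical edges are the grafted (circular) leaves of Thurston's lamination and whose horizontal edges retain their original horocyclic length. The target $R_{j,k}^s \subset E_\infty(s)$ is likewise a Euclidean rectangle, whose vertical edges have length $\exp(s)$ times their original Euclidean length on $E_\infty$. By the preceding bilipschitz estimate on the one-skeleton, both the horizontal and the vertical stretch factors from $\RR_{j,k}^s$ to $R_{j,k}^s$ lie in $[d-\ep,\,d+\ep]$, so their ratio lies in $[1-O(\ep/d),\,1+O(\ep/d)]$. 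Extending $\phi_j^s$ affinely to the interior of each such rectangle yields a $(1+O(\ep))$-quasiconformal map with the prescribed boundary values.

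\emph{Hexagonal pieces.} Each hexagon contains exactly one three-pronged singularity: in $\QQ_{j,h}^s$ this is the center of the corresponding complementary ideal triangle of $L_\infty$ (preserved under grafting), and in $Q_{j,h}^s$ it is the three-pronged cone singularity of the flat structure. I would decompose each hexagon into a bounded central core containing the singular point together with three long rectangular arms glued to the adjacent rectangular branches. On the arms, the extension is affine and $(1+O(\ep))$-quasiconformal by the rectangle argument. On the central cores, exploit the fact that $\tau_j$ is $\ep_j$-nearly straight with $\ep_j \searrow 0$: the horocyclic boundary segments of $\QQ_{j,h}$ have vanishing hyperbolic length, so the grafted core $\QQ_{j,h}^s$ is, up to the scale factor $d$ and an additive error $o(1)$ as $j \to \infty$, Euclidean-isometric to the target core $Q_{j,h}^s$. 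Interpolating between this almost-isometric identification and the affine extension on the arms produces a $(1+O(\ep))$-quasiconformal extension on each hexagon, sending singularity to singularity.

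\emph{Main obstacle.} The hard part is the pointwise quasiconformal control on the hexagonal cores, where both the source geometry (a grafted neighborhood of a nearly straight tripod inside an ideal triangle) and the target geometry (a Euclidean hexagon with a three-pronged conical singularity) must be compared uniformly in a neighborhood of the singular point. This is precisely where Gupta's approach employs a Grötzsch-type modulus argument; the intended replacement here is a direct geometric comparison built from the vanishing horocyclic lengths in the nearly-straight traintrack, yielding pointwise dilatation $1+o(1)$ uniformly on the bounded cores, and hence the global constant $1+\ep$ once $j$ and $s$ are taken sufficiently large.
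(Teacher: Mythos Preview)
Your overall architecture---extend $\phi_j^s$ piece by piece and glue---matches the paper. But the rectangular step contains a genuine error, and the hexagonal sketch understates what is actually needed.

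\textbf{Rectangles.} The grafted branch $\RR_{j,k}^s$ is \emph{not} an isometric Euclidean rectangle, and its horizontal edges do \emph{not} retain their original horocyclic length. The lamination $L_\infty$ is maximal and irrational; its complement in $\RR_{j,k}$ is a dense union of thin hyperbolic strips coming from the cusp regions of the complementary ideal triangles. Grafting by $sL_\infty$ adds $s\cdot L_\infty(e)$ to the length of each horizontal arc $e$, but the hyperbolic strips remain, with curvature $-1$, and the horocyclic arcs still vary in length as you move vertically. So the Thurston metric on $\RR_{j,k}^s$ is a genuine hyperbolic/Euclidean hybrid, and there is no affine map to speak of. What the paper does instead is build an explicit map $\zeta_{j,k}^s\col \RR_{j,k}^s \to F_{j,k}^s$ to a model Euclidean rectangle using the horocyclic and orthogonal-geodesic foliations, and then control its dilatation via the elementary identity $\frac{d}{du}\length(a_u) = -\length(a_u)$ for horocyclic arcs between asymptotic geodesics; since the horocyclic edges have length $<\ep$ in an $\ep$-nearly-straight traintrack, this derivative is small, and as $s\to\infty$ the inserted Euclidean width dominates. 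Only after this lemma can one compose with a linear map to $R_{j,k}^s$ and get $(1+\ep)$-quasiconformality.

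\textbf{Hexagons.} Your core-plus-arms picture is reasonable in spirit, but the paper's route is substantially different and more involved than ``interpolate between an almost-isometric identification and the affine arms.'' The difficulty is that the grafted hexagon $\QQ_{j,h}^s$ contains an entire ideal triangle (the complementary region of $L_\infty$), not a small core, so there is no scale at which it is close to Euclidean near the singularity. The paper introduces a third object, a model projective hexagon $\QQQ_{j,h}^s$ cut from the $\CP^1$-structure on $\C$ with Schwarzian $z\,dz^2$, whose Thurston parameters are an ideal triangle with infinite-weight boundary. It then proves two separate $(1+\ep)$-quasiconformal comparisons: $Q_{j,h}^s \to \QQQ_{j,h}^s$ via asymptotics of the developing map (a result of Gupta--Mj) together with Epstein-surface estimates controlling tangent directions, and $\QQ_{j,h}^s \to \QQQ_{j,h}^s$ by matching the ideal-triangle cores and the three half-plane arms. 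This intermediate model is the replacement for the Gr\"otzsch argument; a direct comparison of $\QQ_{j,h}^s$ with $Q_{j,h}^s$ of the kind you sketch does not obviously go through.
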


In order to prove \Cref{qfExtension}, we construct a desired extension on both types of polygonal pieces in the following subsections.

  \subsubsection{Rectangles}\Label{sAlmostConformalRectangles} 
In this subsection, we extend $\phi_j^s$ to a quasiconformal mapping  $\RR_{j, k}^s \to R_{j, k}^s$ with small distortion for each rectangular branch $\RR_{j, k}^s$.   
  
\begin{lemma}\Label{HorocyclicLengthDerivative}
Consider the region $P$ in $\H^2$ bounded by two disjoint geodesics sharing an endpoint in the ideal boundary $\bdr \H^2$.
Then $P$ is foliated by a one-parameter family of horocyclic arcs $\{a_u\}$ centered at the common endpoint. 
We can parametrize the family of the horocyclic arc $a_u (u \in \R)$ by their distances, so that the difference of their parameters corresponds to the length between the arcs (\Cref{fVerticalStrip}). 
Then 
$$\frac{d}{du} \length_{\H^2}(a_u) = - \length_{\H^2} a_u$$
\end{lemma}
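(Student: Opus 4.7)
The plan is to reduce to an explicit calculation in the upper half plane model of $\H^2$. First I would apply an isometry to move the common ideal endpoint of the two bounding geodesics to $\infty$. Under this normalization, the two disjoint geodesics bounding $P$ become vertical half-lines $\{x = a,\, y > 0\}$ and $\{x = b,\, y > 0\}$ for some $a < b$, and the horocycles centered at the common endpoint become the horizontal lines $\{y = c\}$ for $c > 0$. The region $P$ is exactly the strip $\{a < x < b,\, y > 0\}$, foliated by the horocyclic arcs $a_c := \{y = c,\, a \le x \le b\}$.

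Next I would compute the two relevant quantities directly from the metric $ds^2 = (dx^2 + dy^2)/y^2$. The hyperbolic length of $a_c$ is
\[
\length_{\H^2}(a_c) = \int_a^b \frac{dx}{c} = \frac{b - a}{c}.
\]
The hyperbolic distance between two horocyclic arcs at heights $c_1 < c_2$ is
\[
\int_{c_1}^{c_2} \frac{dy}{y} = \log(c_2/c_1).
\]
Since the parametrization $u$ is by arc length between consecutive horocycles, and we want $\length(a_u)$ to decrease with $u$ (so that the derivative is negative, matching the asserted sign), we choose $u$ to increase in the direction of increasing $y$. Fixing a basepoint height $c_0$ at $u = 0$, we therefore have $y = c_0 e^{u}$.

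Substituting gives
\[
\length_{\H^2}(a_u) = \frac{b - a}{c_0}\, e^{-u},
\]
and differentiating yields
\[
\frac{d}{du} \length_{\H^2}(a_u) = -\frac{b - a}{c_0}\, e^{-u} = -\length_{\H^2}(a_u),
\]
as claimed. There is no real obstacle here; the only care required is fixing the sign convention for the parametrization $u$ consistent with the direction in which the horocycles shrink (namely, toward the shared ideal endpoint).
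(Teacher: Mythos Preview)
Your proof is correct and takes essentially the same approach as the paper: normalize in the upper half plane so the common ideal endpoint is at $\infty$, identify the horocyclic arcs with horizontal segments, and compute their lengths explicitly as a function of height to obtain the exponential decay. Your version is in fact slightly more explicit (writing out the integrals for both the arc length and the inter-horocycle distance), whereas the paper additionally normalizes $a_0$ to height one and checks the identity only at $u=0$, but the substance is identical.
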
  
\begin{proof}
         \begin{figure}
\begin{overpic}[scale=.15
] {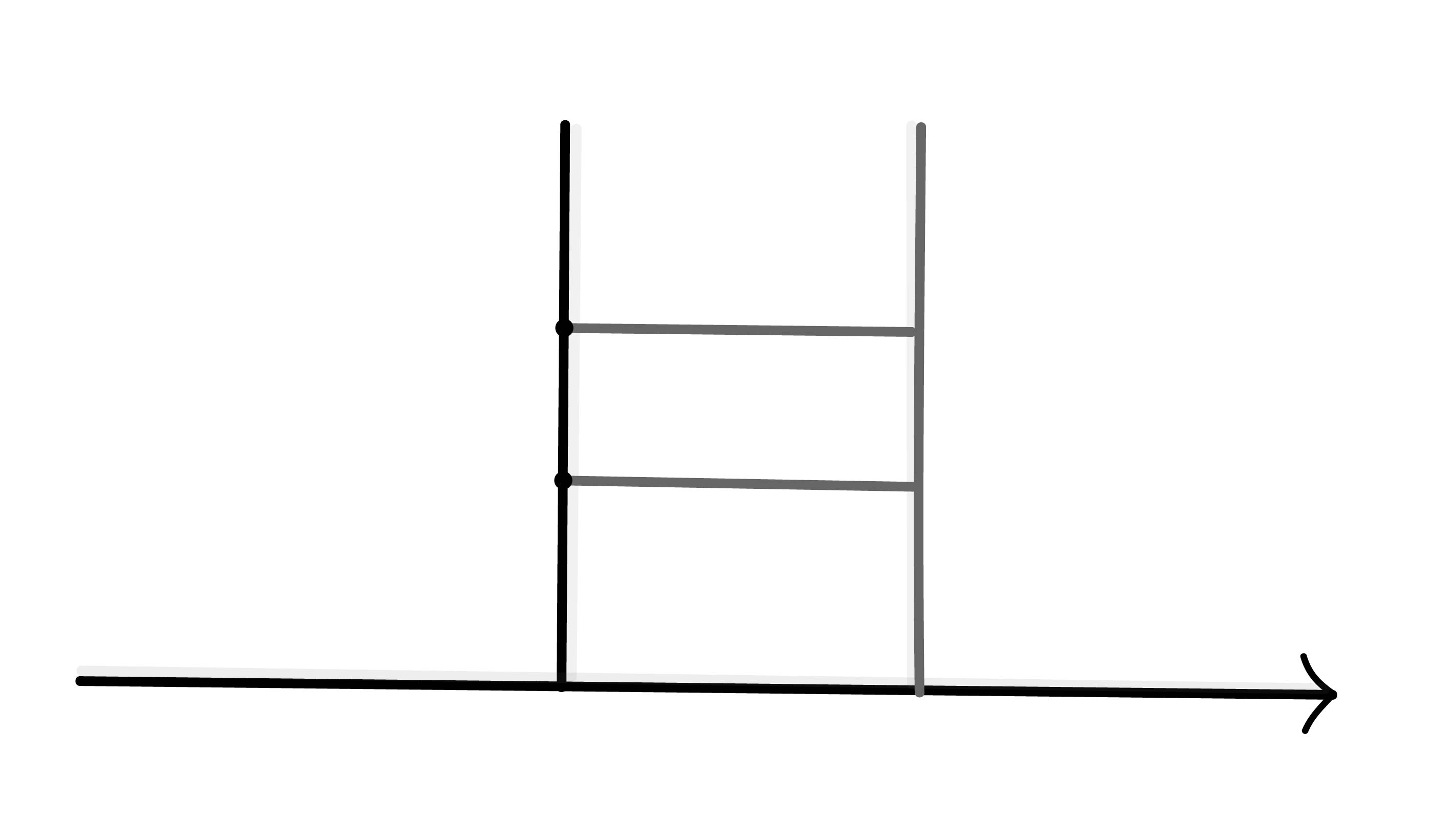} 
 \put(35 , 22){\textcolor{black}{\small $1$}}  
 \put(32.5 , 33){\textcolor{black}{\small \contour{white}{$e^u$} }}  
  \put(49 , 24.4){\textcolor{darkgray}{\small \contour{white}{$a_0$} }}  
     \put(48 , 35.1){\textcolor{darkgray}{\small \contour{white}{$a_u$} }}  

 \put(10 , 40){\textcolor{black}{ \contour{white}{$\H^2$} }}  
  \put(48 , 41){\textcolor{darkgray}{ \contour{white}{$P$} }}  
      \end{overpic}
\caption{horocyclic arcs in the region $P$.}\Label{fVerticalStrip}
\end{figure}
We first normalize the region $P$ in the upper half-plane model of $\H$ so that the common endpoint is at $\infty$.
It suffices to show the derivative formula at $u = 0$, and we can further normalize the region $P$ so that $a_0$ is the horizontal arc at height one; see Figure \ref{fVerticalStrip}. 
Let $\ell$ be the length of $a_0$; then, since $a_u$ is parametrized by the vertical (hyperbolic) distance,  the length of $a_u$ is $\frac{\ell}{e^u}$.
Therefore, we have 
 $$\frac{d}{du} \Big(\frac{\ell}{e^u}\Big)  = - \ell e^{-u}.$$
 Thus   
 $$\frac{d}{du} \Big(\frac{\ell}{e^u}\Big) \Big\vert_{u = 0}=  -\ell.$$
 \end{proof}

Pick a rectangular piece $\RR_{j, k}^s$  of the polygonal decomosition $$\Gr_{L_\infty}^{\exp s/d}\sigma_\infty= ( \cup_{h = 1}^N \partial \QQ_{j,h}^s) \cup (\cup_{k = 1}^{N'} \partial \RR_{j, k}^s ).$$ 
  Then $\RR_{j, k}^s$ is foliated by the leaf segments of horocyclic foliation $\lambda_\infty$ of $(\sigma_\infty, \lambda_\infty)$.
  Therefore, the vertical edges of $\RR_{j, k}^s$ are geodesic segments of the same length;
   let $\ell ( = \ell_{j, k}^s)$ denote this vertical length of $\RR_{j, k}^s$.

Consider the branch $\RR_{j, k}$ of the polygonal decomposition of $\sigma_\infty$ which, after grafting, corresponds to $\RR_{j, k}^s$. 
Then $\RR_{j, k}$ is foliated by the horocyclic segments of the (horizontal) horocyclic lamination $\lam_\infty$, since the non-foliated parts are contained in hexagonal branches.
Let $\lam_{j, k}^s$ denote this horocyclic foliation of $\RR_{j, k}$

The measured geodesic lamination $L_\infty$ is orthogonal to the horocyclic lamination $\lam_\infty$. 
Then, the restriction of $L_\infty$ to $\RR_{j, k}$ extends to a (vertical) geodesic foliation $\mu = \mu_{j, k}$ in $\RR_{j, k}$ orthogonal to the horocyclic foliation.
Note that the lengths of the leaves of the foliation $\mu_{j, k}$ are the same, since there are isometries between the leaves given by the translation along the homocyclic foliation $\mu_{j, k}$. 

As $\RR_{j, k}^s$ is obtained by grafting $\RR_{j, k}$ along $s L_\infty$, the horocyclic foliation $\lam_{j, k}$ induces a horocyclic foliation $\lam_{j, k}^s$ on $\RR_{j, k}^s$, so that the collapsing map $\kap_s \col \Gr_{L_\infty}^s \sigma_\infty \to \sigma_\infty$ takes leaves of  $\lam_{j, k}^s$ to leaves of $\lam_{j, k}$.
Similarly, the vertical geodesic foliation $\mu_{j, k}$ induces the vertical geodesic foliation $\mu_{j, k}^s$ on $\RR_{j, k}^s$, so taht $\kap_s$ takes $\mu_{j, k}^s$ to $\mu_{j, k}$. 
   
\begin{lemma}\Label{UniformizaingGraftedRectangle}
For every $\ep > 0$, there is $J_\ep > 0$, such that, if $j > J_\ep$, then,  for every sufficiently large $s > 0$, every rectangular branch $\RR_{j, k}^s$ of the polygonal decomosition $\Gr_{L_\infty}^{\exp s/d}\sigma_\infty= ( \cup_{h = 1}^N \partial \QQ_{j,h}^s) \cup (\cup_{k = 1}^{N'} \partial \RR_{j, k}^s )$ is a $C^1$-smooth $(1 - \ep, 1 + \ep)$-bilipschitz mapping from $\RR_{j, k}^s$ to an Euclidean rectangle of the same length $\ell = \ell_{j, k}^s$ and the width $\exp(s) L(\RR_{j, k}^s)$, where  $L(\RR_{j, k}^s)$ denote the transversal measure of the horizontal edge of $\RR_{j, k}^s$ given by $L$. 
\end{lemma}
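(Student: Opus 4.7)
The strategy is to build an explicit diffeomorphism $\Phi$ from $\RR_{j,k}^s$, equipped with its Thurston metric, to the Euclidean rectangle $R = [0,\ell] \times [0,W]$ with $W = \exp(s)L(\RR_{j,k}^s)$, and to estimate its Beltrami differential. The key idea is that the grafted rectangle has an almost--product structure coming from the orthogonal pair $(\mu_{j,k}^s, \lam_{j,k}^s)$, and that the large grafting amount $\exp(s)/d$ makes the flat contribution dominate the small horocyclic contribution (which is forced to be small by nearly-straightness).

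First, I would parametrize $\RR_{j,k}^s$ by $(v,h) \in [0,\ell]\times[0,W]$ as follows. Each vertical leaf of $\mu_{j,k}^s$ descends via $\kap_s$ to a leaf segment of $L_\infty$ in $\RR_{j,k}$; those leaf segments are geodesic arcs of the same hyperbolic length $\ell$, and grafting preserves the vertical length, so every vertical leaf of $\mu_{j,k}^s$ is a geodesic arc of Thurston length $\ell$. Set $v$ to be arc length along such a leaf from the bottom edge. For $h$, reparametrize each horizontal leaf of $\lam_{j,k}^s$ linearly (by Thurston arc length) so that the left vertical edge corresponds to $h=0$ and the right vertical edge to $h=W$. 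Because $\lam_\infty$ is orthogonal to $L_\infty$ in $\sigma_\infty$, and because grafting inserts flat strips orthogonally to the leaves of $L_\infty$, the two foliations remain orthogonal in the Thurston metric at every smooth point, so $d\Phi$ is diagonal in the associated orthogonal frames.

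Next, I would estimate the variation of the horizontal width as $v$ ranges over $[0,\ell]$. Each horizontal leaf of $\lam_{j,k}^s$ is a concatenation of (a) horocyclic arcs from $\sigma_\infty$ and (b) inserted flat segments produced by the grafting. The grafted contribution to the total horizontal length equals $\tfrac{\exp s}{d}\,L_\infty(e)$, where $e$ is the horizontal edge of $\RR_{j,k}$, and this quantity is independent of $v$. The horocyclic contribution, on the other hand, is bounded above by the horocyclic length of the top or bottom edge of $\RR_{j,k}$, which is at most $\ep_j$ by the $\ep_j$-nearly-straight hypothesis; moreover, applying \Cref{HorocyclicLengthDerivative} to the strip bounded by two adjacent leaves of $L_\infty$ inside $\RR_{j,k}$, the horocyclic contribution at height $v$ and at height $v'$ differ by at most the factor $e^{|v-v'|}\le e^{\ell}$. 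Consequently, the ratio of the largest to smallest horizontal width satisfies
$$\frac{\exp(s)L_\infty(e)/d + \ep_j\, e^{\ell}}{\exp(s)L_\infty(e)/d + \ep_j\, e^{-\ell}} \;=\; 1 \,+\, O\!\left(\frac{\ep_j}{\exp(s)\,L_\infty(e)/d}\right).$$
Since $d\Phi$ is diagonal and one diagonal entry is identically $1$ (the vertical arc length parametrization) while the other is pinned between the reciprocal extremes of this ratio, the dilatation of $\Phi$ is bounded by this same expression.

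The main obstacle is the uniformity in the branch index $k$: individual horizontal measures $L_\infty(e)$ may be small, and we need the above error to be $<\ep$ for every $k$ simultaneously. For fixed $j$, however, there are only finitely many branches, so $\min_k L_\infty(e)$ is a positive constant depending on $j$. Therefore, choosing first $j > J_\ep$ large enough that $\ep_j$ is negligible and then $s_\ep$ large enough (depending on $j$) so that $\exp(s_\ep)\min_k L_\infty(e)/d$ dwarfs $\ep_j e^{\ell}$ yields the $(1-\ep,1+\ep)$-quasiconformal equivalence for all rectangular branches simultaneously, completing the proof.
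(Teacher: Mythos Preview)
Your approach is essentially the paper's: construct the map that preserves vertical arc length and is linear along each horizontal (horocyclic) leaf, then bound the dilatation by controlling the variation of the horizontal width. The only difference is in how \Cref{HorocyclicLengthDerivative} is applied: the paper notes that \emph{every} horocyclic leaf in $\RR_{j,k}$ (not only the top and bottom edges) has length $<\epsilon$ once $j$ is large, and then uses the differential form $\big|\tfrac{d}{dv}\,\length(u_v)\big|=\length(u_v)<\epsilon$ directly, which avoids your $e^{\ell}$ factor; your cruder integrated bound works equally well after choosing $s$ large.
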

\begin{proof}

 Let $F = F_{j, k}^s$ be the Euclidean rectangle of length $\ell_{j, k}^s$ and width  $\exp(s) L(\RR_{j, k}^s)$.
We construct an almost isometric mapping $\zeta_{j, k}^s\col \RR_{j, k}^s \to F_{j, k}^s$ preserving horizontal leaves.

 Pick a horizontal (horocyclic) edge $e_h$ of $\RR_{j, k}^s$, and a vertical (geodesic) edge $e_v$ of $\RR_{j, k}^s$.
 Let $z$ be a point in $\RR_{j, k}^s$.
 Then $z$ is contained in a leaf $u_z$ of the horizontal horocyclic foliation $\lam_{j, k}^s$, and a leaf $w_z$ of the vertical geodesic foliation $\mu_{j, k}^s$.
 Let $y$ be the length of the geodesic segment of $w_z$ from $z$ to $e_h$ (along $w$).
 Let $x$ be the length of the segment of $u_z$ connecting $z$ to a point in $e_v$. 
 Then, we define a mapping $\zeta_{j, k}^s\col \RR_{j, k}^s \to F_{j, k}^s$ by $$z =(x, y) \mapsto (L(\RR_{j, k}^s) \frac{y}{\length~ u_z}, y),$$ 
 so that it is linear along a horocyclic leaf $u_z$ with respect to arc length (\Cref{sEulideanizingMap}).

  \begin{figure}
\begin{overpic}[scale=.035
] {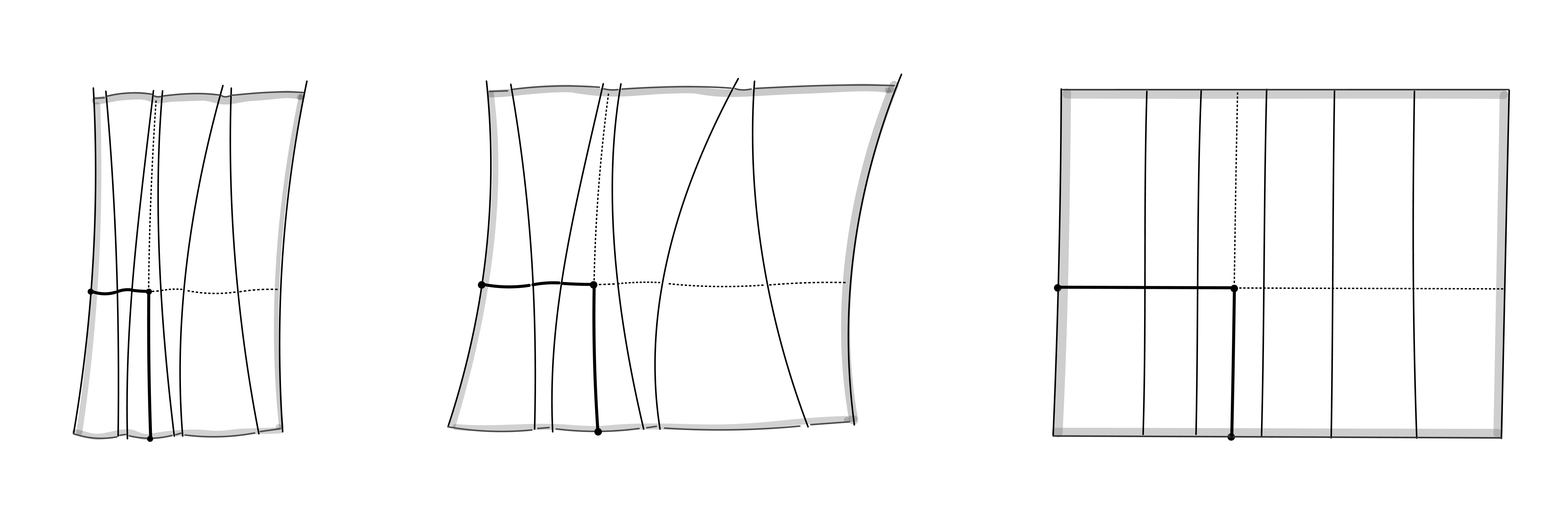} 
  \put(3 ,20 ){\textcolor{darkgray}{\small \contour{white}{$e_v$}}}  
  \put(13 ,3 ){\textcolor{darkgray}{\small \contour{white}{$e_h$}}}  
  
  \put(12 ,23 ){\textcolor{darkgray}{\small \contour{white}{$R_{j, k}^0$}}}

   \put(19, 15){\color{black}\vector(1,0){10}}
  \put(20 , 16.7 ){\textcolor{black}{\small \contour{white}{$\Gr_{s L_\infty}$}}}   
    
  \put(32 , 24 ){\textcolor{darkgray}{\small \contour{white}{$R_{j, k}^s$}}}  
   \put(34 ,15.5 ){\textcolor{black}{\small \contour{white}{$x$}}}
         \put(38.5 , 10 ){\textcolor{black}{\small \contour{white}{$y$}}}

   \put(57, 15){\color{black}\vector(1,0){8}}
  \put(59 , 17 ){\textcolor{black}{\small \contour{white}{$\zeta_{j, k}^s$}}}   

   \put(69 , 24 ){\textcolor{darkgray}{\small \contour{white}{$F$}}}      
  \put(79.5 , 10 ){\textcolor{black}{\small \contour{white}{$y$}}}   
  \put(73 , 15.5 ){\textcolor{black}{\small \contour{white}{$x$}}}
      \put(85 , 15.5 ){\textcolor{darkgray}{\small \contour{white}{$u_z$}}}

      \end{overpic}
\caption{Mapping hyperbolic rectangle into a Euclidean rectangle.}\label{sEulideanizingMap}
\end{figure}
 Next we show that $ \zeta_{j, k}^s$ is an almost conformal mapping for sufficiently large $j, s > 0$.
     .
Each horocyclic leaf $u$ in $R_{j, k}= R_{j, k}^0$ intersects the measured lamination $L_\infty$ in a (Lebesgue) measure zero set. 
As $L_\infty$ is a maximal lamination, 
we set $$u \setminus L_\infty = \cup_{r = 1}^\infty u_r,$$ where $u_r$ are the connected segments of $u \setminus L_\infty$.
  Since $L \cap u$ has measure zero in $u$,  $$\length_{\sigma_\infty} u = \Sigma_{r = 1}^\infty \length_{\sigma_\infty} u_r .$$ 
  
 We parametrize the horocyclic leaves $h_x$ of $\lam_{j, k}^0$ with $x \in [0, \ell_{j, k}^s]$  by the length from the horizontal ledge $e_h$ along vertical geodesic leaves of $\mu_{j, k}^s$.
 For every $\ep > 0$, there is $J_\ep > 0$, such that if $j > J_\ep$, then  $\length~ h < \ep$ for all horocyclic leaves $u$ of $\lam_{j, k}^s$.
  Then, by \Cref{HorocyclicLengthDerivative},
  \begin{eqnarray}
  \Label{iDerivativeBound}
  \bigg\vert \frac{d (\length\, u_x)}{dy }\bigg\vert  &=& \bigg\vert   \frac{d}{d y} (\sum_{r = 1}^\infty \length_{\H^2} u_r)  \bigg\vert \\
  \notag
  &\leq& \sum_{r = 1}^\infty  \bigg\vert \frac{d}{dy}( \length_{\H^2} u_r)  \bigg\vert < \ep.
\end{eqnarray}

The grafting of $\sigma_\infty$ along the measured lamination $L_\infty$ inserts Euclidean structure along $L_\infty$, and the length of all horocyclic leaves of $R_{j, k}$ equally increases by the constant $\exp(s) L(\RR_{j, k}^s)$ w.r.t. Thurston's metric.
Clearly $\exp(s) L(\RR_{j, k}) \to \infty$ as $s \to \infty$.
   Therefore,  for every $\ep > 0$, there are $J_\ep > 0$ and $s_\ep > 0$, such that if $j > J_\ep$ and $s > s_\ep$, then the horizontal derivative  $\frac{d \zeta_{j, k}^s}{d x} (z)$ is  the vector $(t,  0)$ with $t \in (1 - \ep, 1 + \ep)$ for all $z \in R_{j, k}^s$.    
   
Since $\zeta_{j, k}^s$ preserves the height by its definition, a similar argument shows that there are $J_\ep > 0$ and $s_\ep > 0$, such that if $j > J_\ep$ and $s > s_\ep$, then 
$\frac{d \zeta_{j, k}^s}{d x} (z)$ is  $(t, 1)$ with $t \in (-\ep, \ep)$ for all $z \in \RR_{j, k}^s$ by (\ref{iDerivativeBound}).

We have shown that $d \zeta_{j, k}^s$ almost preserves the orthonormal frames.
Therefore,  for every $\ep > 0$, there are $J_\ep > 0$ and $s_\ep > 0$ such that if $j > J_\ep$ and $s > s_\ep$ such that $\zeta_{j, k}^s$ is $(1 - \ep, 1+ \ep)$-bilipschitz and $C^1$-smooth. 
\end{proof}

\begin{corollary}
 For every $\ep > 0$, there are $J_\ep > 0$ and $s_\ep > 0$, such that if $j > J_\ep$  and $s > s_\ep$, then
the edge-wise linear map $\phi_j^s$ on the one-skeleton extends continuously to a $(1 + \ep)$-quasiconformal mapping from $R_{j, s}^s$ to $\RR_{j, k}^s$.
\end{corollary}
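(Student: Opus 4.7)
My plan is to construct $\Phi_j^s|_{\RR_{j,k}^s}$ as the composition of two almost-conformal pieces and then adjust on the boundary. Concretely, for $j > J_\ep$ and $s > s_\ep$, first apply \Cref{UniformizaingGraftedRectangle} to obtain a $(1+\ep/3)$-quasiconformal map $\zeta_{j,k}^s\col\RR_{j,k}^s\to F_{j,k}^s$, where $F_{j,k}^s$ is the Euclidean rectangle with vertical side $\ell_{j,k}^s$ and horizontal side equal to the Thurston length of a horizontal horocyclic edge of $\RR_{j,k}^s$. Then post-compose with an affine map $F_{j,k}^s \to R_{j,k}^s$. The whole scheme hinges on the point that, for large $j$, the two Euclidean rectangles $F_{j,k}^s$ and $R_{j,k}^s$ have essentially the same modulus.

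To verify the modulus comparison, I would examine the two pairs of side lengths. For the vertical sides, \Cref{AlmostIsomtricTraintrackDecompositions} gives that the height of $R_{j,k}^s$, which equals the vertical length of the corresponding vertical edge of $T_j$ in $E_\infty$ (since $f_{\infty,s}$ stretches only horizontally), is within factor $(d\pm \ep/3)$ of $d\cdot \ell_{j,k}^s$. For the horizontal sides, the width of $R_{j,k}^s$ is $\exp(s)$ times the $V_\infty$-transversal measure of the corresponding horizontal edge, and this transversal measure equals the $L_\infty$-transversal measure of the horizontal edge of $\RR_{j,k}^s$ via the collapsing map $\kap$ which identifies $V_\infty$ with $L_\infty$; comparing with the horizontal side of $F_{j,k}^s$ yields the same $d$-ratio asymptotically. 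Hence the affine map $(x,y)\mapsto(\alpha x,\beta y)\col F_{j,k}^s\to R_{j,k}^s$ has $\alpha$ and $\beta$ both within $\ep/3$ of $d$, so it is $(1+\ep/3)$-quasiconformal, and the composition with $\zeta_{j,k}^s$ is $(1+\ep)$-quasiconformal.

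It remains to match this composition with the prescribed one-skeleton map $\phi_j^s$. On each vertical geodesic edge, $\zeta_{j,k}^s$ preserves the $y$-coordinate by its construction in \Cref{UniformizaingGraftedRectangle}, so after the affine rescaling the boundary is mapped arc-length linearly with stretch $\beta\approx d$, matching $\phi_j^s$. On each horocyclic horizontal edge, $\zeta_{j,k}^s$ is almost arc-length linear because $\tau_j$ is $\ep_j$-nearly straight with $\ep_j\searrow 0$, and the subsequent scaling again yields total stretch $\approx d$; the parametric discrepancy with $\phi_j^s$ is therefore a $(1+o(1))$-bilipschitz self-map of each edge. Absorbing this mismatch by a standard boundary-straightening supported in a thin collar of $\partial R_{j,k}^s$ adds only $1+o(1)$ to the quasiconformal distortion, yielding the desired extension. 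The main obstacle I anticipate is handling this boundary-matching step uniformly over all rectangular branches without inflating the constant past $1+\ep$; the necessary uniformity comes not from estimates on individual rectangles but from the already-proven uniform asymptotics in \Cref{BilipschitzOneSkeleton} and \Cref{AlmostIsomtricTraintrackDecompositions}.
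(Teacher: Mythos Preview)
Your proposal is correct and follows essentially the same route as the paper: compose the almost-conformal map $\zeta_{j,k}^s\col \RR_{j,k}^s\to F_{j,k}^s$ from \Cref{UniformizaingGraftedRectangle} with an affine map $F_{j,k}^s\to R_{j,k}^s$ whose two scaling factors are both close to $d$ (the paper cites \Cref{BilipschitzOneSkeleton} here rather than \Cref{AlmostIsomtricTraintrackDecompositions} directly), and then adjust in a collar to match $\phi_j^s$. The only point the paper makes more explicit is that $\phi_j^s$ need not be linear on horizontal edges---because the traintrack is trivalent a horizontal edge may split into up to three linear pieces---but your generic boundary-straightening handles this, since each such piece is long when $j$ and $s$ are large.
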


\begin{proof}
Let $\xi_{j, k}^s\col F_{j, k}^s \to  R_{j, k}^s$ be the linear mapping between Euclidean rectangles which preserves horizontal and vertical edges.
Then, by \Cref{BilipschitzOneSkeleton} and the definition of $F_{j, k}^s$,  for every $\ep > 0$ there is $J_\ep > 0$ such that, if $j > J_\ep$, 
implies that the linear mapping $\xi_{j, k}^s\col F_{j, k}^s \to  R_{j, k}^s$ is a $(d - \ep, d + \ep)$-bilipschitz for sufficiently large $s > 0$. 
Therefore, we can in addition assume that the composition $\xi_{j, k}^s \circ \zeta_{j, k}^s \col \RR_{j, k}^s \to R_{j, k}^s$ is a $(1 + \ep)$-quasiconformal mapping. 

The restriction of $\phi_j^s$ to $\partial \RR_{j, k}^s$ is a piecewise-linear mapping which is linear on the vertical edges but not necessarily linear on the horizontal edges of $ \RR_{j, k}^s$. 
Since the fat traintracks correspond to trivalent graphs, a horizontal edge of $\RR_{j, k}^s$ may be decomposed into three linear pieces for $\phi_j^s$. 
For every $r > 0$, there are $J_r > 0$ and $s_r > 0$, such that,  then if $j > J_r$ and $s > s_r$, then 
 the vertical edge of $R_{j, k}^s$ has length at least $r$, and every linear segment of each horizontal edge also has length at least $r$. 
Therefore, we can easily adjust $\xi_{j, k}$ near the boundary of $F_{j, k}^s$ by a quasi-conformal mapping with small dilatation,  so that the composition $\xi_{j, k}^s \circ \zeta_{j, k}^s$ remains  $(1 + \ep)$-quasiconformal and its restriction to $\bdr \RR_{j, k}^s$ agrees with $\phi_j^s$. 
\end{proof}

\subsection{Extension to hexagonal branches}\Label{sHexagonalBranches}
\begin{figure}
\begin{overpic}[scale=.04
] {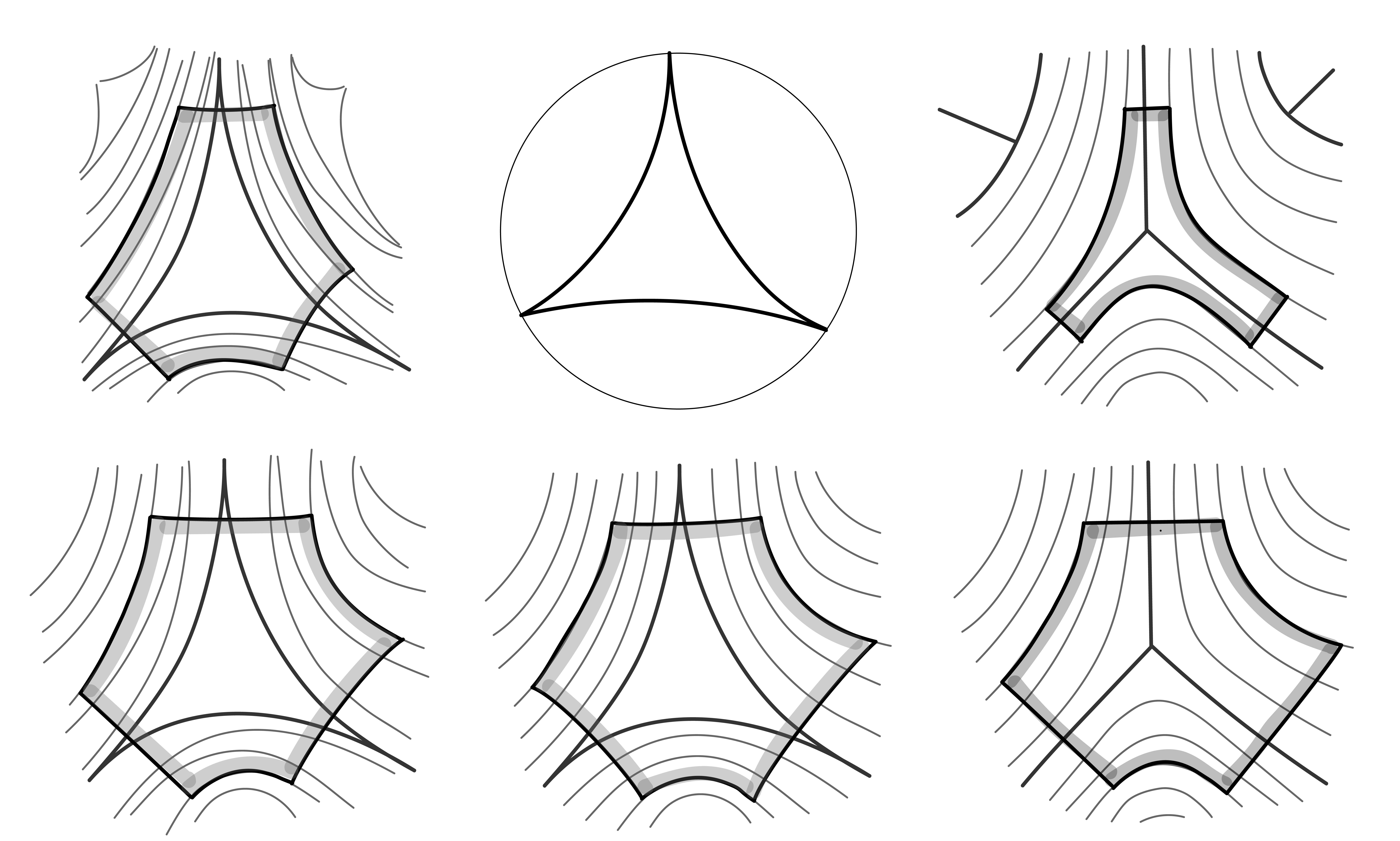} 
         \put(5, 55){\textcolor{black}{\small \contour{white}{$\ti{\sigma}_\infty \cong \H^2$} }}  
    \put(2, 26){\textcolor{black}{\small \contour{white}{$\widetilde{\Gr_{L_\infty}^{\exp s/ d}\sigma}_\infty$} }}  
 \put(90, 32){\textcolor{black}{\small \contour{white}{$z dz^2$}}}   
  \put(45, 42){\textcolor{black}{\small \contour{white}{$\Delta$} }}  
  \put(38, 55){\textcolor{black}{\small \contour{white}{$\H^2$} }}  
    \put(41, 46){\textcolor{black}{\small \contour{white}{$\infty$} }}  
  \put(51, 46){\textcolor{black}{\small \contour{white}{$\infty$} }}  
  \put(45, 36){\textcolor{black}{\small \contour{white}{$\infty$} }}  
      \put(37, 24){\textcolor{black}{\small \contour{white}{$C_q$} }}  
   \put(79, 42){\textcolor{Black}{\small \contour{white}{$Q_{j, h}$} }} 
    \put(12, 46){\textcolor{Black}{\small \contour{white}{$\QQ_{j, k}$} }}  
    \put(12, 17){\textcolor{Black}{\small \contour{white}{$\QQ_{j, k}^s$} }}  
    \put(70, 55){\textcolor{Black}{\small \contour{white}{$\ti{E}_\infty$} }}  
        \put(70, 23){\textcolor{Black}{\small \contour{white}{$E_q$} }}  
        \put(78, 18){\textcolor{Black}{\small \contour{white}{$Q_{j, h}^s$} }}  
        \put(44, 18){\textcolor{Black}{\small \contour{white}{$\QQQ_{j, h}^s$} }}  
  \put(47, 11){\textcolor{black}{\small \contour{white}{$\Delta'$} }}
      \end{overpic}
\caption{Interporeating hyperbolic and singular Euclidean hexagons}\Label{fInterpolatingHexagons}
\end{figure}

In this subsection, we construct a quasi-conformal extension of $\phi_j^s$ with small distortion to each hexagonal branch $\QQ_{j, h}^s$ .
We first construct a model projective structure on a hexagon that interpolates between a hyperbolic hexagonal branch $\QQ_{j, h}^s$ and its corresponding flat hexagonal branch $Q_{j, h}^s$ which contains one cone point of angle $3\pi$ in its interior.

Let $q$ be the quadratic differential $z dz^2$ on $\C$.    
Consider the singular Euclidean metric $E_q$ on $\C$ given by $q$.
Let $V_q$ denote the vertical measured foliation on $\C$ given by $q$.
Then $\C$ is the union of three Euclidean half-planes with a common boundary point at $0$. 
The vertical singular foliation $V_q$ has a $Y$-shaped singular leaf. 

Let $C_q$ be the $\CP^1$-structure on $\C$ given by the quadratic differential $q$.
Then Thurston's parameters of $C_q$ are the ideal triangle $\Delta$ in $\H^3$ and the measured lamination $L_q$ consisting of the boundary geodesics of $\Delta$ with infinite weight. 
Let $\LL_q$ be the corresponding Thurston's lamination on $\C$ (\cite{Kulkani-Pinkall-94}, see also \cite{Baba20ThurstonParameter});  
then, with respect to Thurston's metric,  the complement of $\LL_q$ is an ideal triangle $\Delta'$, while the foliated region $| \LL_q|$ consists of three Euclidean half-planes. (See \Cref{fInterpolatingHexagons}, Middle.)

Let $\lam_{\Del'}$ be the horocyclic measured lamination of the ideal triangle $\Del'$.
Then there is a collapsing map of  $\Delta'$ to a Y-shaped metric graph with infinite ends, which collapses each leaf of $\lam_{\Del'}$ to a point and the complementary triangle to a point. 
The collapsing map collapses each horocyclic leaf to a point and the complementary triangle to the vertex of the Y-shaped graph.

Let $C_q \to (\C, \frac{z}{(\sqrt{2}d)^2} dz^2)$ be the mapping which, by the above collapsing map, takes the ideal triangle $\Delta'$ to the Y-shaped singular vertical leaf, such that $C_q$ is isometric on each half plane of $C_q \minus \Del'$. 
Let $\kap\col C_q \to E_q = (\C, z dz^2)$ be the composition of this collapsing map with the scaling map $\C \to \C, z \mapsto (\sqrt{2}d) z$ by $\sqrt{2} d$.

Recall that $Q_{j, h}^s$ is a hexagonal branch of the polygonal traintrack decomposition $E_{\infty, j}(s)$ of $E_\infty(s)$ associated with the fat traintrack structure $T_{\infty, j}(s)$.
Then $Q_{j, h}^s$ is isometrically embedded in the singular Euclidean surface of $(\C, q)$, so that the horizontal foliation of $Q_{j, h}^s$ maps to the vertical foliation of $(\C, q)$ and the vertical foliation of $Q_{j, h}^s$ maps to the horizontal foliation of $(\C, q)$.
Regarding $Q_{j, h}^s$ as a subset of $\C$ by this embedding, we let $\QQQ_{j, h}^s$ be $\kap^{-1} (Q_{j, h}^s)$ as in \Cref{sModelHexagon}.

We will construct a desired almost conformal mapping from the Euclidean hexagon $Q_{j, h}^s$ to the hyperbolic hexagon $\QQ_{j, h}^s$ through this model Euclidean hexagon $\QQQ_{j, h}^s$.
(\Cref{fCommuativeDiagram}.)

\begin{figure}
\begin{overpic}[scale=.15
] {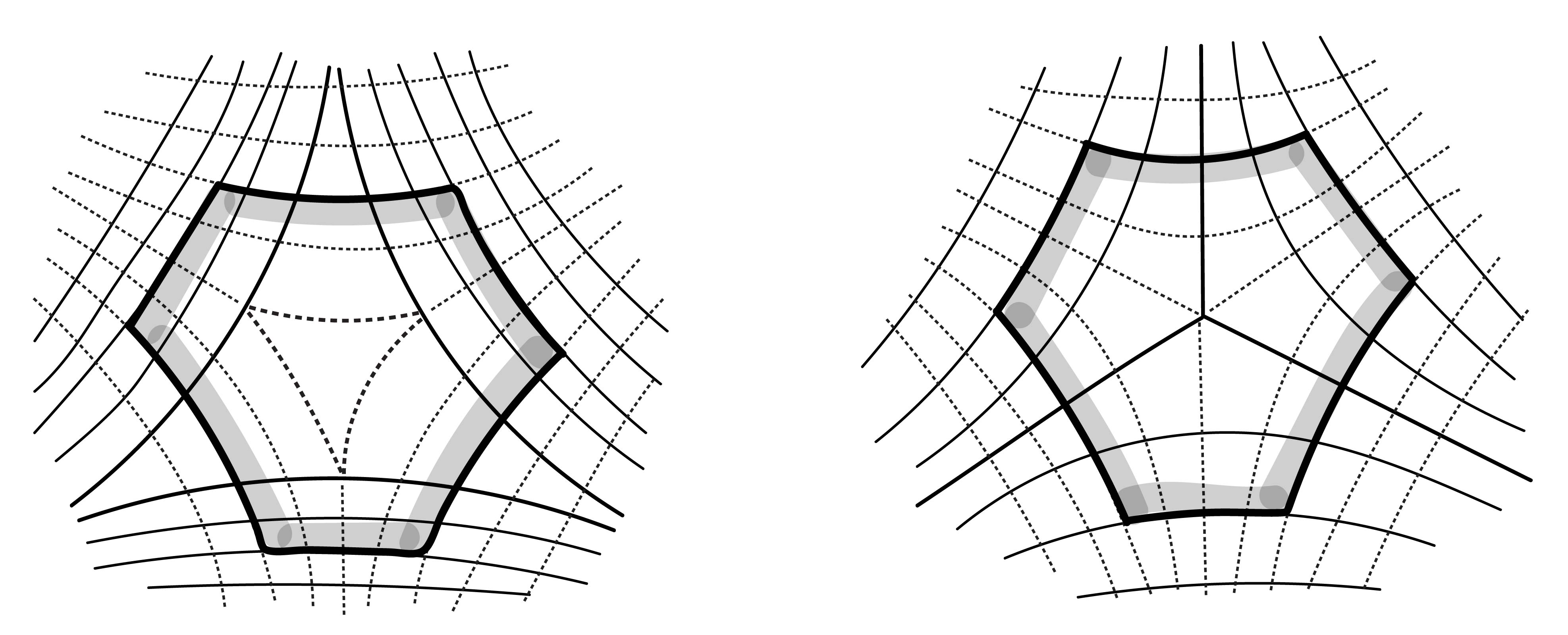} 
\put(75, 13){\textcolor{Black}{\small \contour{white}{$Q_{j, h}^s$}}}   
\put(22, 13){\textcolor{Black}{\small \contour{white}{$\QQQ_{j, h}^s$}}}   
 \put(45,20){\color{black}\vector(1,0){10}}
 \put( 48,21.2 ){\textcolor{black}{$\kappa$}}  
      \end{overpic}
\caption{Constructing a model hexagon.}\Label{sModelHexagon}
\end{figure}

\subsubsection{Almost conformal identification of the Euclidean hexagon $Q_{j, h}^s$ and the model projective hexagon $\QQQ_{j, h}^s$}

Let $f_q\col \C \to \CP^1$ denote the developing map of the $\CP^1$-structure given by $(\C, \frac{z}{(\sqrt{2}d)^2} dz^2).$
The half-Euclidean-plane in $E_q$ bounded by a vertical line passing the singular point $0$ is called an {\it anti-Stokes sector}.
Then the flat surface of $E_q$ is divided into three anti-Stokes sectors. 
\begin{theorem}[Corollary 4.1 in  \cite{GuptaMj21}]\Label{iAssymptotic}
In every anti-Stokes sector,  for every $m \geq 0$, 
\begin{eqnarray} 
( f_q(z) - \exp[\sqrt{2} z^{\frac{3}{2}}] ) z^m \to 0 \end{eqnarray}
as $|z| \to \infty$. 
(\Cref{fModelMapping}.)
\end{theorem}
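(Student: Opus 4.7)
The plan is to view this statement as a WKB/Airy-type asymptotic for the Schwarzian ODE associated to the quadratic differential $q = \frac{z}{(\sqrt{2}d)^2}dz^2$. The developing map $f_q$ of a $\CP^1$-structure with Schwarzian $q$ can be written as a projective ratio $f_q = u_1/u_2$, where $u_1, u_2$ are linearly independent solutions of
$$u'' + \tfrac{1}{2} \cdot \frac{z}{(\sqrt{2}d)^2}\, u = 0.$$
After an affine substitution $z \mapsto c z$, this is (a rescaling of) the classical Airy equation $u'' + \tfrac{w}{2} u = 0$, for which sectorial asymptotics are standard. The WKB phase is $\int \sqrt{-\phi/2}\, dz$ up to a constant, which is exactly a multiple of $z^{3/2}$ matching the exponent $\sqrt{2}z^{3/2}$ in the statement once all normalizations are tracked.

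First I would apply a WKB ansatz $u = A(z)\exp(S(z))$ with $S'(z)^2 \sim -\phi(z)/2$ to extract two formal solutions $u_\pm(z) \sim z^{-1/4}\exp(\pm \tfrac{\sqrt{2}}{2} z^{3/2})$ (up to normalization). Standard Airy theory implies that in each anti-Stokes sector $\Sigma$, one may select genuine analytic solutions $u_\pm^\Sigma$ realizing these formal asymptotics, the choice being pinned down by requiring the subdominant branch to decay optimally along $\Sigma$. Next I would iterate the WKB scheme to obtain subleading corrections
$$u^\Sigma_\pm(z) = z^{-1/4}\exp\bigl(\pm \tfrac{\sqrt{2}}{2} z^{3/2}\bigr) \Bigl( 1 + \sum_{k=1}^{N} c_k^\pm z^{-3k/2} + O(z^{-3(N+1)/2}) \Bigr),$$
uniformly on compact subsectors of $\Sigma$. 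The prefactor $z^{-1/4}$ cancels in the ratio, and the polynomial error estimate propagates through, giving $f_q(z) = \exp[\sqrt{2}z^{3/2}](1 + O(z^{-M}))$ for every $M$, which implies $(f_q(z) - \exp[\sqrt{2}z^{3/2}])z^m \to 0$ for every $m \geq 0$.

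The main obstacle is the Stokes phenomenon: solutions of the ODE do not preserve their WKB asymptotic form across Stokes lines, and ratios of solutions can pick up Stokes multipliers. Thus the nontrivial point is not the (routine) polynomial error bound but the global identification of $f_q$ as the \emph{same} sectorial ratio throughout every anti-Stokes sector, so that its leading term is literally $\exp[\sqrt{2}z^{3/2}]$ and not merely a scalar multiple depending on the sector. This identification uses the specific geometry of the $\CP^1$-structure $(\C, q)$ (in particular, the rotational symmetry $z \mapsto \zeta z$ for $\zeta^{3} = 1$, which permutes the three anti-Stokes sectors compatibly with the exponential), and is essentially the content of Gupta--Mj's corollary which one may invoke directly.
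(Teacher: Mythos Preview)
The paper does not prove this statement at all: it is quoted verbatim as Corollary~4.1 of Gupta--Mj and used as a black box, with no argument given. So there is no ``paper's own proof'' to compare against; your sketch already goes further than the paper does.

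Your outline is the standard and correct route (Schwarzian ODE $\Rightarrow$ rescaled Airy equation $\Rightarrow$ sectorial WKB asymptotics for a basis $u_\pm$, then take the ratio), and you correctly flag that the genuine content is the global identification across anti-Stokes sectors, which is precisely what the cited result supplies. One small point to tighten: from $f_q(z)=\exp[\sqrt{2}z^{3/2}](1+O(z^{-M}))$ you cannot immediately conclude $(f_q-\exp[\sqrt{2}z^{3/2}])z^m\to 0$ unless you are in a sector where $\exp[\sqrt{2}z^{3/2}]$ stays bounded (otherwise the exponential prefactor in the error swamps the polynomial decay). The statement is implicitly using that in each anti-Stokes sector the developing map has been normalized so that the model map is the \emph{subdominant} branch there; you gesture at this when discussing Stokes multipliers, but it should be made explicit rather than folded into the $O(z^{-M})$ step. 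Since you ultimately invoke Gupta--Mj for exactly this identification, your proposal is fine as a gloss on the citation, which is all the paper itself does.
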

\begin{figure}
\begin{overpic}[scale=.15
] {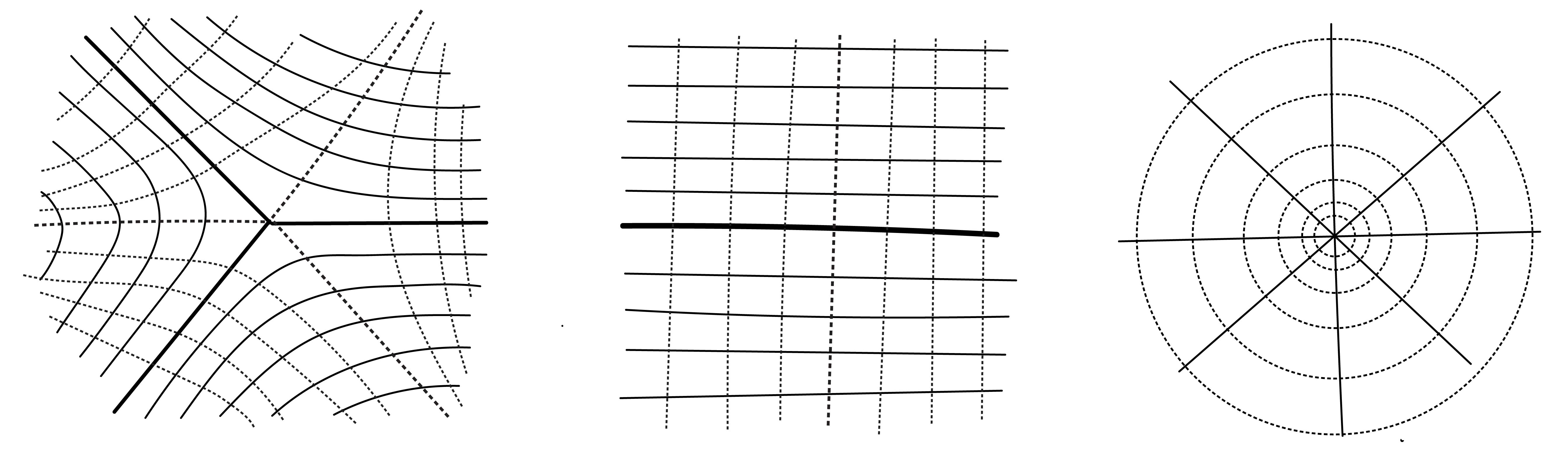} 
\put(2, 27){\textcolor{black}{\small \contour{white}{$(\C, \frac{z}{(\sqrt{2}d)^2} dz^2)$} }}   
\put(40, 27){\textcolor{black}{\small \contour{white}{$\C$} }}   
\put(70, 27){\textcolor{black}{\small \contour{white}{$\C \minus \{0\}$} }}   
 \put(32, 14){\color{black}\vector(1,0){5}}
   \put(33 ,16 ){\textcolor{black}{\tiny $\xi = z^{\frac{3}{2}}$}}  
 \put(65, 14){\color{black}\vector(1,0){5}}
   \put(65 ,16 ){\textcolor{black}{\tiny $e^{-2\xi}$}}  
      \end{overpic}
\caption{The model mapping $\exp[\sqrt{2} z^{\frac{3}{2}}].$}\label{fModelMapping}
\end{figure}

Let $Z_{j, k}^s$ be the set of the boundary points of $Q_{j, h}^s$  which are vertices of rectangles or hexagons of the polygonal decomposition $E_\infty(s) = (\cup_{h = 1}^N \partial Q_{j, h}^s) \cup (\cup_{k =1}^{N'} \partial R_{j, k}^s)$.
By the construction of the polygonal decomposition, $Z_{j, k}^s$ is the set of the vertices of $Q_{j, h}^s$, possibly, with some finitely many points on the vertical edges.
Note that $\kap | \bdr \QQQ_{j, h}^s$ is not a homeomorphism onto $\bdr Q_{j, k}^s$ as $\kap$ collapses some horizontal segments $\bdr Q_{j, h}^s$, but homotopic to a homeomorphism, preserving the vertices. 
The restriction is a linear diffeomorphism on each vertical edge, and $\kap$ takes the restriction of the circular lamination $\LL_q$ to ${\bf Q}_{j, k}^s$ to the vertical measured foliation $V_\infty | Q_{j, h}^s$.

Let $\eta_{j, h}^s \col \bdr Q_{j, h}^s \to \bdr\QQQ_{j, h}^s$ be the edge-wise linear homeomorphism, such that $\eta_{j, h}^s$ coincides with $\kap^{-1}$ at the six vertices of $Q_{j, h}^s$.   
\begin{proposition}\Label{AlmostConformalToModelHexagon}
For every $\ep > 0$, there are $J_\ep > 0$ and $s_\ep > 0$, such that, if   $j > J_\ep$ and $s > s_\ep$, then there is a $(1 + \ep)$-quasiconformal mapping
$Q_{j, h}^s \to \QQQ_{j, h}^s$ which coincides with the edge-wise linear mapping $\eta_{j, h}^s$ on the boundary.
\end{proposition}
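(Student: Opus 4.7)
The plan is to use $\kap^{-1}$ as a conformal model on a large ``outer'' region of $Q_{j, h}^s$ and to handle a bounded ``inner'' neighborhood of the singular tripod by an extremal-length argument, exploiting that the singularity has fixed geometric size while the hexagon $Q_{j, h}^s$ grows without bound.

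Outside the central ideal triangle $\Del'$, the collapsing map $\kap$ restricts to a Euclidean isometry on each of the three half-planes of $C_q \minus \Del'$. I would fix a parameter $R > 0$ (to be chosen in terms of $\ep$) and let $V_R \sub E_q$ be the horizontal $R$-neighborhood of the singular tripod $\gam_h$; for $j, s$ large enough that $V_R \sub \mathrm{int}(Q_{j, h}^s)$, set $V_R^{\ast} := \kap^{-1}(V_R) \sub \QQQ_{j, h}^s$. Then $V_R^{\ast}$ is the union of $\Del'$ with three collar neighborhoods inside the half-planes of $C_q$, each of which is isometric via $\kap$ to a corresponding sector of $V_R \minus \gam_h$. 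On the outer region $Q_{j, h}^s \minus V_R$, define $\Phi := \kap^{-1}$; this is a conformal isometry onto $\QQQ_{j, h}^s \minus V_R^{\ast}$, and it matches $\eta_{j, h}^s$ on $\bdr Q_{j, h}^s$: on each vertical edge $\eta_{j, h}^s = \kap$ by construction, and on each horizontal edge both $\eta_{j, h}^s$ and $\kap$ are linear in arc length (the latter because that edge lies in a half-plane of $C_q \minus \Del'$ where $\kap$ is a Euclidean isometry) and they agree at the two vertex endpoints, hence they coincide on the whole edge.

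The main task is then to extend $\Phi$ across $V_R$, producing a $(1 + \ep)$-quasi-conformal homeomorphism $V_R \to V_R^{\ast}$ that matches $\kap^{-1}$ on $\bdr V_R$. Both $V_R$ and $V_R^{\ast}$ are topological disks whose ``inner'' piece (the tripod in the former, $\Del'$ of hyperbolic area $\pi$ in the latter) has bounded size, while the three ``outer'' collars of $V_R^{\ast} \minus \Del'$ are isometric via $\kap$ to the three sectors of $V_R \minus \gam_h$ and grow to Euclidean diameter $\asymp R$. An extremal-length argument then shows that the infimum of the dilatation over such homeomorphisms tends to $1$ as $R \to \infty$: each pair of corresponding collars contributes no distortion (it is a Euclidean isometry), and the only distortion lies in filling the bounded inner piece, whose relative weight in any extremal-length computation decays as $R$ grows. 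I would then choose $R = R(\ep)$ so that this infimum is at most $1 + \ep$, and take $J_\ep, s_\ep$ large enough that $V_R \sub \mathrm{int}(Q_{j, h}^s)$ whenever $j > J_\ep$ and $s > s_\ep$. Gluing the conformal outer piece with the $(1 + \ep)$-quasi-conformal inner extension produces the required map.

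The main obstacle is making the extremal-length comparison quantitative: explicitly estimating the rate at which the optimal dilatation on the inner piece decays to $1$ as $R \to \infty$. This is where \Cref{iAssymptotic} enters crucially, as it pins down the asymptotic Euclidean form of the three collars of $V_R^{\ast}$ via the model developing map $\exp(\sqrt{2}\, z^{3/2})$ in each anti-Stokes sector, which lets one control the conformal geometry of $V_R^{\ast}$ relative to $V_R$ at an explicit rate.
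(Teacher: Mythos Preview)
Your approach has a genuine geometric gap stemming from a misidentification of the singular locus of $\kap^{-1}$. Recall that the isometric embedding $\iota_{j,h}^s\colon Q_{j,h}^s \to E_q$ \emph{exchanges} horizontal and vertical directions. Hence the tripod $\gamma_h$ (a segment of the \emph{vertical} singular leaf of $E_\infty(s)$) lands on the singular leaf of the \emph{horizontal} foliation $H_q$ of $E_q$. But $\kap$ collapses $\Delta'$ onto the singular leaf of the \emph{vertical} foliation $V_q$; that $V_q$--tripod is transverse to $\iota_{j,h}^s(\gamma_h)$ and meets it only at the cone point. Inside $\iota_{j,h}^s(Q_{j,h}^s)$ the $V_q$--tripod has length $\sim m_j\exp(s)/3$ in each of its three directions, so the set where $\kap^{-1}$ fails to be single-valued is \emph{not} confined to any bounded neighborhood $V_R$ of $\gamma_h$: it grows without bound with $s$, and it exits the hexagon through the three ``horizontal'' edges. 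This is exactly what the paper is pointing at when it says $\kap$ ``collapses many horizontal segments'' of the boundary: each such edge of $\partial\QQQ_{j,h}^s$ is two Euclidean segments in distinct half-planes joined by a horocyclic arc in $\Delta'$. So those edges do not lie in a single half-plane, $\kap^{-1}$ is not a function on them, and it certainly does not coincide with the edgewise-linear $\eta_{j,h}^s$ there. Consequently your outer map is neither conformal on $Q_{j,h}^s\setminus V_R$ nor boundary-matching, and the ``bounded inner piece / extremal length'' picture collapses.

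The paper proceeds quite differently. Rather than inverting $\kap$, it uses the \emph{globally conformal} Schwarzian identification: embed $Q_{j,h}^s$ isometrically in $E_q$ and then pass to $C_q$ via the tautological $E_q\cong C_q$. This map is holomorphic everywhere, but its image is not $\QQQ_{j,h}^s$; the point is that, by the Gupta--Mj asymptotics (\Cref{iAssymptotic}) combined with Epstein-surface estimates (\Cref{BoundedHausdorffDistance}, \Cref{AlomstExponentialMap}), the image hexagon lies within \emph{bounded} Hausdorff distance of $\QQQ_{j,h}^s$ in the Thurston metric, with $\ep$-nearly parallel edges. Since the edge-lengths of both hexagons tend to infinity while the discrepancy stays bounded, a $(1+\ep)$-bilipschitz adjustment supported on a collar of $\partial Q_{j,h}^s$ corrects the image to $\QQQ_{j,h}^s$ and the boundary values to $\eta_{j,h}^s$. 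No extremal-length comparison is needed.
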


The remainder of this subsection is the proof of \Cref{AlmostConformalToModelHexagon}.
Let $\iota_{j, h}^s\col Q_{j, h}^s \to (\C, q =  \frac{z}{(d \sqrt{2})^2} dz^2) \cong C_q$ be the isometric embedding exchainging horizontal and vertical directions, with respect to the flat structure on $C$ given by the differential.
On the other hand,  $\QQQ_{j, h}^s$ is already a subset of $C_q$. 
First we show that $\iota_{j, h}^s  (Q_{j, h}^s)$ is in a bounded distance away from $\QQQ_{j, h}^s$, almost preserving the tangent directions.  
\Cref{iAssymptotic} immediately implies  the Hausdorff distance between $\QQQ_{j, h}^s$ and $\iota_{j, h}^s Q_{j, h}^s$ are uniformly bounded. 
\begin{lemma}\Label{BoundedHausdorffDistance}
There are constants $b > 0, s_\ep > 0, J_\ep > 0$, such that, if $j > J_\ep$ and $s > s_\ep$, then,
for each $x \in \bdr Q_{j, h}^s$, $$d_{Th} (\iota_{j, h}^s(x), \eta_{j, k}^s(x) ) < b,$$ 
in the Thurston metric $d_{Th}$ on $C_q$ (\S \ref{sThurstonMetric}).
\end{lemma}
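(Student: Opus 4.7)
The strategy is to decompose $\bdr Q_{j,h}^s$ into its three vertical and three horizontal edges and bound $d_{Th}(\iota_{j,h}^s(x), \eta_{j,h}^s(x))$ separately on each. The key geometric input is the structure of the Thurston metric on $C_q$: on the three Euclidean half-planes of $C_q \minus \Del'$, the Thurston metric is (up to the $\sqrt{2}d$-scaling absorbed into $\kap$) the pull-back of the flat metric of $E_q$, while inside the complementary ideal triangle $\Del'$ every fiber of $\kap$ has Thurston diameter bounded by a universal constant --- namely, it is either a single point, a horocyclic arc of hyperbolic length at most one (the length of a horocyclic edge of the complementary triangle of $\lam_{\Del'}$), or the complementary triangle itself, which is compact.

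On the three vertical edges, which lie on singular vertical leaves of $V_q$ well outside the collapsed region, the isometric embedding $\iota_{j,h}^s$ sends each edge onto a geodesic segment of $\bdr \Del'$, and by the very definition of $\eta_{j,h}^s$ (linear on each vertical edge, matching the natural $\kap$-identification at all six vertices) the two maps agree pointwise there. On the three horizontal edges the images $\kap \circ \iota_{j,h}^s(x)$ and $\kap \circ \eta_{j,h}^s(x)$ both parametrize the same horizontal segment of $Q_{j,h}^s \subset E_q$, but with slightly different parametrizations: the image of a horizontal edge in $\bdr \QQQ_{j,h}^s$ contains, in addition to its Euclidean half-plane pieces, at most three horocyclic arcs on $\bdr \Del'$ (one per crossing of the tripod in $E_q$), each of length at most one. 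The induced edge-wise linear reparametrization shifts the flat projection of $\eta_{j,h}^s(x)$ by at most three units of arc length along the horizontal edge in $E_q$, and $\eta_{j,h}^s(x)$ itself lies at Thurston distance at most one from the $\kap$-fiber over that projection. Combining these two contributions yields a universal bound $b$ on $d_{Th}(\iota_{j,h}^s(x), \eta_{j,h}^s(x))$.

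The role of Gupta--Mj's asymptotic formula (\Cref{iAssymptotic}) is to ensure that, for $j > J_\ep$ and $s > s_\ep$, the image $\iota_{j,h}^s(Q_{j,h}^s)$ lies at sufficiently large $|z|$ on $(\C,q)$ that the identification of the Euclidean half-planes of $C_q$ with the corresponding regions of $(\C, q)$ is geometrically sharp; this legitimizes the identification of the flat-coordinate picture used above. The main obstacle is the bookkeeping on horizontal edges --- precisely controlling the horocyclic contributions to the arc-length of $\bdr \QQQ_{j,h}^s$ and tracking the resulting parametrization mismatch in $\eta_{j,h}^s$ --- but once this is quantified via the uniform length bound for horocyclic arcs in an ideal triangle, the desired universal bound $b$ drops out from the bounded geometry of $\Del'$, independently of $j$, $h$, and $s$.
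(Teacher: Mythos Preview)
Your argument has a genuine gap: you are conflating the two different identifications of $C_q$ with the flat plane $(\C,q)$. The isometric embedding $\iota_{j,h}^s$ lands in $(\C,q)$ and is then carried to $C_q$ by the \emph{conformal} (Schwarzian) identification $\iota_\phi$, whereas $\eta_{j,h}^s$ is built from the \emph{collapsing map} $\kap$. On a vertical edge one has $\eta_{j,h}^s(x)=\kap^{-1}(x)$ while $\iota_{j,h}^s(x)=\iota_\phi^{-1}(x)$; your claim that these ``agree pointwise'' is the assertion $\kap=\iota_\phi$ on the half-planes, which is false. The boundary geodesics of $\Del'$ (a Thurston-metric object) and the flat vertical leaves of $E_q$ are \emph{different} curves in the common underlying $\C$; indeed Proposition~\ref{AlomstExponentialMap} only shows $\iota_{j,h}^s$ of a vertical edge is $\ep$-\emph{almost} parallel to $\LL_q$, not exactly on a leaf.

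The paper's one-line proof is that \Cref{iAssymptotic} directly quantifies this discrepancy: the asymptotic $f_q(z)\sim\exp(\sqrt{2}\,z^{3/2})$ compares the actual developing map of $C_q$ with the model exponential map that realises the Thurston half-planes, and this translates into a uniform Thurston-metric bound between $\iota_\phi^{-1}(x)$ and $\kap^{-1}(x)$ for $x$ far from the origin. Your horocyclic-arc bookkeeping correctly bounds the \emph{secondary} discrepancy between $\kap^{-1}$ and its edge-wise linearisation $\eta_{j,h}^s$ on horizontal edges, but that is not the main contribution. In your write-up Gupta--Mj is relegated to ``making the identification sharp''; in fact it \emph{is} the bound $b$, and without it nothing controls where $\iota_{j,h}^s(\bdr Q_{j,h}^s)$ sits relative to $\Del'$ and the Thurston half-planes.
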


We now show the closeness of the tangent directions on the hexagon boundary.
\begin{proposition}\Label{AlomstExponentialMap}
For every $\ep > 0$ and $r > 0$, there are $J_\ep > 0$ and $s_\ep > 0$, such that, if $j > J_\ep$ and $s > s_\ep$, then,
\begin{enumerate}
\item for each vertical edge $e$ of $Q_{j, h}^s$, $\iota_{j, h}^s | e$ is $\ep$-almost parallel to Thurston's lamination $\LL_q$ (i.e. the angles between the tangent directions of the curve and the lamination are less then $\ep$);  \Label{iVirticallyParallelToThurstonLamination} 
\item for each horizontal edge $e$ of $Q_{j, h}^s$, $\iota_{j, h}^s | e$ is $\ep$-almost parallel to the horocyclic lamination $\HH_q$ orthogonal to Thurston's lamination $\LL_q$;  \Label{iHorizontallyParallelToHorocyclicLamination}
\item the restriction of $\iota_{j, h}^s$ to the $r$-neighborhood of the boundary $\partial Q_{j, h}^s$ is $(\frac{1}{d} - \ep, \frac{1}{d} + \ep)$-bilipschitz embedding onto its image in $C_q$ w.r.t the Thurston metric.  
\Label{iAlmostIsomtricEmbedding}
\end{enumerate}
\end{proposition}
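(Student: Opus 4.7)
The plan is to leverage the asymptotic \Cref{iAssymptotic} together with \Cref{BoundedHausdorffDistance} to reduce each of the three claims to an explicit analysis of the model map $g(z) := \exp[\sqrt{2} z^{3/2}]$ in the far-field regime $|z|\to\infty$. First I would confirm that as $j,s\to\infty$ the image $\iota_{j,h}^s(Q_{j,h}^s)$ sits at arbitrarily large $|z|$-distance from the zero of $\frac{z}{(\sqrt{2}d)^2}dz^2$. By construction of the splitting sequence, the shortest horizontal branch length $m_j$ of $T_j$ diverges with $j$; since $Q_{j,h}$ is the horizontal $m_j/3$-neighborhood of the tripod $\gamma_h(j)$ on $E_\infty$, and since stretching by $e^s$ only enlarges this neighborhood, the $r$-neighborhood of $\partial Q_{j,h}^s$ lies at flat-distance going to infinity from the cone point. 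Combined with the uniform bound from \Cref{BoundedHausdorffDistance}, this puts us in the regime where \Cref{iAssymptotic} supplies strong control.

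In this regime, $f_q(z) = g(z)(1 + o(|z|^{-m}))$ in each anti-Stokes sector, so the Thurston metric on $C_q$ -- which is the pullback via $f_q$ of the round-disk hyperbolic metric on each half-plane of $C_q\minus\Delta'$ -- is approximated to all orders by the pullback via $g$. A direct computation with $g$, through the substitution $\xi = \sqrt{2} z^{3/2}$ followed by the universal cover $w = e^{\xi}$, together with the scaling by $\sqrt{2}d$ built into $\kappa$, yields the pulled-back Thurston metric as $\bigl(\tfrac{1}{d} + o(1)\bigr)$ times the flat metric of $E_q$ on points of large $|z|$. Since $\iota_{j,h}^s$ is by definition an isometric embedding of flat structures, this gives \ref{iAlmostIsomtricEmbedding} on the $r$-neighborhood of $\partial Q_{j,h}^s$.

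For \ref{iVirticallyParallelToThurstonLamination} and \ref{iHorizontallyParallelToHorocyclicLamination}, observe that on each of the three half-planes of $C_q\minus\Delta'$, the laminations $\LL_q$ and $\HH_q$ are tangentially the vertical and horizontal foliations of the flat differential $\frac{z}{(\sqrt{2}d)^2}dz^2$. The vertical (resp.\ horizontal) edges of $Q_{j,h}^s$ are leaf segments of the vertical (resp.\ horizontal) foliation of $E_\infty(s)$, so under the direction-exchanging isometric embedding $\iota_{j,h}^s$ they map onto segments tangent to the corresponding flat foliations in $(\C, \frac{z}{(\sqrt{2}d)^2}dz^2)$. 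Because the $o(|z|^{-m})$ control in \Cref{iAssymptotic} transfers to the tangent map of $f_q$ versus $g$, the pushed-forward tangent directions of $\LL_q$ and $\HH_q$ converge to these flat foliation directions, establishing the edgewise $\ep$-parallelism.

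The main technical obstacle will be arranging that a single choice of $J_\ep, s_\ep$ controls all the finitely many hexagons $Q_{j,h}^s$, $h=1,\dots,N$, simultaneously. \Cref{iAssymptotic} is a pointwise asymptotic within each anti-Stokes sector, while each hexagon straddles all three half-planes meeting at the singular zero; the estimates must therefore be matched across sectors using \Cref{BoundedHausdorffDistance}, which provides continuous control across the bending leaves of $\LL_q$ where the Thurston metric is only $C^0$. Quantifying how quickly $m_j$ must grow with $j$ to beat the implicit error constants in \Cref{iAssymptotic} is what ultimately fixes $J_\ep$ and $s_\ep$.
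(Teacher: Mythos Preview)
Your approach is genuinely different from the paper's. The paper argues via three-dimensional hyperbolic geometry: it invokes the Epstein surface $\Ep_q\colon C_q\to\H^3$ and the fact (from \cite{Dumas18HolonomyLimit}) that far from the zero $\Ep_q$ sends a vertical leaf $\ell$ to a curve $\ep$-close to the corresponding boundary geodesic $m\subset\H^3$ of the ideal triangle, then reads off tangent directions by projecting back to $\CP^1$. Part~(3) is deduced from an external result \cite[Proposition~4.9]{Baba-25}. By contrast you work entirely on the Riemann-surface side, exploiting the explicit model $g(z)=\exp[\sqrt{2}z^{3/2}]$ from \Cref{iAssymptotic}. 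This is a legitimate and arguably more self-contained route, since it avoids Epstein surfaces altogether.

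Two points need tightening. First, your description of the Thurston metric on the Euclidean half-planes of $C_q\setminus\Delta'$ as the $f_q$-pullback of a ``round-disk hyperbolic metric'' is not correct: the Thurston metric there is flat, not hyperbolic. What you actually want is that, after normalising the endpoints of the relevant boundary geodesic $m$ to $0,\infty\in\CP^1$, the leaves of $\LL_q$ develop to radial rays and those of $\HH_q$ to concentric circles, so the Thurston metric on the half-plane is the $f_q$-pullback of the cylinder metric $|dw/w|$. Since $\log g=\sqrt{2}z^{3/2}$ is precisely the natural flat coordinate for $q$, the computation you outline then goes through with the constant $1/d$ as claimed. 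Second, \Cref{iAssymptotic} is a $C^0$ statement, and you need tangential information; you should say explicitly that Cauchy estimates on disks of radius comparable to $|z|^{1/2}$ (on which the flat metric is nearly Euclidean) upgrade the $o(|z|^{-m})$ bound to one on $f_q'-g'$, which is what drives both the direction comparison in (1)--(2) and the bilipschitz bound in (3). With these corrections the argument is complete; the paper's Epstein-surface method has the advantage of tying the picture directly to the bending geometry, while yours has the advantage of being computationally explicit and not requiring the external reference for part~(3).
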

\begin{proof}
Throughout this proof, we identify $Q_{j, h}^s$ and its image in $E_q$ by $\iota_{j, h}^s\col Q_{j, h}^s \to C_q$.
(\ref{iVirticallyParallelToThurstonLamination})
Recall that
the Thurston parameters of $C_q$ are the ideal hyperbolic triangle $\Delta$ and the geodesic lamination $L_\infty$ consisting of the boundary geodesics of $\Delta$ with weight infinity, and $\LL_q$ is Thurston's circular lamination on $C_q$.
Recall also that the complement of $\LL_q$ in $C_q$ is an ideal triangle $\Delta'$ corresponding to $\Delta$, and $C_q \minus \Delta'$ consists of three Euclidean half-planes foliated by leaves of $\LL_q$. 
Let $\kap_	q\col C_q \to \Delta$ be the collapsing map associated with Thurston's parametrization: it collapses each complementary half-plane to its corresponding boundary geodesic of $\Delta$,  taking leaves of $\LL_q$ diffeomorphically to the boundary geodesic. 

 Let $\ell$ be a leaf of the vertical measured foliation $V_q$ of $(\C, q)$ such that $\ell$ contains a vertical edge $e$ of $Q_{j, h}^s$. 
Let $m$ be the boundary geodesic of the ideal triangle $\Delta$ corresponding to $\ell$. 
By Thurston's parametrization $(\Delta, L_q)$ of $C_q$, the induced bending map is simply an isometric embedding of the ideal triangle $\Delta$ into a totally geodesic plane in $\H^3$. 
By this embedding, $m$ is isometrically identified with a geodesic in $\H^3$.
Then, the ideal boundary $\CP^1$ of $\H^3$ minus the endpoints of $m$ is foliated by round circles bounding disjoint hyperbolic planes orthogonal to $m$; let $\mathcal{C}$ denote this foliation of $\CP^1$ minus two points by those round circles.
 
The developing map of a $\CP^1$-structure corresponds to the bending map of its Thurston parameters by certain nearest point projections (see \ref{sThurstonParameters}).  
Thus the leaves of Thurston lamination $\LL_q$ corresponding to $m$ map to circular arcs connecting the endpoints of $m$ (\Cref{fAlmostVertical}); those circular arcs are orthogonal to the round foliation $\mathcal{C}$.  
Therefore it suffices to show that each tangent vector $v$ along a vertical edge $e$ (in $\ell$) maps to a tangent vector $\ep$-almost orthogonal to the circle foliation $\CC$.

Let $\Ep_q\col C_q \to \H^3$ be the Epstein surface of $C_q$ (\S \ref{sEpsteinSurfaces}).
For every $\ep > 0$, there is $R > 0$ such that, if the disctance of $\ell$ from the singular point, the zero, is least $R$, then $\Ep_q \ell$ is a $(1 - \ep, 1 + \ep)$-bilipschitz embedding and $\ep$-close to the geodesic $m$ (\cite[Lemma 3.4]{Dumas18HolonomyLimit}.) 
Recall that $f_q\col \C \to \CP^1$ denotes the developing map of $C_q$. 
By the property of the Epstein surface,  $d f_q v$ corresponds to $d \Ep_q v$ by the orthogonal projection to the Epstein surface $\Ep_q$. 
Therefore,  if $j > 0$ and $s > 0$ are sufficiently large, then $\Ep_q \ell$ is tangentially very close to the geodesic $m$,  and thus $d f_q v$ is $\ep$-almost orthogonal to a leaf of $\CC$ (\Cref{fAlmostVertical}). 
\begin{figure}
\begin{overpic}[scale=.15
] {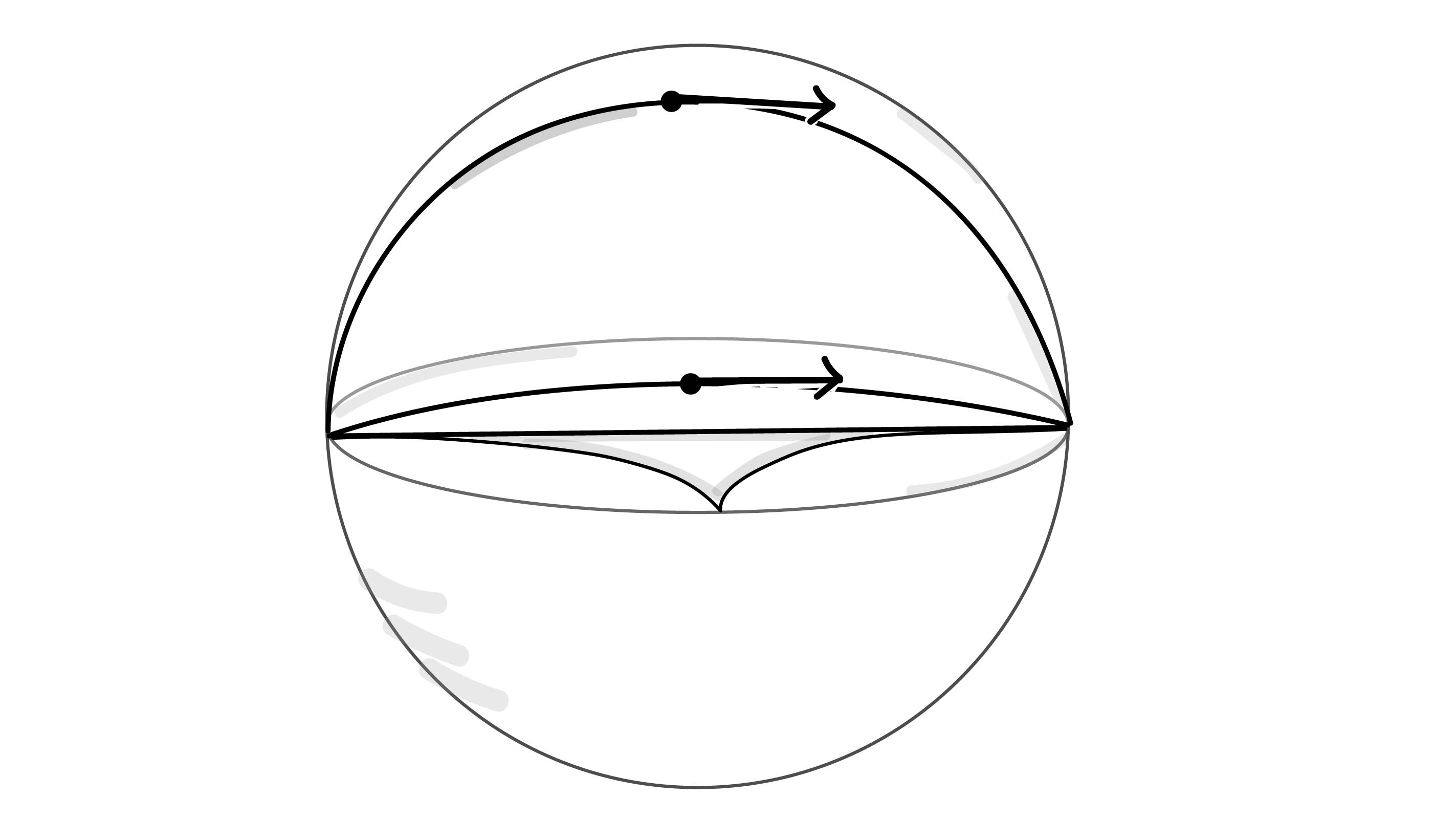} 
\put(40, 23 ){\textcolor{black}{\small \contour{white}{$m$}}}   
\put(35, 31){\textcolor{Black}{\small \contour{white}{$\Ep_q \ell$}}}   
\put(50, 33.5){\textcolor{Black}{\small \contour{white}{$d \Ep_q (v)$}}}   
\put(47, 51.7){\textcolor{Black}{\small \contour{white}{$d f (v)$}}}   
\put(50, 12){\textcolor{Black}{\small \contour{white}{$\H^3$}}}   
\put(25, 48){\textcolor{Black}{\small \contour{white}{$\CP^1$}}} 
      \end{overpic}
\caption{}\Label{fAlmostVertical}
\end{figure}

(\ref{iHorizontallyParallelToHorocyclicLamination})
Let $e$ be a horizontal edge of $Q_{j, h}^s$.
If $j > 0$ and $s > 0$ are sufficiently large, we can pick a rectangle $R_e$ in $E_q$ with horizontal and vertical edges, such that $R_e$ is sufficiently far from the zero of $E_q$ and the vertical edges of $R_e$ are long. 
Let  $h_w$ be the horizontal foliation of $R_e$ parametrized linearly by $w \in [0,1]$. 
Let $v_u$ be the vertical foliation of $R_e$ parametrized linearly by $u \in [0,1]$; then $v_0$ and $v_1$ are its vertical edges.
Let $\ell_0$ and $\ell_1$ be the vertical leaves of $V_q$ containing the vertical edges $v_0$ and $v_1$, respectively.
Similar to (\ref{iVirticallyParallelToThurstonLamination}),  let $m_0, m_1$ be the boundary geodesics of the ideal triangle $\Delta$ corresponding to $\ell_0$ and $\ell_1$.
 (\Cref{fHorizontalEdgesAndHorocyclicLamination}.)

For every  $\ep > 0$, if $j > 0$ and $s > 0$ are sufficiently large, then
Since $R_e$ is far from the zero of $q$,  $\Ep_q v_u$ are $(\sqrt{2} - \ep, \sqrt{2} + \ep)$-bilipschitz  embedding into $\H^3$ for all $u \in [0,1]$ and $\Ep_q h_w$ has length less than $\ep$ for all $w \in [0,1]$ (\cite{Epstein84}, Lemma 2.6, Lemma 3.4 in \cite{Dumas18HolonomyLimit}). 
Therefore, we may, in addition, assume that the long almost-geodesic curves $\Ep_q v_u ( u \in [0,1])$ are $\ep$-close to each other (\Cref{fHorizontalEdgesAndHorocyclicLamination}). 
Similar to (\ref{iVirticallyParallelToThurstonLamination}), let $\mathcal{C}$ be the foliation of $\CP^1$ minus endpoints of $m_0$ by round circles which bound hyperbolic planes orthogonal to the geodesic $m_0$. 
Then, for every $\ep > 0$, if $j, s > 0$ are sufficiently large, then $f_q v_u$ are $\ep$-almost orthogonal to $\CC$ for all $u \in [0,1]$, since the almost geodesic curves $\Ep_q v_u$ are very close to a common segment $\alpha$ of  the geodesic  $m_0$. 
Therefore $f_q h_w$ are $\ep$-almost parallel to $\CC$ for all $w \in [0,1]$.

The horocyclic foliation $\HH_q$ is orthogonal to the Thurston lamination $\LL_q$ on $C_q$. 
Since $R_e$ is sufficiently far away from the zero of $q$,  the boundary $m_0, m_1$ of the ideal triangle $\Delta$ are close to each other near $\alpha$.
Therefore, $h_w$ are $\ep$-almost orthogonal to $\LL_q$, and therefore $\ep$-almost parallel to $\HH_q$. if $j$ and $s$ are sufficiently large.
 
\begin{figure}
\begin{overpic}[scale=.25
] {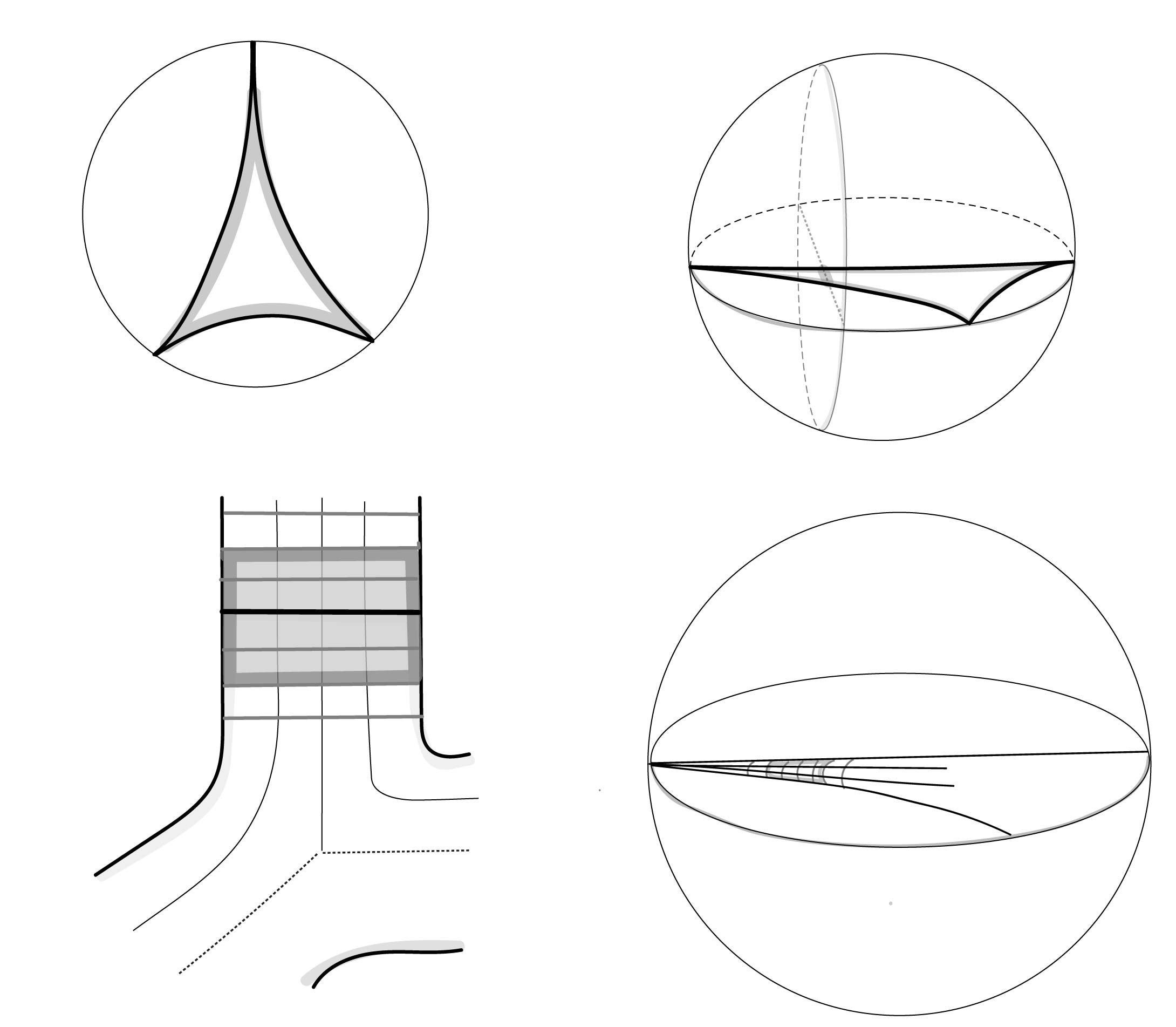} 
\put(10, 75){\textcolor{black}{\small \contour{white}{$\H^2$}}}
\put(28, 66){\textcolor{black}{\small \contour{white}{$m_0$}}}  
\put(12, 64){\textcolor{black}{\small \contour{white}{$m_1$}}}   
\put(73, 75){\textcolor{black}{\small \contour{white}{$\H^3$}}}
\put(22, 61){\textcolor{black}{\small \contour{white}{$\Delta$}}}   
\put(80., 61.5){\textcolor{darkgray}{\small \contour{white}{$\Delta$}}}  
\put(75, 66){\textcolor{black}{\small \contour{white}{$m_0$}}}   
\put(15, 23){\textcolor{black}{\small \contour{white}{$\ell_0$}}}   
\put(79, 24.7){\textcolor{darkgray}{\small \contour{white}{$\Ep_q\ell_0$}}}   
\put(73, 59){\textcolor{black}{\small \contour{white}{$m_1$}}}   
\put(36.5, 27){\textcolor{black}{\small \contour{white}{$\ell_1$}}}   
\put(72, 16){\textcolor{darkgray}{\small \contour{white}{$\Ep_q\ell_1$}}}   
\put(25, 16){\textcolor{black}{ \contour{white}{$Q_{j, h}^s$}}}   
\put(25, 36){\textcolor{black}{\small \contour{white}{$e$}}}
\put(7, 36){\textcolor{black}{\contour{white}{$E_q$}}}
\put(25, 31){\textcolor{darkgray}{\small \contour{white}{$R_e$}}}
\put(63, 24){\textcolor{darkgray}{\small \contour{white}{$\Ep_q R_e$}}}
\put(73, 31){\textcolor{black}{\small \contour{white}{$\H^3$}}}

 \put(45,21){\color{black}\vector(1,0){9}}
 \put(45,65){\color{black}\vector(1,0){9}}

      \end{overpic}
\caption{The $\iota_{j, h}^s$-images of horizontal edges of $Q_{j, s}^s$ are almost parallel to the horocyclic lamination of $C_q$.}\Label{fHorizontalEdgesAndHorocyclicLamination}
\end{figure}

(\ref{iAlmostIsomtricEmbedding})
For every $R > 0$,  if  $j > 0$ and $s > 0$ are sufficiently large, then the $r$-neighborhood of $\bdr Q_{j, h}^s$ has a $\iota_{j, h}^s$-image in $(\C, q =  \frac{z}{(d \sqrt{2})^2} dz^2)$ whose distance from $0$ is at least $R$.
Therefore, by 
\cite[Proposition 4.9]{Baba-25}, the developing map $f_q$ on the $r$-neighborhood is well approximated by the exponential map. 
Hence, (\ref{iVirticallyParallelToThurstonLamination})
 and  (\ref{iHorizontallyParallelToHorocyclicLamination}) imply the desired bilipschitz property. 
\end{proof}

Let $\iota_\phi\col C_q \to E_q$ be the identification map given by the Schwarzian parametrization $C_q \cong (\C, q)$.
We have seen that $\iota_\phi$ embeds $Q_{j, h}^s$ into $\C$ so that its image is bounded Hausdorff distance from $\QQQ_{j, h}^s$ (\Cref{BoundedHausdorffDistance}), and moreover they are close in a $C^1$-manner (\Cref{AlomstExponentialMap}). 
\begin{lemma}
For every $\ep > 0$, there are $J_\ep > 0$ and $s_\ep > 0$ such that, if $j > J_\ep$ and $s > s_\ep$, then 
we can modify $\iota_\phi | Q_{j, h}^s$, with respect to the (singular) Euclidean metric $E_q$ of $(\C, q)$,  by a $(1 - \ep, 1 + \ep)$-bilipschitz  mapping so that
\begin{enumerate}
\item $Q_{j, h}^s$ is identified with $\QQQ_{j, h}^s$ by a $(1 + \ep)$-quasiconformal mapping, and \Label{iImageAdjustment}
\item  the boundary of $Q_{j, h}^s$ is identified with the boundary of $\QQQ_{j, h}^s$ by the edgewise linear mapping $\eta_{j, h}^s$, with respect to arc length. \Label{iBoundaryMatching}
\end{enumerate}
\end{lemma}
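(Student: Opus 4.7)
The plan is to take $\iota_\phi|_{Q_{j,h}^s}$, which by \Cref{BoundedHausdorffDistance} and \Cref{AlomstExponentialMap} already sends $Q_{j,h}^s$ almost isometrically (up to the factor $\frac{1}{d\sqrt{2}}$) into $(\C,q)$ with image in bounded Thurston neighborhood of $\QQQ_{j,h}^s$, and compose it with a bilipschitz adjustment supported near $\bdr Q_{j,h}^s$ which (i) moves $\iota_\phi(\bdr Q_{j,h}^s)$ onto $\bdr\QQQ_{j,h}^s$ via $\eta_{j,h}^s$ and (ii) extends smoothly into the interior without destroying the near-isometric property.

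First, fix $r>0$ large enough that $b/r<\ep$, where $b$ is the Hausdorff constant from \Cref{BoundedHausdorffDistance}, then take $J_\ep$ and $s_\ep$ so large that \Cref{AlomstExponentialMap} applies with parameter $r$. Let $N_r \coloneqq N_r(\bdr Q_{j,h}^s)\subset Q_{j,h}^s$ be the inward collar of depth $r$; by \Cref{AlomstExponentialMap}(\ref{iAlmostIsomtricEmbedding}), $\iota_\phi|_{N_r}$ is a $\bigl(\tfrac{1}{d\sqrt{2}}-\ep,\tfrac{1}{d\sqrt{2}}+\ep\bigr)$-bilipschitz embedding into $E_q$. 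On each edge $e$ of $Q_{j,h}^s$ with corresponding edge $e'$ of $\QQQ_{j,h}^s$, items (\ref{iVirticallyParallelToThurstonLamination}) and (\ref{iHorizontallyParallelToHorocyclicLamination}) of the same proposition say that the tangent field of $\iota_\phi(e)$ is $\ep$-aligned with the tangent field of $e'$. Hence the nearest-point projection of $\iota_\phi(e)$ onto $e'$ (in $E_q$) is a $(1\pm O(\ep))$-bilipschitz surjection, and a subsequent linear reparametrization along $e'$ matches the piecewise linear mapping $\eta_{j,h}^s$ at the cost of another $(1\pm O(\ep))$-factor, since the relative length defect between $\iota_\phi(e)$ and $e'$ is at most $b$ while the length of $e'$ diverges as $j,s\to\infty$.

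Next, propagate this boundary correction through the collar. Parametrize $N_r$ by $(y,t)$ with $y\in \bdr Q_{j,h}^s$ and $t\in[0,r]$ the distance to the boundary (in the $\iota_\phi$-pulled-back Euclidean metric); let $\xi(y)$ denote the displacement vector at $\iota_\phi(y)$ from $\iota_\phi(\bdr Q_{j,h}^s)$ to the already-constructed boundary target on $\bdr\QQQ_{j,h}^s$. Since $|\xi(y)|\leq b$, the straight-line interpolation $(y,t)\mapsto \iota_\phi(y,t) + (1-t/r)\,\xi(y)$ differs from $\iota_\phi$ by a map whose derivative differs from the identity by at most $O(b/r)=O(\ep)$ (thanks to the $\ep$-alignment of the tangent directions, the tangential component of $\partial_y \xi$ is also $O(\ep)$). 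Composed with $\iota_\phi|_{N_r}$, this produces a map from $N_r$ to a one-sided tubular neighborhood of $\bdr\QQQ_{j,h}^s$ in $\QQQ_{j,h}^s$ which is $\bigl(\tfrac{1}{d\sqrt{2}}-O(\ep),\tfrac{1}{d\sqrt{2}}+O(\ep)\bigr)$-bilipschitz and realizes $\eta_{j,h}^s$ on the outer boundary. On the complement $Q_{j,h}^s\setminus N_r$ we keep $\iota_\phi$; the two definitions agree at $t=r$, so after rescaling by $d\sqrt{2}$ (which converts the bilipschitz factor $1/(d\sqrt{2})\pm O(\ep)$ into $1\pm O(\ep)$) we obtain the desired $(1+\ep)$-quasiconformal identification $Q_{j,h}^s\to\QQQ_{j,h}^s$.

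The main obstacle is confirming that the interpolation in the collar remains $(1+\ep)$-quasiconformal (in particular, that the normal and tangential derivatives of the correction are both $O(\ep)$); this is where the assumption $r\gg b$ is used in an essential way, since the normal derivative of the interpolation is exactly $\xi(y)/r$, and the tangential derivative is controlled by the $\ep$-near-parallelism provided by \Cref{AlomstExponentialMap}. Once this is in place, claim (\ref{iImageAdjustment}) follows from the composition being a $(1+\ep)$-quasiconformal homeomorphism, and (\ref{iBoundaryMatching}) follows from the boundary construction.
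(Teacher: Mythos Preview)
Your approach is essentially the same as the paper's: both use the bounded Hausdorff distance (\Cref{BoundedHausdorffDistance}) and the $\ep$-alignment of tangent directions (\Cref{AlomstExponentialMap}) to build a linear collar interpolation between the identity on an inner hexagonal boundary and the target boundary map $\eta_{j,h}^s$, leaving the common core untouched. The paper phrases this as a map $f\colon N_R\to\NNN_R$ between product-structured hexagonal annuli in $E_q$, linear on each fiber $\{h\}\times[0,1]$, whereas you phrase it additively via a displacement field $\xi(y)$ damped by $1-t/r$; these are the same construction up to reparametrization of the collar.

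Two minor remarks. First, your ``rescaling by $d\sqrt{2}$'' step is unnecessary: a $(c-\ep,c+\ep)$-bilipschitz map is already $(1+O(\ep/c))$-quasiconformal regardless of the constant $c$, so there is nothing to rescale. Second, your distance-to-boundary parametrization $(y,t)$ is ambiguous near the six corners of the hexagon; the paper avoids this by using the product structure that connects parallel edges by straight segments, which you may want to adopt to make the collar genuinely a $C^1$ product. Neither point affects the correctness of the argument.
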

\begin{proof}
We identify $Q_{j, h}^s$ with the image of $Q_{j, h}^s$ in $E_q$ by $\iota_{j, h}^s$.
Let $H$ denote the (hexagonal) boundary of the hexagonal branch $Q_{j, h}^s$ in $E_q$. 
For $R > 0$, let $N_R$ be  the $R$-neighborhood of the hexagonal boundary $H$ in $Q_{j, h}^s$ in the Euclidean metric $E_q$ (\Cref{fProductStructuresOnHexagonalCylinders}, left).
Then $N_R$ is topologically a cylinder. 
The outer boundary $H$ is identified with the inner boundary of $N_R$ by the edge-wise linear homeomorphism which identifies a pair of parallel edges. 
Thus $N_R$ has a natural product structure $H \times [0,1]$ by linearly extending this identification of the hexagonal boundary components; for each $h \in H$, the segment $h \times [0, 1]$ is a line segment in $N_R$ that connects a pair of identified points on the corresponding inner and outer edges.  
Let $b > 0$ be the Hausdorff distance bound in  \Cref{BoundedHausdorffDistance}. 
Then, if $R > 2b$, then $\iota_\phi(\QQQ_{j, h}^s)$ contains the inner boundary of $N_R$.

Let $\NNN_R$ be the region in $E_q$ bounded by the boundary hexagon of $\iota_\phi (\QQQ_{j, h}^s)$ and  the inner boundary $H \times \{0\}$ of $N_R$ (\Cref{fProductStructuresOnHexagonalCylinders}).
We shall define a natural product structure $H \times [0,1]$ on $\NNN_R$, such that, via this product structure $\NNN_R = H \times [0,1]$, 
\begin{itemize}
\item  the identification $\NNN_R = H \times [0,1] = N_R$ agrees with the identity on the inner hexagonal boundary and $\eta_{j, h}^s\col \bdr Q_{j, h}^s \to \bdr \QQQ_{j, h}^s$ on the outer hexagonal boundary, and 
\item $h \times [0,1]$ are straight line segments connecting the outer and inner hexagonal boundary for all $h \in [0,1]$. 
\end{itemize}

   If $j > 0$ and $s > 0$ are sufficiently large, 
the Hausdorff distance between the outer boundary hexagon of $Q_{j, h}^s$ and the boundary hexagon of $\iota_\phi \QQQ_{j, h}^s$ is less than a fixed constant $b$ by \Cref{BoundedHausdorffDistance}, and then the corresponding edges are $\ep$-almost parallel by \Cref{AlomstExponentialMap} w.r.t the singular Euclidean metric $E_q$.

Therefore we let $f\col N_R \to \NNN_R$ be the mapping such that 
 \begin{itemize}
 \item the restriction of $f$ to the outer boundary of $N_R$ is equal to $\eta_{j, h}^s$;
 \item the restriction of $f$ to the inner boundary of $N_R$ is the identity map; 
 \item $f$ is linear of each $\{h\} \times [0,1]$ for all $h \in H$. 
\end{itemize}
The edges of the hexagons are long if $j > 0$ and $s > 0$ are large. 
Therefore
\Cref{AlomstExponentialMap} implies that following. 
 \begin{claim}
 For every $\ep > 0$, there are $J_\ep > 0$ and $s_\ep > 0$, such that if $ j > J_\ep$ and $s > s_\ep$, then 
$f$ is a $(1 + \ep)$-quasiconformal mapping. 
\end{claim}
  \begin{figure}
\begin{overpic}[scale=.03
] {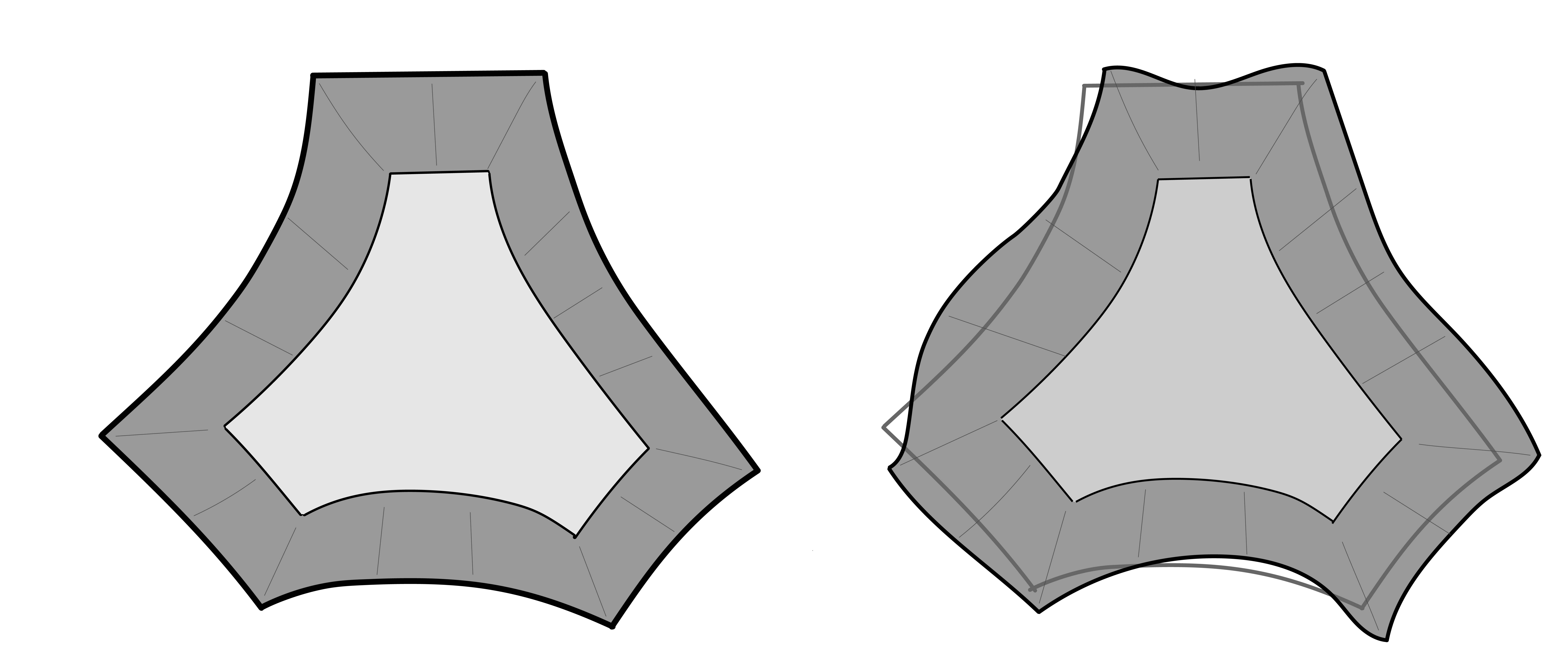} 
 \put(25, 33){\textcolor{darkgray}{\small \contour{white}{$N_R$}}}   
\put(73, 32){\textcolor{darkgray}{\small \contour{white}{$\NNN_R$}}}   
\put(3, 28){\textcolor{black}{\small \contour{white}{$\iota_{j, h}^s(Q_{j, h}^s)$}}}   
\put(51, 28){\textcolor{black}{\small \contour{white}{$\iota_\phi(\QQQ_{j, h}^s)$}}}   
\put(88, 35){\textcolor{Black}{\small \contour{white}{$E_q$}}}   
\put(38, 35){\textcolor{Black}{\small \contour{white}{$(\C, E_q)$}}}   
      \end{overpic}
\caption{The product structures on hexagonal cylinders.}\label{fProductStructuresOnHexagonalCylinders}
\end{figure}
\end{proof}

We completed the proof of \Cref{AlmostConformalToModelHexagon}.
\subsubsection{Hyperbolic Hexagons are almost conformal to model projective Hexagons}
Let $ \mathbf{Z}_{j, k}^s =  (\phi_{j, h}^s)^{-1} Z_{j, k}^s$, the set of points on the boundary of $\QQQ_{j, h}^s$ corresponding to the vertices of the polygonal decomposition of $E_\infty(s)$.
For every $\ep > 0$, there are $J_\ep > 0$ and $s _\ep > 0$, such that, for every hexagonal branch, we 
 constructed a $(d - \ep, d + \ep)$-bilipschitz mapping 
$\phi_{j, h}^s \col \bdr \QQ_{j, h}^s \to \bdr Q_{j, h}^s$ which is linear on each segment of $\bdr \QQ_{j, h} \minus \mathbf{Z}_{j, k}^s$ and  a $(\frac{1}{d\sqrt{2}} - \ep, \frac{1}{d\sqrt{2}} + \ep)$-bilipschitz mapping   $\eta_{j, h}^s\col \bdr Q_{j, h}^s \to \bdr \QQQ_{j, h}^s$ which is linear on each segment between adjacent points of $\mathbf{Z}_{j, k}^s$.

\begin{figure}

\begin{tikzpicture}[
  every node/.style={inner sep=5pt},
  arrow/.style={->},
    hookarrow/.style={{Hooks[right]}->},
]

\node (QQ) at (0, 0) {$\QQ_{j, h}^s $};
\node (Q) at (3.5, 0) {$Q_{j, h}^s$};
\node (QQQ) at (0, -1.5) {$\QQQ_{j, h}^s$};
\node (C) at (0.9, -1.5) {$\subset C_q$};
\node (CC) at (3.5, -1.5) {$(\C, z dz^2)$};

\draw[arrow] (QQ) -- (Q) 
	node[midway, above] {\tiny $(d - \ep, d + \ep)$-bilip.}
         node[midway, below] {\small $\xi_{j, h}^s$};
\draw[arrow ] (QQ) -- (QQQ) 
	node[midway, left] {\tiny $ (1 + \ep)-q.c.$}
	node[midway, right] {\tiny  $\phi_{j, k}^s$};
\draw[arrow] (Q) -- (CC)
	 node[midway, right] {\tiny $(\frac{1}{d \sqrt{2}} - \ep, \frac{1}{d \sqrt{2}} + \ep)$-bilip.}
	 node[midway, left] { $\iota_{j, h}^s$};
\draw[arrow] (C) -- (CC) node[midway, above] {\tiny $\times \sqrt{2}$};

\end{tikzpicture}

\caption{The commutative diagram for the construction of an almost conformal mapping $\QQ_{j, h}^s \to Q_{j, h}^s $.}\label{fCommuativeDiagram}
\end{figure}

We shall construct an almost conformal mapping from $\QQ_{j, h}^s$ to $\QQQ_{j, h}^s$ which continuoutly extends the piecewise linear homeomorphism $\eta_{j, h}^s \circ \phi_{j, h}^s \col \bdr \QQ_{j, h}^s \to \bdr \QQQ_{j, h}^s$.  
Those singular points are only on the vertical edges of $\bdr \QQ_{j, h}^s$  and the number is uniformly bounded from above by $2 (2 g- 2)$, the number of the singular points on $E_\infty(s)$, where $g$ is the genus of the surface.

\begin{lemma}
For every $\ep > 0$, there are $J_\ep > 0$ and $s_\ep > 0$, such that $j > J_\ep$ and $s > s_\ep$,  then 
there is a $(1 + \ep)$-quasiconformal mapping
$\xi_{j, h}^s\col Q_{j, h}^s \to \QQQ_{j, h}^s$ continuously extending the edgewise linear mapping  $\eta_{j, h}^s\col  \bdr Q_{j, h}^s \to \bdr \QQQ_{j, h}^s$.
\end{lemma}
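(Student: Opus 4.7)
The plan is to compare the hyperbolic hexagonal branch $\QQ_{j,h}^s$ in the grafted surface $\Gr_{L_\infty}^{\exp(s)/d}\sigma_\infty$ directly with the model projective hexagon $\QQQ_{j,h}^s\subset C_q$, using that both hexagons carry the same anatomy: a central complementary ideal-triangle region surrounded by three ``wings'' that are subsets of Euclidean half-planes grafted along the vertical Thurston lamination. On the grafted-surface side, $\QQ_{j,h}$ is a hexagonal neighborhood of the tripod vertex of $\tau_j$ at the center of the complementary ideal triangle $\Delta$ of $L_\infty$, extended into the three adjacent branches of the $\ep_j$-nearly straight train track; after grafting by $\frac{\exp s}{d}L_\infty$, the three wings become long Euclidean strips with vertical Thurston lamination and horizontal horocyclic foliation. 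The model hexagon $\QQQ_{j,h}^s=\kap^{-1}(Q_{j,h}^s)$ has exactly the same decomposition, with the ideal triangle $\Delta'$ in the role of $\Delta$ and the three Euclidean half-planes of $C_q\minus\Delta'$ in the role of the wings.

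First I would construct a natural identification $\xi_{j,h}^s\col \QQ_{j,h}^s\to\QQQ_{j,h}^s$ piece by piece. Match the complementary ideal-triangle region of $\QQ_{j,h}^s$ with the corresponding subregion of $\Delta'$ in $\QQQ_{j,h}^s$ by an isometry of $\H^2$ identifying the two tripods; on each wing, use the canonical isometric identification of Euclidean strips preserving vertical (Thurston) and horizontal (horocyclic) leaves, rescaled so that the transverse measures agree. By the choice of $d$ and the normalizing factor $\sqrt{2}$ built into $\kap$, this rescaling is trivial and the identification is conformal on each wing, and isometric on the central triangle.

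Second, I would match the prescribed piecewise-linear boundary data $\phi_{j,h}^s\col\bdr\QQ_{j,h}^s\to\bdr\QQQ_{j,h}^s$. By \Cref{BilipschitzOneSkeleton}, $\phi_{j,h}^s$ is $(d-\ep,d+\ep)$-bilipschitz in the vertical direction and nearly so in the horizontal direction, and by \Cref{AlomstExponentialMap} the vertical and horizontal edges of $\QQ_{j,h}^s$ are tangentially $\ep$-close to leaves of Thurston's lamination and the horocyclic foliation respectively. I would then apply the collar-interpolation trick from the proof of \Cref{AlmostConformalToModelHexagon}: on a wide collar of $\bdr\QQ_{j,h}^s$, linearly interpolate between the natural identification on the inner boundary and $\phi_{j,h}^s$ on the outer boundary. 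Since every edge of $\bdr\QQ_{j,h}^s$ grows to infinity with $s$, the additional dilatation incurred by this interpolation is at most $\ep$.

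The main obstacle is controlling the quasi-conformal dilatation uniformly in a neighborhood of the three concave ``corners'' of $\QQ_{j,h}^s$, where the central ideal-triangle region meets each wing; this is where the $\CP^1$-structure on $\QQ_{j,h}^s$ genuinely differs from that on $\QQQ_{j,h}^s$. The resolution uses the same asymptotic analysis of the Epstein surface already invoked in the proof of \Cref{AlomstExponentialMap} (via \cite[Lemma 3.4]{Dumas18HolonomyLimit}): as $s\to\infty$, the $\CP^1$-structure on a horocyclic neighborhood of the complementary triangle in $\QQ_{j,h}^s$ converges in the $C^1$-topology to the $\CP^1$-structure on the corresponding neighborhood of $\Delta'$ in $\QQQ_{j,h}^s$, producing a $(1+\ep)$-quasi-conformal identification near the corners for $j$ and $s$ sufficiently large. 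Gluing with the identifications from the ideal triangle, the wings, and the boundary collar then yields the desired $\xi_{j,h}^s$.
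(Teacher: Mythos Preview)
Your overall skeleton matches the paper's: decompose both $\QQ_{j,h}^s$ and $\QQQ_{j,h}^s$ into a central piece of an ideal triangle plus three ``wings'', then match piece by piece. But two of your steps are off.

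First, the wings of $\QQ_{j,h}^s$ are \emph{not} Euclidean strips, and the identification on them is not conformal. The grafting is along the irrational lamination $L_\infty$, so in the Thurston metric each wing is a mixture of hyperbolic and Euclidean pieces, not a single Euclidean rectangle. The paper handles this exactly by reusing the rectangular argument of \S\ref{sAlmostConformalRectangles} (Lemma \ref{UniformizaingGraftedRectangle}): the horocyclic/geodesic product structure on a grafted rectangular branch gives a $(1-\ep,1+\ep)$-bilipschitz map to a genuine Euclidean rectangle, and that is what one matches with the Euclidean wings of $\QQQ_{j,h}^s\setminus\HHH_{j,h}^s$. Your sentence ``the identification is conformal on each wing, and isometric on the central triangle'' therefore hides the actual content of the lemma.

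Second, the Epstein-surface analysis you invoke for the ``corners'' is both unnecessary and inapplicable here. Proposition \ref{AlomstExponentialMap} and \cite[Lemma 3.4]{Dumas18HolonomyLimit} concern the embedding of the \emph{Euclidean} hexagon $Q_{j,h}^s$ into $(\C,q)$; they say nothing about the projective structure on the \emph{hyperbolic} hexagon $\QQ_{j,h}^s$. For the present lemma there is no mismatch at the corners at all: in both $\QQ_{j,h}^s$ and $\QQQ_{j,h}^s$ the central piece is a portion of an ideal hyperbolic triangle bounded by horocyclic arcs, and the paper simply notes that the corresponding vertical edges of $H_{j,h}^s$ and $\HHH_{j,h}^s$ have $(1-\ep,1+\ep)$-bilipschitz lengths, giving a $(1+\ep)$-quasiconformal identification of the central hexagons that glues directly to the wing maps along common vertical edges. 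No developing maps or Epstein surfaces enter.

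Finally, the collar-interpolation step you describe (adjusting to the prescribed piecewise-linear boundary map $\eta_{j,h}^s\circ\phi_{j,h}^s$) is not part of this lemma; the lemma only asks that $\xi_{j,h}^s$ respect the edge types, and the boundary matching is the content of the subsequent Corollary.
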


\begin{proof}
In the hexagon $\QQQ_{j, h}^s$, the complement of the Thurston lamination $\LL_q$,   the hyperbolic hexagon $\mathbf{H}_{j, h}^s$ obtained by cutting by the ideal triangle along horocyclic arcs centered at the vertices. 
Then the complement of  $\mathbf{H}_{j, h}^s$ in  $\QQQ_{j, h}^s$ consists of three Euclidean rectangles in Thurston's metric. 
(\Cref{fDecomposirtionsOfHexagons}.)

Similarly, the hexagon $Q_{j, h}^s$ contains the hyperbolic hexagon $H_{j, h}^s$ obtained by cutting the ideal triangle $\Delta$ along three horocyclic arcs. 
For every $\ep > 0$, there are $J_\ep > 0$ and $s_\ep > 0$, such that if $j > J_\ep$ and $s > s_\ep$, then 
then three components of $Q_{j, h}^s \minus H_{j, h}^s$ are, with respect to Thurston's metric,  $(1 - \ep, 1 + \ep)$-bilipschitz to the corresponding three complementary Euclidean rectangles of $\QQQ_{j, h}^s \minus \HHH_{j, h}^s$, as described in \S\ref{sAlmostConformalRectangles}, using the product structure given by the horocyclic foliation and the orthogonal geodesic foliation.
Recall that this map linearly preserves horizontal foliation, and it is linear with respect to the vertical hyperbolic distance. 

For every $\ep > 0$, 
if $J_\ep > 0$ is sufficiently large, then the corresponding  vertical edges $\HHH_{j, h}^s$ and $H_{j, h}^s$ are $(1 - \ep, 1 + \ep)$-bilipschitz, for sufficiently large $s_\ep$.
Moreover, we can in addition assume that the difference of vertical edge length is less than $\ep$, since the estimate is quadratic in the distance from the zero set (\cite{Epstein84}, \cite[Lemmas 2.6, 3.4]{Dumas18HolonomyLimit}; c.f. \cite[Lemma 3.1]{Baba_23}).  
Therefore there is a $(1 + \ep)$-quasiconformal mapping $H_{j, h}^s \to\HHH_{j, h}^s$, preserving vertical and horizontal edges such that it is linear on each edge of $H_{j, h}^s$ with respect to the hyperbolic length. 

\begin{figure}
\begin{overpic}[scale=.15
] {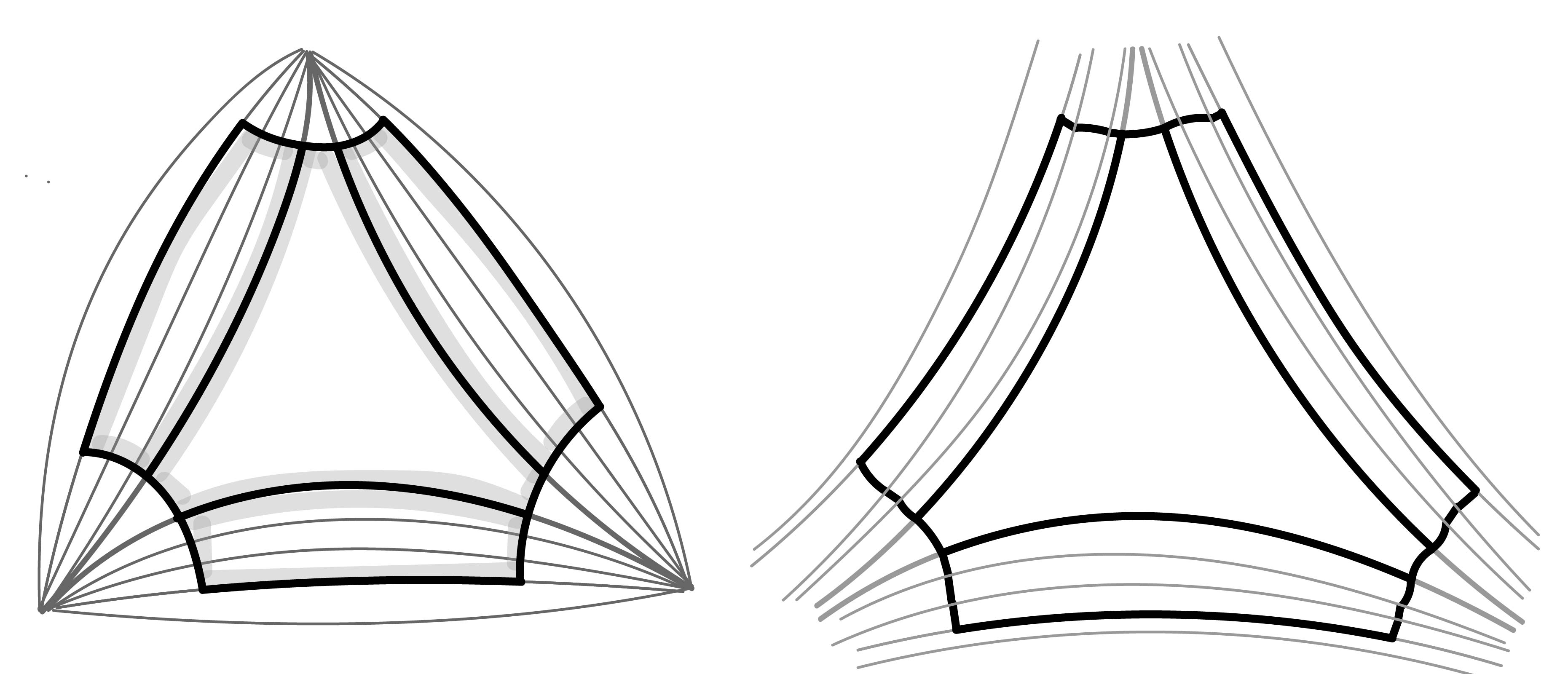} 
 \put(12, 28){\textcolor{black}{\small \contour{white}{$\QQQ_{j, h}^s$}}}  
 \put(67, 30){\textcolor{black}{\small \contour{white}{$Q_{j, h}^s$}}}  
 \put(19, 17){\textcolor{Black}{\small \contour{white}{$\HHH_{j, h}^s$}}}  
 \put(70, 17){\textcolor{Black}{\small \contour{white}{$H_{j, h}^s$}}}  
      \end{overpic}
\caption{Mapping blue hexagon to blue hexagons and rectangles to rectangles.}\Label{fDecomposirtionsOfHexagons}
\end{figure}

We have constructed  $(1 + \ep)$-quasiconformal mappings from rectangle and hexagon pieces of  $Q_{j, h}^s$ to corresponding rectangle and hexagon pieces of  $\QQQ_{j, h}^s$ so that they agree along their common vertical edges. 
Thus, by gluing those quasi-conformal mappings along vertical edges, we obtain a desired $(1 + \ep)$-quasiconformal mapping. 
\end{proof}

 \begin{corollary}
  We can in addition assume that the $(1 + \ep)$-quasiconformal mapping $\xi_{j, h}^s\col Q_{j, h}^s \to \QQQ_{j, h}^s$  coincides with $\eta_{j, h}^s \circ \phi_{j, h}^s$ on the boundary $\bdr Q_{j, h}^s$. 
\end{corollary}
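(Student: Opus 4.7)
The plan is to leave $\xi_{j,h}^s$ unchanged away from the boundary and absorb the discrepancy between its current boundary parametrization and the prescribed one $\eta_{j,h}^s \circ \phi_{j,h}^s$ in a thin collar, via a self-homeomorphism with dilatation tending to $1$ as $j,s\to\infty$. This is the analogue, for hexagonal branches, of the edge-reparametrization argument already used at the end of Section \ref{sAlmostConformalRectangles} for rectangular branches.

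First, I would compare the two boundary maps edge by edge. The construction of $\xi_{j,h}^s$ in the preceding lemma is compatible with the edge structure: each edge of $\partial Q_{j,h}^s$ is sent homeomorphically onto the corresponding edge of $\partial \QQQ_{j,h}^s$, and on each edge the parametrization is linear with respect to arc length in the ambient (Euclidean or Thurston) metric. The prescribed boundary map $\eta_{j,h}^s \circ \phi_{j,h}^s$ is also edgewise linear in arc length. By \Cref{AlomstExponentialMap}(\ref{iAlmostIsomtricEmbedding}) together with the one-skeleton bilipschitz estimates from \Cref{BilipschitzOneSkeleton}, both parametrizations pull back the same arc-length ratio up to $(1-\epsilon',1+\epsilon')$; hence on each boundary edge there is a $(1-\epsilon',1+\epsilon')$-bilipschitz self-reparametrization $r$ (fixing endpoints) with $\xi_{j,h}^s \circ r = \eta_{j,h}^s \circ \phi_{j,h}^s$, and $\epsilon'\searrow 0$ as $j,s\to\infty$.

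Next I would introduce a thin collar neighborhood $N \subset Q_{j,h}^s$ of $\partial Q_{j,h}^s$ with a product structure. On the three neighborhoods of the horizontal edges I use the horocyclic foliation $\lambda_\infty$ transverse to the edges (as in \S\ref{sAlmostConformalRectangles}); on the three neighborhoods of the vertical edges I use the geodesic foliation of $L_\infty$. Near the six vertices of $\partial Q_{j,h}^s$ these two product structures have to be reconciled, but since the edges are long and the vertex neighborhoods have uniformly bounded geometry, one can interpolate between the two foliations on a small fixed-size disk around each vertex with controlled dilatation. This gives a well-defined product $N \cong \partial Q_{j,h}^s \times [0,1]$ whose inner boundary is $\partial Q_{j,h}^s \times \{1\}$.

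Then I would define a self-homeomorphism $g\colon N \to N$ that is the identity on the inner boundary $\partial Q_{j,h}^s \times \{1\}$ and equals $r$ on the outer boundary $\partial Q_{j,h}^s \times \{0\}$, interpolating linearly in the $[0,1]$-fiber direction. Since $r$ is $(1-\epsilon',1+\epsilon')$-bilipschitz with $\epsilon'\searrow 0$, the map $g$ is $(1+\epsilon'')$-quasi-conformal on each fiberwise region, with $\epsilon''\to 0$ as $j,s\to\infty$. Extending $g$ by the identity on $Q_{j,h}^s \setminus N$ and replacing $\xi_{j,h}^s$ by $\xi_{j,h}^s \circ g$ yields a $(1+\epsilon)$-quasi-conformal map whose restriction to $\partial Q_{j,h}^s$ is exactly $\eta_{j,h}^s \circ \phi_{j,h}^s$.

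The main technical obstacle is the vertex regions, where the collar's product structure is not canonical and one must take care not to introduce uncontrolled dilatation. This is handled by the fact that the edges of $\partial Q_{j,h}^s$ become arbitrarily long as $j,s\to\infty$ while there are only six vertices of bounded local geometry, so a uniformly bounded-dilatation adjustment on fixed-size disks around the vertices contributes negligibly to the overall dilatation after absorption into the $(1+\epsilon)$-estimate; alternatively, one can deform $r$ so that it is affine of slope exactly $1$ in a small neighborhood of each vertex and perform the collar interpolation only on the smooth part of each edge.
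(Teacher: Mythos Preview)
Your approach is correct and essentially equivalent in spirit to the paper's, but the paper carries out the modification in a slightly different (and somewhat cleaner) way. Instead of introducing a generic collar neighborhood in the domain and pre-composing with an interpolating self-map $g$, the paper \emph{post-composes} $\xi_{j,h}^s$ with explicit shears $\zeta\colon \RRR \to \RRR$ supported on the three Euclidean rectangles $\RRR$ of $\QQQ_{j,h}^s \setminus \HHH_{j,h}^s$ in the target: each $\zeta$ is linear on horizontal leaves, fixes the inner vertical edge of $\RRR$, and moves the outer vertical edge $v$ so as to match the prescribed map; horizontal edges are handled afterward by a similar shear on rectangular neighborhoods. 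The advantage of the paper's method is that it reuses the rectangle/hexagon decomposition of $\QQQ_{j,h}^s$ already built in the preceding lemma, so no new product structure is needed and the six vertex regions require no separate discussion. Your collar argument is more portable but costs you the extra vertex-patching paragraph.

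One small point to tighten: you assert that both boundary maps are ``edgewise linear in arc length,'' but $\eta_{j,h}^s \circ \phi_{j,h}^s$ is only \emph{piecewise} linear on each vertical edge, with break points at the extra vertices $Z$ of the global polygonal decomposition (at most $2(2g-2)$ of them). This does not affect your conclusion, since the number of break points is uniformly bounded and the linear pieces become long as $j,s\to\infty$, so your reparametrization $r$ is still $(1-\ep',1+\ep')$-bilipschitz; but it is worth stating correctly.
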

\begin{proof}   
We first modify $\xi_{j, h}^s$ so that it coincides with  $\eta_{j, h}^s \circ \phi_{j, h}^s$ on vertical edges of $Q_{j, h}^s$. 
Let $v$ be a vertical edge of $\QQQ_{j, h}^s$. 
Let $\mathbf{R}$ be the corresponding rectangular component of $\QQQ_{j, h}^s \minus \HHH_{j, h}^s$ such that $v$ is also a vertical edge of $\RRR$ (\Cref{fSupportForModification}). 

    There is a unique piecewise linear map from $\zeta\col \RRR \to \RRR$ such that 
\begin{itemize}
\item the restriction of $\xi_{j, h}^s$ on $v$ coincides with the composition of $\eta_{j, h}^s \circ \phi_{j, h}^s$ with $\zeta$ on $v$;
\item $\zeta$ is linear on each horizontal leaf of $\RRR$;
\item $\zeta$ is the identity map on the vertical edge of $\RRR$ opposite to $v$. 
\end{itemize}
Then, for every $\ep > 0$,  there are $J_\ep > 0$ and $s_\ep > 0$, such that, if  $j > J_\ep$ and  $s > s_\ep$, then $\zeta$ is a $(1 - \ep, 1 + \ep)$-bilipschitz mapping, as $\RRR$ has sufficiently long vertical and horizontal edges. 
Then, by modifying $\xi_{j, h}^s \col  Q_{j, h}^s \to \QQQ_{j, h}^s$ by  post-composing $\xi_{j, h}^s$ with $\zeta\col \RRR \to \RRR$,   $\xi_{j, h}^s$ coincides with $\eta_{j, h}^s \circ \phi_{j, h}^s$ on the vertical edge $v$. 

By applying this to all vertical edges of $\QQQ_{j, h}^s$, we can modify $\xi_{j, h}^s \col  Q_{j, h}^s \to \QQQ_{j, h}^s$ so that  $\xi_{j, h}^s$ coincides with $\eta_{j, h}^s \circ \phi_{j, h}^s$ on all three vertical edges of $Q_{j, h}^s $.
Then,  there are $J_\ep > 0$ and $s_\ep > 0$, such that, if  $j > J_\ep$ and  $s > s_\ep$, then this modified mapping $\xi_{j, h}^s$ is still $(1 + \ep)$-quasiconformal after this modification. 

Similarly, we can modify $\xi_{j, h}^s$ along appropriately large rectangular regular neighborhoods of horizontal edges of  $\QQQ_{j, h}^s$, so that $\xi_{j, h}^s$ also coincides with $\eta_{j, h}^s \circ \phi_{j, h}^s$ along horizontal edges. 
\begin{figure}
\begin{overpic}[scale=.15
] {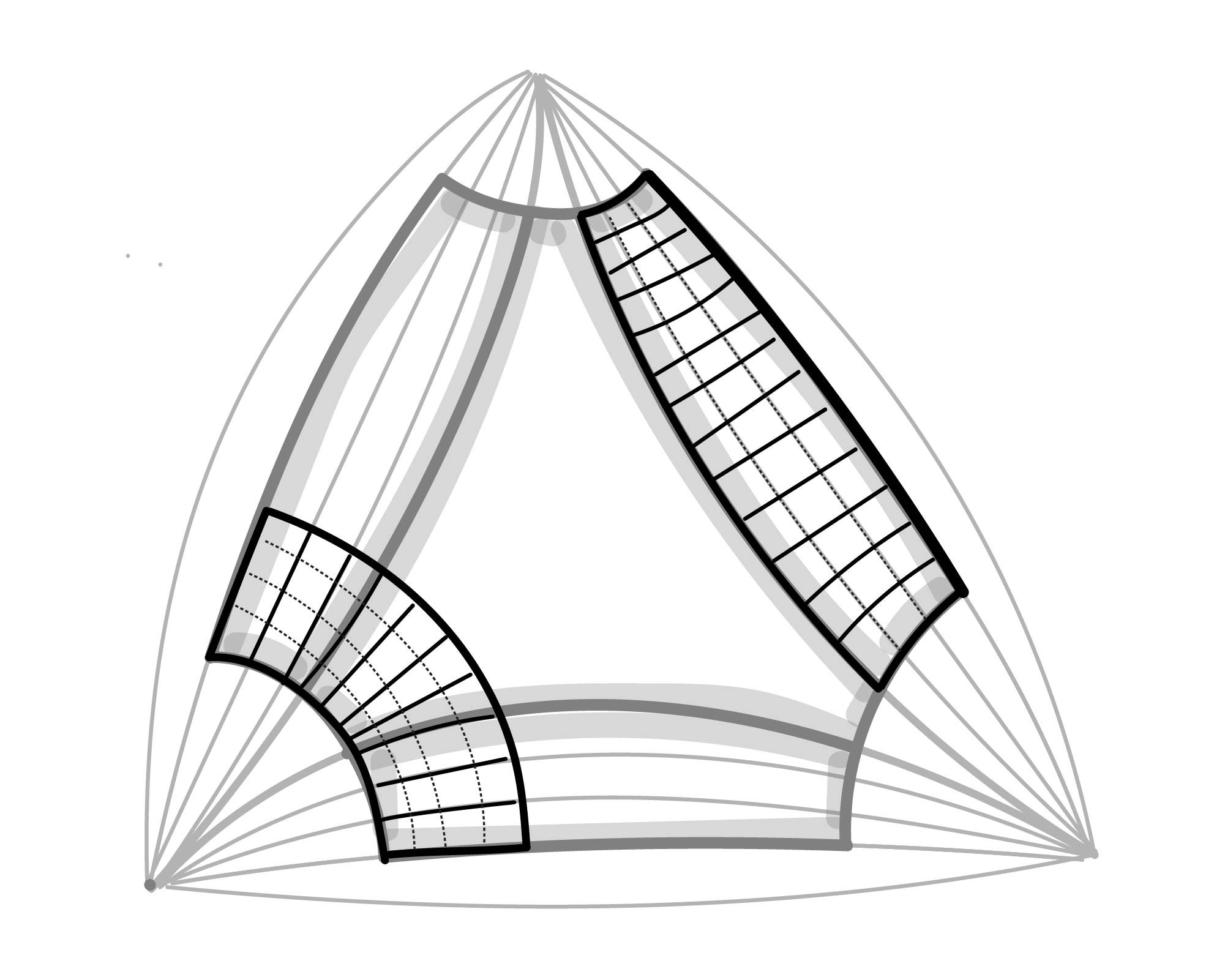}  
\put(60, 44){\textcolor{Black}{\small \contour{white}{$\RRR$}}}   
\put(65, 55){\textcolor{Black}{\small \contour{white}{$v$}}}   
      \end{overpic}
\caption{Reagions in $\QQQ_{j, h}^s$ on which $\xi_{j, h}^s$ is modified. }\label{fSupportForModification}
\end{figure}
\end{proof}

%

 We completed the proof of \Cref{Asymptotic}.   
  
  \section{Uniform asymptoticity}
 We have proved, in \Cref{Asymptotic}, that the limit of the Teichmuller ray $X_\infty\col \R \to \TT$ is asymptotic to the corresponding conformal grafting ray from the same base point $X_\infty(0)$, directly by constructing a quasi-conformal mapping between corresponding points on those rays. 
 
Utilizing this asymptotic property, we, in this section,  show a uniform asymptotic property for the pairs of a Teichmuller ray $X_i\col \R \to \TT$ and a corresponding grafting ray with the same base point $X_i (0)$, such that $X_i\col \R \to \TT$ limits to the Teichmüller ray $X_\infty\col \R \to \TT$ as $i \to \infty$.

Recall that the vertical measured foliation $V_i$ on $X(t_i)$ is normalized by scaling, so that $V_i$ has length one on the corresponding flat surface $E_i $.
 For each $i = 1, 2 \dots, $
 we let $$d_i = \frac{\length_{E_i} V_i}{\length_{\sigma_i} L_i} = \frac{1}{\length_{\sigma_i} L_i} \in \R_{>0}.$$
 Then $d_i \to d$ as $i \to \infty$, since $[E_i]$ converges to $[E_\infty]$ and accordingly $[V_i]$ converges to $[V_\infty]$ as $i \to \infty$. 
 We have the uniform asymptotic pairs of the Teichmüller ray and the grafting ray. 
  \begin{theorem}\Label{UniformAsymptoticity}
For every $\ep > 0$, there are $I_\ep > 0$ $s_\ep > 0$ such that, if $i > I_\ep$ and $s > s_\ep$, then
$$d_\TT(X_i(s), \gr_ {L_i}^{d_i \exp (s)} \sigma_i  ) < \ep.$$
\end{theorem}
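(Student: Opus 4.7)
The plan is to upgrade the construction used for the limit ray in \Cref{Asymptotic} to the approximating family by transporting the polygonal decompositions of $E_\infty$ and $\sigma_\infty$ to the surfaces $E_i$ and $\sigma_i$ via the convergence $\nu_i X(t_i) \to X_\infty$, and then re-running the almost-conformal extension argument with uniform constants.

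First I would fix, once and for all, an index $j = J_\epsilon$ large enough that the limit construction of \S\ref{sAlmostConformalMapping} produces a $(1+\epsilon/3)$-quasi-conformal mapping $\Gr_{L_\infty}^{\exp(s)/d}\sigma_\infty \to E_\infty(s)$ for all $s > s_\epsilon$. Here $T_{J_\epsilon}$ is a sufficiently split Euclidean fat traintrack on $E_\infty$ and $\tau_{J_\epsilon}$ is the corresponding $\epsilon_j$-nearly straight traintrack on $\sigma_\infty$, together with their polygonal refinements into rectangles $R_{J_\epsilon,k}$ and hexagons $Q_{J_\epsilon,h}$ (respectively $\RR_{J_\epsilon,k}$ and $\QQ_{J_\epsilon,h}$).

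Next, because $\nu_i E(t_i) \to E_\infty$ as marked flat surfaces with $\nu_i V_{t_i} \to V_\infty$, and correspondingly $\nu_i \sigma_i \to \sigma_\infty$ with $\nu_i L_i \to L_\infty$ in the Hausdorff (and measured) sense, I can pull back the polygonal decompositions of the limit via the approximating markings to obtain, for each $i \geq I_\epsilon$, a polygonal fat traintrack decomposition of $E_i$ and a compatible polygonal nearly-straight traintrack decomposition of $\sigma_i$. The vertical edge lengths satisfy $\length a_j^{(i)} / \length \alpha_j^{(i)} \to d_i$ (the analogue of \Cref{AlmostIsomtricTraintrackDecompositions} for $i$) with $d_i \to d$, so for $i$ large the bilipschitz bound on one-skeletons from \Cref{BilipschitzOneSkeleton} still holds with constants $(d_i - \epsilon, d_i + \epsilon)$, and the horocyclic edges of the transported nearly-straight traintrack remain of length less than $\epsilon$ because nearly-straightness is an open condition under $C^1$-convergence of laminations.

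Then I would apply the hexagon/rectangle construction of \S\ref{sAlmostConformalRectangles} and \S\ref{sHexagonalBranches} \emph{verbatim} on $E_i$ and $\sigma_i$, using parameter $d_i$ in place of $d$. On rectangular branches, the key input is \Cref{HorocyclicLengthDerivative} together with the fact that horocyclic edges have uniformly small length---both survive the transport since they depend only on the geometry of the nearly-straight traintrack, which converges in $C^1$ to that of $\sigma_\infty$. On hexagonal branches, the model projective hexagon $\QQQ_{J_\epsilon,h}^s$ constructed from $q = z\, dz^2$ is \emph{independent} of $i$, so \Cref{AlmostConformalToModelHexagon} and its consequences transfer immediately; only the identification of the hyperbolic hexagon $\QQ_{J_\epsilon,h}^s$ with $\QQQ_{J_\epsilon,h}^s$ depends on $i$, and again the relevant estimates depend continuously on the nearly-straightness constant. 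Patching these quasi-conformal mappings across the one-skeleton yields a $(1+\epsilon)$-quasi-conformal mapping
\[
\Phi_i^s \colon \Gr_{L_i}^{d_i \exp(s)}\sigma_i \longrightarrow E_i(s)
\]
for all $i > I_\epsilon$ and $s > s_\epsilon$, which bounds the Teichmüller distance by $\tfrac{1}{2}\log(1+\epsilon) < \epsilon$ after adjusting $\epsilon$.

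The main obstacle will be verifying that every estimate in the limit construction is \emph{open} in the convergence $\nu_i E_i \to E_\infty$ and $\nu_i \sigma_i \to \sigma_\infty$: in particular, that the nearly-straightness constant $\epsilon_{J_\epsilon}$ of the transported traintrack, the length bounds on horocyclic edges, the $(1-\epsilon,1+\epsilon)$-bilipschitz control on the rectangle uniformizing maps $\zeta_{j,k}^s$ of \Cref{UniformizaingGraftedRectangle}, and the asymptotic identification of hexagonal branches with model projective hexagons via the exponential asymptotics of \Cref{iAssymptotic} are all stable under small perturbation of the base flat/hyperbolic surface. This is morally clear but requires bookkeeping of two limits: $i \to \infty$ (converging to the limit ray) and then $s \to \infty$ (running down the ray). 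Choosing $J_\epsilon$ first, then $I_\epsilon$, then $s_\epsilon$ in that order decouples the two limits and yields the uniform asymptoticity.
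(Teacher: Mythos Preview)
Your approach is sound and would work, but it differs from the paper's in a way worth noting. You propose to transport the polygonal decompositions to $E_i,\sigma_i$ and then \emph{re-run} the entire almost-conformal extension argument of \S\ref{sAlmostConformalMapping} directly on each approximating pair, checking that every intermediate estimate (horocyclic edge lengths, \Cref{UniformizaingGraftedRectangle}, the hexagon identifications via \Cref{iAssymptotic}, etc.) is open in the convergence $\nu_i E_i\to E_\infty$, $\nu_i\sigma_i\to\sigma_\infty$. You correctly identify this openness bookkeeping as the main obstacle.

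The paper avoids that bookkeeping entirely by \emph{routing through the limit}. It does not rebuild $\Phi_i^s$ from scratch; instead it constructs, from the geometric convergence, two families of $(1-\ep,1+\ep)$-bilipschitz comparison maps
\[
\nu_{i,j}^s\colon E_i(s)\to E_\infty(s)
\qquad\text{and}\qquad
\upsilon_{i,j}^s\colon \Gr_{sL_i}\sigma_i\to \Gr_{sL_\infty}\sigma_\infty
\]
respecting the polygonal decompositions (\Cref{BilipschitzGraftedEuclideanTraintracks}, \Cref{BilipschitzGraftedHyperbolicTraintracks}), and then simply composes
\[
E_i(s)\xrightarrow{\nu_{i,j}^s} E_\infty(s)\xrightarrow{(\Phi_j^s)^{-1}} \Gr_{L_\infty}^{\exp(s)/d}\sigma_\infty \xrightarrow{(\upsilon_{i,j}^s)^{-1}} \Gr_{L_i}^{\exp(s)/d}\sigma_i,
\]
using the already-built limit map $\Phi_j^s$ of \Cref{qfExtension} as a black box. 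The delicate hexagon and rectangle estimates are thus invoked exactly once, for $i=\infty$; only the comparatively soft bilipschitz comparisons need to be established uniformly in $i$. Your approach trades this economy for a more self-contained construction on each $E_i$, which would also let you hit the parameter $d_i$ rather than $d$ on the nose, but at the cost of the openness verification you flagged.
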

The rest of this section is a proof of this theorem. 

 \subsection{Converecne of Euclidean polygonal structure}\Label{sEuclideanPolygonalStructuresConverge}
 
We first analyze the convergence of the Euclidean surfaces.
By the convergence of $\nu_i (E_i, V_i) \to (E_\infty, V_\infty)$ implies the following proposition. 
\begin{lemma}\Label{BilipschitzEuclideanTraintracks}
For every $\ep > 0$, there are $I_\ep > 0, J_\ep > 0$ and $J_i > 0$ with $J_i \to \infty$ as $i \to \infty$, such that,  
if $i > I_\ep$ and $J_\ep < j < J_i$, then,  
there is a $(1 -\ep, 1 + \ep)$-bilipschitz map 
$$\nu_{i, j}\col E_i \to  E_\infty$$ homotopic to $\nu_i$ such that 
 \begin{itemize}
 \item $\nu_{i, j}$ preserves singular points;
\item the inverse map $\nu_{i, j}^{-1}$ takes the tripods $\gam_1(j), \dots,  \gam_N(j)$ into tripods in singular leaves of the vertical foliation $V_i$;
\item  $\nu_{i, j}$ is  $(1- \ep, 1 + \ep)$-bilipschitz mapping in the vertical and horizontal distances. 
\end{itemize}
\end{lemma}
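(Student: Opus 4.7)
The plan is to leverage the convergence $\nu_i(E_i,V_i)\to(E_\infty,V_\infty)$ of marked flat surfaces with vertical measured foliations, and upgrade the resulting approximate isometry to the structured bilipschitz map demanded by the lemma. This is done by a sequence of small local modifications, followed by a diagonal argument in $i$ and $j$.

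First, by the convergence statement set up in \S\ref{sAsymptoticPropertyInLimit}, for each $i$ there is a marking-preserving homeomorphism $g_i\col \nu_i(E_i)\to E_\infty$ whose bilipschitz constant tends to $1$ as $i\to\infty$, and which may be arranged to carry the vertical (resp. horizontal) direction of $\nu_i V_i$ to that of $V_\infty$ up to an angular error tending to $0$. Composing with $\nu_i$ yields a base map $E_i\to E_\infty$ homotopic to $\nu_i$, to be refined in three stages. Stage one: since the zeros of the quadratic differentials realizing $\nu_i E_i$ converge in position and type to those of $E_\infty$, a small isotopy supported in pairwise disjoint uniformly bounded neighborhoods of the singular points makes the map carry singularities to singularities with negligible extra distortion.

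Stage two: using the asymptotic alignment of the two foliations, I straighten the map on a fixed fine Euclidean traintrack decomposition of $E_\infty$ by a piecewise-linear correction along horizontal leaves inside each rectangular branch, followed by a corresponding correction along vertical leaves. The outcome is a map that takes vertical leaves exactly to vertical leaves and horizontal leaves exactly to horizontal leaves, and which is $(1+\ep)$-bilipschitz separately in the vertical and horizontal directions. Stage three: to force $\nu_{i,j}^{-1}(\gam_h(j))$ to lie in a singular vertical leaf of $V_i$, I perform a final shear supported in a thin horizontal strip around each arm of each tripod $\gam_h(j)$; since the map from stage two already sends vertical leaves to vertical leaves and since the tripod $\gam_h(j)$ already lies in a singular vertical leaf of $V_\infty$, this shear moves the preimage onto the corresponding singular vertical leaf of $V_i$ emanating from the matched singularity, with distortion tending to $0$ as the strip is thinned.

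The final step is the diagonal argument that produces $I_\ep$, $J_\ep$, and $J_i\to\infty$. Each of the three modifications above can be estimated quantitatively in terms of (a) the local bilipschitz constant of $g_i$ on the relevant compact subset of $E_\infty$, and (b) the combinatorial complexity and diameter of the tripods $\gam_1(j),\dots,\gam_N(j)$, which grow at a controlled rate with $j$. For each fixed $j$, the accumulated distortion tends to $1$ as $i\to\infty$, because $g_i$ restricted to any fixed compact set $K\subset E_\infty$ becomes arbitrarily close to an isometry. Defining $J_i$ to be the largest $j$ for which the cumulative bilipschitz constant remains below $1+\ep_i$, for a sequence $\ep_i\searrow 0$ chosen slowly enough, yields $J_i\to\infty$ and delivers the lemma. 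The main technical obstacle is uniformly controlling the tripod-alignment shear in stage three as $j$ grows with $i$; this reduces to the locally uniform Hausdorff convergence of the singular vertical leaves of $\nu_i V_i$ to those of $V_\infty$ on progressively larger compact sets, which follows from the convergence of the flat structures together with the absence of vertical saddle connections on $E_\infty$.
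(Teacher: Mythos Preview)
The paper does not actually give a proof of this lemma; it simply asserts that the statement follows from the convergence $\nu_i(E_i,V_i)\to(E_\infty,V_\infty)$ and then states the lemma. Your proposal therefore supplies the details the paper leaves implicit, and it does so along the same conceptual line the paper invokes.

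Your three-stage construction is a sound way to make this precise, but stage three is largely redundant. Once the map preserves singular points (stage one) and carries vertical leaves to vertical leaves (stage two), it automatically carries singular vertical leaves to singular vertical leaves, since a singular vertical leaf is exactly a vertical leaf through a singular point; the inverse map then does the same. The only residual issue is that the three prongs at each simple zero are matched correctly, and this already follows from continuity together with the convergence of the flat structures. So the shear you describe in stage three is unnecessary, and with it the ``main technical obstacle'' you flag also disappears. The diagonal argument you give for producing $I_\ep$, $J_\ep$, and $J_i\to\infty$ is the natural one.

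In summary, your approach is correct and fully compatible with the paper's one-line justification; you have simply written out what the paper treats as immediate, with one step (stage three) that can be dropped.
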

 By the second property of $\nu_{i, j}$, $E_i$ minus the $\nu_{i, j}^{-1}$-images of  $\gam_1(j), \dots,  \gam_N(j)$ can be decomposed into Euclidean rectangles along horizontal segments from the endpoints of the pullback tripods; then 
we obtain a traintrack structure $T_{i, j}$ on the complement which is isomorphic to $T_{\infty, j}$ as fat-traintracks, as in the construction as the traintrack structure $T_{\infty, j}$ on $E_\infty \minus \gam_1(j) \cup \dots \cup \gam_N(j)$.
 
 In  \S \ref{sAlmostConformalMapping}, 
 we constructed, from the traintrack structure $T_{\infty, j}$,  a decomposition $E_{\infty, j} = (\cup_{k = 1}^{N'} R_{j, k}) \cup (\cup_{h = 1}^{N} Q_{j, h})$ into rectangles $ R_{j, k}$ and hexagons $Q_{j, h}$.
 Similarly, we let $m_{i, j} > 0$ be the shortest width of the (rectangular) branches of $T_{i, j}$. 
By \Cref{BilipschitzEuclideanTraintracks}, for each $h = 1, \dots, N$, the inverse-image  $\nu_{i, j}^{-1} (\gam_h (j))$ is also a tripod embedded in a singular leaf of $V_i$. 
Let $Q_{i, j, h}$ be the hexagon which is the $(m_{i, j}/3)$-neighborhood of the tripod $\nu_{i, j}^{-1} (\gam_h (j))$   in the horizontal direction of $E_i$. 
Then, by removing the hexagonal part $Q_{i, j, 1} \cup \dots \cup Q_{i, j, N}$ from the rectangular branches of $T_{i, j}$, we obtain $2 m_{i, j}/ 3$- thiner rectangles  $R_{i, j, 1}, \dots, R_{i, j, N'}$. 
 We thus obtain a decomposition $E_{i, j}$  of $E_j$
$$(\cup_{k = 1}^{N'} R_{i, j, k}) \cup (\cup_{h = 1}^{N} Q_{i, j, h})$$ 
into the hexagonal  $Q_{i, j, 1}, \dots, Q_{i, j, N}$ and the rectangles $R_{i, j, 1}, \dots, R_{i, j, N'}$, which have disjoint interiors.

We can, by isotopy, modify the  $(1 - \ep, 1 + \ep)$-bilipschitz map $\nu_{i,  j}\col E_i \to E_\infty$ so that the decomposition $E_{i, j}$ to the decomposition $E_{\infty, j}$ by $\nu_{i, j}$, retaning the properties in \Cref{BilipschitzEuclideanTraintracks}. 
\begin{proposition}\Label{BilipschitzGraftedEuclideanTraintracks}
For every $\ep > 0$, there are $I_\ep > 0, J_\ep > 0$ and $J_i > 0$ with $J_i \to \infty$ as $i \to \infty$, such that  
if $i > I_\ep$ and $J_\ep < j < J_i$, then  for all $s > 0$. 
There is a $(1 -\ep, 1 + \ep)$-bilipschitz map 
$$\nu_{i, j}'\col E_i \to  E_\infty$$
homotopic to $\nu_i$ such that 
\begin{itemize}
\item $\nu_{i, j}'$ preserves the singular points; 
\item $\nu_{i, j}'$ induces an isomorphism between polygonal decompositions
$$E_{i, j} = (\cup_{k = 1}^{N'} R_{i, j, k}) \cup (\cup_{h = 1}^N Q_{i, j, h}) \to E_{\infty, j} = (\cup_{k = 1}^{N'} R_{j, k}) \cup (\cup_{h = 1}^N Q_{j, h});$$
and 
\item $\nu_{i, j}'$ is  $(1 + \ep)$-bilipschitz both in the vertical and horizontal directions. 
\end{itemize}
\end{proposition}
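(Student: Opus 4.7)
The plan is to construct $\nu_{i,j}'$ as a small horizontal modification of the map $\nu_{i,j}$ furnished by \Cref{BilipschitzEuclideanTraintracks}. That lemma already gives a $(1-\ep',1+\ep')$-bilipschitz map preserving singular points and carrying each inverse-image tripod $\nu_{i,j}^{-1}(\gam_h(j))$ onto $\gam_h(j)$, almost-isometric separately in both the vertical and horizontal directions. The only obstruction to its already inducing the desired polygonal isomorphism is the mismatch between the horizontal widths $m_{i,j}/3$ and $m_j/3$ appearing in the definitions of $Q_{i,j,h}$ and $Q_{j,h}$; correcting this by a piecewise-linear horizontal reparametrization is essentially the whole content of the proof.

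The key quantitative input is that $m_{i,j}/m_j \to 1$ uniformly as $\ep' \to 0$, because the fat-traintrack combinatorics are identical on the two sides (inherited from the common combinatorial model $T_j$) and $\nu_{i,j}$ is $(1\pm\ep')$-bilipschitz horizontally; likewise, each pair of corresponding branches $R \subset T_{i,j}$ and $R' \subset T_{\infty,j}$ have horizontal widths agreeing up to a factor $1+O(\ep')$. To build $\nu_{i,j}'$ I would proceed branch by branch: on each branch $R$, define a piecewise-linear homeomorphism $\rho_R \col R \to R'$ that fixes the height coordinate, is linear on each horizontal leaf, and sends the horizontal subrectangles of widths $m_{i,j}/3$, $(\text{width of }R) - 2m_{i,j}/3$, $m_{i,j}/3$ linearly onto the analogous subrectangles of $R'$ of widths $m_j/3$, $(\text{width of }R') - 2m_j/3$, $m_j/3$. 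Composing $\nu_{i,j}$ with these $\rho_R$ produces a candidate $\nu_{i,j}'$: the three outer subrectangles meeting at each tripod assemble into $Q_{i,j,h}$ on the domain side and into $Q_{j,h}$ on the target side, while the middle subrectangles form the rectangular pieces $R_{i,j,k}$ and $R_{j,k}$. Since each $\rho_R$ is $(1+O(\ep'))$-bilipschitz in the horizontal direction and preserves heights, the composition retains the separate $(1+\ep)$-bilipschitz estimate in both coordinate directions once $\ep'$ is taken small enough.

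The main compatibility to verify is that the branch-wise reparametrizations glue consistently across shared vertical edges. At each tripod the three adjoining branches contribute horizontal strips of width $m_{i,j}/3$ that are rescaled uniformly to width $m_j/3$, and since $\nu_{i,j}$ already matches tripods, these three strips in $E_i$ assemble into $Q_{i,j,h}$ and are mapped onto the three strips assembling into $Q_{j,h}$ in $E_\infty$; the gluing is automatic. Away from tripods, neighbouring branches share only a common vertical edge on which both reparametrizations act as the identity in the horizontal direction, so there is no ambiguity. I anticipate no deep obstacle: the proposition amounts to rescaling by factors tending to $1$, with the bilipschitz bookkeeping following directly from the corresponding bounds for $\nu_{i,j}$; the only care needed is in the definition of the admissible range $J_\ep < j < J_i$, which must shrink the bilipschitz tolerance $\ep'$ fast enough that the composite constants in both directions remain below $1+\ep$.
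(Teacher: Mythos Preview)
Your approach is the same as the paper's: the paper merely asserts (in one sentence before the proposition) that one can isotope the map $\nu_{i,j}$ of \Cref{BilipschitzEuclideanTraintracks} so that it carries the polygonal decomposition $E_{i,j}$ to $E_{\infty,j}$ while preserving the bilipschitz bounds, and gives no further detail. Your proposal is exactly an attempt to make this isotopy explicit via branch-wise piecewise-linear horizontal adjustments.

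There is, however, a gap in your gluing analysis. In the fat traintrack $T_{i,j}$, adjacent branches share \emph{horizontal} edges (the horizontal cuts emanating from tripod endpoints), not vertical ones; the vertical edges of every branch lie along the tripod slits on the boundary of the traintrack. Your sentence ``away from tripods, neighbouring branches share only a common vertical edge on which both reparametrizations act as the identity'' is therefore not correct. When two branches $R_1$, $R_2$ of different widths are stacked along a common horizontal edge, your piecewise-linear reparametrizations have breakpoints at horizontal positions $m_{i,j}/3$ and $w_1 - m_{i,j}/3$ on $R_1$ versus $m_{i,j}/3$ and $w_2 - m_{i,j}/3$ on $R_2$, and these generally do not agree on the overlap. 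You also need a small \emph{vertical} adjustment, since $\nu_{i,j}$ is only approximately horizontal-preserving, so the horizontal cuts defining $T_{i,j}$ are not taken exactly to those defining $T_{\infty,j}$.

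Both issues are reparable with a little more care (e.g., first adjust $\nu_{i,j}$ to take horizontal leaves to horizontal leaves near the cut segments, then perform a single global horizontal reparametrization determined by the one-skeleton rather than branch-by-branch), and the bilipschitz bookkeeping goes through exactly as you describe. So your overall plan is sound and in line with the paper; only the compatibility check needs rewriting.
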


  Similar to $E_\infty(s)$ in \S\ref{sAsymptoticPropertyInLimit}, for each $s \geq  0$,
we let $E_i(s)$ be the marked flat structure on $S$ obtained by stretching $E_i$ by $\exp(s)$ only in the horizontal direction, so that $E_i(s)$ is conformally equivalent to $X_i(s)$.
Similar to $f_{\infty, s}\col E_\infty = E_\infty(0) \to E_\infty(s)$, 
we let $f_{i, s}\col E_i(0) \to E_i (s)$ denote this stretch map by $\exp(s)$ so that $f_{i, s}$ is the Teichmüller maping realizing the smallest quasi-conformal distortion from $E_i(0)$ to $E_i (s)$.

Then, by $f_{i, s}$, the polygonal decomposition $E_{i, j} = (\cup_k R_{i, j, k}) \cup (\cup_h Q_{i, j, h})$ descends to a polygonal decomposition of $E_i(s)$; we set 
$$E_{i, j} (s) = (\cup_{k = 1}^{N'} R_{i, j, k}^s) \cup (\cup_h^N Q_{i, j, h}^s).$$
Then, the $(1 -\ep, 1 + \ep)$-bilipschitz map  $$\nu_{i, j}'\col E_i \to  E_\infty$$  induces
$$\nu_{i, j}^s\col E_i(s) \to  E_\infty(s)$$ 
so that $f_{\infty, s} \circ\nu_{i, j} = \nu_{i, j}^s \circ f_{i, s}$.
Since the mappings $f_{i, s}$ and $f_{\infty, s}$ both stretch $E_i$ and $E_\infty$ by $\exp(s)$ in the horizontal direction,  $\nu_{i, j}^s$ retains the properties of $\nu_{i, j}'$, and we obtain the following corollary. 

\begin{corollary}\Label{BilipschitzGraftedEuclideanTraintracks}
Under the same assumption, for all $s > 0$, 
the mapping 
$$\nu_{i, j}^s\col E_{i, j}(s) \to  E_{\infty, j}(s)$$ is  a $(1 -\ep, 1 + \ep)$-bilipschitz map
such that 
\begin{itemize}
\item $\nu_{i, j}^s$ gives a polygonal isomorphism as smooth Euclidean traintracks,
$$E_{i, j} (s) = (\cup_k R_{i, j, k}^s) \cup (\cup Q_{i, j, h}^s) \to E_{\infty, j} (s)= (\cup_k R_{j, k}^s) \cup (\cup Q_{j, h}^s)$$
\item this induced isomorphism is $(1 + \ep)$-bilipschitz both in the vertical and horizontal directions. 
\end{itemize}
\end{corollary}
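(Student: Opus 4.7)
The plan is to \emph{define} $\nu_{i,j}^s$ by the forced diagram relation $f_{\infty,s}\circ \nu_{i,j}' = \nu_{i,j}^s \circ f_{i,s}$, i.e.\ set $\nu_{i,j}^s := f_{\infty,s}\circ \nu_{i,j}' \circ f_{i,s}^{-1}$, and then transfer each asserted property through the horizontal stretches $f_{i,s}$ and $f_{\infty,s}$. The key observation is that $f_{i,s}$ and $f_{\infty,s}$ are piecewise-linear Teichmüller maps which multiply horizontal lengths by the \emph{same} factor $\exp(s)$ and preserve vertical lengths, and which respect the horizontal/vertical foliations of the flat metrics.

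First I would verify the polygonal isomorphism claim. By \Cref{BilipschitzGraftedEuclideanTraintracks} (the preceding proposition), $\nu_{i,j}'$ sends the polygonal decomposition $E_{i,j}$ onto $E_{\infty,j}$ piece by piece: $R_{i,j,k}\mapsto R_{j,k}$ and $Q_{i,j,h}\mapsto Q_{j,h}$. The stretched decompositions $E_{i,j}(s)$ and $E_{\infty,j}(s)$ are defined precisely as the push-forwards of $E_{i,j}$ and $E_{\infty,j}$ under $f_{i,s}$ and $f_{\infty,s}$, so the composite $\nu_{i,j}^s$ automatically carries each $R_{i,j,k}^s$ to $R_{j,k}^s$ and each $Q_{i,j,h}^s$ to $Q_{j,h}^s$.

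Next I would transfer the bilipschitz estimates. For a horizontal tangent vector $v$ at a point $p \in E_i(s)$, the chain rule gives $d\nu_{i,j}^s(v) = df_{\infty,s}\bigl(d\nu_{i,j}'(df_{i,s}^{-1}(v))\bigr)$; here $df_{i,s}^{-1}$ divides horizontal length by $\exp(s)$, the horizontal bilipschitz bound for $\nu_{i,j}'$ from the proposition distorts by a factor in $(1-\ep, 1+\ep)$, and $df_{\infty,s}$ multiplies by $\exp(s)$, so the overall horizontal distortion of $\nu_{i,j}^s$ lies in $(1-\ep, 1+\ep)$. Vertical tangent vectors are untouched by both stretches, so the vertical bilipschitz bound transfers directly. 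Since each branch $R^s_{i,j,k}$ or $Q^s_{i,j,h}$ is a Euclidean polygon whose metric is recovered from horizontal and vertical arc lengths along the foliations, separate directional bilipschitz control implies an overall $(1-\ep',1+\ep')$-bilipschitz bound on the Euclidean metric; replacing $\ep$ by $\ep/2$ in the input proposition absorbs the harmless constant and delivers the stated bound.

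I do not expect any substantial obstacle: the statement is a mechanical corollary of the fact that the Teichmüller stretches commute with the horizontal/vertical splitting in a uniform way, so the separate directional estimates for $\nu_{i,j}'$ survive conjugation. The only point requiring a bit of care is to check that horizontal and vertical directions for the source and target match up consistently through $\nu_{i,j}^s$, which is guaranteed by the foliation-preserving nature of $f_{i,s}$ and $f_{\infty,s}$ and by the fact that $\nu_{i,j}'$ was already constructed to respect horizontal and vertical directions branch by branch.
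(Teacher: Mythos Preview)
Your proposal is correct and follows essentially the same approach as the paper: the paper defines $\nu_{i,j}^s$ by the commuting relation $f_{\infty,s}\circ \nu_{i,j}' = \nu_{i,j}^s \circ f_{i,s}$ and observes that since $f_{i,s}$ and $f_{\infty,s}$ stretch horizontally by the same factor $\exp(s)$, the map $\nu_{i,j}^s$ ``retains the properties of $\nu_{i,j}'$''. Your write-up is simply a more explicit unpacking of that one-line argument, including the chain-rule computation showing the $\exp(s)$ factors cancel in each direction.
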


\subsection{Convergence of decompositions of hyperbolic surfaces}
In \S \ref{sEuclideanPolygonalStructuresConverge}, we constructed a Euclidean polygonal decomposition $E_{i, j}$ of $E_i$ which converges to the Euclidean polygonal decomposition $E_{\infty, j}$ of $E_\infty$ as $i \to \infty$.
In this subsection, we construct a corresponding polygonal decomposition of $\tau_{i, j}$ converging to the polygonal decomposition $\tau_{\infty, j}$.

By the convergence $\nu_i (\sigma_i, L_i) \to (\sigma_\infty, L_\infty)$, up to isotopy, implies the following Lemma. 
\begin{lemma}\Label{BilipschitzHyperbolicTraintracks}
For every $\ep > 0$, there are constants $I_\ep > 0, J_\ep > 0$ and a sequence $J_i > 0$ with $J_i \to \infty$ as $i \to \infty$, such that,  
if $i > I_\ep$ and $J_\ep < j < J_i$, then,  
there are
\begin{itemize}
\item a  $(1 -\ep, 1 + \ep)$-bilipschitz map 
$$\upsilon_{i, j}\col \sigma_{i, j} \to  \sigma_{\infty, j}$$ 
homotopic to the diffeomorphism $\nu_i$, and
\item an $\ep$-nearly straight traintrack $\tau_{i, j}$ on $\sigma_i$ combinatorially isomorphic to $\tau_{\infty, j}$,
\end{itemize}
such that 
\begin{itemize}
\item $\upsilon_{i, j}$  induces a  $(1 -\ep, 1 + \ep)$-bilipschitz isomorphism of traintrack neighborhoods $$\tau_{i, j} \to  \tau_{\infty, j},$$ and
\item  the $L_i$-weights of $\tau_{i, j}$   are $(1 -\ep, 1 +\ep)$-bilipschitz close to the $L_\infty$-weights of $\tau_{\infty, j}$ (on the corresponding branches). 
\end{itemize}
\end{lemma}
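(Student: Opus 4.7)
The plan is to follow the same blueprint as the Euclidean counterpart \Cref{BilipschitzEuclideanTraintracks}, transferred to the hyperbolic side. The starting point is that $\nu_i(\sigma_i, L_i) \to (\sigma_\infty, L_\infty)$ in $\TT \times \ML$, where the marking is implemented by the mapping classes $\nu_i$. By standard Teichmüller theory, convergence of marked hyperbolic structures on a fixed compact topological surface produces, for every $\ep > 0$ and every $i$ sufficiently large (say $i > I_\ep$), a $(1-\ep,1+\ep)$-bilipschitz diffeomorphism $\upsilon_i \col \sigma_i \to \sigma_\infty$ that is homotopic to the marking identification and whose derivative is nearly orientation-preserving; simultaneously, the measured lamination $\upsilon_i(L_i)$ converges to $L_\infty$ in the weak-$\ast$ topology of measured laminations, and the geodesic realization of $\upsilon_i(L_i)$ on $\sigma_\infty$ is Hausdorff-close to $L_\infty$ with transverse measures comparable up to $(1-\ep,1+\ep)$.

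Next, for fixed level $j$, pull back the $\ep_j$-nearly straight traintrack neighborhood $\tau_{\infty,j}$ of $L_\infty$ by $\upsilon_i^{-1}$ to obtain a fat traintrack $\tau'_{i,j}$ on $\sigma_i$ containing $L_i$ in its interior, combinatorially isomorphic to $\tau_{\infty,j}$. This $\tau'_{i,j}$ is a priori not exactly nearly straight, because the boundary arcs of $\tau_{\infty,j}$ (which are geodesic or horocyclic on $\sigma_\infty$) get slightly distorted by $\upsilon_i^{-1}$. I fix this in the standard way: replace each smooth boundary arc of $\tau'_{i,j}$ by the corresponding geodesic segment on $\sigma_i$ with the same endpoints, and each short horocyclic horizontal edge by the horocyclic arc on $\sigma_i$ orthogonal to the just-constructed geodesic boundaries. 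Because $\upsilon_i$ is $(1-\ep,1+\ep)$-bilipschitz, each such adjustment is by a uniformly small amount, hence smaller than the widths of the branches of $\tau_{\infty,j}$ as long as $j < J_i$ for an appropriate $J_i \to \infty$. The outcome is an honest $\ep$-nearly straight traintrack $\tau_{i,j}$ on $\sigma_i$, combinatorially isomorphic to $\tau_{\infty,j}$, and the composition of $\upsilon_i$ with a small radial adjustment near these boundaries produces a new $(1-\ep,1+\ep)$-bilipschitz map $\upsilon_{i,j} \col \sigma_{i,j} \to \sigma_{\infty,j}$ carrying branches of $\tau_{i,j}$ onto branches of $\tau_{\infty,j}$.

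For the weight comparison, observe that the $L_i$-weight of a branch of $\tau_{i,j}$ is the $L_i$-transverse measure of a vertical edge, while the corresponding $L_\infty$-weight is the $L_\infty$-transverse measure of its image vertical edge in $\tau_{\infty,j}$. Under $\upsilon_{i,j}$ these vertical edges are identified up to a $(1-\ep,1+\ep)$-bilipschitz distortion, and the weak-$\ast$ convergence $\upsilon_i(L_i) \to L_\infty$ on transversals of fixed combinatorial type (there are only finitely many branches at level $j$) gives, for $i$ large enough compared to $j$, the desired multiplicative closeness of weights. This compatibility of thresholds forces the two-parameter constraint $J_\ep < j < J_i$ with $J_i \to \infty$.

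The main obstacle is the simultaneous management of the two levels of smallness: as $j$ grows, the nearly-straightness parameter $\ep_j$ and the horocyclic widths of branches in $\tau_{\infty,j}$ shrink, so the allowable bilipschitz error of $\upsilon_i$ and the allowable measure error between $\upsilon_i(L_i)$ and $L_\infty$ must correspondingly shrink. Arranging this so that the boundary-straightening step preserves the $\ep$-nearly straight property (and does not destroy the combinatorial isomorphism with $\tau_{\infty,j}$) is what pins down the dependence $J_i$ on $i$. All other ingredients are standard: convergence of marked hyperbolic structures gives bilipschitz approximations on compact subsurfaces, and weak-$\ast$ convergence of uniquely ergodic measured laminations gives transverse measure control on any finite collection of transversals.
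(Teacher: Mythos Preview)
Your proposal is correct and is in fact considerably more detailed than what the paper provides: the paper simply asserts that the lemma follows from the convergence $(\sigma_i, L_i) \to (\sigma_\infty, L_\infty)$, exactly parallel to how it states \Cref{BilipschitzEuclideanTraintracks} as a consequence of $\nu_i(E_i, V_i) \to (E_\infty, V_\infty)$, with no further argument. Your pull-back-and-straighten construction, together with the weight comparison via weak-$\ast$ convergence on finitely many transversals, is the natural way to unpack that one-line assertion and matches the intended approach.
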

Recall, from \S \ref{sAlmostConformalMapping}, that  the polygonal decomposition  $\sigma_{\infty, j} = (\cup_k \RR_{j, k}) \cup (\cup \QQ_{j, h})$ carrying $L_\infty$ is constructed from the $\ep$-nearly straight traintrack $\tau_{\infty, j}$ so that it corresponds to $ E_{\infty, j} = (\cup_{k = 1}^{N'} R_{j, k}) \cup (\cup_{h = 1}^N Q_{j, h})$ carrying $V_\infty$. 

Recall that the Euclidean polygonal decomposition $E_{i, j} = (\cup_{k = 1}^{N'} R_{i, j, k}) \cup (\cup_h Q_{i, j, h})$ of $E_i$ carries the vertical foliation $V_i$. 
Then, we can similarly construct a corresponding polygonal decomposition $$\sigma_{i, j} = (\cup_{k = 1}^{N'} \RR_{i, j, k}) \cup (\cup_{h =1}^N \QQ_{i, j, h})$$ carrying $L_i$, where
 $\RR_{i, j, k}$ and $\QQ_{i, j, h}$ are rectangles and hexagons with horocyclic horizontal edges and with vertical edges in $L_i$, 
 such that 
\begin{itemize}
\item  $\sigma_{i, j} = (\cup_{k = 1}^{N'} \RR_{i, j, k}) \cup (\cup_{h =1}^N \QQ_{i, j, h})$ is combinatorially  isomorphic to $ E_{i, j} = (\cup_{k = 1}^{N'} R_{i, j, k}) \cup (\cup_{h = 1}^N Q_{i, j, h})$ by a marking-preserving homeomorphism from $\sigma_i$ to $E_i$;
\item moreover $\sigma_{i, j} = (\cup_{k = 1}^{N'} \RR_{i, j, k}) \cup (\cup_{h = 1}^N \QQ_{i, j, h})$ carries $L_\infty$ combinatorially in the same manner as  $ E_{i, j} = (\cup_{k = 1}^{N'} R_{i, j, k}) \cup (\cup_{h = 1}^N Q_{i, j, h})$ carries $V_\infty$, respecting the identification of  the geodesic measured lamination $L_i$ and the measured foliation $V_i$;
\item the union of the horizontal edges of $\RR_{i, j, k}$ and $\QQ_{i, j, h}$ is the union of (horocyclic) horizontal edges of $\tau_{i, j}$;
\end{itemize}

As the polygonal decomposition $\sigma_{i, j}$ is geometrically determined by the nearly-straight traintrack neighborhood $\tau_{i, j}$, \Cref{BilipschitzHyperbolicTraintracks} implies the following. 
\begin{proposition}\Label{BilipschitzHyperbolicPolygonalDecomposition}
For every $\ep > 0$, there are $I_\ep > 0, J_\ep > 0$ and $J_i > 0$ with $J_i \to \infty$ as $i \to \infty$, such that,  
if $i > I_\ep$ and $J_\ep < j < J_i$, then,  
there is a $(1 -\ep, 1 + \ep)$-bilipschitz map 
$$\upsilon_{i, j}'\col \sigma_{i, j} \to  \sigma_{\infty, j}$$ homotopic to $\upsilon_i$, such that 
\begin{itemize}
\item $\upsilon_{i, j}'$  induces a  $(1 -\ep, 1 + \ep)$-bilipschitz isomorphism between polygonal decompositions
$$\sigma_i = (\cup_{k = 1}^{N'} \RR_{i, j, k}) \cup (\cup_{h = 1}^N \QQ_{i, j, h}) \to \sigma_\infty = (\cup_{k = 1}^{N'} \RR_{j, k}) \cup (\cup_{h = 1}^N \QQ_{j, h}),$$
and
\item the $L_i$-weights of $(\cup_{k = 1}^{N'} \RR_{i, j, k}) \cup (\cup_{h = 1}^N \QQ_{i, j, h})$ are $(1 -\ep, 1 +\ep)$-bilipschitz close to the $L_\infty$-weights of $ (\cup_{k = 1}^{N'} \RR_{j, k}) \cup (\cup_{h = 1}^N \QQ_{j, h})$  on the corresponding horizontal edges of the polygonal decompositions.
\end{itemize}
 \end{proposition}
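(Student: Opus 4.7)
The plan is to deduce the proposition essentially as a corollary of \Cref{BilipschitzHyperbolicTraintracks}, by exploiting the fact that the polygonal decomposition $\sigma_{i, j} = (\cup_k \RR_{i, j, k}) \cup (\cup_h \QQ_{i, j, h})$ is constructed canonically from the nearly-straight traintrack $\tau_{i, j}$ in the same way that $\sigma_{\infty, j}$ is constructed from $\tau_{\infty, j}$ in \S\ref{sAlmostConformalMapping}, so both the hexagonal ``branch-point'' pieces and the thinner rectangular pieces are determined by intrinsic traintrack data.

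First I would apply \Cref{BilipschitzHyperbolicTraintracks} to extract the bilipschitz map $\upsilon_{i, j} \col \sigma_{i, j} \to \sigma_{\infty, j}$ and the accompanying bilipschitz isomorphism $\tau_{i, j} \to \tau_{\infty, j}$ of nearly-straight traintrack neighborhoods. Since the hexagons $\QQ_{i, j, h}$ (respectively $\QQ_{j, h}$) are defined as the horocyclic neighborhoods of the three-pronged regions at the end of each branch, with size prescribed by the shortest branch-length constant $m_{i,j}$ (respectively $m_j$), and since the rectangles $\RR_{i, j, k}$ are what remains of the branches after removing these hexagonal collars, the bilipschitz traintrack isomorphism automatically carries the combinatorial polygonal structure: corresponding vertical edges lie in $L_i$ and $L_\infty$, corresponding horizontal edges are horocyclic, and the transition vertices between hexagons and rectangles are defined by the same intrinsic recipe on each side.

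Next, to promote $\upsilon_{i,j}$ to a map $\upsilon_{i, j}'$ that exactly identifies polygonal pieces with polygonal pieces, I would perform a small local adjustment along the interior polygonal boundaries so that transition vertices go to transition vertices and the vertical edges of $\RR_{i, j, k}$ (inside $L_i$) go into the vertical edges of $\RR_{j, k}$ (inside $L_\infty$). The amount of adjustment needed is of order $\ep$ because the locations of the transition vertices are computed from $m_{i, j}$ and $m_j$, which themselves are $(1 \pm \ep)$-close on corresponding branches by the bilipschitz isomorphism. Since horizontal edges of branches have lengths diverging to $\infty$ with $j$ while horocyclic widths remain uniformly bounded, a standard piecewise-linear reparametrization localized in a collar around each polygonal boundary realizes this vertex-matching as a $(1 + \delta_\ep)$-bilipschitz perturbation; composing yields the required $(1 - \ep, 1 + \ep)$-bilipschitz $\upsilon_{i, j}'$, still homotopic to $\upsilon_i$.

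Finally, for the weight comparison: by construction, the $L_i$-weight of any branch of $\tau_{i, j}$ coincides with the transverse $V_i$-measure of the corresponding flat branch of $T_{i, j}$, and analogously in the limit; this weight is inherited by the horizontal edges of the rectangular and hexagonal pieces sitting inside that branch. The convergence $\nu_i(E_i, V_i) \to (E_\infty, V_\infty)$ together with the unique ergodicity of $V_\infty$ (already used in \Cref{BilipschitzEuclideanTraintracks} and \Cref{BilipschitzGraftedEuclideanTraintracks}) gives $(1 - \ep, 1 + \ep)$-matching of these transverse measures on corresponding branches for $i > I_\ep$ and $J_\ep < j < J_i$, which transfers verbatim to the $L_i$-weights versus $L_\infty$-weights on corresponding horizontal edges. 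The main obstacle I anticipate is purely organizational: keeping the vertex-matching perturbation in the previous step uniformly $(1 + O(\ep))$-bilipschitz across all $N + N'$ pieces simultaneously, which is handled by the observation that the perturbation on each piece is supported in a collar of fixed horocyclic width while the ambient horizontal dimensions grow without bound as $j \to \infty$.
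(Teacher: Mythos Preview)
Your proposal is correct and follows essentially the same approach as the paper: both derive the proposition from \Cref{BilipschitzHyperbolicTraintracks} by observing that the polygonal decomposition $\sigma_{i,j}$ is canonically (geometrically) determined by the nearly-straight traintrack $\tau_{i,j}$, so the bilipschitz traintrack isomorphism and weight comparison transfer directly to the polygonal pieces. The paper's own justification is in fact a single sentence to this effect, whereas you spell out the vertex-matching perturbation and the weight-transfer argument in more detail than the paper does.
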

 
In \S \ref{sAlmostConformalMapping}, we showed that the grafting of $\sigma_\infty$ along $s L_\infty$ transforms the polygonal decomposition $\sigma_{\infty, j} = (\cup_{k = 1}^{N'} \RR_{j, k}) \cup (\cup \QQ_{j, h})$ to a polygonal decomposition of $\Gr_{s L_\infty} \sigma_\infty$ $$\Gr_{s L} \sigma_\infty = (\cup_{k = 1}^{N'} \RR_{j, k}^s) \cup (\cup_{h = 1}^N \QQ_{j, h}^s)$$ for $s \geq 0.$ 
Similarly, 
by the grafting of $\sigma_i$ along $s L_i \,(s \geq 0)$,  the polygonal decomposition $$\sigma_{i, j} = (\cup_{k = 1}^{N'} \RR_{i, j, k}) \cup (\cup_{h = 1}^N \QQ_{i, j, h})$$ induces a polygonal decomposition 
$$\Gr_{sL_i} \sigma_{i, j} = (\cup_{k = 1}^{N'} \RR_{i, j, k}^s) \cup (\cup_{h = 1}^N \QQ_{i, j, h}^s),$$ 
where $\RR_{i, j, k}^s$ is a rectangle obtained by grafting $\RR_{i, j, k}$ along the restriction of $L_i$ to $\RR_{i, j, k}$ and $\QQ_{i, j, h}^s$ is a hexagon obtained by grafting $\QQ_{i, j, h}$ along the restriction of $L_i$ to $\QQ_{i, j, h}$.
 
Then, since the way $\sigma_{i, j}$ carries $L_i$ geometrically converges to the way $\sigma_{\infty, j}$ carries $ L_\infty$, \Cref{BilipschitzHyperbolicPolygonalDecomposition} implies that the convergence of grafted decompositions. 

\begin{corollary}\Label{BilipschitzGraftedHyperbolicTraintracks}
Under the same assumption, for all $s > 0$, 
there is a $(1 -\ep, 1 + \ep)$-bilipschitz map 
$$\upsilon_{i, j}^s \col \Gr_{s L_i} \sigma_{i, j} \to \Gr_{s L_\infty} \sigma_{\infty, j}$$ homotopic to $\nu_i$, such that  $\upsilon_{i, j}^s$  induces a branch-wise $C^1$-smooth  $(1 -\ep, 1 + \ep)$-bilipschitz isomorphism
$$\sigma_i = (\cup_k \RR_{i, j, k}^s) \cup (\cup \QQ_{i, j, h}^s) \to \sigma_\infty = (\cup_k \RR_{j, k}^s) \cup (\cup \QQ_{j, h}^s).$$
 \end{corollary}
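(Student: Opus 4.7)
The plan is to upgrade the bilipschitz map $\upsilon_{i,j}'\col \sigma_{i,j}\to \sigma_{\infty,j}$ from \Cref{BilipschitzHyperbolicPolygonalDecomposition} to a map between the grafted surfaces by working piece-by-piece on the polygonal decomposition and using the explicit local structure of grafting. Recall that grafting $\sigma$ along $s L$ (with $L$ having no periodic leaves) inserts, via Thurston's parametrization, a Euclidean ``strip'' of transverse height $s$ along each leaf of $L$, and the complement of the grafted locus $C^1$-smoothly maps to $\sigma$ by the collapsing map. Hence, the underlying hyperbolic part of $\Gr_{sL_i}\sigma_{i,j}$ is canonically identified with $\sigma_{i,j}$, and similarly for $\sigma_{\infty,j}$.

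First I would handle the rectangular pieces. Each rectangle $\RR_{i,j,k}^s$ has a product structure from the horocyclic foliation $\lam_i$ (horizontal) together with the vertical foliation carrying $L_i$, and its vertical edges are segments of leaves of $L_i$ lengthened after grafting in the Euclidean direction by $s$ times the $L_i$-weight of the branch; the same holds for $\RR_{j,k}^s$ with $L_\infty$ in place of $L_i$. I would define $\upsilon_{i,j}^s$ on $\RR_{i,j,k}^s$ by sending horocyclic leaf to horocyclic leaf, linearly rescaled in the horizontal direction so that endpoints match, and sending the vertical coordinate by the restriction of $\upsilon_{i,j}'$ on the hyperbolic slice to the corresponding vertical coordinate on $\RR_{j,k}^s$. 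By \Cref{BilipschitzHyperbolicPolygonalDecomposition}, the $L_i$-weights and horocyclic lengths are each $(1-\ep,1+\ep)$-bilipschitz close to those on $\RR_{j,k}$, so the map is $(1-\ep',1+\ep')$-bilipschitz with $\ep'\searrow 0$ as $\ep\searrow 0$.

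Next I would treat the hexagonal pieces $\QQ_{i,j,h}^s$. These decompose into the central hyperbolic hexagonal region (where the collapsing map is an isometry onto $\QQ_{i,j,h}$) and three Euclidean ``wings'' of grafted strips adjacent to the vertical edges. On the central hyperbolic hexagon, I would use $\upsilon_{i,j}'$ directly. On each Euclidean wing, I would again use the horocyclic/geodesic product structure and linearly match horocyclic leaves, as in the rectangle case; the bilipschitz bound again follows from the convergence of $L_i$-weights to $L_\infty$-weights along the corresponding vertical edge. I would then check that on the shared vertical edges between a hexagonal wing and an adjacent rectangle, the two definitions agree (because they both linearly match grafted Euclidean strips of the same bilipschitz-close widths), giving a globally well-defined map $\upsilon_{i,j}^s$; homotopy to $\upsilon_i$ follows since each piece is constructed by straight-line interpolation over $\upsilon_{i,j}'$.

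The main obstacle I anticipate is ensuring $C^1$-smoothness at the interfaces between pieces, and specifically across the boundary between the hyperbolic and Euclidean portions of the grafted surface inside the hexagons. On the hyperbolic side the product structure is horocyclic/geodesic, while on the Euclidean side it is Euclidean horizontal/vertical; these foliations glue $C^1$-smoothly inside $\Gr_{sL}\sigma$ because Thurston's parametrization yields a $C^1$-smooth bending map and correspondingly a $C^1$ grafting diffeomorphism (see \cite{Baba20ThurstonParameter}). Thus the piecewise-defined map $\upsilon_{i,j}^s$ is $C^1$ as long as, along each common edge, both sides of the construction are defined using the same parametrization of the shared leaf. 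I would arrange this by fixing, once and for all, the horocyclic parametrization of each vertical edge of $\tau_{i,j}$ and using it consistently in both the rectangle and the adjacent hexagon construction. The bilipschitz constant is controlled uniformly by combining the $(1\pm\ep)$-bilipschitz estimate on the hyperbolic part from \Cref{BilipschitzHyperbolicPolygonalDecomposition} with the $(1\pm\ep)$-closeness of the $L_i$- and $L_\infty$-weights, and the result follows.
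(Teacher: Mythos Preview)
Your proposal is correct and, in spirit, is what the paper intends; the paper itself gives essentially no proof of this corollary beyond the one-sentence remark preceding it, namely that ``since the way $\sigma_{i,j}$ carries $L_i$ geometrically converges to the way $\sigma_{\infty,j}$ carries $L_\infty$, \Cref{BilipschitzHyperbolicPolygonalDecomposition} implies the convergence of grafted decompositions.'' Your argument is a concrete fleshing-out of that assertion: you take the bilipschitz map $\upsilon_{i,j}'$ on the ungrafted surfaces and push it forward through the grafting using the product (horocyclic $\times$ vertical) structure on each branch, which is exactly the mechanism the paper uses elsewhere (e.g.\ in \Cref{UniformizaingGraftedRectangle} and in the hexagon analysis of \S\ref{sHexagonalBranches}). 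So the two approaches are not in tension; yours simply supplies the details the paper omits.

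One small imprecision worth flagging: you describe grafting along an irrational $L$ as ``inserting a Euclidean strip along each leaf'' and speak of the hexagonal wings as Euclidean. For a maximal irrational lamination, the grafting map $g_s\col \sigma\to\Gr_{sL}\sigma$ is a $C^1$-diffeomorphism (no topological insertion), and the wing regions in the Thurston metric are not literally Euclidean rectangles but only $(1\pm\ep)$-bilipschitz to such, since they contain countably many hyperbolic slivers between accumulating leaves. This does not affect your argument, because the bilipschitz estimate you need follows from the weight and length comparisons in \Cref{BilipschitzHyperbolicPolygonalDecomposition} together with the horocyclic-length derivative bound (as in \Cref{HorocyclicLengthDerivative}); but the phrasing should be adjusted to match how the paper treats these regions.
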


\subsection{Uniform quasi-conformal mappings}
By compositing the $C^1$-smooth bilipschitz mappings,  we obtain a desired quasi-conformal mapping with small distortion.
\proof[Proof of \Cref{UniformAsymptoticity}]
    For every $\ep > 0$,   there are $I_\ep > 0, J_\ep > 0$, $J_i > 0$ with $J_i \to \infty$ as $i \to \infty$ and $s_\ep > 0$, such that,
    if $i > I_\ep$ and $J_\ep < j < J_i$, and $s > s_\ep$, 
    combining the quasi-conformal mappings with small distortion  $$\nu_{i, j}^s\col E_{i, j}(s) \to  E_{\infty, j}(s)$$  in  \Cref{BilipschitzGraftedEuclideanTraintracks}, $$\upsilon_{i, j}^s \col \Gr_{s L_i} \sigma_{i, j} \to \Gr_{s L_\infty} \sigma_{\infty, j}$$ in  \Cref{BilipschitzGraftedHyperbolicTraintracks}, 
    $$\Phi_j^s\col \Gr_{L_\infty}^{ \exp(s)/d} \to E_\infty(s)$$ 
 in \Cref{qfExtension}, 
   we obtain a desired $(1 + \ep)$-quasiconformal mapping, 
  
   $$ E_i(s)  \xrightarrow{\nu_{i, j}^s}  E_\infty(s)  \xrightarrow{\Phi_j^s}   \Gr_{L_\infty}^{\exp s/d} \sigma_\infty(s)   \xrightarrow{(\upsilon_{i, j}^s)^{-1}}  \Gr_{L_i}^{ \exp s/ d} \sigma_i (s) $$

 (see \Cref{fCloseToTheLimit}).
\begin{figure}[H]
\begin{overpic}[scale=.2
] {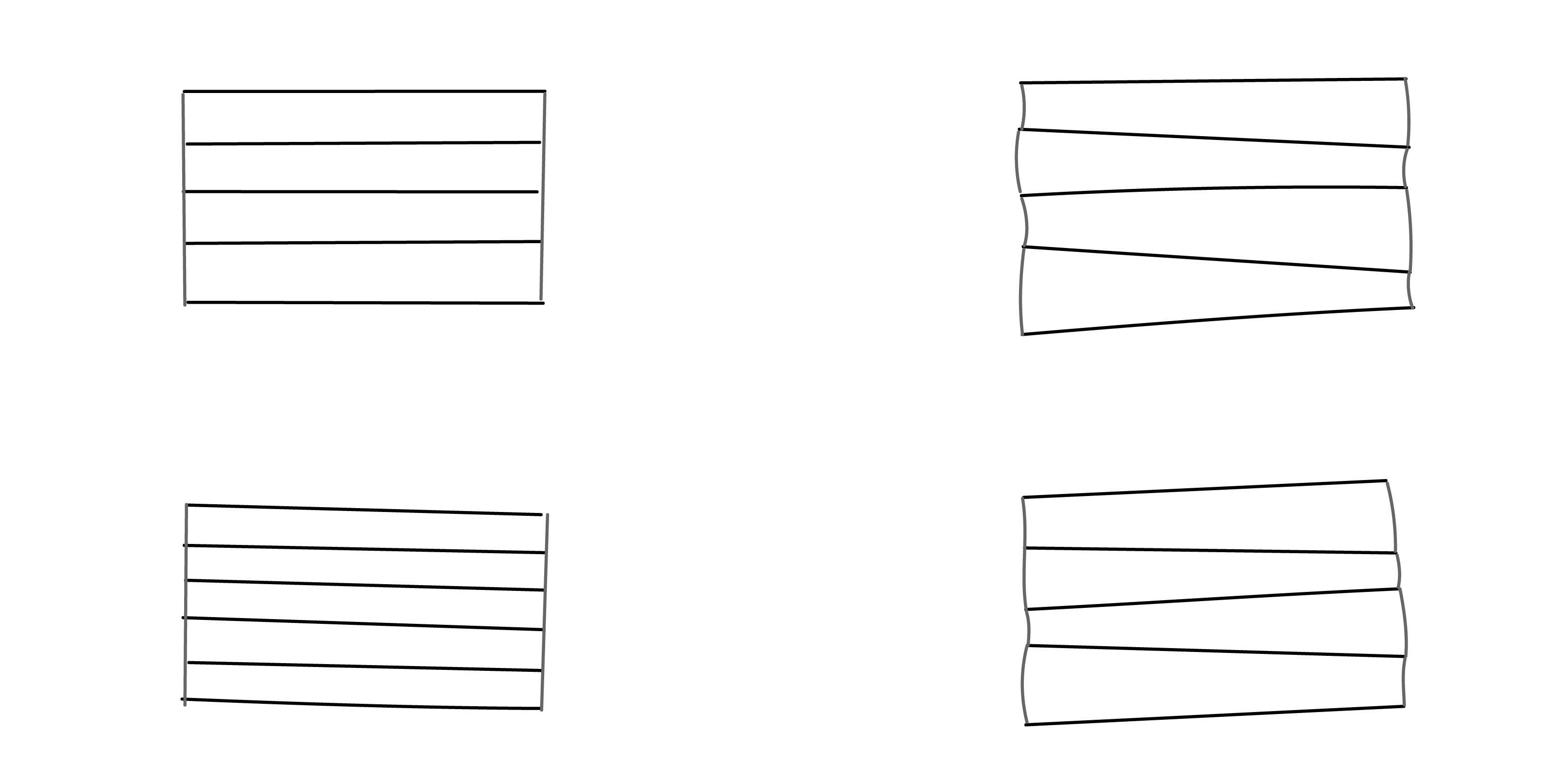} 
 \put(75, 10 ){\textcolor{black}{\small \contour{white}{$\RR_{i, j, k}^s$}}}   
  \put(20, 10 ){\textcolor{black}{\small \contour{white}{$R_{i, j, k}^s$}}}   
 \put(75, 36 ){\textcolor{black}{\small \contour{white}{$\RR_{j, k}^s$}}}   
 \put(20, 36 ){\textcolor{black}{\small \contour{white}{$R_{j, k}^s$}}}   
 \put(48, 39 ){\textcolor{black}{\small \contour{white}{$\Phi_j^s$}}}   
 \put(16, 23 ){\textcolor{black}{\small \contour{white}{$\nu_{i, j}^s$}}}   
 \put(81, 23 ){\textcolor{black}{\small \contour{white}{$(\upsilon_{i, j}^s)^{-1}$}}}   
   \put(42, 37){\color{black}\vector(1,0){15}}
  \put(22, 21){\color{black}\vector(0,1){6}}
  \put(80, 27){\color{black}\vector(0,-1){6}}
      \end{overpic}
\caption{The composition $(\upsilon_{i, j}^s)^{-1}\circ \Phi_j^s \circ \nu_{i, j}^s$ from $R_{i, j, k}^s$ to $\RR_{j, k}^s$.}\Label{fCloseToTheLimit}
\end{figure}
\Qed{UniformAsymptoticity}

\section{Uniform approximation of grafting rays by integral grafting}
Recall that $\sigma_i$ is a sequence of marked hyperbolic structures on $S$ and $\nu_i\col S \to S$ is a diffeomorphism  such that  $\nu_i (\sigma_i)$ converges to $\sigma_\infty \in \TT$ as $i \to \infty$.
Moreover,  $L_i$ is a maximal measured lamination on $\sigma_i$ such that $\nu_i (L_i)$ converges to the maximal measured lamination $L_\infty$ on $\sigma_\infty$ as $i \to \infty$. 

Gupta showed that every grafting ray is conformally well-approximated by a sequence of integral grafting toward infinity; see \cite[Lemma 6.19]{Gupta14}. 
In this section, using Gupta's idea in our setting, we show that the family of grafting rays $\gr_{L_i}^s \sigma_i \, (s \geq 0)$ is well-approximated by the integral graftings of $\sigma_i$, indeed,  in a uniform manner.

\begin{theorem}\Label{IntegralGrafting}
For every $\ep > 0$, there are $I_\ep > 0$ and $s_\ep > 0$ such that, if $i > I_\ep$ and $s > s_\ep$, then there is a multiloop $M = M_{i, s}$ with weights multiples of $2\pi$, such that 
$$d_\TT(\gr_ {L_i}^s (\sigma_i), \gr_{M_{i, s}}(\sigma_i)  ) < \ep,$$
where $d_\TT$ denotes the Teichmüller distance on $\TT$.
\end{theorem}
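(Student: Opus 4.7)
The strategy is to approximate the transverse measure of $sL_i$ on the nearly-straight traintrack $\tau_{i,j}$ by a multiloop weight vector with entries in $2\pi\Z$, carried by the same traintrack, and then to quasi-conformally compare the two grafted surfaces branch-by-branch using the polygonal decompositions already developed.

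First, I fix $j$ in the uniform range provided by \Cref{BilipschitzHyperbolicPolygonalDecomposition}, so that for all $i > I_\ep$ the trivalent traintrack $\tau_{i,j}$ carries $L_i$, is $\ep$-nearly straight, and is combinatorially identical to $\tau_{\infty,j}$. The transverse measure of $L_i$ is encoded by a branch-weight vector $w_i \in \R_{>0}^E$ satisfying the switch conditions at every vertex. The set of multiloops on $\sigma_i$ carried by $\tau_{i,j}$ with weights in $2\pi\Z$ corresponds exactly to the lattice $(2\pi\Z)^E$ intersected with the rational linear subspace defined by the switch conditions. Hence, for any $s$, one can find a non-negative vector $m_{i,s}$ in this lattice with $\|m_{i,s}-sw_i\|_\infty \le C$, where $C$ is the covering radius of the lattice, depending only on the combinatorial type of $\tau_{i,j}$; by the stabilization noted above, $C$ depends only on $j$, not on $i$. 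For $s$ large enough that $sw_i$ has all entries $\ge C$, the vector $m_{i,s}$ is non-negative, so it is realized by a multiloop $M_{i,s}$.

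Next, because $M_{i,s}$ is carried by $\tau_{i,j}$, the grafted surface $\Gr_{M_{i,s}}\sigma_i$ inherits a polygonal decomposition $(\cup_k \RR_{i,j,k}^M) \cup (\cup_h \QQ_{i,j,h}^M)$ parallel to the decomposition $(\cup_k \RR_{i,j,k}^s) \cup (\cup_h \QQ_{i,j,h}^s)$ of $\Gr_{sL_i}\sigma_i$ obtained in \Cref{BilipschitzGraftedHyperbolicTraintracks}. On a rectangular branch $\RR_{i,j,k}$, grafting along $sL_i$ inserts a Euclidean strip of width $s\cdot L_i(\RR_{i,j,k})$, whereas grafting along $M_{i,s}$ inserts a strip whose width differs by at most $C$; an affine horizontal stretch therefore identifies these two rectangles with dilatation $1 + O(C/s)$, since the widths grow like $s$. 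On a hexagonal branch $\QQ_{i,j,h}$, the model-hexagon construction of \S\ref{sHexagonalBranches} applies to both graftings, identifying each with a nearby model projective hexagon $\QQQ_{j,h}^{\ast}$ associated with the differential $z\,dz^2$; the three ``prong'' widths differ between $sL_i$ and $M_{i,s}$ by $O(C)$, which after the bilipschitz setup of \Cref{AlmostConformalToModelHexagon} again yields a quasi-conformal distortion of $1 + O(1/s)$. Gluing these branchwise mappings along common vertical edges (they match exactly on both sides, as the two decompositions share the same traintrack skeleton) produces a $(1+O(1/s))$-quasi-conformal homeomorphism $\Gr_{sL_i}\sigma_i \to \Gr_{M_{i,s}}\sigma_i$, uniformly in $i$; this is the required Teichmüller bound for $s > s_\ep$.

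\textbf{Main obstacle.} The delicate step is the lattice approximation: one must certify that the bounding constant $C$ depends only on the combinatorial type of the traintrack (hence only on $j$) and not on the metric data of $\sigma_i$ or on $s$. This uses that the switch conditions are linear with integer coefficients of bounded complexity and that for $i > I_\ep$ the combinatorial type of $\tau_{i,j}$ is stable. A secondary subtlety is matching the hexagonal pieces, where the multiloop $M_{i,s}$ may differ from $sL_i$ in how its leaves enter the hexagon; this is handled by absorbing the discrepancy into the ``prong-adjusting'' step of \S\ref{sHexagonalBranches}, using that the long prongs make the associated quasi-conformal distortion negligible as $s\to\infty$.
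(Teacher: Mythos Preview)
Your overall strategy---lattice-approximate $sL_i$ by a $2\pi\Z$-weighted multiloop on the stabilized traintrack, then compare the two graftings branch-by-branch---matches the paper's. The lattice bound depending only on the combinatorial type (your ``main obstacle'') is exactly what the paper isolates as \Cref{UniformApproximationByMultiloop}, citing Gupta.

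However, your choice of decomposition creates a real gap. The paper does \emph{not} use the hexagon--rectangle decomposition of \S\ref{sAlmostConformalMapping} here; it uses the simpler decomposition $\sigma_i = (\cup_k R_{i,J,k}) \cup (\cup_h \Delta_{i,J,h})$ into rectangular branches of $\tau_{i,J}$ and the complementary \emph{triangles} $\Delta_{i,J,h}$. Since both $L_i$ and the geodesic multiloop $M$ lie inside $\tau_{i,J}$, grafting along either leaves the triangles literally untouched, and the comparison map is the identity there. Only the rectangular traintrack part needs work, and the paper handles it by passing through intermediate ``model Euclidean traintracks'' $F_i(sL_i)$ and $F_i(M_{i,j}^s)$ (Propositions \ref{AlmostIsometricEuclideanizationForLamination}--\ref{AlomstConformalPLMapping}).

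Your route instead requires comparing the hexagonal pieces $\QQ_{i,j,h}^s$ and $\QQ_{i,j,h}^M$, and here the appeal to \S\ref{sHexagonalBranches} does not go through as stated. That machinery is built for grafting along the maximal irrational lamination: the grafted hexagon has Thurston parameters (ideal triangle, infinite-weight boundary), matching the model $\CP^1$-structure of $z\,dz^2$. Grafting along a multiloop $M$ inserts finitely many discrete Euclidean strips along geodesic arcs; the resulting hexagon is not the model projective hexagon $\QQQ_{j,h}^s$, and the Epstein-surface and developing-map estimates of \Cref{AlomstExponentialMap} do not apply to it. Your assertion that ``the model-hexagon construction applies to both graftings'' would need a genuinely new argument; the paper's triangle decomposition sidesteps the issue entirely.

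One further point you pass over: for the grafting $\Gr_{M_{i,s}}\sigma_i$ to inherit a decomposition compatible with $\tau_{i,J}$, one needs the \emph{geodesic representative} of $M_{i,s}$ on $\sigma_i$ (not merely its combinatorial weight vector) to be carried by the nearly-straight neighborhood $\tau_{i,J}$. The paper establishes this separately in \Cref{CarryingGeodesicMeausredLamintion}, using that $[M_{i,J}^s] \to [L_\infty]$ in $\PML$ as $s \to \infty$.
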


The rest of this section is the proof of  \Cref{IntegralGrafting},

\subsection{Uniform approximation of grafting lamination rays} 
By the convergence $\nu_i (\tau_i, L_i) \to (\tau_\infty, L_\infty)$ in $\TT \times \ML$, we can take a nearly-straight traintrack neighborhood of $L_i$ converging to a nearly-straight traintrack neighborhood of $L_\infty$: 

\begin{lemma}\Label{UniformlyAlmostIsomtricTraintracks}
For all $\ep > 0$, there are constants $I_\ep > 0$, $J_\ep > 0$, and  $J_i > 0 ~(i = 1, 2,\dots)$ with $J_i \to \infty$ as $i \to \infty$, such that,  if $i > I_\ep$ and $J_\ep < j <  J_i$, then we can take an $\ep$-nearly-straight traintrack neighborhood $\tau_{i, j} = \cup_{k = 1}^{N'} R_{i, j, k}$ of $L_i$ on $\sigma_i$ and an $\ep$-nearly straight traintrack neighborhood $\tau_{\infty, j} = \cup_{k = 1}^{N'} R_{j, k}$,
 such that  there is a $(1 - \ep_i, 1 + \ep_i)$-bilipschitz map $\nu_{i, j}\col \sigma_i \to \sigma_\infty$ homotopic to $\nu_i$ which takes $\tau_{i, j}$ to $\tau_{\infty, j}$ for some $0 < \ep _i < \ep$ with $\ep_i \searrow 0$ as $i \to \infty$. 
\end{lemma}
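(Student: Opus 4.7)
The plan is to transport the construction of $\tau_{\infty, j}$ from \S\ref{sAsymptoticPropertyInLimit} back to $\sigma_i$ via Teichmüller convergence, then canonicalise the pullback geometrically in order to recover the nearly-straight property.

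First I would exploit the convergence $\nu_i \sigma_i \to \sigma_\infty$ in $\TT$ to produce a marking-preserving $(1 + \delta_i)$-bilipschitz diffeomorphism $\alpha_i \col \sigma_i \to \sigma_\infty$ homotopic to $\nu_i$, with $\delta_i \searrow 0$ as $i \to \infty$; this follows from the standard comparison between the Teichmüller distance and the bilipschitz distance on the thick part of $\TT$. The convergence $\nu_i L_i \to L_\infty$ of measured laminations then forces $\alpha_i(L_i) \to L_\infty$ on $\sigma_\infty$.

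Next I would transfer the combinatorial splitting data of $\tau_{\infty, j}$ back to $\sigma_i$. Recall that $\tau_{\infty, j}$ arose from the first $j$ steps of a canonical splitting sequence on an initial traintrack neighborhood of $L_\infty$, encoded by the nested tripods $\gam_h(j)$ on $E_\infty$. The combinatorial type of these $j$ splittings is determined by finitely many intersection patterns of $L_\infty$ with transverse arcs, and so stabilises once $i$ exceeds some threshold $I(j)$. For $i > I(j)$, the same combinatorial splittings can be performed starting from a traintrack neighborhood of $L_i$; I then take $\tau_{i, j}$ to be the canonical geometric realisation on $\sigma_i$, with vertical edges composed of arcs in the geodesic lamination $L_i$ (and geodesic boundary segments) and horizontal edges given by horocyclic arcs orthogonal to $L_i$.

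The nearly-straight property of $\tau_{i, j}$ and the bilipschitz identification are then delivered together. Since $\tau_{\infty, j}$ is $\ep_j$-nearly straight with $\ep_j \searrow 0$, and $\alpha_i$ is $(1+\delta_i)$-bilipschitz, the image $\alpha_i(\tau_{i, j})$ has boundary $C^1$-close to $\bdr \tau_{\infty, j}$ with error $O(\ep_j + \delta_i)$. A standard boundary-collar isotopy supported in a thin neighborhood of $\bdr \tau_{\infty, j}$ adjusts $\alpha_i$ to the desired diffeomorphism $\nu_{i, j}$ sending $\tau_{i, j}$ exactly onto $\tau_{\infty, j}$, with bilipschitz constant $1 + O(\ep_j + \delta_i)$. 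Choosing $J_\ep$ so that $\ep_j < \ep/2$ for $j > J_\ep$, and $I_\ep$ so that $\delta_i < \ep/2$ for $i > I_\ep$, gives constants that work.

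The main obstacle is the coupling between $i$ and $j$: once $j$ is large, the horocyclic horizontal edges of $\tau_{\infty, j}$ become short, so the pullback error $O(\delta_i)$ must remain much smaller than these edge lengths or the traintrack structure collapses at the boundary-adjustment step. This forces an upper bound $j < J_i$, taken as the largest index for which the horocyclic edge lengths of $\tau_{\infty, j}$ still dominate $\delta_i$; the decay $\delta_i \to 0$ then guarantees $J_i \to \infty$, yielding the three-parameter regime $i > I_\ep$, $J_\ep < j < J_i$.
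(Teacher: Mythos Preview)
Your proposal is correct and aligns with the paper's approach: the paper offers no proof block for this lemma at all, merely the preceding sentence ``By the convergence $\nu_i(\tau_i, L_i) \to (\tau_\infty, L_\infty)$, we can take a nearly-straight traintrack neighborhood of $L_i$ converging to a nearly-straight traintrack neighborhood of $L_\infty$.'' Your argument is a faithful and detailed unpacking of that one-line justification---in particular, your explanation of the coupling $J_\ep < j < J_i$ (that the horocyclic edge lengths of $\tau_{\infty,j}$ must dominate the bilipschitz defect $\delta_i$) makes explicit why the upper bound $J_i$ is needed, which the paper leaves entirely to the reader.
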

For $\ep > 0$, by applying \Cref{UniformlyAlmostIsomtricTraintracks},  we fix  $I_\ep > 0, J_\ep > 0, J_i > 0$, and a traintrack neighorhoods  $\tau_{i, j}$ and $\tau_{i, j}$. 
Since $L_i$ is maximal, let $\cup_{h = 1}^N \Delta_{i, j, h}$ denote the complement $\sigma_i \minus \tau_{i, j}$ where $\Delta_{i, j, h}$ are (triangular) connected components. 
Then, for  $i > I_\ep$ and $J_\ep < j < J_i$, we have the decomposition $$\sigma_{i, j} = (\cup_{k = 1}^{N'} R_{i, j, k}) \cup (\cup_{h =1}^N \Delta_{i, j, h})$$
of $\sigma_i$ into reclantular branches $R_{i, j, k}$ of $\tau_{i, j}$ with horocyclic horizontal edges and the triangular complements $\Delta_{i, j, h}$, so that $R_{i, j, k}$ and $\Delta_{i, j, h}$ have disjoint interiors.

\begin{lemma}[Lemma 6.14 in \cite{Gupta14}]\Label{ApproximationByMultiloop}
For all $i = 1, 2, \dots$ and $j = 1,2, \dots, \infty$, there is $K_{i, j} > 0$ such that for every measured lamination $L$ carried by $\tau_{i, j}$, there is a  multiloop  $M$ with weight $\Z_{> 0}$ carried by $\tau_{i, j}$ such that, for each branch $R$ of $\tau$, the difference of  the weights of $L$ and $M$ on $R$ is less than $K_{i, j}$. 
\end{lemma}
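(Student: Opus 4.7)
The plan is to interpret measured laminations carried by $\tau_{i,j}$ as points of a rational polyhedral cone and reduce the statement to a standard rounding argument. The switch condition at each interior vertex of $\tau_{i,j}$ is a linear equation with integer coefficients in the $N'$ branch weights, so the weight systems respecting the switch conditions form a rational linear subspace $V_{i,j} \subset \R^{N'}$. Measured laminations carried by $\tau_{i,j}$ correspond bijectively to the rational polyhedral cone
$$C_{i,j} \;:=\; V_{i,j} \cap \R_{\geq 0}^{N'},$$
while integer-weighted multiloops carried by $\tau_{i,j}$ correspond exactly to the lattice points $C_{i,j} \cap \Z^{N'}$.

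By the Minkowski--Weyl theorem applied to the rational cone $C_{i,j}$, there exist finitely many primitive integer vectors $v_1, \dots, v_k \in C_{i,j} \cap \Z^{N'}$ (the vertex cycles of $\tau_{i,j}$) such that every $L \in C_{i,j}$ admits a non-negative real expansion $L = \sum_{l=1}^{k} t_l v_l$ with $t_l \geq 0$. Given such an expansion, I would define
$$M \;=\; \sum_{l=1}^{k} \lfloor t_l \rfloor \, v_l,$$
which is a non-negative integer combination of vertex cycles, and therefore a multiloop carried by $\tau_{i,j}$. For each branch $R$ of $\tau_{i,j}$ the resulting error is controlled by
$$|L_R - M_R| \;=\; \sum_{l=1}^{k} (t_l - \lfloor t_l \rfloor)(v_l)_R \;\leq\; \sum_{l=1}^{k} (v_l)_R,$$
so setting $K_{i,j} := \max_R \sum_{l=1}^{k} (v_l)_R$ yields a constant that depends only on the combinatorics of $\tau_{i,j}$, as required.

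The only subtlety I expect is the non-uniqueness of the expansion $L = \sum t_l v_l$ when the number of vertex cycles exceeds the dimension of $V_{i,j}$. I would handle this by fixing once and for all a triangulation of $C_{i,j}$ into simplicial subcones, each spanned by a linearly independent subfamily of the $v_l$, and using the unique expansion inside whichever subcone contains $L$; the estimate above is unaffected by this choice. The main conceptual step is the identification of multiloops with lattice points of $C_{i,j}$ together with the finite generation by integer vertex cycles; once these are in hand, the approximation reduces to the coordinate-wise rounding performed above.
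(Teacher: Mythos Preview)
Your argument is correct and is essentially the standard proof: measured laminations carried by a train track sit in the rational polyhedral cone of non-negative weight systems satisfying the switch conditions, integer points are multiloops, and rounding down the coefficients in an expansion by integral vertex cycles gives the required approximation with an error bounded solely in terms of the combinatorics of $\tau_{i,j}$.

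The paper does not supply its own proof of this lemma; it is quoted directly from Gupta \cite[Lemma~6.14]{Gupta14}, whose argument is exactly the rational-cone/vertex-cycle approximation you wrote down. So there is nothing to compare---your proof is the intended one.
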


Recall that the traintrack structure of $\tau_{i, j}$ is identified with $\tau_{\infty, j}$ by the diffeomorphism $\nu_i \col \sigma_i \to \sigma_\infty$, and $K_{i, j}$ is independent on $i = 1, 2, \dots$. 
Moreover, there are only finitely many combinatorial types of $\tau_{i, j}$, we can take $K_{i, j} > 0$ independently of $i, j$: 
\begin{corollary}\Label{UniformApproximationByMultiloop}
 There is $K > 0$ such that, for every $i = 1, 2, \dots, \infty$ and  $j = 1,2,\dots$ and every measured lamination $L$ carried by $\tau_{i ,j}$, there is a multiloop  $M$  with weight $\Z_{> 0}$ carried by $\tau_{i, j}$ such that, for each branch $R$ of $\tau$, the difference of  the weights of $L$ and $M$ on $R$ is less than $K$. 
\end{corollary}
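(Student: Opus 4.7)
The plan is to upgrade \Cref{ApproximationByMultiloop} to a bound uniform in $(i,j)$ by showing that its constant $K_{i,j}$ can be taken to depend only on the combinatorial isomorphism type of the train track $\tau_{i,j}$, and then to invoke that only finitely many such types occur across the whole family. First I would revisit the construction in the proof of \Cref{ApproximationByMultiloop}. Carried real-weighted laminations correspond to nonnegative real solutions of the switch equations on the branches of $\tau_{i,j}$, and carried multiloops to integer (respectively $2\pi\Z$) solutions of the same equations. Approximating a real switch-admissible weight-vector by an admissible integer weight-vector is achieved by branch-wise rounding together with corrections propagated along the graph; since the switch equations have $\pm 1$ coefficients, each correction introduces a bounded error, and an induction over the branches produces a total branch-wise error bound depending only on the number of switches and branches of the graph---that is, only on the combinatorial type of $\tau_{i,j}$.

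Second, I would invoke \Cref{UniformlyAlmostIsomtricTraintracks}: the bilipschitz map $\nu_{i,j}\col\sigma_i\to\sigma_\infty$ carries $\tau_{i,j}$ onto $\tau_{\infty,j}$ as a combinatorial trivalent train track, so the combinatorial type of $\tau_{i,j}$ is independent of $i$. Moreover, on the closed surface $S$ of fixed genus $g$, a trivalent train track has at most $12g-12$ branches, so up to isotopy there are only finitely many combinatorial isomorphism types of trivalent train tracks on $S$. Consequently the doubly-indexed family $\{\tau_{i,j}\}_{i,j\geq 1}$ realizes only finitely many combinatorial types.

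Combining these two observations, the map $(i,j)\mapsto K_{i,j}$ factors through this finite set of combinatorial types, so it takes only finitely many values; taking $K$ to be their maximum yields the required uniform constant. The main subtlety---where I would spend the most care---is the first step: verifying that the constant produced by the proof of \Cref{ApproximationByMultiloop} depends only on combinatorial data, and not on the hyperbolic metric on $\sigma_i$ or on the specific carried lamination $L$. The cleanest way to make this explicit is to view carried multiloops as the intersection of the switch-equation linear subspace with the integer (respectively $2\pi\Z$) lattice in $\R^{\text{branches}}$, and to bound the distance from any admissible real point to this lattice by the diameter of a fundamental domain---a quantity determined entirely by the combinatorial graph.
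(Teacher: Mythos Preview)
Your proposal is correct and follows essentially the same approach as the paper: the paper notes that $\tau_{i,j}$ is combinatorially identified with $\tau_{\infty,j}$ (hence independent of $i$), that there are only finitely many combinatorial types of such train tracks, and that therefore $K_{i,j}$ can be taken independent of the indices. You supply more detail than the paper does---in particular the lattice/switch-equation justification for why the constant in \Cref{ApproximationByMultiloop} depends only on combinatorics, and the explicit Euler-characteristic bound on the number of branches---but the logical skeleton is identical.
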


\begin{proposition}\Label{CarryingGeodesicMeausredLamintion}
Pick  arbitrary $\ep > 0$ and arbitrary $J > J_\ep$, so that, there is $I_{\ep, J} > 0$ such that,  if $i > 0$ is sufficiently large, then  $J_ \ep <  J < J_i$ and the $\ep$-nearly-straight traintrack neighborhood $\tau_{i, J}$ of $L_i$ exists by \Cref{UniformlyAlmostIsomtricTraintracks}.
For $s > 0$,  consider the $\Z_{> 0}$-weighted multiloop by applying \Cref{UniformApproximationByMultiloop} to $s L_i$, and letting $M_{i, j}^s$ be its geodesic representative on $\tau_i$. 
  
 Then,  there exist $s_{\ep, J} > 0$ and $I_{\ep, J} > 0$ such that, 
if  $i > I_{\ep, J}$ and $J_\ep  < j < J_i$,  and $s > s_{\ep, J}$,  then the geodesic multiloop $M_{i, J}^s$ on $\sigma_i$ is carried by the $\ep$-nearly straight traintrack $\tau_{i, J}$ on $\sigma_i$ (without isotopy). 
\end{proposition}
\begin{proof}
Fix $\ep > 0$, and let $\tau_{\infty, J}^\ep$ is an $\ep$-nearly straight traintrack on $\sigma_\infty$ carring $L_\infty$ obtained by \Cref{UniformlyAlmostIsomtricTraintracks}.
If  a neighborhood $U_\infty$ of $[L_\infty]$ in ${\rm PML}$ is suffiicently small, then  $\tau_{\infty, J}^\ep$ carries all geodesic measured laminations on $\sigma_\infty$ whose proejctive class contained in $U_\ep$ (without isotopy). 
For sufficiently large $i$, let  $\tau_{i, J}^\ep$  be an $\ep$-nearly-striaght traintrack on $\sigma_i$ carrying $L_i$ from \Cref{UniformlyAlmostIsomtricTraintracks}, so that $\nu_i (\sigma_i, \tau_{i, J})$  converges to $(\sigma_\infty, \tau_\infty)$ as $i \to \infty$.

Then, as the bilispchiz constants of $\nu_{i, j}$ converge to one, 
if the neighborhood $U_\ep$ of $[L_\infty]$ in $\PML$ is sufficientely small, then there is $I_{\ep, J} > 0$ such that the $\ep$-nearly straight traintrack $\tau_{i, J}^\ep$ contains all geodesic laminations whose projective classes are in $U_\ep$. 
Let $M_{i, J}^s$ be the geodesic multiloop on $\sigma_i$ so that the difference of the weights of $s L_i$ and $M_{i, J}^s$ is less than $K$ on each branch of $\tau_{i, J}$.

Therefore we can pick sufficiently large $s_{\ep, J} > 0$ so that  
 if $s > s_{\ep, J}$ and $i > I_\ep$, then the projective class of $M_{i, J}^s$ is contained in $U_\ep$. 
 Thus the geodesic multiloop $M_{i, J}^s$ is carried by $\tau_{i, J}^\ep$ (without isotopy). 
\end{proof}

\subsection{$2\pi$-grafting of nearly straight traintracks}

 Recall that, for $J_\ep <  J$, $\tau_{i, J}$ is the $\ep$-nearly-straight traintrack neighborhood of $L_i$ on $\sigma_i$, and 
  $$\sigma_{i, J} = (\cup_{k = 1}^{N'} R_{i, J, k}) \cup (\cup_{h = 1}^N \Delta_{i, J, h})$$ 
  is the traintrack decomposition of $\tau_{i, J}$ such that horizontal edges of $R_{i, J, k}$ are contained in leaves of  the horocyclice lamination $\lam_i$ of $(\sigma_i, L_i)$.

Consider the projective grafting of $\sigma_i$ along $s L_i~ (s \geq 0)$. 
Since the geodesic measured lamination $L_i$ is carried by $\tau_{i, J}$ , the above traintrack decomposition of $\sigma_i$ induces a traintrack decomposition of $\Gr_{s L_i} \sigma_i$ for $s \geq 0$, and we set 
$$\Gr_{s L_i} \sigma_i = (\cup_{k =1}^{N'} R^s_{i, J,k}) \cup (\cup_{h =1}^N \Delta_{i, J, h}),$$
where $R^s_{i, J, k}$ are grafting of $R_{i, J, k}$ along the restriction of $s L_i$ to the branch $R^s_{i, J, k}$. 
Recall that $R_{i, J, k}$ is foliated by nearly horocyclic foliation and mostly-straight foliation orthogonal to it, and the restriction of $sL_i$ to  $R_{i, J, k}$ is contained in the mostly-straight foliation. 
Therefore, as $L_i$ is irrational, this pair of orthogonal foliation induces a horocyclic and a mostly-straight foliation on $R^s_{i, J, k}$ so that the collapsing map $R^s_{i, J, k} \to R_{i, J, k}$ preserves those foliations.

By \Cref{CarryingGeodesicMeausredLamintion}, there are $I_{\ep, J} > 0$ and $s_{\ep, J}$, such that the traintrack  $\tau_{i, J}$ carries the geodesic representative of $M_{i, J}^s$ on $\tau_i$ for $i > I_{\ep, J}$ and $s >  s_{\ep, J}$. 
Then, similarly, the traintrack decomposition $\sigma_{i, J}$ induces a traintrack decomposition of the grafting of $\sigma_i$ along $M_{i, J}^s$ as follows.
Along each loop $m$ of $M_{i, J}^s$, the grafting $\Gr_{M_{i, J}^s}$ inserts an Euclidean cylinder of width $2\pi$ times the weight along $M_{i, J}^s$ (in $\Z_{\geq 0}$) in Thurston metric.
Then, for each branch $R_{i, J, k}$,   the restriction of $M_{i, J}^s$ to $R_{i, J, k}$ is a geodesic multi-arc connecting horizontal horocyclic edges. 
Then, let $R^{M_s}_{i,J,k}$ denote the grafting of $R_{i, J, k}$ along the multi-arc. 
In Thurston metric, along each geodesic arc of the multiarc,  the grafting inserts an Euclidean rectangle of length equal to the length of the arc and width equal to  $2\pi$ times the weight of the arc. 
Then the induced traintrack decomposition is 
$$\Gr_{M_{i, J}^s}\sigma_i = (\cup_{k = 1}^{N'} R^{M_s}_{i, J, k}) \cup (\cup_{h = 1}^N \Delta_{i, J, h}).$$

\subsection{Model Euclidean Traintracks}
Let $F_{i} (s L_i)$ be the {\it Euclidean traintrack} which represents the sum of the structure inserted to the hyperbolic traintrack $\tau_{i, j}$ by $\Gr_{s L_i}$.
Namely, 
\begin{itemize}
\item  $F_{i} (s L_i)$ is diffeomorphic to $\tau_{i, j} = \cup_{k = 1}^{N'} R_{i, j, k}$ as fat traintracks. 
\item the branch of  $F_{i} (s L_i)$ corresponding to $ R_{i, j, k}$ is a Euclidean rectangle of length equal to the length of $ R_{i, j, k}$ and width equal to the weight of $s L_i$ on $ R_{i, j, k}$.
\end{itemize}

Similarly, 
 let $F_{i} (M_{i, j}^s)$ be the Euclidean traintrack representing the sum of the structure inserted to $\tau_{i, j}$ along $M_{i, j}^s$.
 Namely,  
\begin{itemize}
\item  $F_{i} (M_{i, j}^s)$ is diffeomorphic to $\tau_{i, j} = \cup_{k = 1}^{N'} R_{i, j, k}$ as fat traintracks, and
\item if the branch of  $F_{i} (M_{i, j}^s)$ corresponding to $ R_{i, j, k}$, then it is a Euclidean rectangle of length equal to the length of $ R_{i, j, k}$ and width equal to the weight of $M_{i, j}^s$ on $ R_{i, j, k}$.
\end{itemize} 
 
Each branch of the grafted train track $\Gr_{s L_i}\tau_{i, j}$ is foliated by mostly-horocyclic foliation and mostly-straight foliation orthogonal to it. 
Let 
$$\xi_{s L_i}\col \Gr_{s L_i}\tau_{i, j} \to F_{i, j} (s L_i)$$ be the straightening mapping defined similar to the proof of $\Cref{UniformizaingGraftedRectangle}$ using the mostly-horocyclic foliation and mostly-straight foliation orthogonal to it.
Namely, 
\begin{itemize}
\item $\xi_{s L_i}$ takes horizontal foliation of $\Gr_{s L_i}\tau_{i j}$ to the horizontal foliation of $F_{i, j} (M_{i, j}^s)$, and
\item  $\xi_{s L_i}$ is linear on each vertical edge of $\Gr_{s L_i}\tau_{i, j}$ with respect to vertical distance. 
\end{itemize}

Then, by the construction of $ F_{i, j} (s L_i) $ we have the following.  
\begin{proposition}\Label{AlmostIsometricEuclideanizationForLamination}
For every $\ep > 0$, there are $I_\ep > 0, J_\ep > 0, s_\ep> 0$ such that, if $i > I_\ep, s > s_\ep$, then
$$\xi_{s L_i}\col \Gr_{s L_i}\tau_i \to F_{i, J_\ep} (s L_i) $$ is a $(1 - \ep, 1 + \ep)$-bilipschitz homeomorphism. 
\end{proposition}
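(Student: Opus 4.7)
The plan is to reduce the global statement to a branch-by-branch estimate essentially identical to \Cref{UniformizaingGraftedRectangle}, and then to extract uniformity in $i$ from \Cref{UniformlyAlmostIsomtricTraintracks}. First I would fix $\ep>0$ and pass to the parameter $j=J_\ep$ supplied by \Cref{UniformlyAlmostIsomtricTraintracks}: for this choice of $j$ there exists $I_\ep>0$ such that, for every $i>I_\ep$, the traintrack neighborhood $\tau_{i,J_\ep}$ of $L_i$ is $\ep'$-nearly straight for some $\ep'\searrow 0$ controlled only by $J_\ep$ (independently of $i$), and the rectangular branches $R_{i,J_\ep,k}$ have horocyclic horizontal edges with length uniformly smaller than $\ep'$. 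By construction both $\Gr_{sL_i}\tau_{i,J_\ep}$ and $F_{i,J_\ep}(sL_i)$ decompose into rectangular branches indexed by $k=1,\dots,N'$ with the same combinatorics, and $\xi_{sL_i}$ respects the decomposition and agrees with the obvious identification on the one-skeleton (vertical edges matched linearly by height, horizontal edges matched linearly by arc length).

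Next I would analyze $\xi_{sL_i}$ on a single grafted branch $R^s_{i,J_\ep,k}$. By the definition of projective grafting via Thurston's parametrization, the branch is foliated by a vertical geodesic foliation coming from $L_i$ (together with the inserted Euclidean leaves) and by a horizontal nearly-horocyclic foliation coming from $\lambda_i$. Since $\xi_{sL_i}$ preserves the height along each vertical leaf by definition, it is an isometry in the vertical direction. For the horizontal direction, the argument of \Cref{UniformizaingGraftedRectangle} applies verbatim: using \Cref{HorocyclicLengthDerivative} the hyperbolic length of a horocyclic leaf parametrized by the vertical coordinate $x$ satisfies $|d(\length\,u_x)/dx|<\ep'$, since the total horocyclic length across the branch is bounded by $\ep'$ in an $\ep'$-nearly straight traintrack. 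The grafting contribution to each horocyclic leaf is the constant $\exp(s)\, L_i(R_{i,J_\ep,k})$ (in Thurston metric), and this quantity tends to $\infty$ uniformly in $i$ once $s>s_\ep$, because by \Cref{UniformlyAlmostIsomtricTraintracks} the $L_i$-weights of the branches of $\tau_{i,J_\ep}$ are $(1-\ep,1+\ep)$-close to the positive $L_\infty$-weights of $\tau_{\infty,J_\ep}$. Consequently the ratio of the horocyclic-plus-grafted length to the pure grafted width is $1+O(\ep'/(\exp(s)L_i(R_{i,J_\ep,k})))$, which can be made less than $\ep$ by enlarging $s_\ep$.

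These two estimates together show that on each branch $R^s_{i,J_\ep,k}$ the map $\xi_{sL_i}$ is $(1+\ep)$-bilipschitz in both the vertical and horizontal directions, hence $(1+\ep)$-bilipschitz as a map of fat-rectangles once we use that the vertical and horizontal foliations are almost orthogonal (again by the $\ep'$-nearly straight property and the standard Epstein-surface estimates already invoked in \Cref{AlomstExponentialMap}). Since $\xi_{sL_i}$ matches on the shared vertical edges of adjacent branches, the branch-wise bilipschitz homeomorphisms glue to a global $(1+\ep)$-bilipschitz homeomorphism $\Gr_{sL_i}\tau_{i,J_\ep}\to F_{i,J_\ep}(sL_i)$.

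The main obstacle to be careful about is the uniformity in $i$: one must verify that the thresholds $J_\ep$, $I_\ep$, $s_\ep$ can be chosen independently of each other, in the right order. I would first fix $J_\ep$ large enough that the horocyclic horizontal edges of branches of $\tau_{\infty,J_\ep}$ have length less than $\ep/C$ for a constant $C$ coming from \Cref{HorocyclicLengthDerivative}, next choose $I_\ep$ via \Cref{UniformlyAlmostIsomtricTraintracks} so that for $i>I_\ep$ the hyperbolic geometry of $\tau_{i,J_\ep}$ is $(1+\ep)$-bilipschitz close to that of $\tau_{\infty,J_\ep}$, and finally take $s_\ep$ so large that $\exp(s_\ep)\min_k L_\infty(R_{J_\ep,k})$ dominates all $\ep'/\ep$-comparisons uniformly. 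With this order of quantifiers the branch-wise estimates in the previous paragraph become uniform, and the conclusion follows.
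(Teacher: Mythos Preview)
Your proposal is correct and follows essentially the same approach as the paper: both argue branch-by-branch by reducing to the derivative estimates of \Cref{UniformizaingGraftedRectangle} (vertical direction preserved, horizontal direction controlled via \Cref{HorocyclicLengthDerivative}), with the paper's proof being a one-line pointer to that lemma while you spell out the uniformity in $i$ via \Cref{UniformlyAlmostIsomtricTraintracks}. One minor remark: the near-orthogonality of the horizontal and vertical foliations on the grafted branches is elementary hyperbolic geometry (horocycles are orthogonal to the geodesics through their center) and does not require the Epstein-surface estimates from \Cref{AlomstExponentialMap}; that machinery was only needed for the hexagonal branches in the model $C_q$.
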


\begin{proof}
Similar to the proof of  \Cref{UniformizaingGraftedRectangle}, for every $\ep > 0$, given sufficiently large $J_\ep > 0$, one can prove the derivatives of $\xi_{M_{i, j}^s}$ in both horizontal and vertical directions are $\ep$-close to one. This implies the assertion. 
\end{proof}

 As $\Gr_{M_{i, j}^s}$ inserts to each branch $R_{i, j, k}$ of $\tau_{i, j}$, Euclidean rectangles rectangles along the geodesic arcs of  $M_{i, j}^s | R_{i, j, k}$.
 The grafted branches $R_{i, j, k}^{M^s}$ have horizontal foliations obtained by gluing the obvious horizontal foliation of the rectangles and the mostly-horocyclic foliation of $R_{i, j, k}$.
 Then,  similar to $\xi_{s L_i}$,
let 
$$\xi_{M_{i, j}^s}\col \Gr_{M_{i, j}^s}\tau_i \to F_{i, j} (M_{i, j}^s)$$ be the straightening mapping defined similar to $\zeta_{j, k}^s$ in the proof of $\Cref{UniformizaingGraftedRectangle}$.

\begin{proposition}\Label{AlmostIsometricEuclideanizationForMultiloop}
 For every $\ep > 0$, there are $I_\ep > 0, J_\ep > 0, s_\ep> 0$ such that, if $i > I_\ep, s > s_\ep$, then
$$\xi_{M_{i, J_\ep}^s}\col \Gr_{M_{i, j}^s}\tau_i \to F_{i, J_\ep} (M_{i, J_\ep}^s) $$ is a $(1 - \ep, 1 + \ep)$-bilipschitz mapping. 
\end{proposition}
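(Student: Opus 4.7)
The plan is to follow the strategy of the proof of \Cref{AlmostIsometricEuclideanizationForLamination}, which in turn is based on the argument of \Cref{UniformizaingGraftedRectangle}, adapting the estimates from the continuous lamination $sL_i$ to the discrete multiloop $M_{i,J_\ep}^s$. Since $\xi_{M_{i,j}^s}$ is defined in full analogy with $\xi_{sL_i}$---preserving horizontal leaves and acting linearly on vertical edges with respect to arc length---the task reduces to showing that in each grafted rectangular branch the derivative of $\xi_{M_{i,j}^s}$ is $\ep$-close to the identity in both horizontal and vertical directions.

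First I would work branch-by-branch. Pick a branch $R_{i, J_\ep, k}^{M^s}$. Its horizontal leaves decompose into alternating pieces of two kinds: nearly-horocyclic segments inherited from the $\ep$-nearly-straight branch $R_{i, J_\ep, k}$, together with straight horizontal segments of width $2\pi$ times the $M_{i,J_\ep}^s$-weight, coming from the Euclidean rectangles inserted along each geodesic arc of $M_{i,J_\ep}^s \cap R_{i, J_\ep, k}$. This is structurally analogous to the lamination case in \Cref{AlmostIsometricEuclideanizationForLamination}; the only difference is that the inserted Euclidean width is now a finite sum of rectangles of discrete widths rather than a continuously spread width. The vertical derivative is handled directly from the construction of $\xi_{M_{i,j}^s}$ being linear on vertical edges with respect to arc length.

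The horizontal derivative is the nontrivial estimate. I would apply \Cref{HorocyclicLengthDerivative} to bound $|d(\length(u_x))/dx|$ on each horocyclic piece of a horizontal leaf by its own length; the $\ep$-nearly-straight hypothesis on $\tau_{i, J_\ep}$ then forces the total horocyclic length on any horizontal leaf to be less than $\ep$. On the other hand, the Euclidean width inserted by $\Gr_{M_{i,J_\ep}^s}$ is constant in the height coordinate, so it contributes no variation to the horizontal length. By \Cref{UniformApproximationByMultiloop}, the $M_{i,J_\ep}^s$-weight on any branch differs from the $sL_i$-weight on that branch by at most a uniform constant $K$, and therefore diverges to infinity with $s$ uniformly in $i$. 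Hence the ratio between horizontal variation and total horizontal length tends to zero, which yields the required horizontal derivative estimate.

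The main obstacle I expect is ensuring uniform control of all the constants across the sequence indexed by $i$: the near-straightness constant of $\tau_{i, J_\ep}$, the horocyclic leaf lengths, and the combinatorial approximation constant $K$ all depend a priori on $i$. However, \Cref{UniformlyAlmostIsomtricTraintracks}, \Cref{UniformApproximationByMultiloop}, and the uniform bilipschitz convergence $\nu_{i, J_\ep}\col \sigma_i \to \sigma_\infty$ together supply these bounds uniformly once $i > I_\ep$. Combining the vertical and horizontal derivative estimates branch by branch gives the desired $(1-\ep, 1+\ep)$-bilipschitz conclusion.
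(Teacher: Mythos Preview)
Your proposal is correct and follows essentially the same approach as the paper: the paper states this proposition without proof, relying on the analogy with \Cref{AlmostIsometricEuclideanizationForLamination} (whose proof in turn just appeals to \Cref{UniformizaingGraftedRectangle}), and your argument is precisely the natural elaboration of that analogy---branch-by-branch derivative estimates using \Cref{HorocyclicLengthDerivative} for the horocyclic contribution, constancy of the inserted Euclidean width in the height coordinate, and uniform divergence of the total width via \Cref{UniformApproximationByMultiloop}. The uniformity-in-$i$ discussion you include is more explicit than anything in the paper but is exactly what the setup of \S 6 is designed to provide.
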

Recall that the boundary of the traintrack $\tau_{i, j}$ on $\sigma_i$ is identified both with the boundary of $F_{i, j}(s L_i)$ by $\xi_{s L_i}$and the boundary of $F_{i, j}(M_{i, j}^s)$ by $\xi_{M_{i, j}^s}$. 
Thus we have a canonical ``identity" mapping $\bdr \zeta_{i, j}^s$ from  $ \bdr F_{i, j} (s V)$ to $\bdr F_{i, j} (M_{i, j}^s)$ by composing those idnetifications. 
With respect to this identification,  endpoints of horizontal leaves of  $F_{i, j}(s L_i)$ coincide with endpoints of horizontal leaves of $F_{i, j}(M_{i, j}^s)$, since the constructions of $\Gr_{M_{i, j}^s}\tau_i $ and $\Gr_{s L_i}\tau_i$ preserve horizontal leaves. 
  
  Therefore,  we can finally define a $C^1$-diffeomorphism $\zeta_{i, j}^s\col  F_{i, j} (s L_i) \to  F_{i, j} (M_{i, j}^s)$ so that
\begin{itemize}
\item $\zeta_{i, j}^s$ coincides with $\bdr \zeta_{i, j}^s$ on the boundary of $F_{i, j} (s V)$, and 
\item  $\zeta_{i, j}^s$ linear on each horizontal leaf of $F_{i, j}(s L_i)$ with respect to arc length (\Cref{fPiecewieLinearExtension}).
\end{itemize}
\begin{proposition}\Label{AlomstConformalPLMapping} 
 For every $\ep > 0$, if $J > 0$ is sufficient large, then  there are $I_\ep > 0, s_\ep > 0$ such that,  if $i > I_\ep$ and $s > s_\ep$, then the piecewise $C^1$-diffeomorphism
 $\zeta_{i, j}^s\col F_{i, j}(s L_i) \to F_{i, j}(M_{i, j}^s)$ is a $(1 - \ep, 1 + \ep)$-bilipschitz map.   (\Cref{fPiecewieLinearExtension}.)
\end{proposition}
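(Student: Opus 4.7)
The plan is to analyze $\zeta_{i,j}^s$ branch by branch and show that, on each rectangular branch, it is essentially a horizontal stretching by a factor close to one. On the branch corresponding to $R_{i,j,k}$, the domain is a Euclidean rectangle of vertical length $\ell_{i,j,k}$ and horizontal width $w_1 = s\, L_i(R_{i,j,k})$, while the target is a Euclidean rectangle of the same vertical length $\ell_{i,j,k}$ and horizontal width $w_2 = M_{i,j}^s(R_{i,j,k})$. The decisive input is \Cref{UniformApproximationByMultiloop}, which furnishes a universal constant $K > 0$ with $|w_1 - w_2| < K$ for every branch, independent of $i$, $j$, and $s$.

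First I would verify that the boundary identification $\bdr \zeta_{i,j}^s$ preserves the vertical coordinate on vertical edges. Both $\xi_{sL_i}$ and $\xi_{M_{i,j}^s}$ are linear with respect to vertical (geodesic) distance on vertical edges, and the vertical edges of $F_{i,j}(sL_i)$ and of $F_{i,j}(M_{i,j}^s)$ both have length $\ell_{i,j,k}$ (inherited from the geodesic length of the vertical edge of the branch in $\sigma_i$, which is unaffected by grafting along a lamination parallel to it). Hence the boundary map is the identity in the vertical coordinate on vertical edges. Since the horizontal foliation is also preserved by both grafting constructions, the horizontal leaf at height $y$ in $F_{i,j}(sL_i)$ maps to the horizontal leaf at the same height $y$ in $F_{i,j}(M_{i,j}^s)$.

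The linearity requirement on horizontal leaves then forces $\zeta_{i,j}^s$ on each branch to be the affine map $(x,y) \mapsto \bigl(w_2 x / w_1,\, y\bigr)$, whose Jacobian is $\mathrm{diag}(w_2/w_1,\, 1)$. The branch-wise bilipschitz constant is $\max(w_2/w_1, w_1/w_2)$, so to push it into $(1-\ep, 1+\ep)$ it suffices to secure $w_1 \geq K/\ep$. Since $L_\infty$ is uniquely ergodic on $\sigma_\infty$ and every branch of the finite traintrack $\tau_{\infty, J}$ contains leaves of its support, there is $c_J > 0$ with $L_\infty(R_{J,k}) \geq c_J$ for every branch. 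By the convergence $\nu_i L_i \to L_\infty$, for sufficiently large $i$ we have $L_i(R_{i,J,k}) \geq c_J/2$ for every $k$, and therefore $w_1 \geq s c_J/2$. Choosing $s_\ep \geq 2K/(\ep c_J)$ then yields $|w_2/w_1 - 1| < \ep$, giving the desired branch-wise bilipschitz estimate.

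The main technical point I anticipate is ensuring that these branch-wise affine maps glue continuously along the horizontal edges that are shared at switches. This is handled by the fact that the boundary identification is inherited from the common horizontal-leaf structure of $\tau_{i,j}$, which both $\Gr_{sL_i}$ and $\Gr_{M_{i,j}^s}$ preserve by construction; subdivisions of horizontal edges at switches therefore correspond under $\bdr \zeta_{i,j}^s$, and the branch-wise estimates assemble into the desired global $(1-\ep, 1+\ep)$-bilipschitz homeomorphism from $F_{i,j}(sL_i)$ onto $F_{i,j}(M_{i,j}^s)$.
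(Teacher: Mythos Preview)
Your proof is correct and follows the same branch-by-branch approach as the paper's, using \Cref{UniformApproximationByMultiloop} to bound $|w_1-w_2|$ uniformly and the equality of branch lengths for the vertical direction. You add detail the paper omits---namely the explicit lower bound $w_1 \geq s\,c_J/2$ coming from the positive minimum $L_\infty$-weight on $\tau_{\infty,J}$ and the convergence of weights---while the paper for its part appeals to near-straightness of $\tau_{i,j}$ for the vertical estimate.
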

\begin{proof}
For an arbitaray branch $R_{i, j, k}$ of $\tau_{i, j}$, let  $R_L$ and $R_M$ be its corresponding branches of  $F_{i, j}(s L_i)$ and $F_{i, j}(M_{i, j}^s)$, respectively. 
Then the width of $R_L$ is the weight of $s L_i$ on $R_{i, j, k}$, and the width of $R_M$ is the weight of $M_{i, j}^s$ on $R_{i, j, k}$.
As $s_\ep > 0$ is sufficiently large, the ratio of the width of $R_L$ and $R_M$ is $\ep$-close to one by \Cref{UniformApproximationByMultiloop}. 
Therefore, under the assumption of the assertion,  $\zeta_{i, j}^s$ is $(1 - \ep, 1 + \ep)$-bilipschitz in the horizontal direction. 

By the definition of $F_{i, j}(s L_i)$ and $F_{i, j}(M_{i, j}^s)$, the lengths of the corresponding branches are the same. 
Since $\tau_{i, j}$ are sufficiently straight,   $\zeta_{i, j}^s$ is $(1 - \ep, 1 + \ep)$-bilipschitz in the vertical direction as well. 

Since $\zeta_{i, j}^s\col F_{i, j}(s L_i) \to F_{i, j}(M_{i, j}^s)$ is a $(1 - \ep, 1 + \ep)$-bilipschitz in both vertical and horizontal direction, the proposition follows.
\end{proof}

\begin{figure}
\begin{overpic}[scale=.15
] {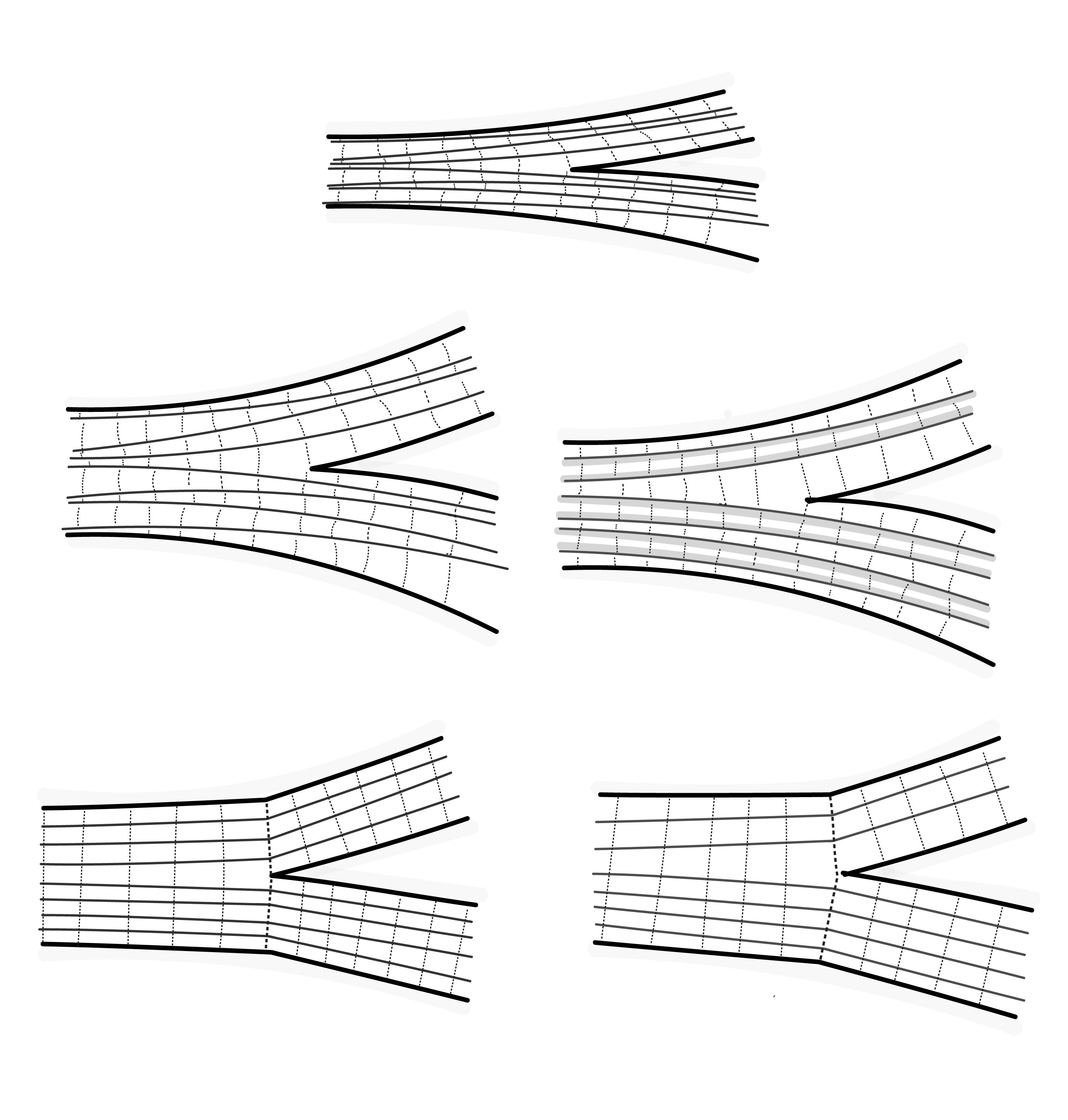} 
 \put(60, 64){\textcolor{black}{\small \contour{white}{$\Gr_{M_{i, j}^s} \tau_i$}}}   
 \put(53, 10){\textcolor{Black}{\small \contour{white}{$F_{i, j}(M_{i, j}^s)$}}}   
 \put(60, 38){\textcolor{Black}{\small \contour{white}{$\xi_{M_{i, j}^s}$}}}   
 \put(46, 26){\textcolor{Black}{\small \contour{white}{$\zeta_{i, j}^s$}}}   
  \put(12, 39){\textcolor{Black}{\small \contour{white}{$\xi_{s L_i}$}}}   
  \put(10,66){\textcolor{black}{\small \contour{white}{$\Gr_{s L_i} \tau_i$}}}  
 \put(4, 11){\textcolor{Black}{\small \contour{white}{$F_{i, j}(s L_i)$}}}   
  \put(50, 77){\color{black}\vector(1,-1.8){8}}
    \put(35,78){\color{black}\vector(-1,-1){8}}
  \put(45,22){\color{Black}\vector(1,0){7}}
\put(20, 48){\color{black}\vector(0,-1){14}}
\put(70, 43){\color{Black}\vector(0,-1){10}}
\put(45, 95){\textcolor{black}{\small \contour{white}{$\sigma_i$}}}   
\put(33, 89){\textcolor{black}{\small \contour{white}{$\tau_i$}}}   
      \end{overpic}
\caption{Composing branchewise $C^1$-smooth  nearly-isometric bilipschitz mappings.}\label{fPiecewieLinearExtension}
\end{figure}
   
\Qed{AlomstConformalPLMapping}

\begin{corollary}  
 For every $\ep > 0$, if $j > 0$ is sufficient large, then  there are $I_\ep > 0, s_\ep > 0$ such that,  if $i > \ep$ and $s > s_\ep$, then
the mapping $\xi_{s M_s}^{-1} \circ \zeta_{i, j, s} \circ \xi_{s L_i}$ is a $(1 + \ep)$-quasiconformal mapping from $$\Gr_{M_s}\tau_i  \to \Gr_{s V}\tau_i$$ which is the identity on the boundary. 
\end{corollary}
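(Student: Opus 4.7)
The proof will be a direct composition argument using the three previous propositions. My plan is as follows.

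First, I would recall the three bilipschitz estimates already established. By \Cref{AlmostIsometricEuclideanizationForLamination}, for every $\delta>0$ there exist $I_\delta, J_\delta, s_\delta >0$ such that, if $i>I_\delta$, $j=J_\delta$, $s>s_\delta$, then $\xi_{sL_i}\col \Gr_{sL_i}\tau_i \to F_{i,J_\delta}(sL_i)$ is $(1-\delta,1+\delta)$-bilipschitz. By \Cref{AlmostIsometricEuclideanizationForMultiloop} under the same hypotheses (after enlarging $I_\delta$ and $s_\delta$ if necessary), $\xi_{M_{i,J_\delta}^s}\col \Gr_{M_{i,J_\delta}^s}\tau_i \to F_{i,J_\delta}(M_{i,J_\delta}^s)$ is also $(1-\delta,1+\delta)$-bilipschitz, and hence so is its inverse. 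Finally, by \Cref{AlomstConformalPLMapping}, the piecewise $C^1$-diffeomorphism $\zeta_{i,j}^s\col F_{i,j}(sL_i)\to F_{i,j}(M_{i,j}^s)$ is $(1-\delta,1+\delta)$-bilipschitz.

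Next I would compose them. The mapping
\[
\Gr_{sL_i}\tau_i \xrightarrow{\xi_{sL_i}} F_{i,J_\delta}(sL_i) \xrightarrow{\zeta_{i,J_\delta}^s} F_{i,J_\delta}(M_{i,J_\delta}^s) \xrightarrow{\xi_{M_{i,J_\delta}^s}^{-1}} \Gr_{M_{i,J_\delta}^s}\tau_i
\]
is then a composition of three $(1-\delta,1+\delta)$-bilipschitz maps, hence is itself $((1-\delta)^3,(1+\delta)^3)$-bilipschitz with respect to Thurston's metric on both sides. A bilipschitz homeomorphism of smooth (or piecewise smooth) surfaces is quasiconformal with maximal dilatation bounded by the square of the bilipschitz constant, so the composition is $K$-quasiconformal with $K \leq (1+\delta)^6/(1-\delta)^6$. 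Given $\ep>0$, I choose $\delta = \delta(\ep) > 0$ small enough that $(1+\delta)^6/(1-\delta)^6 < 1+\ep$, and then set $I_\ep := I_\delta$, $J_\ep := J_\delta$, $s_\ep := s_\delta$. This gives the desired $(1+\ep)$-quasiconformality.

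It remains to verify the boundary condition. By construction, $\bdr \Gr_{sL_i}\tau_i = \bdr\tau_i = \bdr\Gr_{M_{i,J_\delta}^s}\tau_i$, since grafting does not alter the (horocyclic) horizontal boundary of the traintrack. The ``identity'' boundary map $\bdr\zeta_{i,j}^s \col \bdr F_{i,j}(sL_i) \to \bdr F_{i,j}(M_{i,j}^s)$ was defined precisely as the composition of the boundary identifications given by $\xi_{sL_i}$ and $\xi_{M_{i,J_\delta}^s}$, and $\zeta_{i,J_\delta}^s$ was then extended so that it agrees with $\bdr\zeta_{i,J_\delta}^s$ on the boundary of $F_{i,J_\delta}(sL_i)$. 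Hence, on $\bdr \tau_i$, the composition $\xi_{M_{i,J_\delta}^s}^{-1} \circ \zeta_{i,J_\delta}^s \circ \xi_{sL_i}$ unwinds to the identity.

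I do not expect a serious obstacle; the only care needed is bookkeeping the parameters so that the three propositions can be applied simultaneously with a common threshold $(I_\ep,J_\ep,s_\ep)$, and verifying that the boundary identifications in \Cref{AlomstConformalPLMapping} genuinely compose to the identity on $\bdr\tau_i$ (as opposed to some nontrivial edge-wise linear reparametrization), which is the one place a subtle consistency check is needed.
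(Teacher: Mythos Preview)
Your proposal is correct and follows essentially the same approach as the paper: the paper's proof simply invokes \Cref{AlomstConformalPLMapping}, \Cref{AlmostIsometricEuclideanizationForLamination}, and \Cref{AlmostIsometricEuclideanizationForMultiloop} to conclude that each of the three factors has small distortion, and then states that the assertion follows immediately from composition. Your version is actually more thorough than the paper's, in that you explicitly track the bilipschitz-to-quasiconformal conversion and verify the boundary condition (which the paper's proof omits, though it is implicit in the definition of $\bdr\zeta_{i,j}^s$ as the composition of the two boundary identifications).
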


\begin{proof}
By   \Cref{AlomstConformalPLMapping},  \Cref{AlmostIsometricEuclideanizationForLamination},  \Cref{AlmostIsometricEuclideanizationForMultiloop}, 
under the assumotion of the corollary, the mappings $\xi_{s M_s}^{-1}$, $\zeta_{i, j, s}$ and $\xi_{s L_i}$ are all branch-wise $C^1$-smoooth $(1 - \ep, 1 + \ep)$-bilipschitz mappings. 
Therefore, the assertion follows immediately
\end{proof}

We completed the proof of \Cref{IntegralGrafting}.
\section{Proof of the main theorem}\Label{MainProof}

In this section, we prove our main theorem. 
\begin{theorem}\Label{InfiniteIsomodromicPairs}
Let $X, Y$ be distinct Riemann surface structures in $\TT \cup \TT^\ast$.
Then, there is an infinite sequence $(C^X_j, C^Y_j)_{j = 1}^\infty \in \BB$ of distinct pairs such that $\psi(C^X_j) = X$ and $\psi(C^Y_j) = Y$ for all $j =1, 2, \dots$
\end{theorem}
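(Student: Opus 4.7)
The plan is to argue by induction on the number of isomonodromic pairs already produced. Since \cite[Corollary 12.7]{Baba_23} provides at least one pair realizing $(X, Y)$, it suffices, given distinct pairs $(C_1^X, C_1^Y), \dots, (C_n^X, C_n^Y) \in \BB$ realizing $(X, Y)$ with common holonomies $\rho_1, \dots, \rho_n \in \rchi$, to produce a new pair whose holonomy lies outside the finite set $\{\rho_1, \dots, \rho_n\}$. First I would reduce to the case $X, Y \in \TT$ (same orientation); the opposite-orientation case is handled analogously, with Bers' theorem already supplying one pair there.

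Next, I would exploit ergodicity of the Teichmüller flow together with density of mapping class group orbits in $\TT$. Choosing a generic Teichmüller geodesic $X(\cdot)$ in $\TT$ satisfying the conditions of \S3 and translating by a suitable mapping class, we may arrange that the geodesic passes $\epsilon$-close to $X$ in the Teichmüller metric at some time $t_X$ and $\epsilon$-close to $Y$ at a later time $t_Y = t_X + \tau$. Because the unit tangent projection is dense in $T^1\MM$, the time $t_X$ can be taken arbitrarily negative, so the Fuchsian holonomy $\rho_{t_X}$ of the hyperbolic structure $\sigma(t_X)$ uniformizing $X(t_X)$ can be arranged to lie outside any prescribed compact subset of $\rchi$, in particular to avoid $\{\rho_1, \dots, \rho_n\}$ with a definite separation $\delta > 0$.

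I would then construct a near-target isomonodromic pair sharing holonomy $\rho_{t_X}$. The Fuchsian $\CP^1$-structure $C_{X'}$ on $\sigma(t_X)$ has holonomy $\rho_{t_X}$ and underlying Riemann surface $X(t_X) \approx X$. Let $L$ be the geodesic representative on $\sigma(t_X)$ of the vertical measured foliation of the chosen geodesic. By \Cref{UniformAsymptoticity}, the conformal grafting $\gr_{L}^{d\exp(\tau)}\sigma(t_X)$ is $\epsilon$-close in the Teichmüller metric to $X(t_Y) \approx Y$, with uniformity in the parameters entering the construction. By \Cref{IntegralGrafting}, this conformal grafting is further approximated by $\gr_M \sigma(t_X)$ for a multiloop $M$ with weights in $2\pi\Z$. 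Setting $C_{Y'} := \Gr_M \sigma(t_X)$, the $2\pi$-grafting preserves holonomy, so $C_{Y'}$ has holonomy $\rho_{t_X}$ while its underlying Riemann surface $Y'$ is close to $Y$.

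Finally, I would deform $(C_{X'}, C_{Y'}) \in \BB$ to a pair exactly realizing $(X, Y)$ via completeness of the analytic branched covering $\Psi\col \BB \to (\TT \sqcup \TT^\ast)^2 \minus \Delta$ from \cite[Theorem A]{Baba_23}: lifting a short path from $(X', Y')$ to $(X, Y)$ in the target through $(C_{X'}, C_{Y'})$ produces a pair $(C_X, C_Y)$ with $\Psi(C_X, C_Y) = (X, Y)$. The main obstacle will be controlling the displacement of the holonomy along this lifted path, so that the resulting holonomy remains at distance more than, say, $\delta/2$ from each $\rho_k$. I expect to handle this by taking $\epsilon$ much smaller than $\delta$ and exploiting the uniformity in \Cref{UniformAsymptoticity} and \Cref{IntegralGrafting} to keep the lift in $\BB$ within a small neighborhood of $(C_{X'}, C_{Y'})$, together with continuity of the holonomy map on $\BB$; the delicate point is that the lift could in principle have to traverse branch locus of $\Psi$, and balancing the initial approximation quality against the length of the lifted path is where the uniform constants of \S3--5 enter essentially.
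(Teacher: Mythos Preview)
Your construction has a genuine gap at the step where you claim the Fuchsian holonomy $\rho_{t_X}$ can be pushed outside any compact set. You take $C_{X'}$ to be the Fuchsian $\CP^1$-structure on $\sigma(t_X)$ with underlying Riemann surface $X(t_X)\approx X$; but then $\sigma(t_X)$ lies in the $\ep$-neighborhood of the uniformizing hyperbolic structure of $X$, and hence $\rho_{t_X}$ lies in a \emph{fixed} small neighborhood of the Fuchsian holonomy of $X$ in $\rchi$, regardless of how negative the parameter $t_X$ is. The holonomy depends only on the point $X(t_X)\in\TT$, not on the time coordinate. So this construction can never escape a fixed compact set, and the induction cannot proceed. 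The paper instead places the grafting base point $\sigma_i$ at $R(t_i)$ with $t_i\to -\infty$ along the geodesic, so that $\sigma_i$ itself diverges in $\TT$ and $\rho_i$ diverges in $\rchi$; then \emph{both} $X$ and $Y$ are approximated by $2\pi$-integral graftings $\Gr_{M_X}\sigma_i$ and $\Gr_{M_Y}\sigma_i$ of this distant $\sigma_i$ (this is precisely why the uniform asymptotic estimate of \Cref{UniformAsymptoticity} is needed for large $s$, and why \Cref{IntegralGrafting} must be applied twice).

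Your ``main obstacle'' of controlling holonomy displacement along the lifted path is also handled differently, and more simply, in the paper. Rather than avoiding only the finite set $\{\rho_1,\dots,\rho_n\}$ and then trying to bound holonomy drift, the paper first fixes bounded neighborhoods $U_h\subset\BB$ of the old pairs on which $\Psi$ is a finite branched covering onto neighborhoods $W_h$ of $(X,Y)$, sets $W=\bigcap_h W_h$, and chooses the new pair $(C,D)$ with holonomy outside the bounded set $\bigcup_h\Hol(U_h)$. The lift of a path in $W$ from $(\psi(C),\psi(D))$ to $(X,Y)$ then cannot terminate at any old pair: if it did, the entire lifted path would lie in the corresponding $U_h$ by the covering property, forcing $\Hol(C)\in\Hol(U_h)$, a contradiction. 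No quantitative control on holonomy along the lift is needed.
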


We prove \Cref{InfiniteIsomodromicPairs} by induction. 
Suppose that we have $n$ pairs 
$$(C^X_1, C^Y_1), \dots, (C^X_{n}, C^Y_{n})$$ in
 $\Psi^{-1}(X, Y)$.
 Then we shall find a new pair $(C^X_{n +1}, C^Y_{n + 1})$ in  $\Psi^{-1}(X, Y)$.
Then, for each $j = 1, \dots, n$,  there are bounded open connected neighborhoods $U_j$ of $(C^X_j, C^Y_j)$ in $\BB$ and $W_j$ of $(X, Y)$ in $(\TT \cup \TT^\ast)^2 \minus \Delta$, such that 
the restriction of $\Psi$ to $U_j$ is a finite branched covering map onto $W_j$ (\cite[Theorem A]{Baba_23}). 

  Let $W$  be the (open) connected component of the intersection $ W_1 \cap W_2 \cap \dots \cap W_n$ containing $(X, Y)$.
Then it suffices to show the following.   
 \begin{proposition}\Label{ANewPair}
 There is $(C, D) \in \mathcal{B}$ such that $(\psi(C), \psi(D))$ is in $W$ and $\Hol(C) = \Hol(D) \not\in \cup_{j= 1}^n \Hol (U_j)$. 
\end{proposition}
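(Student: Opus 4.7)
The plan is to execute the strategy sketched in the introduction: construct a new pair $(C, D) \in \BB$ with the required properties by $2\pi$-grafting a Fuchsian hyperbolic surface along carefully chosen multiloops, using Theorem~\ref{UniformAsymptoticity} (asymptotic approximation of Teichmüller rays by grafting rays from the same basepoint) and Theorem~\ref{IntegralGrafting} (approximation of grafting rays by integral grafting) developed in the previous sections.

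First I would select a generic Teichmüller geodesic $X(\cdot)\col \R \to \TT$ satisfying the genericity assumptions at the start of Section~3, together with distinct parameters $t_X, t_Y \in \R$ such that $(X(t_X), X(t_Y)) \in W$. The unique Teichmüller geodesic joining $Y$ to $X$ passes exactly through $(X, Y) \in W$; since generic Teichmüller geodesics are dense among all Teichmüller geodesics and $W$ is open, a small perturbation yields a generic geodesic whose pair $(X(t_X), X(t_Y))$ lies in $W$. With this geodesic fixed, the sequence $t_i \to -\infty$ and mapping classes $\nu_i$ from Section~3 are determined. Choose $i$ large enough so that $t_i \ll \min(t_X, t_Y)$, and set $s_X := t_X - t_i$ and $s_Y := t_Y - t_i$, both large and positive. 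Let $\sigma_i$ uniformize $X(t_i)$, let $L_i$ be the geodesic lamination on $\sigma_i$ representing the vertical foliation $V_i$, and let $\rho_i$ denote the Fuchsian holonomy of $\sigma_i$.

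By Theorem~\ref{UniformAsymptoticity}, for $i$ and $s_X, s_Y$ sufficiently large, the Teichmüller points $X(t_X) = X_i(s_X)$ and $X(t_Y) = X_i(s_Y)$ are $\ep$-close in $d_\TT$ to $\gr_{L_i}^{d_i\exp(s_X)}\sigma_i$ and $\gr_{L_i}^{d_i\exp(s_Y)}\sigma_i$ respectively. Then by Theorem~\ref{IntegralGrafting} applied at parameters $d_i\exp(s_X)$ and $d_i\exp(s_Y)$, there exist multiloops $M_X, M_Y$ on $S$ with weights in $2\pi\Z$ such that $\gr_{M_X}\sigma_i$ and $\gr_{M_Y}\sigma_i$ are $\ep$-close to $\gr_{L_i}^{d_i\exp(s_X)}\sigma_i$ and $\gr_{L_i}^{d_i\exp(s_Y)}\sigma_i$ respectively. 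Setting $C := \Gr_{M_X}\sigma_i$ and $D := \Gr_{M_Y}\sigma_i$, the integrality of the grafting weights ensures that both $\CP^1$-structures have the Fuchsian holonomy $\rho_i$, so $\Hol(C) = \Hol(D) = \rho_i$. By the triangle inequality, $(\psi(C), \psi(D))$ lies within $2\ep$ of $(X(t_X), X(t_Y))$; for $\ep$ small enough we have $(\psi(C), \psi(D)) \in W$, and since $X(t_X) \neq X(t_Y)$ we get $\psi(C) \neq \psi(D)$, hence $(C, D) \in \BB$.

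It remains to verify that $\rho_i \notin \bigcup_{h=1}^{n} \Hol(U_h)$ for $i$ sufficiently large. Each $U_h$ is bounded by hypothesis, so $\Hol(U_h) \subset \rchi$ is bounded, and hence so is the finite union $\bigcup_h \Hol(U_h)$. On the other hand, as $t_i \to -\infty$ the Riemann surface $X(t_i) \in \TT$ escapes every bounded subset of $\TT$, and since the Fuchsian embedding $\TT \hookrightarrow \rchi$ is proper, $\rho_i$ escapes every bounded subset of $\rchi$. Therefore, for $i$ sufficiently large $\rho_i$ lies outside $\bigcup_h \Hol(U_h)$, and the pair $(C, D)$ just constructed satisfies all requirements of the proposition. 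The main obstacle I anticipate is the first step---ensuring that a generic Teichmüller geodesic meeting the Section~3 conditions can be chosen to pass through $W$ at two distinct times---but this follows from the density of such geodesics together with the openness of $W$.
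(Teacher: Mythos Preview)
Your proof is correct and follows essentially the same route as the paper's \S\ref{sOrientationsCoincide}: perturb the Teichm\"uller geodesic through $X$ and $Y$ to a generic one, go far back along it to a Fuchsian basepoint $\sigma_i$, use \Cref{UniformAsymptoticity} and \Cref{IntegralGrafting} to produce $2\pi$-integral graftings $\Gr_{M_X}\sigma_i$, $\Gr_{M_Y}\sigma_i$ landing in $W$, and observe that their common Fuchsian holonomy $\rho_i$ escapes the bounded set $\bigcup_h\Hol(U_h)$ by properness.

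One omission: your argument implicitly assumes $X,Y$ lie in the same Teichm\"uller space, since you invoke ``the unique Teichm\"uller geodesic joining $Y$ to $X$'' and require $(X(t_X),X(t_Y))\in W$ with both coordinates in $\TT$. When $X\in\TT$ and $Y\in\TT^\ast$ this fails, as $W\subset\TT\times\TT^\ast$. The paper handles this case separately (\S7.2) by replacing $Y$ with its complex conjugate $Y^\ast\in\TT$, running your argument verbatim to obtain $\Gr_{M_{Y^\ast}}\sigma_i$, and then taking the complex conjugate $\CP^1$-structure $\Gr_{M_Y}\sigma_i^\ast$ on $S^\ast$, which has the same holonomy $\rho_i$ since $\rho_i$ is Fuchsian. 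You should add this reduction.
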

Indeed, if we find such a pair $(C, D)$, then  we take a path $(X_t, Y_t), t \in [0,1]$ in $W$ connecting $(\psi(C), \psi(D))$ to $(X, Y)$. 
 By the completeness of $\Psi$, we can take a lift $(C_t, D_t), t \in [0,1]$ of $(X_t, Y_t)$ to $\BB$ such that
 \begin{itemize}
 \item $(C_0, D_0) = (C, D)$, and 
 \item $(\psi(C_1), \psi(D_1)) = (X, Y)$ (\Cref{fLiftingPath}). 
\end{itemize}
\begin{claim}
The pair $(C_1, D_1)$ in $\Psi^{-1}(X, Y)$ is different from all given $n$ pairs \linebreak[2] $(C^X_1, C^Y_1), \dots, (C^X_{n}, C^Y_{n})$.
\end{claim}
 \begin{proof}
Suppose, to the contaray, that $(C_1, D_1) = (C^X_j, C^Y_j)$ for some $j \in \{ 1, \dots, n\}$. 
Then, the lifted path $(C_t, D_t), t  \in [0,1]$ is entirely contained in $U_j$, since $\Psi_j\col U_j \to W_j$ is a finite branched covering map and $W (\subset W_j)$ contains the path $(X_t, Y_t),\, t \in [0,1]$.
Accordingly $\Hol (C_t) = \Hol (D_t), t \in [0,1]$  is entirely contained in $\Hol (U_j)$. 
 In particular, the initial holonomy $\Hol (C_0) = \Hol (D_0) = \Hol (C) = \Hol (D)$ is in $\Hol (U_j)$.
 This contradicts \Cref{ANewPair}. 
 Therefore, we conclude that $(C_1, D_1)$ is indeed a new pair in  $\Psi^{-1}(X, Y)$ distinct from the given n pairs.
 \begin{figure}
\begin{overpic}[scale=.23
] {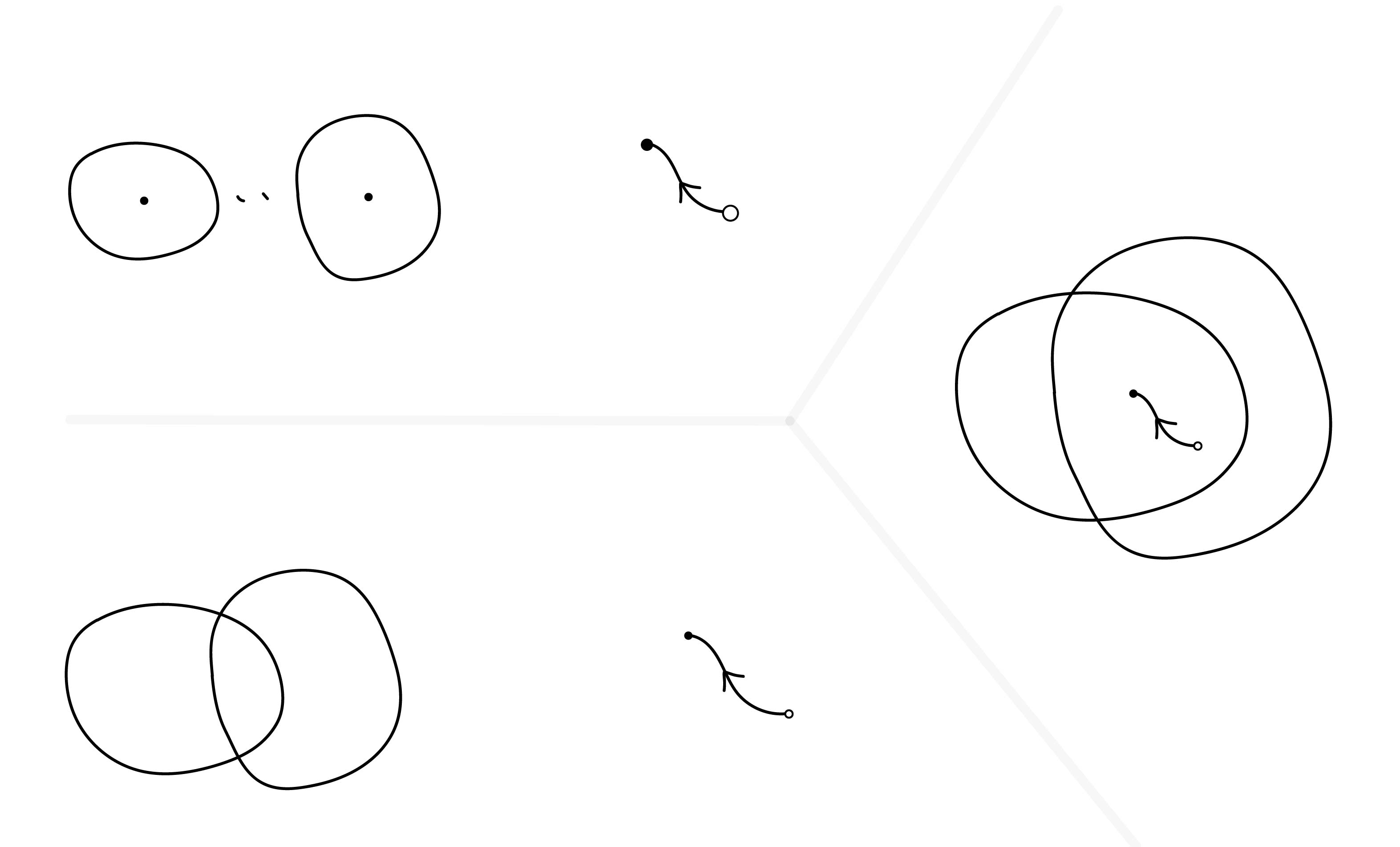} 
  \put(5 , 55){\contour{white}{ $\BB$}} 
  \put(6 , 46){\contour{white}{\small $U_1$}} 
    \put(6 , 25){\contour{white}{ $\rchi$}}   
    \put(6 , 12){\contour{white}{\small $\Hol U_1$}} 
    \put(18 , 16){\contour{white}{\small $\Hol U_n$}} 
       \put(23 , 48){\contour{white}{\small $U_n$}} 
  \put(75 , 52){\contour{white}{$(\TT \cup \TT^\ast)^2 \minus \Delta$}}   
    \put(69 , 33){\contour{white}{\small $W_1$}}   
    \put(87 , 38){\contour{white}{\small $W_n$}} 
    \put(80 , 25){\contour{white}{\small $W$}} 
  \put(76 , 34){\contour{white}{\small $(X, Y)$}} 
  \put(50 , 42){\contour{white}{\small $(C, D)$}} 
  \put(40 , 52){\contour{white}{\small $(C_1, D_1)$}} 
  \put(53 , 13){\contour{white}{\small $\Hol(C_t, D_t)$}} 
   \put(31,35){\color{black}\vector(0,-1){10}}
 \put(56 , 36){\contour{white}{ $\Psi$}} 
     \put(55,35){\color{black}\vector(1,0){8}}
 \put(32 , 29){\contour{white}{$\Hol$}} 
      \end{overpic}
\caption{Lifting the path $(X_t, Y_t)$.}\Label{fLiftingPath}
\end{figure}
\end{proof} 

We prove \Cref{ANewPair} in the remainder of \S \ref{MainProof}.

\subsection{When the orientations of $X$ and $Y$ are the same}
\Label{sOrientationsCoincide}
In this subsection, supposing that the orientation of $X$ coincides with that of $Y$, we prove \Cref{ANewPair}. 
We, in addition, assume that  $X, Y \in \TT$, and the proof in the case  $X, Y \in \TT^\ast$ is essentially the same. 

Pick a sufficiently small $\ep> 0$ so that $W$ contains the product of the $\ep$-negihborhood of $X$ and the $\ep$-neighborhood of $Y$  in $\TT$ w.r.t. the Teichmüller metric.

There is a unique Teichmüller geodesic passing $X$ and $Y$. 
By perturbing it, we obtain  a ``generic'' Teichmüller geodesic $R\col \R \to \TT$ passing through the $\ep/3$-neighborhood of $X$ and the $\ep/3$-neighborhood of $Y$ in the Teichmüller metric, such that 
\begin{itemize}
\item its corresponding quadratic differential $q$ has only simple zeros, and
\item the projection of the ray $R(-\infty, 0]$ towrad $-\infty$ is dense in the moduli space $\MM$ of Riemann surfaces (\Cref{fTeichmuellerRayAndItsProjection}). 
\end{itemize}
Let $V$ denote the vertical (singular) measured foliation of $R$. 
Since $q$ has only simple zeros, each singular point of $V$ has three prongs.

\begin{figure}
\begin{overpic}[scale=.20
] {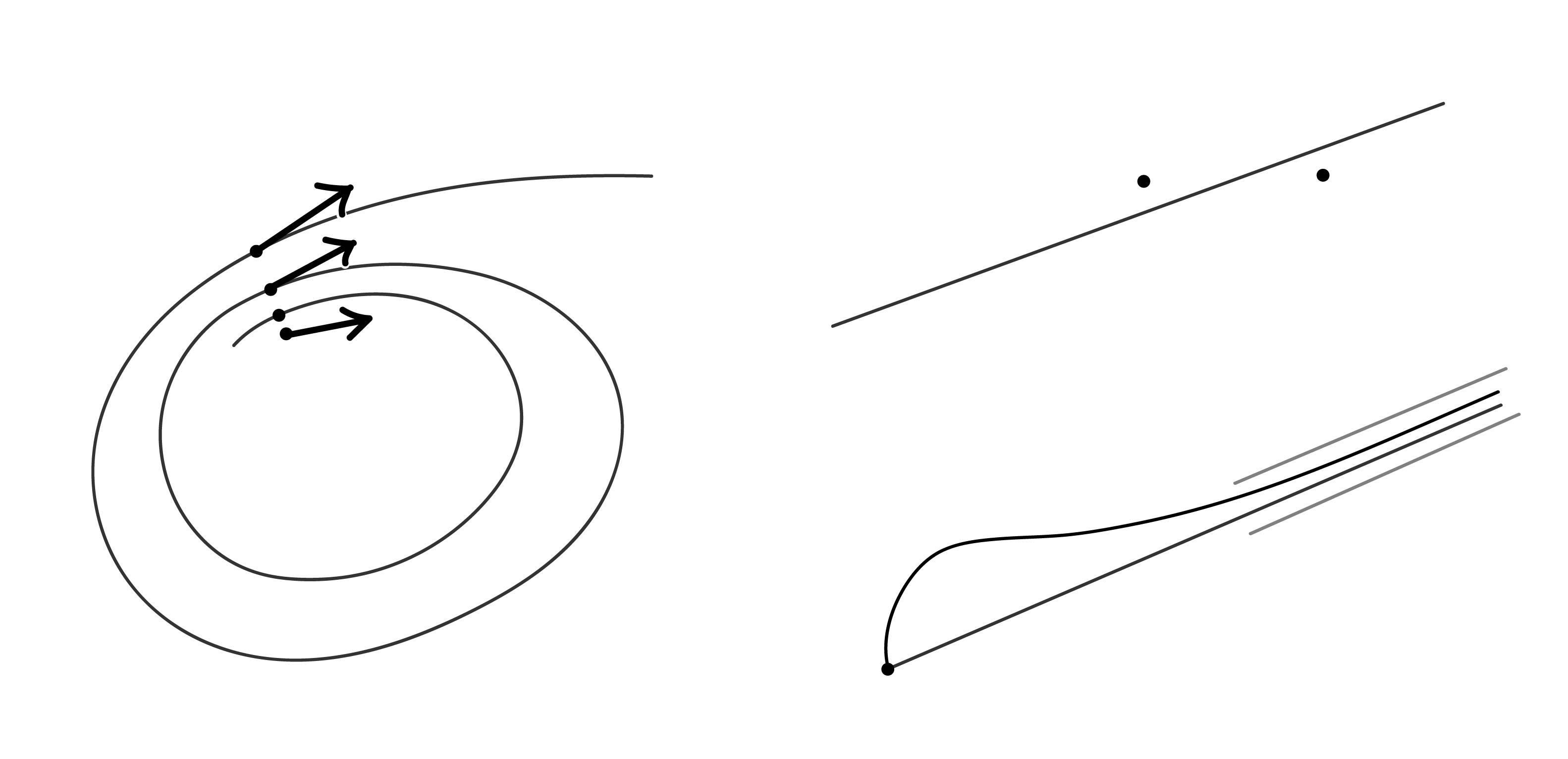} 
 \put(70, 38 ){$X$}  
 \put(82, 32 ){$Y$}  
  \put(55 , 16){\textcolor{Black}{\contour{white}{\small $gr_{s L_i} R_{t_i}$} }} 
  \put(83, 12){\textcolor{darkgray}{\small $\ep/3$}} 
    \put(14, 23){\textcolor{black}{\small \contour{white}{$Z$}}} 
     \put(6, 35){\textcolor{black}{\small \contour{white}{$[R'(t_i)]$}}} 
        \put(50, 2){\textcolor{black}{\contour{white}{\small $R_{t_i}$}}} 
        \put(70, 7){\textcolor{black}{\contour{white}{\small $X_i$}}} 
      \put(30, 39){\textcolor{black}{\small $[R_t]$}} 
        \put(60, 33){\textcolor{black}{\small $R_t$}} 
    \put(5, 40){\textcolor{black}{$\mathcal{M}$}} 
      \put(50, 40){\textcolor{black}{$\TT$}} 
      \end{overpic}
\caption{A generic Teichmüller geodesic passing close to $X$ and $Y$.}\Label{fTeichmuellerRayAndItsProjection}
\end{figure}

Let $[R(t)] \in \MM$ denote the unmarked Riemann surface structure of $R(t)$.
By the density of $[R (-\infty, 0] ]$, we can pick a sequence $0> t_1 > t_2 > \dots$ diverging to $-\infty$ such that 
\begin{itemize}
\item its unmarked sequence $[R(t_i)]$ converges to  $Z \in \MM$, and 
\item the tangent vector $[R'(t_i)]$ also converges in the unite tangent space $T^1 \MM$ at $Z$ as $i \to \infty$ (\Cref{fTeichmuellerRayAndItsProjection}, left).
\end{itemize}

For each $i = 1, 2,\dots$, let $\sigma_i$ denote the marked hyperbolic structure on $S$ corresponding to the marked Riemann surface $R(t_i)$ by the uniformization theorem. 
Let $L_i \in \ML$ denote the measured geodesic lamination on $\sigma_i$ representing the vertical measured foliation $V$.
 Let $\gr_{L_i}^{t} \sigma_i \in \TT \,(t  \geq 0)$ be the conformal grafting ray from $\sigma_i$ along $L_i$.

For each $i = 1, 2, \dots$, define $R_i \col \R \to \TT$ by $R_i(s) = R(t_i + s)$,  the reprametrization of the Teichmüller geodesci $R$ by shifting the base point backward to $R(t_i)$. 
 
By \Cref{UniformAsymptoticity}, for every $\ep > 0$,  there are $I_\ep > 0$ and  $s_\ep > 0$ such that, if $i > I_\ep$, then
$$d_\TT(R_i(s), \gr_{L_i}^{d_i \exp (s)} \sigma_i ) < \ep/3$$
for all $s > s_\ep$.
Since $R_i$ passes through the $\frac{\ep}{3}$-neighborhoods of $X$ and $Y$, 
we may in addtion assume that, if $i > I_\ep$, then $\gr_{V_i}^t \sigma_i$ passes through the $\frac{2}{3}\ep$-neighborhood of $X$ and  the $\frac{2}{3}\ep$-neighborhood of $Y$ in $\TT$.
Thus there are  $s_X^i, s_Y^i > s_\ep$, such that 
$$d_\TT(X, \gr_ {L_i}^{d_i \exp (s_X^i)} \sigma_i  ) < 2\ep/3,$$
and
 $$d_\TT(Y, \gr_ {L_i}^{d_i \exp (s_Y^i)} \sigma_i  ) < 2\ep/3.$$

By \Cref{IntegralGrafting},
there are $s_\ep > 0$ and $I_\ep > 0$ such that, if $s > s_\ep$ and $i > I_\ep$, then there is a geodesic multi-loop $M_{i, s}$ on $\tau_i$  with weights multiples of $2\pi$ satisfying
 $$d_\TT(\gr_{L_i}^{d_i \exp(s)}(\sigma_i),  \gr_{M_{i, s}}(\sigma_i)) < \frac{\ep}{3}.$$
 
 As $t_i \to - \infty$ as $i \to \infty$, we have  $t_i < - s_\ep$ for sufficiently large $i$.
 Thus, by the above inequality, there are multiloops $M_X = M_{X, i}$ and $M_Y = M_{Y, i}$ on $S$ with weight in $2\pi \Z_{> 0}$ such that 
 $$d_\TT(\gr_{L_i}^{d_i \exp(s_X^i)}(\sigma_i),  \gr_{M_X}(\sigma_i)) < \frac{\ep}{3}$$ and
 $$d_\TT(\gr_{L_i}^{d_i \exp(s_Y^i)}(\sigma_i),  \gr_{M_Y}(\sigma_i)) < \frac{\ep}{3}$$ 
 By combining the inequalities above,  the triangle inequality implies
 $$d_\TT(X,  \gr_{M_{X, i}}(\sigma_i)) < \ep$$ and
 $$d_\TT(Y,  \gr_{M_{Y, i}}(\sigma_i)) < \ep.$$ 
(See \Cref{fTeichmullerRayAndGrafting}.)
\begin{figure}
\begin{overpic}[scale=.22
] {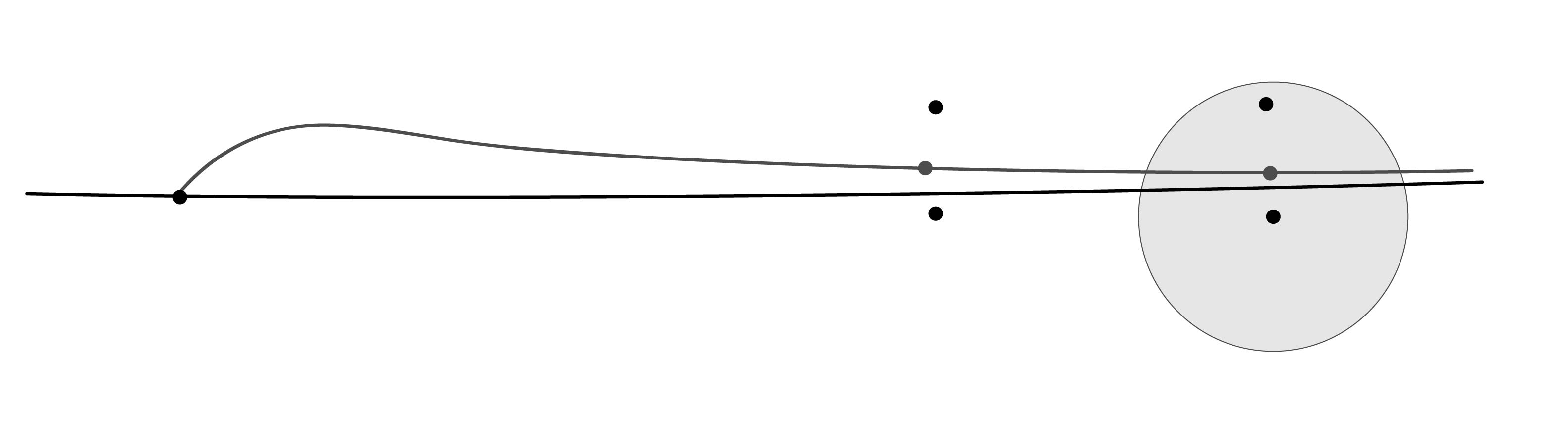} 
 \put(30 , 18 ){\small \contour{white}{\textcolor{darkgray}{$ \gr_ {V_i}^{\ast} \sigma_i$} }}  
  \put(58 ,22 ){\small $\gr_{M_X}(\sigma_i)$}  
 \put(12 ,11 ){\small $\sigma_i$}  
   \put(59 ,10 ){\small $X$}  
 \put(80 ,22 ){\small \contour{white}{$\gr_{M_Y}(\sigma_i)$}} 
 \put(80 ,9 ){\small \contour{white}{$Y$}}  
  \put(5, 15){ \textcolor{black}{\small \contour{white}{$R$} } }   
      \end{overpic}
\caption{Approximating the Riemann surfaces $X$ and $Y$ by integral grafting.}\Label{fTeichmullerRayAndGrafting}
\end{figure}

 The holonomy representation of the marked hyperbolic surface $\sigma_i$ is a discrete and faithful representation $\rho_i\col \pi_1(S) \to \PSL_2\R$ unique up to conjugation by $\PSL_2\R$. 
 Since $R(t_i) = R_i(0)$ leaves every compact in $\TT$,  accordingly the hyperbolic structure $\sigma_i$ diverges to infinity  as  $i \to \infty$. 
 Thus $\rho_i$ leaves every compact subset in the character variety $\rchi$ as $i \to \infty$. 
 
Since $2\pi$-grafting does not change  holonomy,   $ {\rm Gr}_{M_X}(\sigma_i)$ is a $\CP^1$-structure with holonomy $\rho_i$ and its underlying Riemann surface structure is $\ep$-close to $X$, and 
 ${\rm Gr}_{M_Y}(\sigma_i)$ is a $\CP^1$-structure also with holonomy $\rho_i$ and its underlying Riemann surface structure is $\ep$-close to $Y$ in the Teichmüller metric. 
 Therefore, by the hypothesis of $\ep$, we have $(\gr_{M_X}(\sigma_i),  \gr_{M_Y}(\sigma_i)) \in W$. 
As $U_j$ is a bounded subset of $\BB$ for each $j = 1, \dots, n$,  accordingly $\cup_{j = 1}^n \Hol (U_j)$ is a bounded subset of $\rchi$. 
Thus, if $i$ is sufficiently large, then $\rho_i \not\in \cup_h \Hol (U_j)$. 
Therefore $( \Gr_{M_X} \sigma_i, \Gr_{M_Y} \sigma_i) \in \BB$ has holonomy outside of $\cup_j \Hol (U_j)$ and the pair of their Riemann surface structures is in $W$, as desired.

\subsection{When the orientations of $X$ and $Y$ are the opposite}\Label{sOppositeOrieantations}
We last prove \Cref{ANewPair}, supposing that the orientations of $X$ and $Y$ are opposite.
The proof is basically reduced in the previous case (\S\ref{sOrientationsCoincide}) if we appropriately reverse the orientation of the surfaces, as follows. 

First we can assume, without loss of generality, that $X \in \TT$ and $Y \in \TT^\ast$.
Let $Y^\ast$ be the complex conjugate of $Y$, so that $Y^\ast \in \TT$.

Similar to \S\ref{sOrientationsCoincide}, pick $\ep> 0$ so that the product of the $\ep$-negihborhood of $X$ in $\TT$ and the $\ep$-neighborhood of $Y$ is contained in $$W = W_1 \cap W_2 \cap \dots \cap W_n.$$
 Let $R\col \R \to \TT$ be a ``generic'' Teichmuller ray in $\TT$ passing the $\ep/3$-neighborhood of $X$ and the $\ep/3$-neighborhood of the complex conjugate $Y^\ast$ such that
 \begin{itemize}
\item its corresponding quadratic differential has only simple zeros, and
\item the unmarked Riemann surface $[R(t)]$ is dense in the moduli space $\MM$ of Riemann surfaces as $t \to -\infty$. 
\end{itemize}

Let $t_1 > t_2 > \dots$ be a sequnce such that 
\begin{itemize}
\item $t_i \to -\infty$ as $i \to \infty$; 
\item the unmarked Riemann surface $[R(t_i)]$ converges to $Z \in \MM$ as $i \to \infty$; 
\item the tangent vectore $[R'(t_i)]$ converges in the unite tangent vector of $\MM$ at $Z$ as $i \to \infty$. 
\end{itemize}
For each $i = 1, 2, \dots$, let $\sigma_i$ be the marked hyperbolic structure on $S$ uniformizaing $R(t_i)$. 
Let $\rho_i \col \pi_1(S) \to \PSL_2\R$ be the discrete faithful representation corresponding to the hyperbolic surface $\sigma_i$.

As $X, Y^\ast \in \TT$, by \Cref{sOrientationsCoincide},  for sufficiently large $i$,  
\begin{itemize}
\item  $\rho_i \not\in \cup_{j = 1}^n \Hol (U_j)$,
\item there are a multiloop $M_X$ and $M_Y$ with weighs $2\pi$-multiples on $\sigma_i$, such that  
 $$d_\TT(X, \gr_{M_X} \sigma_i) < \ep, d_\TT(Y^\ast, \gr_{M_{Y^\ast}} \sigma_i) < \ep,$$ and 
 \item  the $\CP^1$-structures $\Gr_{M_X} (\sigma_i)$  and $\Gr_{M_{Y^\ast}} (\sigma_i)$ have the smae holonomy $\rho_i$.
\end{itemize}

In the upper half-plane model of $\H^2$ in $\C$, the $\C$-conjugate $\sigma^\ast_i$ of the hyperbolic structure $\sigma_i$ on $S$ is a hyperbolic structure on $S^\ast$ with the same Fuchsian holonomy $\rho_i$. 
Let $M_Y$ denote the multiloop on $\sigma_i^\ast$ corresponding to $M_{Y^\ast}$ on $\sigma_i$ by the complex conjugation, so that $M_Y$ and $M_{Y^\ast}$ represent the same weighted multi-loop on the unoriented surface $\Sigma$. 
Therefore  $d_\TT(Y^\ast, \gr_{M_{Y^\ast}} \sigma_i) < \ep$ implies  $d_{\TT^\ast}(Y, \gr_{M_Y} \sigma_i^\ast) < \ep$.
Hence  $(\gr_{M_X} \sigma_i, \gr_{M_Y} \sigma_i^\ast) \in W$.
Therefore, if $i$ is sufficiently large, the projective grafting pair $(\Gr_{M_X} \sigma_i, \Gr_{M_Y} \sigma_i^\ast)$  in  $\BB$ has 
 holonomy outside $\cup_{i = 1}^n \Hol (U_i)$,  and the pair of their Riemann surface structures is in $W$, as desired.

  \subsubsection{Alternative proof}\Label{sAlternativeProof}
 We give a short alternative proof when the orientations of $X$ and $Y$ are opposite. 
\begin{proposition}\Label{ConjugatePairs}
For every $X \in \TT$, 
 there are infinitely many pairs of $\CP^1$-structures on the Riemann surface $X$ and on its complex conjugate $X^\ast$ which share real holonomy $\pi_1(\Sigma) \to \PSL(2, \R)$.
\end{proposition}
\begin{proof}
By  Tanigawa \cite[Theorem 3.2]{Tanigawa-97},  for every $\Z$-weighted multiloop $M$ on $S$, there is a unique (marked) hyperbolic structure $\sigma$ on $S$ such that its conformal grafting $\gr_M \sigma$ of $\sigma$ along $M$ is $X$. 
Then $\Gr_M \sigma$ is a $\CP^1$-structure on $S$ with the Fuchsian holonomy $\rho_\sigma\col \pi_1(S) \to \PSL(2, \R)$ associated with $\sigma$.
Then, by taking the complex conjugate of the developing map of $\Gr_M \sigma$, we obtain a $\CP^1$-structure on $S^\ast$ with the same Fuchsian holonomy $\rho_\sigma$, and its conformal structure is $X^\ast$. 

Since there are infinitely many $\Z$-weighted multiloops on $S$, accordingly, we have infinitely many pairs of $\CP^1$-structures on $X$ and on $X^\ast$ sharing Fuchsian holonomy. 
\end{proof}

\begin{remark}
\Cref{ConjugatePairs} provides the isomonodromic pairs in $\Psi^{-1}(X, X^\ast)$ with real Fuchsian holonomy. 
These isomonodromic $\CP^1$-structures on $X$ and on $X^\ast$ are related by complex conjugation.
Those properties are strong, and it is plausible that there are other isomodromic pairs in $\Psi^{-1}(X, X^\ast)$ having non-real holonomy. 
Seemingly, our approach in \S\ref{sOppositeOrieantations} may have more possibilities to be generalized in order to construct such isomonodromic pairs with non-real holonomy.
\end{remark}

\begin{theorem}
For each $X \in \TT$ and $Y \in \TT^\ast$, there are infinitely many pairs of $\CP^1$-structures on $X$ and on $Y$ sharing holonomy. 
\end{theorem}
\begin{proof}
Let $\BB_-$ be the union of the connected components of $\BB$ consisting of pairs of $\CP^1$-structures on $S$ and on $S^\ast$. 
Let $\Psi_-$ be the restriction of $\Psi$ to $\BB_-$. 
Clearly $\Psi_-\col \BB  \to \TT \times \TT^\ast$ is a complete local branched covering map since $\Psi$ is. 
Therefore, the covering degree of $\Psi_-$ is well-defined.
Namely, for all pairs  $(X, Y)$ in $ \TT \times \TT^\ast$, their fibers $\Psi_-^{-1}(X, Y)$ have the same cardinality counted with multiplicity. 

For every $X \in \TT$, by \Cref{ConjugatePairs}, the cartinarily of $\Psi^{-1}(X, X^\ast)$ (counted without multiplicity) is inifnite. 
Therefore, the covering degree of $\Psi | \BB_{-}$ is also infinite. 
Since the uniformization mapping $\Psi$ is a local branched covering map, for every $X \in \TT$ and $Y \in \TT^\ast$, the fiber $\Psi^{-1}(X, Y)$ is also infinite. 
\end{proof}

\bibliography{2025-8Intersection-of-Holonomy-varieties-arXiv}

\bibliographystyle{alpha}

\end{document}